\documentclass[a4paper,11pt,openany]{amsbook}
\usepackage[T1]{fontenc}
\usepackage{newtxtext,newtxmath}
\usepackage{tikz}

\usepackage{hyperref}

\usepackage{imakeidx}
\makeindex
\makeindex[name=notation,title=Index of notation,columns=1] 

\renewcommand{\epsilon}{\varepsilon}
\renewcommand{\phi}{\varphi}

\newtheoremstyle{mytheorem}
{\baselineskip}{\baselineskip}{\itshape}{}{\scshape}{.}{.5em}{}

\newtheoremstyle{mydefinition}
{\baselineskip}{\baselineskip}{}{}{\scshape}{.}{.5em}{}        

\theoremstyle{mytheorem}
\newtheorem{theorem}{Theorem}
\numberwithin{theorem}{chapter}
\newtheorem{lemma}[theorem]{Lemma}
\newtheorem{corollary}[theorem]{Corollary}

\theoremstyle{mydefinition}
\newtheorem{definition}[theorem]{Definition}
\newtheorem{remark}[theorem]{Remark}

\numberwithin{equation}{chapter}
\numberwithin{section}{chapter}

\usepackage{etoolbox}
\makeatletter
\patchcmd{\@makechapterhead}{\bfseries}{\scshape}{}{}  
\patchcmd{\@makeschapterhead}{\bfseries}{\scshape}{}{}
\def\section{\@startsection{section}{1}\z@{.7\linespacing\@plus\linespacing}{.5\linespacing}{\normalfont\scshape\centering}}
 \def\subsection{\@startsection{subsection}{2}\z@{.5\linespacing\@plus.7\linespacing}{1ex}{\normalfont\itshape}}

\usepackage{xstring}
\renewcommand{\tocappendix}[3]{\indentlabel{\IfStrEq{#3}{Bibliography}{}{\IfStrEq{#3}{Index}{}{\IfStrEq{#3}{Index of notation}{}{#1}}}\@ifnotempty{#2}{ #2.\quad}}#3}
\makeatother

\makeatletter
\newcommand{\setref}[2]{\phantomsection
  #1\def\@currentlabel{\unexpanded{#1}}\label{#2}}
\makeatother

\title{Regularity and uniqueness result for generated Jacobian equation}
\author{Cale Rankin}

\begin{document}

\thispagestyle{empty}\clearpage{}{\begingroup \scshape \begin{center}

\begin{minipage}{35em}\begin{center}
            \vspace{60pt}
{\LARGE Regularity and uniqueness results for  \\ generated Jacobian equations \\}
\vspace{60pt}
{\Large Cale Rankin \\}
{\large December 2021}
\vspace{30pt}
\end{center}\end{minipage}

\vspace{350pt}
{\large    A thesis submitted for the degree of \\
    Doctor of Philosophy of \\
     The Australian National University  \\}
\end{center}
\endgroup} \pagebreak

\clearpage{}
\thispagestyle{empty}\clearpage{}\begin{minipage}{35em}\end{minipage}
\vspace{-500pt}

{\begingroup \begin{center}
 \vspace{2cm}
    
    The work in this thesis is my own except where otherwise indicated. \\
 \hspace{-84pt}  This thesis contains approximately 25,000 words.\\
    \vspace{1cm}
    \hspace{234pt}  Cale Rankin\\ 
  \end{center}
      \vspace{2cm} \textsc{Version Info} \\
      November 28, 2021: Corrections to errors/typos that were present when submitted for marking.\\
      December 14, 2021: Corrections based on examiner feedback.\\
      January 07, 2022: Expository comment \S 6.1, removed blank pages, arXiv v1
\endgroup \pagebreak}

\clearpage{}
\thispagestyle{empty}\clearpage{}\chapter*{Acknowledgements}

Thank you to all my friends. Thank you Anthony and Jaklyn for many enjoyable Friday nights. Chris and Feng it's been a joy completing our degrees together. There's no one I'd have rather shared an office with. 

I owe thanks to a number of academic staff. Professor Seick Kim helped organise my participation in The 9th Korea PDE School and Professor Jiakun Liu invited me to speak at the UOW PDE Seminar Series. Both experiences left me feeling mathematically-renewed. Professor Ben Andrews, Professor Shibing Chen, and Professor Xu-Jia Wang helpfully answered mathematical questions. I thank them for this.   John Urbas was free whenever I needed him for mathematical discussions. John I enjoyed and benefited from our meetings, thank you. Neil, working with you has been, and will remain, a high point of my academic life. Thank you for all you have taught me, both about mathematics and what it means to be a mathematician. 

Finally to my family, with all my love, thank you for your believing in me.

\clearpage{}

\pagestyle{plain}
\clearpage{}\chapter*{Abstract}

This is a thesis about generated Jacobian equations; our purpose is twofold. First, we provide an introduction to these equations, whilst, at the same time, collating some results scattered throughout the literature. The other goal is to present the author's own results on these equations. These results all concern solutions of the PDE
\begin{align}
  \label{eq:a:gje}\tag{1} \det DY(\cdot,u,Du) = \frac{f(\cdot)}{f^*(Y(\cdot,u,Du))} \text{ in } \Omega,
\end{align}
for a particular family of vector fields $Y:\mathbf{R}^n\times \mathbf{R}\times \mathbf{R}^n\rightarrow\mathbf{R}^n$ and $\Omega$ a domain in $\mathbf{R}^n$. Usually this PDE is paired with the second boundary value problem, which requires the image of the mapping $Y$ be prescribed. That is,
\begin{align}
  \label{eq:a:2bvp}\tag{2} Y(\cdot,u,Du)(\Omega) = \Omega^*,
\end{align}
for prescribed domains $\Omega,\Omega^* \subset \mathbf{R}^n$. We summarise our results as follows, though warn the word convexity must be understood in the generalised $g$-convexity sense. In all but the third point we assume the A3w condition. 
\begin{itemize}
\item In two dimensions when $f/f^* \geq \lambda > 0$ and $\Omega$ is convex, solutions are strictly convex. Also in two dimensions, when $f/f^* \leq \Lambda < \infty$ and $\Omega^*$ is convex, solutions are $C^{1}$.
\item When $0 < \lambda \leq f/f^* \leq \Lambda < \infty$, $\Omega^*$ is convex, and the generating function is defined on a convex domain containing $\overline{\Omega}$, solutions are strictly convex and in $C^{1}(\Omega)$. The same result holds provided $\Omega$ is uniformly convex and the  generating function is  defined on a domain containing $\overline{\Omega}$. 
\item If two $C^{1,1}(\Omega)$ solutions of \eqref{eq:a:gje} and \eqref{eq:a:2bvp} intersect, then they are the same solution. In addition, $C^{1,1}(\overline{\Omega})$ solutions of the Dirichlet problem associated with \eqref{eq:a:gje} are unique. 
\item Under appropriate uniform convexity hypothesis on $\Omega,\Omega^*$ and smoothness conditions on $f,f^*$, \textit{all} Aleksandrov solutions are globally smooth.
\item Solutions of the parabolic version of the generated Jacobian equation remain uniformly bounded, independent of time. 
\end{itemize}

We emphasise that the results of the second point have appeared in the works of Guillen and Kitagawa under stronger domain hypothesis. Our contribution is the weaker domain hypothesis which are essential for our applications to the global regularity. With regards to which, conditions for the \textit{existence} of globally smooth solutions are due to Jiang and Trudinger. Our contribution is showing that all weak (i.e.\ Aleksandrov) solutions are globally smooth under the same conditions.

\clearpage{}
\tableofcontents
\clearpage{}\chapter{Introduction}

Generated Jacobian equations are a relatively new family of partial differential equations. They model a variety of situations in which the underlying problem is how to move between two prescribed densities. As examples we list reflection and refraction problems in geometric optics, the optimal transport problem, and the Minkowski problem from geometry. All of these problems can be expressed in the language of generated Jacobian equations (GJEs), and their solutions written explicitly in terms of the solution to the corresponding GJE.

More precisely, GJEs are a generalisation of the Monge--Amp\`ere equation and the Monge--Amp\`ere type equations from optimal transport. The framework of generated Jacobian equations was introduced with the goal of extending, and generalising, the well developed framework and techniques from optimal transport to problems in geometric optics. In this regard much has been done --- yet much remains. The goal of this thesis is to continue these developments. We prove new regularity results and structure results akin to uniqueness. 

The equations we study are of the form
\begin{equation}
  \label{eq:i:gje}
   \det DY(\cdot,u,Du) = \psi(\cdot,u,Du) \text{ in }\Omega,
\end{equation}
where $Y:\mathbf{R}^n\times\mathbf{R}\times\mathbf{R}^n\rightarrow \mathbf{R}^n$, $\psi:\mathbf{R}^n\times\mathbf{R}\times\mathbf{R}^n \rightarrow \mathbf{R} $ and $\Omega$ is a domain in $\mathbf{R}^n$. Precise conditions on $Y$ and $\psi $are given in Chapter \ref{chap:g}. However, the prototypical example is $Y(\cdot,u,Du) = Du$  in which case \eqref{eq:i:gje} is the Monge--Amp\`ere equation. The other well known example is from optimal transport. In this setting $Y$ and $\psi$ are independent of $u$, so \eqref{eq:i:gje} takes the form
\[ \det DY(\cdot,Du) = \psi(\cdot,Du).\]
The boundary condition that arises in applications is the second boundary value problem. For this we prescribe the image of $\Omega$ under the mapping $x \mapsto Y(x,u(x),Du(x))$. That is, we require
\[ Y(\cdot,u,Du)(\Omega) = \Omega^*,\]
for $\Omega^*$ a prescribed domain.

The results in this thesis mirror those in the Monge--Amp\`ere and optimal transport setting. So we'll begin with an outline of the regularity theory in those cases. First, the Monge--Amp\`ere equation. This equation has a long history. It dates back (unsurprisingly) to the works of Monge \cite{Monge1784} and Amp\`ere \cite{Ampere1819}.  The regularity theory for Monge--Amp\`ere equations of the form
\begin{align*}
  \det D^2u &= \psi(\cdot,u,Du) \text{ in }\Omega, 
\end{align*}
is well developed in the elliptic setting, that is, when the solution $u:\Omega\rightarrow \mathbf{R}$ is a convex function. Convexity plays other roles in the regularity theory: to prove solutions are smooth convexity conditions on the domains are essential. 

In the elliptic setting the modern theory began with the work of Minkowski \cite{Minkowski03} and Aleksandrov \cite{Aleksandrov42,Aleksandrov42a}. They studied the Monge--Amp\`ere equation in conjunction with problems in geometry. In his early work Aleksandrov introduced (in two dimensions) a notion of weak solution and techniques for studying the uniqueness and $C^1$ regularity of such weak solutions. The analogues of all this play an important role in our study of GJEs. Aleksandrov's definition of weak solution was generalised to higher dimensions by both himself \cite{Aleksandrov58} and Bakelman \cite{Bakelman57}. However the interior regularity of these solutions is a difficult problem. It was not solved until Pogorelov \cite{Pogorelov71,Pogorelov71a,Pogorelov78}, and also Cheng and Yau \cite{ChengYau76,ChengYau77} deduced interior estimates that showed strictly convex solutions are smooth in the interior. The idea here, that strict convexity paves the way to higher regularity, plays an important role in this thesis. Indeed, strict convexity is tantamount to uniform ellipticity, and lets us treat the equation locally as a Dirichlet problem with affine boundary values. It was later that Caffarelli gave (very weak) conditions for solutions to be strictly convex \cite{Caffarelli}.

We are also interested in global regularity for the second boundary value problem. For the Monge--Amp\`ere equation this can be attacked in two completely different ways. One is the geometric theory of Caffarelli \cite{Caffarelli96}, the other, which we pursue here, is the method of continuity as used by Urbas \cite{Urbas1997}. Key to this is the derivation of a number of apriori estimates. Throughout all this the regularity theory is aided by uniqueness results: with appropriate conditions on $\psi$ (for example, no $u$ dependence), weak solutions of the Dirichlet problem are unique, and weak solutions of the second boundary value problem all differ by a constant. An important consequence is that the existence of a solution with certain regularity properties implies all solutions are equally regular. Heuristically, regularity for one implies regularity for all. 

Next we discuss the optimal transport case. The optimal transport problem, which is well known, originated with Monge in 1781 \cite{Monge1781}. Though at that point the connection to the Monge--Amp\`ere equation was unknown. Indeed, it was Kantorovich \cite{Kantorovitch1942,Kantorovich2004} who showed the optimal transport problem can be solved in terms of certain functions known as potentials. Later, Brenier \cite{Brenier1991} showed these are a weak solution of the Monge--Amp\`ere equation. Thus regularity of the optimal transport map could be studied via the associated Monge--Amp\`ere equation. However, at that time the regularity theory was only known in the special case where the cost of transport was the distance squared. It was known that for more general costs the optimal transport map could be written in terms of the solution to a more general Monge--Amp\`ere type equation. Yet the regularity theory for these equations was entirely unknown. Nevertheless, the early work of R\"{u}schendorf \cite{Ruschendorf91} and later work by Gangbo and McCann \cite{GangboMcCann96} showed there was an underlying convexity theory.  It was Ma, Trudinger, and Wang \cite{MTW05} who (based upon earlier works of Wang \cite{Wang96}) stated a condition on the cost of transport which allowed them to prove the interior regularity of optimal transport maps.

Immediately following this, a number of authors proved results that all contributed to the basic fact that Ma, Trudinger, and Wang's condition, known as A3 or A3w, was essential for the underlying convexity theory to inherit desirable, familiar, properties from standard convexity theory \cite{Loeper09,TrudingerWang09a,Villani09,KimMcCann10}. These include local conditions for testing the convexity of functions and domains. This  marked a relative explosion of regularity results for optimal transport maps. A number of results, all analogues of the corresponding results for Monge--Amp\`ere equations, appeared shortly thereafter: Trudinger and Wang proved global regularity for the second boundary value problem (following the basic approach of Urbas) \cite{TrudingerWang09}. Liu and Trudinger showed that, as in the Monge--Amp\`ere case, strict convexity implies interior regularity \cite{LiuTrudinger14}. A number of authors proved the Caffarelli style strict convexity under various hypothesis \cite{FKM13,GuillenKitagawa15,Vetois15,ChenWang16}. In all these results the convexity theory induced by the cost and the A3w condition play an essential role.

So techniques from the Monge--Amp\`ere equation had been used with extreme success to study optimal transport. It was natural to ask whether this same success could be recreated with applications to geometric optics --- here the equations have a similar, but more complicated, form. In his early work \cite{Trudinger14} Trudinger laid the framework for this extension. He introduced the convexity theory and proved regularity results in the same vein as his work with Ma and Wang. After which, he and Feida Jiang proved the existence of globally smooth solutions \cite{JiangTrudinger14,JiangTrudinger18}. However, unlike the optimal transport case this does not suffice to prove all weak solutions are globally smooth. Further work on the regularity was done by Guillen and Kitagawa \cite{GuillenKitagawa17} who proved the strict $g$-convexity and $C^{1,\alpha}$ regularity under the same hypothesis as their earlier work on optimal transport. The regularity theory has been further developed by Jhaveri \cite{Jhaveri17} and  Jeong \cite{Jeong21}. There is also work on numerical methods for approximating the solutions of GJEs \cite{AbedinGutierrez17,GMT21}. 

Our goal in this thesis is to continue the development of the regularity theory for generated Jacobian equations.  The author's results begin with strict convexity and $C^{1}$ regularity under weaker hypothesis on the domains than in the literature. We also prove particularly strong strict convexity and $C^1$ differentiability results in two dimensions (some well known counterexamples show these results hold only in two dimensions). Next, we consider adaptations of the uniqueness results for Monge--Amp\`ere equations, proving that if two smooth solutions of the second boundary value problem intersect, then they are the same solution. We also prove uniqueness for the Dirichlet problem. Finally, by a modification of Jiang and Trudinger's proof of global regularity in conjunction with our earlier results, we show all Aleksandrov solutions are globally smooth. To summarise, we develop the global regularity theory for Aleksandrov solutions of the second boundary value problem, building up along the way a number of other, necessary, results. Here is a more detailed outline of the thesis.

Chapters \ref{chap:g} and \ref{chap:w} are introductory and largely an exposition of the works of Trudinger \cite{Trudinger14,Trudinger20}. In Chapter \ref{chap:g} we introduce the underlying theory of $g$-convexity. We give details on the structure of the equations we consider and define generalisations of convex functions and convex sets. Moreover we emphasise that the well known A3w condition lies at the heart of this convexity theory: it ensures a number of essential properties. Then, in Chapter \ref{chap:w}, we give the definition of Aleksandrov solutions to generated Jacobian equations. We prove the existence of Aleksandrov solutions to the second boundary value problem that take a prescribed value at a given point in the domain. That is, solutions  satisfying $u(x_0) = u_0$ for an $x_0,u_0$ of our choosing. This is similar to being able to add a constant to any solution of the second boundary value problem for Monge--Amp\`ere equations. We prove that when these solutions are strictly $g$-convex they are smooth provided the data is smooth. 

The author's results begin in Chapter \ref{chap:sc}. Here we show  Aleksandrov solutions are strictly convex. This was achieved in the earlier work of Guillen and Kitagawa \cite{GuillenKitagawa17}. Our contributions are weaker domain hypothesis: we show strict convexity under the analogues of Chen and Wang's \cite{ChenWang16} hypothesis from optimal transport. Furthermore, we show that in two dimensions the situation is comparable to the Monge--Amp\`ere and optimal transport case --- much weaker hypothesis guarantee the strict convexity. The work in this chapter appears in \cite{Rankin2020b} and \cite{Rankin2021}.  We conclude the chapter with $C^1$ differentiability, treated as a consequence of the strict convexity. 

In Chapter \ref{cha:u} we consider the structure of the solution set, that is, results which are substitutes for uniqueness. The key tool is an Aleksandrov style lemma. This lemma states that on the boundary of the set where one solution lies above another, the gradients of the two solutions agree. In particular this implies the maximum of two Aleksandrov solutions is still an Aleksandrov solution. We combine this with classical tools from elliptic PDE (the Harnack inequality and Hopf's boundary point lemma) to obtain a full characterisation of the solution set in the case of smooth solutions: If two $C^{1,1}(\Omega)$ solutions of the second boundary value problem intersect then they are the same solution. Moreover solutions of the Dirichlet problem are unique provided they satisfy a particular extension property. The results for the second boundary value problem appeared in the author's work \cite{Rankin2020}. 

Our final result extends the global regularity result of Jiang and Trudinger. They proved the existence of globally smooth solutions. We modify their construction so as to prove the existence of a globally smooth solution which takes a given value at a point. Then, under the hypothesis for the existence of a globally smooth solution we have the following procedure to show Aleksandrov solutions are globally smooth. We begin by taking an arbitrary Aleksandrov solution. Under the given hypothesis our results in Chapter \ref{chap:sc}  imply this solution is strictly convex and, subsequently, smooth on the interior.  Now our construction implies the existence of a globally smooth solution intersecting the Aleksandrov solution. Since both solutions are smooth on the interior our uniqueness result for intersecting solutions imply they are the same solution. Thus the Aleksandrov solution is globally smooth. The construction of a globally smooth solution through a given point, which is via degree theory, uses a great deal of background material. Thus Chapters \ref{chap:r1} and \ref{chap:r2} are largely assembling results from the literature. Our contributions are in Section \ref{sec:glob-regul-via}. 

We conclude the thesis with some appendices. Along with the usual index of notation we include some proofs that by virtue of being generally well known, or similar to other parts of the thesis, were left out of the main text. We also, briefly, outline some work on parabolic generated Jacobian equations. Here we are able to show the natural parabolic analog of GJEs has a solution which is bounded independently of time. We then outline how this implies existence of $C^2$ solutions to the parabolic problem for all finite time intervals. 

\section{Conventions and notes to the reader}
\label{sec:conv-notes-read}

It will be helpful (if not essential) to know the basics of the Monge--Amp\`ere equation. Standard references are the books by Guti\'errez \cite{Gutierrez16} and Figalli \cite{Figalli17}. The author is also fond of the surveys by Trudinger and Wang \cite{TrudingerWang08} and Liu and Wang \cite{LiuWang15}. The material we need from elliptic PDE, along with most of our notation, is from \cite{GilbargTrudinger01}. Further introductory material for generated Jacobian equations can be found in the work of Guillen and Kitagawa \cite{GuillenKitagawa17} or Guillen's survey \cite{Guillen19}. 

\subsection*{Conventions}
\begin{enumerate}
  \item A \textit{domain} is an open connected subset of $\mathbf{R}^n$.
  \item When $E$ is a matrix $E_{ij}$ denotes the $ij^{th}$ element and $E^{ij}$ denotes the $ij^{th}$ element of $E^{-1}$.
\item For a symmetric matrix $A$, $A \geq 0$ means the eigenvalues of $A$ are nonnegative. For symmetric matrices $A,B$ we write $A \geq B$ provided $A-B \geq 0$. 
    
  \item We frequently deal with a function $g(x,y,z)$ where $(x,y,z) \in \mathbf{R}^n\times \mathbf{R}^n\times \mathbf{R}$. Derivatives with respect to $x$ are indicated as subscripts before a comma (or without a comma), derivatives with respect to $y$ are subscripts after a comma, and derivatives with respect to $z$ are indicated by a subscript $z$. As examples $D_{x_ix_jy_kz}g = g_{ij,k,z}$ and $D_{y_ky_lz}g = g_{,kl,z}$. On occasion to be explicit we subscript as in $D_{x_ix_jy_k}g = g_{x_ix_jy_k}$.
    \item The summation convention, that repeated indices are summed over, is employed throughout. 
\item Contrary to the literature when we use use the phrase generating function we are referring to a function that satisfies the conditions  A0, A1, A1$^*$, and A2 (see Chapter \ref{chap:g}). This differs from most of the literature where these conditions are explicitly stated.
\item Our definition of $g$-convex is stronger than standard. For those familiar with the literature what we call a $g$-convex function is Guillen and Kitagawa's \cite{GuillenKitagawa17} definition of ``very nice $g$-convex function''.
\end{enumerate}

\clearpage{}
\clearpage{}\chapter{Generating functions and $g$-convexity}
\label{chap:g}

Generated Jacobian equations are a generalisation of Monge--Amp\`ere equations, the study of which relies heavily on convexity theory. Thus, it's no surprise that the study of GJE depends on the introduction of a generalisation of convexity that is well adapted to these new equations. That's what we do in this chapter: We present the framework of $g$-convexity largely following  \cite{Trudinger14,Trudinger20}. Whilst this chapter is definition heavy, the payoff is powerful tools for generalising results from the study of the Monge--Amp\`ere equation. We'll see this in action in the coming chapters. 

\section{Generating functions}
\label{sec:generating-functions}
 We begin by introducing generating functions which we regard  as nonlinear extensions of affine planes.

   \begin{definition} \index{generating function}
  A \textit{generating function} is a function $g$ satisfying the conditions A0, A1, A1$^*$, and A2. 
\end{definition}

  \textbf{A0.} \index{A0}\index[notation]{$g$, $g(x,y,z)$, \ \ generating function} $g \in C^4(\overline{\Gamma})$ where $\Gamma \subset \mathbf{R}^n\times \mathbf{R}^n\times \mathbf{R}$\index[notation]{$\Gamma$, \ \ domain of definition for $g$} is a bounded open set whose points are denoted $(x,y,z)$ and which satisfies that for all $x,y\in \mathbf{R}^n$ the set
  \[ I_{x,y} := \{z ; (x,y,z) \in \Gamma\}, \]
  is a (possibly empty) open interval. Moreover we assume there are domains $U,V \subset \mathbf{R}^n$ and a nonempty open interval $J$\index[notation]{$J$, \ \  ``allowable'' $u$ values}\index[notation]{$U$, \ \ ``allowable'' $x$ values}\index[notation]{$V$, \ \ ``allowable'' $y$ values} such that when $x \in \overline{U}, y \in \overline{V}$ $J\subset g(x,y,I_{x,y})$.   \\
  \textbf{A1.}\index{A1} For all $(x,u,p)\in \mathcal{U}$,\index[notation]{$\mathcal{U}$, \ \  one-jet (with respect to $x$ derivative) of the generating function} which we define as
  \[\mathcal{U}:= \{(x,g(x,y,z),g_x(x,y,z)); \ (x,y,z) \in \Gamma\}, \]
  there is a unique $(x,y,z) \in \Gamma$ such that
  \[ g(x,y,z) = u \quad\text{ and }\quad g_x(x,y,z) = p.\]
  That is, for each fixed $x$ the mapping $(y,z) \mapsto (g(x,y,z),g_x(x,y,z))$ is injective.\\
  \textbf{A1$^*$.}\index{A1$^*$} For each $y,z$ the mapping
  \[ x \mapsto \frac{g_y}{g_z}(x,y,z), \]
  is injective on its domain of definition. Our reason for the notation A1$^*$ is explained in Section \ref{sec:g:convexity}.  \\
  \textbf{A2.}\index{A2} On $\overline{\Gamma}$ there holds $g_z < 0$ and the matrix $E$ with entries\footnote{Our subscript convention is explained in Section \ref{sec:conv-notes-read}.}\index[notation]{$E$,$E_{ij}$, \ \ nonsingular matrix defined by $E_{ij}:=g_{i,j}-g_z^{-1}g_{i,z}g_{,j}$}
  \[ E_{ij}:=g_{i,j}-g_z^{-1}g_{i,z}g_{,j}\]
  satisfies $\det E \neq 0$.

From this point on $g$ denotes a generating function. The A0 condition requires explanation. The condition $g \in C^4(\overline{\Gamma})$ ensures rudimentary estimates, for example $\sup|g_{xx}| \leq C$. This idea is from \cite{FKM13}.  Heuristically $U,V$ is the ``universe'' of allowable $x,y$ values and $J$ the set of allowable heights. The set $J$ is adapted from Guillen and Kitagawa's definition of uniform admissibility \cite[Definition 4.1]{GuillenKitagawa17}. Indeed, $J$ represents the set of allowable heights, which we may hit by an appropriate choice of the arguments of $g$. We will introduce further conditions on $g$ later, these are A3w (page \pageref{ref:a3w}), A4w (page \pageref{ref:a4w}), and A5 (page \pageref{ref:a5}). 

We use condition A1 to define mappings $Y:\mathcal{U} \rightarrow \mathbf{R}^n$ and $Z:\mathcal{U} \rightarrow \mathbf{R}$\index[notation]{$Y$, $Y(x,u,p)$, \ \ $Y$-mapping}\index[notation]{$Z$, $Z(x,u,p)$, \ \ $Z$-mapping}  by requiring they solve
\begin{align}
  \label{eq:g:yzdef1}  g(x,Y(x,u,p),Z(x,u,p)) &= u,\\
  \label{eq:g:yzdef2} g_x(x,Y(x,u,p),Z(x,u,p)) &= p.
\end{align}
We assume if $(x,u,p_0),(x,u,p_1) \in \mathcal{U}$ then  $(x,u,tp_1+(1-t)p_0)) \in \mathcal{U}$ for $t \in [0,1]$. This condition serves the same purpose as $g^*$-subconvexity \cite{Trudinger20}. 

\begin{definition}\index{generated Jacobian equation}\index{GJE}
  The partial differential equation
  \begin{align}
         \det DY(\cdot,u,Du) = \psi(\cdot,u,Du), 
  \end{align}
  is called a \textit{generated Jacobian equation} (GJE) provided the mapping $Y$ derives from solving \eqref{eq:g:yzdef1} and \eqref{eq:g:yzdef2} for some generating function. Here $\psi$ is a real valued function defined on a subset of $\mathbf{R}^n\times\mathbf{R}\times\mathbf{R}^n$.
\end{definition}

For applications GJEs are paired with the second boundary value problem\index{second boundary value problem}. That is, the condition
\begin{equation}
  \label{eq:g:2bvp}
  Y(\cdot,u,Du)(\Omega) = \Omega^*. \tag{2BVP}
\end{equation}
We let $\Omega,\Omega^*$ denote domains in $\mathbf{R}^n$ satisfying $\Omega \subset U$ and $\Omega^* \subset V$. When we study the second boundary value problem $\psi$ has the form
\begin{equation}
  \label{eq:g:separable}
  \psi(\cdot,u,Du) = \frac{f(\cdot)}{f^*(Y(\cdot,u,Du))}
\end{equation}
where $f,f^*$ are nonnegative functions on $\Omega,\Omega^*$. The resulting equation is 
  \begin{equation}
    \label{eq:g:gje}
       \det DY(\cdot,u,Du) = \frac{f(\cdot)}{f^*(Y(\cdot,u,Du))}. \tag{GJE}
  \end{equation}
If $\psi$ has the form \eqref{eq:g:separable} and $x \mapsto Y(x,u,Du)$ is a diffeomorphism, the change of variables formula implies a necessary condition for solvability of \eqref{eq:g:gje} subject to \eqref{eq:g:2bvp} is the mass balance\index{mass balance} condition
\begin{equation}
  \label{eq:g:mass_balance}
   \int_\Omega f = \int_{\Omega^*}f^*. 
\end{equation}
We always assume \eqref{eq:g:mass_balance} holds.  We occasionally study the Dirichlet problem. However this is primarily as a means of locally approximating solutions of \eqref{eq:g:gje} subject to \eqref{eq:g:2bvp}.

For a function $u:\Omega \rightarrow \mathbf{R}$ to solve a generated Jacobian equation in $\Omega$ it is necessary that $(x,u(x),Du(x)) \in \mathcal{U}$ for each $x \in \Omega$. We say more about the class of solutions to GJEs in Section \ref{sec:g:convexity}. For now we show condition A2 allows us to rewrite  \eqref{eq:g:gje} as a Monge--Amp\`ere type equation. 

To begin we assume $u \in C^2(\Omega)$ solves \eqref{eq:g:gje} on $\Omega$. Evaluating \eqref{eq:g:yzdef1} and \eqref{eq:g:yzdef2} at $u=u(x),p=Du(x)$ we have
\begin{align}
  \label{eq:g:matederive1}  g(x,Y(x,u,Du),Z(x,u,Du)) &= u,\\
  \label{eq:g:matederive2} g_{i}(x,Y(x,u,Du),Z(x,u,Du)) &= D_iu.
\end{align}
Differentiating with respect to $x_j$ we obtain
\begin{align}
  \label{eq:g:matederive3} g_{j}+g_{,k}D_jY^k+g_zD_jZ &= D_ju\\
  \label{eq:g:matederive4} g_{ij}+g_{i,k}D_jY^k+g_{i,z}D_jZ &= D_{ij}u.
\end{align}
Where the arguments of $g,Y,Z$ are unchanged from \eqref{eq:g:matederive1} and \eqref{eq:g:matederive2}. 
Comparing \eqref{eq:g:matederive2} and \eqref{eq:g:matederive3} we see
\begin{equation}
  \label{eq:g:dz-sub}
  g_{,k}D_jY^k+g_zD_jZ = 0. 
\end{equation}
Using this to eliminate $D_jZ$ from \eqref{eq:g:matederive4} yields
\[ g_{ij}+g_{i,k}D_jY^k-\frac{g_{i,z}g_{,k}}{g_z}D_jY^k = D_{ij}u.\]
Recalling the definition of $E$ from condition A2 we obtain
\begin{equation}
DY(x,u,Du) = E^{-1}[D^2u-g_{xx}(x,Y(x,u,Du),Z(x,u,Du))],\label{eq:g:premate}
\end{equation}
and thus $u$ satisfies the Monge--Amp\`ere type equation \index{Monge--Amp\`ere type equation}
\begin{equation}
  \label{eq:g:mate}
  \tag{MATE}
  \det[D^2u-A(\cdot,u,Du)] = B(\cdot,u,Du),
\end{equation}
for \index[notation]{$A$, $A(x,u,p)$, $A_{ij}(x,u,p)$, \ \ augmenting matrix for MATE} \index[notation]{$B$, $B(x,u,p)$, \ \ right-hand side for MATE}
\begin{align}
  \label{eq:g:Adef} A(x,u,Du) &= g_{xx}(x,Y(x,u,Du),Z(x,u,Du)),\\
\label{eq:g:Bdef}  B(x,u,Du) &= \det E \ \psi(x,u,Du).
\end{align}

The two distinct ways of writing the equation, \eqref{eq:g:gje} and \eqref{eq:g:mate}, correspond to two distinct families of techniques for studying the equation. Indeed we use \eqref{eq:g:gje} mainly with techniques from convexity theory. On the other hand \eqref{eq:g:mate} is the correct form to apply tools from elliptic PDE, though of course this requires the equation be elliptic. So we study \eqref{eq:g:mate} only when it is elliptic, that is,  when the matrix
\begin{equation}
  \label{eq:g:elliptic}
   D^2u - g_{xx}(x,Y(x,u,Du),Z(x,u,Du))
\end{equation}
is positive definite (see \cite[Ch. 17]{GilbargTrudinger01} for the definition of ellipticity for nonlinear PDE). This condition is a generalised notion of convexity in the sense we now make clear. 

\section{Convexity theory}
\label{sec:g:convexity}

\subsection*{$g$-convex functions}\index{$g$-convex function}

The prototypical example of a generating function is $g(x,y,z) = x\cdot y - z$. In this case $Y(\cdot,u,Du) = Du$. The resulting GJE is the Monge--Amp\`ere equation
\[\det D^2u = \psi(\cdot,u,Du). \]
The ellipticity condition \eqref{eq:g:elliptic} reduces to the positivity of $D^2u,$ which implies $u$ is locally convex. It is well known that finite convex functions have a supporting hyperplane at each point on the interior of their domain. That is, if $u:\Omega \rightarrow \mathbf{R}$ is convex then for each $x_0 \in \Omega$ there is $y_0\in \mathbf{R}^n$ and $z_0 \in \mathbf{R}$ such that
\begin{align}
  \label{eq:g:convdef1} u(x_0) &= x_0\cdot y_0 - z_0,\\
  \label{eq:g:convdef2} u(x) &\geq x\cdot y_0 - z_0 \text{ for any }x\in\Omega.
\end{align}
Recognising that, in this case, the right hand side is the generating function we generalise as follows.
\begin{definition}
  A function $u:\Omega \rightarrow \mathbf{R}$ is called $g$\textit{-convex} provided for all $x_0 \in \Omega$ there exists $y_0 \in V$ and $z_0 \in \cap_{x \in \Omega}I_{x,y_0}$ such that
\begin{align}
  \label{eq:g:gconvdef1} u(x_0) &= g(x_0,y_0,z_0),\\
  \label{eq:g:gconvdef2} u(x) &\geq g(x,y_0,z_0) \text{ for any }x\in \Omega,
\end{align}
and  whenever \eqref{eq:g:gconvdef1} and \eqref{eq:g:gconvdef2} are satisfied $g(\overline{\Omega},y_0,z_0)\subset J$.

Furthermore if the inequality in \eqref{eq:g:gconvdef2} is strict for $x \neq x_0$, then $u$ is called strictly $g$-convex. \index{strict $g$-convexity}
\end{definition}
\noindent We call $g(\cdot,y_0,z_0)$ a $g$-support at $x_0$ provided \eqref{eq:g:gconvdef1}, \eqref{eq:g:gconvdef2} hold and $g(\overline{\Omega},y_0,z_0)\subset J$. More generally we call functions of the form $x \mapsto g(x,y,z)$ $g$-affine.  The containment condition $g(\overline{\Omega},y_0,z_0)\subset J$ is due to Guillen and Kitagawa \cite{GuillenKitagawa17}, and functions satisfying it are referred to as ``very nice''.  It ensures any two supports of $u$ can be vertically shifted to pass through the same point; in practice this is a useful tool. 

If $u \in C^1(\Omega)$ is $g$-convex and $g(\cdot,y_0,z_0)$ is a $g$-support at $x_0$ then  $x \mapsto u(x)-g(x,y_0,z_0)$ has a minimum at $x_0$.  Subsequently,
\[ Du(x_0) = g_x(x_0,y_0,z_0).\]
Since this equation along with \eqref{eq:g:gconvdef1} is equivalent to \eqref{eq:g:yzdef1} and \eqref{eq:g:yzdef2} we must have $y_0 = Y(x_0,u(x_0),Du(x_0))$ and $z_0 = Z(x_0,u(x_0),Du(x_0))$. If, furthermore, $u \in C^2(\Omega)$ then, again because there is a minimum at $x_0$, the matrix
\[ D^2u(x_0) - g_{xx}[x_0,Y(x_0,u(x_0),Du(x_0)),Z(x,u(x_0),Du(x_0))]  \]
is nonnegative definite. Thus \eqref{eq:g:mate} is degenerate elliptic whenever $u$ is a $C^2$ $g$-convex function.

When $u$ is $g$-convex but not differentiable at $x_0$ then \eqref{eq:g:gconvdef1} and \eqref{eq:g:gconvdef2} hold for more than one $y_0$. We use the notation \index[notation]{$Yu$, \ \ $Y$-mapping for $g$-convex function $u$}\index[notation]{$Zu$, \ \ $Z$-mapping for $g$-convex function $u$}
\begin{align}
  \label{eq:g:yudef}
  Yu(x_0) &:= \{y_0 \in V; \text{ there exists } z_0 \in \mathbf{ R}\text{ such that \eqref{eq:g:gconvdef1} and \eqref{eq:g:gconvdef2} hold}\},\\
  \label{eq:g:yudef} Zu(x_0) &:= \{z_0 \in \mathbf{R}; \text{ there exists } y_0 \in V\text{ such that \eqref{eq:g:gconvdef1} and \eqref{eq:g:gconvdef2} hold}\}.
\end{align}
When these sets are singletons we identify them with their single element. The mapping $Yu$ is referred to as the $g$-subgradient, $g$-normal mapping or $g$-exponential mapping (or just the $Y$ mapping). Finally, when $x \in \partial \Omega$,  $Yu(x)$ is defined as the set of all $y = \lim y_k$ where $y_k \in Yu(x_k)$ for $\{x_k\}_{k=1}^\infty$ a sequence in $\Omega$ with limit $x$. 
\begin{remark} \label{rem:g:semiconvex}\index{semiconvexity}
We note any $g$-convex function, $u$, is semiconvex. That is, $u+C|x|^2$ is convex for some $C$ and, in fact, $C$ may be chosen independently of $u$. To see this note
\[g_C(x,y,z):=g(x,y,z)+C|x|^2\]
is convex for $C = \sup_{\overline{\Gamma}}|g_{xx}|$. Thus if $u$ is $g$-convex, $x_0$ is given, and $g(\cdot,y_0,z_0)$ is a support at $x_0$, then by the convexity of $g_C$
\begin{align*}
  u(x)+C|x|^2 &\geq g(x,y_0,z_0)+C|x|^2\\
  &\geq Dg_C(x_0,y_0,z_0)\cdot(x-x_0)+g_C(x_0,y_0,z_0).
\end{align*}
Thus $u+C|x|^2$ has a supporting plane at every $x_0 \in \Omega$ and so is convex. Semiconvexity implies local Lipschitz continuity, differentiability almost everywhere, and the existence of second derivatives almost everywhere \cite[\S 6.3,6.4]{EvansGariepy15}.
\end{remark}

\subsection*{The dual generating function}\index{dual generating function}\index[notation]{$g^*$, $g^*(x,y,u)$, \ \ dual generating function}

If $g(x,y,z)=x\cdot y -z$, $u$ is a convex function, $x_0$ is given and $y_0 \in Yu(x_0)$ then the corresponding support is
\[ x \mapsto x\cdot y_0 - (x_0 \cdot y_0 - u(x_0)). \]
That is, the $z_0$ for which $g(\cdot,y_0,z_0)$ is a support is found by solving $u(x_0) = g(x_0,y_0,z_0)$. Now we introduce a function $g^*$ that gives the $z_0$ solving $g(x_0,y_0,z_0) = u_0$. 

\begin{definition}
  Given a generating function $g$, the \textit{dual generating function}, denoted $g^*$, is the unique function defined on
  \[ \Gamma^*:= \{ (x,y,g(x,y,z)); (x,y,z) \in \Gamma\},\]
  by the requirement it satisfy
  \begin{equation}
    \label{eq:g:gstardef1}
       g^*(x,y,g(x,y,z)) = z.
  \end{equation}
\end{definition}

\begin{remark}\label{rem:g:support}
When $u$ is $g$-convex and $y_0 \in Yu(x_0)$ there is a support of the form $g(\cdot,y_0,z_0)$. Since the support satisfies $g(x_0,y_0,z_0) = u(x_0)$ we must have $z_0 = g^*(x_0,y_0,u(x_0))$. Thus the support at $x_0$ with $g$-subgradient $y_0$ is
\[ g(\cdot,y_0,g^*(x_0,y_0,u(x_0))).\]
\end{remark}

Note $g^*$ is well defined by A2 and more explicitly $g^*(x,y,\cdot) = g(x,y,\cdot)^{-1}$. Denoting points in $\Gamma^*$ as $(x,y,u)$, we see by applying $g(x,y,\cdot)$ to \eqref{eq:g:gstardef1} that 
\begin{equation}
  \label{eq:g:gstardef2}
g(x,y,g^*(x,y,u)) = u,
\end{equation}
whenever $u=g(x,y,z)$ for $z \in I_{x,y}$.
We now explain why the A1$^*$ condition was called such. We claim it is equivalent to the following condition on $g^*$. \index{A1$^*$}\index[notation]{$\mathcal{U}^*$, \ \ one-jet (with respect to $y$ derivative) of the dual generating function}\\
\textbf{A1$^*$.}\textit{ (equivalent form)} For each $(y,z,q)$ in the set
\[ \mathcal{U}^* = \{(y,g^*(x,y,u),g_{y}^*(x,y,u)) ; (x,y,u) \in \Gamma^*\},\]
there exists a unique $(x,y,u)$ such that
\[  g^*(x,y,u) = z \quad\text{ and }\quad g_y^*(x,y,u) = q.\]

As before, we assume whenever $(y,z,q_0),(y,z,q_1) \in \mathcal{U}^*$ then for $t \in [0,1]$ we have $(y,z,tq_1+(1-t)q_0)) \in \mathcal{U}^*$.

That the above is equivalent to our earlier statement of A1$^*$ can be seen by differentiating \eqref{eq:g:gstardef2} and obtaining the identities
\begin{align}
  &g_y^* = \frac{-g_y}{g_z},  &&g^*_x =\frac{-g_x}{g_z}, &g^*_u = \frac{1}{g_z}, \label{eq:g:deriv-equiv}
\end{align}
where $g/g^*$ terms are evaluated at  $(x,y,g^*(x,y,u))/(x,y,u)$ respectively.

Analogously to the mapping $Y(x,u,p)$ we define the mappings $X(y,z,q)$. At this point we hope the following seems natural. \index[notation]{$X$, $X(y,z,q)$, \ \ $X$-mapping}

\begin{definition}\index{$g^*$-convex}
  A function $v:\Omega^*\rightarrow \mathbf{R}$ is called $g^*$\textit{-convex} provided for each $y_0 \in \Omega^*$ there exists $x_0 \in U$ and $u_0 \in J$ such that
  \begin{align}
  \label{eq:g:gstarconvdef1} v(y_0) &= g^*(x_0,y_0,u_0),\\
  \label{eq:g:gstarconvdef2} v(y) &\geq g^*(x_0,y,u_0) \text{ for any }y \in \Omega^*.
\end{align}
\end{definition}
As for $g$-convex functions, when $v$ is differentiable and $g^*(x_0,\cdot,u_0)$ is a support at $y_0$ then $x_0 = X(y_0,v(y_0),Dv(y_0))$. However when $v$ is not differentiable we denote the set of all such $x_0$ by $Xv(y_0)$. \index[notation]{$Xv$, \ \ $X$-mapping for $g^*$-convex function $v$}

\subsection*{The $g$ and $g^*$ transforms}
The functions $g^*$ and $g$ induce mappings between functions in the $x$ and $y$ variables that generalise the Legendre--Fenchel transformation. 

\begin{definition}\index{$g^*$-transform}\index{$g$-transform}
  Let $u:\Omega \rightarrow \mathbf{R}$ be $g$-convex. The $g^*$-transform of $u$ is the function $v:V\rightarrow\mathbf{R}$ defined by
  \begin{equation}
    \label{eq:g:g-trans-def}
       v(y) = \sup_{x \in \Omega}g^*(x,y,u(x)).
  \end{equation}
  Similarly if $v:\Omega^* \rightarrow \mathbf{R}$ is $g^*$-convex its $g$-transform is the function $u$ defined on $\Omega$ by
  \[ u(x) = \sup_{y \in \Omega^*}g(x,y,v(y)).\]
\end{definition}

The $g$ and $g^*$ transform serve a number of roles in the study of GJEs. One example is that they map between our original problem, and the dual problem. This is encapsulated in the following lemma which we use in Appendix \ref{cha:bound-parab-gener} where we give its proof. 

\begin{lemma}\label{lem:g:smooth_duality}
  Assume $u \in C^2(\Omega)$ is a $g$-convex solution of
  \begin{align*}
    \det DYu = \frac{f(\cdot)}{f^*(Yu(\cdot))},\\
    Yu(\Omega) = \Omega^*,
  \end{align*}
  for positive $f,f^*$. Let $v$ denote the $g^*$-transform of $u$. Then $v|_{\Omega^*}$  is a $C^2(\Omega^*)$ solution of
  \begin{align*}
    \det DXv = \frac{f^*(\cdot)}{f(Xv(\cdot))},\\
    Xv(\Omega^*) = \Omega.
  \end{align*}
\end{lemma}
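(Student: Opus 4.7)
The plan is to exhibit $v$ explicitly as the $g^*$-transform evaluated at the appropriate preimage, and then deduce the PDE via the inverse function theorem.

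First I would fix $y_0 \in \Omega^*$ and, using the boundary condition $Yu(\Omega)=\Omega^*$, select $x_0 \in \Omega$ with $Yu(x_0)=y_0$. Setting $z_0 = Z(x_0,u(x_0),Du(x_0))$, the defining equations \eqref{eq:g:yzdef1}--\eqref{eq:g:yzdef2} and the assumed $g$-convexity of $u$ identify $g(\cdot,y_0,z_0)$ as a $g$-support of $u$ at $x_0$, so $u(x)\ge g(x,y_0,z_0)$ on $\Omega$ with equality at $x_0$. Applying the function $g^*(x,y_0,\cdot)$, which by A2 together with the identity $g^*_u=1/g_z$ is strictly decreasing, I obtain $g^*(x,y_0,u(x)) \le z_0$ on $\Omega$, with equality at $x_0$. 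Hence the supremum defining $v(y_0)$ is attained at $x_0$ and $v(y_0)=z_0=g^*(x_0,y_0,u(x_0))$.

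Next I would differentiate the identity $v(y)=g^*(\cdot,y,u(\cdot))$ at the maximising point. Since $x_0$ is an interior maximiser, the envelope-type computation gives $Dv(y_0)=g^*_y(x_0,y_0,u(x_0))$. Combined with $v(y_0)=g^*(x_0,y_0,u(x_0))$, the A1$^*$ condition forces $x_0 = X(y_0,v(y_0),Dv(y_0))$, that is $x_0 \in Xv(y_0)$. This simultaneously proves $v$ is $g^*$-convex and shows that the map $y \mapsto Xv(y)$ is precisely the pointwise inverse of $Yu\colon\Omega \to \Omega^*$. In particular $Xv(\Omega^*)=\Omega$, giving the second boundary value problem for $v$, and $Xv$ is a $C^1$ diffeomorphism by the standard inverse function theorem applied to the $C^1$ map $Yu$ (whose Jacobian is nonzero because $f,f^*>0$). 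Reading off $Dv(y)=g^*_y(Xv(y),y,u(Xv(y)))$ as a composition of $C^1$ data with the $C^3$ function $g^*$ yields $v\in C^2(\Omega^*)$.

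Finally the Jacobian equation follows by change of variables: from $Xv=(Yu)^{-1}$ the inverse function theorem gives $DXv(y)=(DYu(Xv(y)))^{-1}$, so
\[
\det DXv(y) \;=\; \frac{1}{\det DYu(Xv(y))} \;=\; \frac{f^*(Yu(Xv(y)))}{f(Xv(y))} \;=\; \frac{f^*(y)}{f(Xv(y))},
\]
as required.

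The main obstacle I anticipate is the bookkeeping in step two, namely justifying that the supremum is actually attained at the point $x_0$ produced from $Yu$. The cleanest path is the monotonicity argument via $g^*_u=1/g_z<0$, which converts the $g$-support inequality for $u$ directly into a pointwise upper bound on $g^*(\cdot,y_0,u(\cdot))$; this avoids any compactness or boundary behaviour argument on $\overline{\Omega}$. Once attainment and differentiability of the sup are in hand, the identification $Xv=(Yu)^{-1}$ and the subsequent PDE derivation are essentially forced by A1$^*$ and the inverse function theorem.
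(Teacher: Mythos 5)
Your argument follows the same route as the paper's: identify $v(y_0)$ with $g^*(x_0,y_0,u(x_0))$ at a preimage $x_0$ of $y_0$ under $Yu$ (the paper cites Lemma \ref{lem:g:gstarsubdiff} here, whereas you re-derive it inline from the monotonicity of $g^*(x,y,\cdot)$), differentiate to recognise $x_0=Xv(y_0)$, and read the dual Jacobian equation off the identification $Xv=(Yu)^{-1}$ via the chain rule.

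There is one step that needs more justification than you supply. The ``envelope-type computation'' producing $Dv(y_0)=g^*_y(x_0,y_0,u(x_0))$, and the identification of $Xv$ with a pointwise inverse of $Yu$, both require that the maximiser defining $v(y_0)$ be \emph{unique}, equivalently that $Yu$ be globally one-to-one. You appeal to the inverse function theorem, but that only gives a local diffeomorphism; a surjective local diffeomorphism need not be injective. If $x_0\neq x_1$ both satisfied $Yu(x_0)=Yu(x_1)=y_0$, the supremum would be attained at both, $v$ would generically be non-differentiable at $y_0$, and the chain-rule step would collapse. What closes the gap is the strict $g$-convexity of $u$ --- which the paper's proof names but also does not prove --- or, more directly from the lemma's hypotheses, the mass balance condition together with the area formula: a surjective $C^1$ local diffeomorphism with multiplicity at least two on a set of positive $f^*$-measure would force $\int_\Omega f^*(Yu)\det DYu>\int_{\Omega^*}f^*$, contradicting $\int_\Omega f=\int_{\Omega^*}f^*$. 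Whichever route you take, make global injectivity of $Yu$ explicit before treating $Xv$ as its inverse.
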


When $u$ is merely $g$-convex we instead obtain the following.
\begin{lemma}\label{lem:g:gstarsubdiff}
  Let $u:\Omega\rightarrow \mathbf{R}$ be $g$-convex and $v$ its $g^*$-transform. If $y_0 \in Yu(x_0) $ then  $x_0 \in Xv(y_0)$ and
  \begin{equation}
    \label{eq:g:point-def}
       v(y_0) = g^*(x_0,y_0,u(x_0)).
  \end{equation}
\end{lemma}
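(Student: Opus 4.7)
The plan is to unravel the definitions, using the key fact that $g_z < 0$ implies $g^*(x,y,\cdot)$ is monotone decreasing in its third argument (since it is the pointwise inverse of $g(x,y,\cdot)$).

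First I would start from the hypothesis $y_0 \in Yu(x_0)$. By definition, there exists $z_0$ such that $u(x_0) = g(x_0,y_0,z_0)$ and $u(x) \geq g(x,y_0,z_0)$ for all $x \in \Omega$. Applying the defining identity \eqref{eq:g:gstardef1} to the equality at $x_0$ yields $z_0 = g^*(x_0,y_0,u(x_0))$. This identifies the candidate value for $v(y_0)$.

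Next I would establish the inequality $g^*(x,y_0,u(x)) \leq z_0$ for every $x \in \Omega$. Because $g_z < 0$ on $\overline{\Gamma}$, the map $z \mapsto g(x,y_0,z)$ is strictly decreasing, so its inverse $u \mapsto g^*(x,y_0,u)$ is strictly decreasing in $u$ as well. Applying this inverse to the support inequality $u(x) \geq g(x,y_0,z_0)$ gives
\[
g^*(x,y_0,u(x)) \leq g^*(x,y_0,g(x,y_0,z_0)) = z_0,
\]
with equality at $x=x_0$. Taking the supremum over $x \in \Omega$ in the definition \eqref{eq:g:g-trans-def} then forces
\[
v(y_0) = \sup_{x \in \Omega}g^*(x,y_0,u(x)) = z_0 = g^*(x_0,y_0,u(x_0)),
\]
which is the second assertion \eqref{eq:g:point-def}.

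Finally, to conclude $x_0 \in Xv(y_0)$, I would set $u_0 := u(x_0)$ and verify that $g^*(x_0,\cdot,u_0)$ is a $g^*$-support of $v$ at $y_0$. The equality $v(y_0) = g^*(x_0,y_0,u_0)$ was just shown, and for any $y$ in the relevant domain the definition of $v$ as a supremum gives
\[
v(y) = \sup_{x \in \Omega} g^*(x,y,u(x)) \geq g^*(x_0,y,u(x_0)) = g^*(x_0,y,u_0),
\]
which is the required support inequality. The only step that requires any care is the monotonicity argument in the middle; everything else is a direct unwinding of the definitions of $Yu$, $g^*$, and the $g^*$-transform. I do not anticipate a genuine obstacle, since the lemma is essentially a formal duality statement underpinned by condition A2.
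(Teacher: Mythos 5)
Your proposal is correct and follows essentially the same route as the paper: invert the $g$-support inequality via the monotone decreasing map $g^*(x,y_0,\cdot)$ to identify the supremum value as $z_0 = g^*(x_0,y_0,u(x_0))$, then read off the $g^*$-support property directly from the definition of $v$ as a supremum. You are a bit more explicit than the paper about why $g^*(x,y_0,\cdot)$ is decreasing, but the argument is otherwise the same.
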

\begin{proof}
  Since $y_0 \in Yu(x_0)$ we have for every $x \in \Omega$
  \[ u(x) \geq g(x,y_0,g^*(x_0,y_0,u(x_0))).\]
Applying  $g(x,y_0,\cdot)$ to both sides (recall $g_z < 0$), we obtain
\[ g(x,y_0,u(x)) \leq g^*(x_0,y_0,u(x_0)).\]
Thus the supremum defining $v(y_0)$ is obtained at $x=x_0$ and \eqref{eq:g:point-def} holds.  On the other for any $y \in V$ the  supremum defining $v(y)$ is taken over a set containing $g^*(x_0,\cdot,u(x_0))$ so  $v(y) \geq g^*(x_0,y,u(x_0))$ and $x_0 \in Xv(y_0)$. 
\end{proof}

A consequence, which we'll frequently employ, is the following.

\begin{lemma}\label{lem:g:sharedy}\index[notation]{$\mathcal{Z}$, \ \ set of points in $Yu(x)$ for more than one $x$}
  Let $u:\Omega\rightarrow\mathbf{R}$ be $g$-convex. The set
  \[ \mathcal{Z}:= \{ y \in V; y \in Yu(x_0) \cap Yu(x_1) \text{ for distinct }x_0,x_1 \in \Omega\}\]
  has Lebesgue measure 0. 
\end{lemma}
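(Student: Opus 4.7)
The plan is to reduce this to the well-known fact that a semiconvex function is differentiable almost everywhere, by transferring the multivalued-ness at points of $\mathcal{Z}$ from $Yu$ to $Xv$, where $v$ is the $g^*$-transform of $u$. Concretely, if $y \in \mathcal{Z}$ with $y \in Yu(x_0)\cap Yu(x_1)$ and $x_0\neq x_1$, then Lemma \ref{lem:g:gstarsubdiff} already gives $\{x_0,x_1\}\subset Xv(y)$. Hence
\[ \mathcal{Z} \subset \{ y \in V : Xv(y) \text{ is not a singleton}\},\]
and the task reduces to showing the right-hand side has Lebesgue measure zero.

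First I would check that $v$ is semiconvex on $V$, mimicking Remark \ref{rem:g:semiconvex}. By A0 the function $g^*$ is $C^4$ on $\overline{\Gamma^*}$, so there exists a constant $C$ (depending only on $\sup_{\overline{\Gamma^*}}|g^*_{yy}|$) for which $(x,y,u)\mapsto g^*(x,y,u)+C|y|^2$ is convex in $y$. Since $v(y)=\sup_{x\in\Omega}g^*(x,y,u(x))$ is a supremum of functions each of which becomes convex after adding $C|y|^2$, the function $v+C|y|^2$ is convex on $V$; in particular $v$ is locally Lipschitz and differentiable almost everywhere on $V$.

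Next I would argue that at any point $y_0$ where $v$ is differentiable, $Xv(y_0)$ consists of at most one point. Indeed, if $x \in Xv(y_0)$ then $g^*(x,\cdot,u(x))$ is a $g^*$-support at $y_0$, so the $C^1$ function $y\mapsto v(y)-g^*(x,y,u(x))$ attains its minimum at $y_0$, yielding
\[ Dv(y_0)=g^*_y(x,y_0,u(x)), \qquad g^*(x,y_0,u(x))=v(y_0).\]
By condition A1$^*$ (in the equivalent form stated after Remark \ref{rem:g:support}), the pair $(g^*(x,y_0,\cdot),g^*_y(x,y_0,\cdot))$ determines $x$ uniquely, so $Xv(y_0)$ is a singleton.

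Combining these two steps, the set of $y\in V$ where $Xv(y)$ fails to be a singleton is contained in the non-differentiability set of $v$, which by semiconvexity has Lebesgue measure zero (see e.g.\ \cite[\S 6.3,6.4]{EvansGariepy15}). Therefore $\mathcal{Z}$ has measure zero. The only place where care is needed is the semiconvexity bound: one must make sure the constant $C$ can be chosen uniformly over all ``very nice'' supports of $u$, but this follows at once from A0 since all supports lie in $\overline{\Gamma}$ and hence all dual supports lie in $\overline{\Gamma^*}$.
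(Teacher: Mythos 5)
Your argument is correct and follows essentially the same route as the paper's proof: pass to the $g^*$-transform $v$, observe via Lemma \ref{lem:g:gstarsubdiff} that a shared $y$ forces $Xv(y)$ to be multivalued, and deduce that $\mathcal{Z}$ sits inside the set where the semiconvex function $v$ fails to be differentiable. You have simply spelled out the two details the paper leaves implicit (that $v$ is semiconvex with a uniform constant, and that differentiability of $v$ plus A1$^*$ forces $Xv$ to be single-valued), both of which are exactly as you say.
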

\begin{proof}
  Take $y \in \mathcal{Z}$ and let $v$ denote the $g^*$-transform of $u$. Necessarily there is distinct $x_0,x_1$ with $y \in Yu(x_0) \cap Yu(x_1)$. Our previous lemma implies $x_0,x_1 \in Xv(y)$. This means $v$ is not differentiable at $y$. Thus $\mathcal{Z}$ is a subset of the nondifferentiable points for $v$. By semiconvexity this latter set has measure 0.
\end{proof}

Further duality properties are considered in Section \ref{sec:w:weak_exist}.

\subsection*{Domain convexity}
We conclude our presentation of $g$-convexity by generalising line segments and subsequently obtaining a generalisation of convex sets.

\begin{definition}\index{$g$-segment}\index[notation]{$x_\theta$, $\{x_\theta\}_{\theta \in [0,1]}$ \ \ a $g$-segment}
  Let $[a,b]$ be a nonempty interval. A set of points $\{x_\theta\}_{\theta \in [a,b]} \subset U$ is called a $g$-segment with respect to $y_0,z_0$ provided that either/both of the equivalent expressions
  \begin{align*}
    g_y^*(x_\theta,y_0,g(x_\theta,y_0,z_0))\\
    \text{ and } -\frac{g_y}{g_z}(x_\theta,y_0,z_0)
  \end{align*}
  are line segments in $\theta$.
\end{definition}
If we are given points $x_a,x_b$ then the $g$-segment joining $x_a$ to $x_b$ parametrised by $[a,b]$ is the set  $\{x_\theta\}_{\theta \in [a,b]}$ obtained by solving
\[\frac{g_y}{g_z}(x_\theta,y_0,z_0) = \frac{\theta-b}{a-b}\frac{g_y}{g_z}(x_a,y_0,z_0)+\frac{a-\theta}{a-b}\frac{g_y}{g_z}(x_b,y_0,z_0). \]
Let $y_0,z_0$ and two points $x_0,x_1$ be given. There is, up to reparametrisation, at most one $g$-segment with respect to $y_0,z_0$ joining $x_0$ to $x_1$. This follows from A1$^*$ and the corresponding uniqueness of the line segment joining $x_0$ to $x_1$. 

\begin{definition}\index{$g$-convex}
  A set $A \subset U$ is called $g$-convex with respect to $y_0,z_0$ provided for every $x_0,x_1 \in A$ the $g$-segment joining $x_0$ to $x_1$ with respect to $y_0,z_0$ lies in $A$. 
\end{definition}

An equivalent definition is that $g_y^*(\cdot,y_0,g(\cdot,y_0,z_0))(A)$ is convex. We have the following dual notions of $g$-segments and $g$-convexity.

\begin{definition}\index{$g^*$-segment}\index[notation]{$y_\theta$, $\{y_\theta\}_{\theta \in [0,1]}$, \ \ a $g^*$-segment}
  A set of points $\{y_\theta\}_{\theta\in[a,b]} \subset V$ is called a $g^*$-segment with respect to $x_0,u_0$ provided that the either/both of the equivalent expressions
    \begin{align*}
    g_x(x_0,y_\theta,g^*(x_0,y_\theta,u_0))\\
    \text{ and } -\frac{g^*_x}{g^*_u}(x_0,y_\theta,u_0)
  \end{align*}
  are line segments in $\theta$. 
\end{definition}
\begin{definition}\index{$g^*$-convex}
  A set $B\subset V$ is called $g^*$-convex with respect to $x_0,u_0$ provided that for every $y_0,y_1 \in B$, the $g^*$-segment joining $y_0$ to $y_1$ with respect to $x_0,u_0$ lies in $B$.
  
\end{definition}

The statement of a number of theorems are streamlined by the following definitions.

\begin{definition} \label{def:g:g-conv-wrt-fung}
  A domain $B \subset V$ is called $g^*$-convex with respect to a $g$-convex function $u:\Omega \rightarrow \mathbf{R}$ provided $B$ is $g$-convex with respect to each $x,u(x)$ for each $x \in \Omega$.

  A domain $A \subset U$ is called $g$-convex with respect to $u:\Omega \rightarrow \mathbf{R}$ provided $A$ is $g$-convex with respect to each $y,z$ for $y \in Yu(x)$, $z = g^*(x,y,u(x))$ and $x \in \Omega$.
\end{definition}

Finally we define uniform $g$ and $g^*$-convexity.  Recall a $C^2$ domain is uniformly convex if it is convex and its boundary curvatures are bounded below by a positive constant.
\begin{definition}\label{def:g:g-conv-unif}\index{uniform $g$-convexity of domains} \index{uniform $g^*$-convexity of domains}
  A domain $A \subset U$ is called uniformly $g$-convex with respect to $y,z$ provided $\frac{g_y}{g_z}(A,y,z)$ is uniformly $g$-convex. Similarly $A$ is uniformly $g$-convex with respect to a $g$-convex function $u:\Omega \rightarrow \mathbf{R}$ provided $A$ is uniformly $g$-convex with respect to each $y,z$ for $y \in Yu(x)$, $z = g^*(x,y,u(x))$ and $x \in \overline{\Omega}$. 
\end{definition}
Uniform $g^*$-convexity and uniform $g^*$-convexity with respect to $u$ are defined in the expected way based on Definitions \ref{def:g:g-conv-wrt-fung} and \ref{def:g:g-conv-unif}.

\section{The A3w condition}
\label{sec:a3w-convexity}

There is an interplay between convex functions and convex sets. For example the set where a convex function lies below a plane is convex, the subdifferential of a convex function a convex set, and, finally, a $C^2$ locally convex function ($D^2u\geq0$) on a convex domain is convex. These innocuous seeming facts have important implications for the regularity of Monge--Amp\`ere equations. In this section we introduce an assumption on $g$, called A3w, that ensures  the natural extension of these results holds in the $g$-convex setting. The condition is the following.

\textbf{A3w.}\index{A3w}\setref{}{ref:a3w} Let the matrix $A$ be as given in \eqref{eq:g:Adef}. Then for all $\xi,\eta \in \mathbf{R}^n$ satisfying $\xi\cdot\eta=0$ and $(x,u,p) \in \mathcal{U}$ there holds
\begin{align}
  \label{eq:g:a3w}
  D_{p_kp_l}A_{ij}(x,u,p)\xi_i\xi_j\eta_k\eta_l \geq 0. \tag{A3w}
\end{align}

This condition, though with a strict inequality, first appeared as the third assumption in \cite{MTW05}.  There it was used to obtain interior second derivative estimates. Its implications for the underlying convexity theory were studied later \cite{FKM13,KimMcCann10,TrudingerWang09a,Liu09,Loeper09}. We will have occasion to use A3w for vectors $\xi,\eta$ which don't necessarily satisfy $\xi\cdot\eta = 0$. In this case A3w implies
\begin{equation}
  \label{eq:g:a3w-equiv}
  D_{p_kp_l}A_{ij}(x,u,p)\xi_i\xi_j\eta_k\eta_l \geq -C |\xi||\eta|(\xi\cdot\eta),
\end{equation}
for $C = -K|D_{p_kp_l}A_{ij}(x,u,p)|$. 
To prove \eqref{eq:g:a3w-equiv} from A3w let $\xi,\eta$ be arbitrary unit vectors and apply A3w to the vectors $\xi$ and $\eta-(\xi\cdot\eta)\xi$.

We begin with two theorems which illustrate the interplay between $g$-convex functions and $g$-convex sets under A3w.

\begin{theorem}\label{thm:g:gconvsection}
  Suppose $g$ is a generating function satisfying A3w and $u$ is a $g$-convex function. Suppose $y_0 \in Yu(x_0)$ and $z_0=g^*(x_0,y_0,u(x_0))$. For $h>0$ define
  \[ S^{x_0,y_0}_h:= \{x ; u(x) < g(x,y_0,z_0-h)\},\]
  and for $h=0$ define
  \[ S^{x_0,y_0}_0:= \{x; u(x) = g(x,y_0,z_0)\}.\]
(This notation is not perfectly consistent.) For all $h\geq0$ sufficiently small the set $S^{x_0,y_0}_h$ is $g$-convex with respect to $y_0,z_0-h$.  
\end{theorem}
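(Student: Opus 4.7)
The plan is to deduce the result from Loeper's maximum principle, which is the standard tool through which A3w implements an interplay between $g$-convex functions and $g$-convex sets. Concretely, A3w is equivalent to the statement that whenever $\{x_\theta\}_{\theta\in[0,1]}$ is a $g$-segment with respect to some $\bar y, \bar z$ and $(y_1, z_1)$ is any other pair for which the relevant $g$-supports are defined on the segment, the function
\[
\theta \longmapsto g(x_\theta, y_1, z_1) - g(x_\theta, \bar y, \bar z)
\]
attains its maximum on $[0,1]$ at $\theta = 0$ or $\theta = 1$. I will invoke this as a standing fact (it appears later in Chapter~\ref{chap:g}) and build the proof on top of it. The role of the smallness of $h$ is just to ensure that for all $x \in S^{x_0,y_0}_h$ and nearby we have $(x, y_0, z_0 - h) \in \Gamma$ and that the $g$-segments with respect to $y_0, z_0 - h$ joining points of $S^{x_0,y_0}_h$ are well-defined and contained in $U$; this reduces to a continuity/openness argument given the analogous facts at $h = 0$ where the supports exist by hypothesis.

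Given $x_a, x_b \in S^{x_0,y_0}_h$, let $\{x_\theta\}_{\theta\in[0,1]}$ be the $g$-segment joining them with respect to $y_0, z_0 - h$, and fix an intermediate $x_\theta$. Since $u$ is $g$-convex, choose any $y_\theta \in Yu(x_\theta)$ and set $z_\theta = g^*(x_\theta, y_\theta, u(x_\theta))$, so that $g(\cdot, y_\theta, z_\theta)$ is a $g$-support to $u$ at $x_\theta$. In particular
\[
g(x_a, y_\theta, z_\theta) \leq u(x_a), \qquad g(x_b, y_\theta, z_\theta) \leq u(x_b),
\]
and, for $h > 0$, by the definition of $S^{x_0,y_0}_h$ the right-hand sides are strictly less than $g(x_a, y_0, z_0 - h)$ and $g(x_b, y_0, z_0 - h)$ respectively.

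Now apply Loeper's maximum principle to the $g$-segment $\{x_\theta\}$ (with $\bar y = y_0, \bar z = z_0 - h$) and to the parameters $(y_\theta, z_\theta)$: the endpoint inequalities above yield
\[
g(x_\theta, y_\theta, z_\theta) - g(x_\theta, y_0, z_0 - h) \leq 0,
\]
with strict inequality when $h > 0$. Since $u(x_\theta) = g(x_\theta, y_\theta, z_\theta)$, we conclude $u(x_\theta) < g(x_\theta, y_0, z_0 - h)$ when $h > 0$, and hence $x_\theta \in S^{x_0,y_0}_h$. For $h = 0$ the same argument gives $u(x_\theta) \leq g(x_\theta, y_0, z_0)$, while the reverse inequality $u(x_\theta) \geq g(x_\theta, y_0, z_0)$ holds automatically because $g(\cdot, y_0, z_0)$ is a $g$-support to $u$ at $x_0$; this forces $u(x_\theta) = g(x_\theta, y_0, z_0)$, placing $x_\theta$ in $S^{x_0,y_0}_0$.

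The main obstacle I anticipate is the $h = 0$ case, since Loeper's maximum principle delivers only a weak inequality and one must combine it with the global support inequality to recover the equality characterising $S^{x_0,y_0}_0$; the other technical point is the verification that, for $h$ sufficiently small, the relevant $g$-segments and supports remain inside $\Gamma$ and $U$, which is a continuity argument off the base configuration at $x_0$.
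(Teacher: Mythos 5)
Your reduction to a maximum principle along $g$-segments is in the right spirit, but the ``Loeper maximum principle'' you invoke is not valid in the generality you use it, and the gap is precisely what forces the paper's argument to take a more careful shape. For generating functions, the difference $h(\theta)=g(x_\theta,y_1,z_1)-g(x_\theta,\bar y,\bar z)$ along a $g$-segment with respect to $\bar y,\bar z$ satisfies only the differential inequality $h''\geq -K|h'|-K_0|h|$ of Lemma~\ref{lem:g:maindiffineq}, where the extra zeroth-order term $-K_0|h|$ comes from the $u$-dependence of $A$ (the $A_{ij,u}$ term). This inequality does \emph{not} imply that $h$ attains its maximum at the endpoints; it only yields the strong maximum principle for functions with the correct sign, and $h\sim\epsilon\sin(\pi\theta)$ shows that interior positive maxima are compatible with it. Moreover, the version of $g$-quasiconvexity that the paper does prove (inequality \eqref{eq:sc:gqq}, Lemma~\ref{lem:a:gqq}) carries the constraint that the two $g$-affine functions agree at the start of the segment ($z_1=g^*(x_0,y_1,u_0)$). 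Your $(y_\theta,z_\theta)$ is the support of $u$ at the intermediate point $x_\theta$, so it does not satisfy this constraint at $x_a$, and that lemma does not apply as stated.

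The paper bridges exactly this gap with a shifting argument that your proposal omits. Assuming for contradiction that $u(x_T)\geq g(x_T,y_0,z_0-h)$ at some interior $x_T$, take the support $g(\cdot,y_T,z_T)$ at $x_T$ and increase $z_T$ by $\delta\geq 0$ (shifting the $g$-affine down, since $g_z<0$) until $\sup_\theta\big[g(x_\theta,y_T,z_T+\delta)-g(x_\theta,y_0,z_0-h)\big]=0$. Because both endpoints start strictly negative and only decrease, the touching point is interior. Now the function $h_\delta(\theta):=g(x_\theta,y_T,z_T+\delta)-g(x_\theta,y_0,z_0-h)$ is $\leq 0$ with an interior zero, so $-K_0|h_\delta|=K_0h_\delta$ and the inequality becomes $h_\delta''+K|h_\delta'|-K_0 h_\delta\geq 0$, a linear elliptic inequality with nonpositive zeroth-order coefficient; the strong maximum principle then forces $h_\delta\equiv 0$, contradicting the strict inequality at the endpoints. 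Without the normalization $h_\delta\leq 0$, the sign of the zeroth-order term is wrong and the maximum principle you invoke simply is not available, which is the crux your argument is missing.
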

Most of the proof is accomplished via the following Lemma.
\begin{lemma}\label{lem:g:maindiffineq}
  Assume $x_0,x_1 \in U,$ $y_0 \in V$ and $z_0$ are given and $g$ satisfies A3w. Let $\{x_\theta\}_{\theta \in [0,1]}$ be the $g$-segment with respect to $y_0,z_0$ that joins $x_0$ to $x_1$. Let $u$ be a $C^2$ $g$-convex function and define
  \[ h(\theta) = u(x_\theta)-g(x_\theta,y_0,z_0).\]
Then
  \begin{align}
  \label{eq:g:maindiffineq} \frac{d^2}{d \theta^2}h(\theta) &\geq [D_{ij}u(x_\theta) - g_{ij}(x_\theta,Yu(x_\theta),Zu(x_\theta))]\dot{(x_\theta)}_i\dot{(x_\theta)}_j\\
          \nonumber              &\quad+D_{p_kp_l}A_{ij}\dot{(x_\theta)}_i\dot{(x_\theta)}_jD_kh(\theta)D_lh(\theta)\\
\nonumber  &\quad\quad +A_{ij,u}\dot{(x_\theta)}_i\dot{(x_\theta)}_j h(\theta)-  C|h'(\theta)|.
\end{align}
where we've used the shorthand $D_kh(\theta) = u_k(x_\theta)-g_k(x_\theta,y_0,z_0)$. Here $K$ depends on $g$ and its derivatives on $(x_\theta,y_0,z_0)$ and $\dot{x_\theta} = \frac{d}{d\theta}x_\theta$.   In particular there holds
\begin{equation}
  \label{eq:g:main-diff-useful}
\frac{d^2}{d \theta^2}h(\theta) \geq - K|h'| - K_0|h|.
\end{equation}

\end{lemma}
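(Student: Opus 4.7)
I would prove the lemma by direct differentiation, followed by two substitutions designed to introduce the $A$-matrix and the $g$-segment data. Differentiating $h(\theta) = u(x_\theta) - g(x_\theta, y_0, z_0)$ twice in $\theta$ yields
\[
h'(\theta) = D_kh(\theta)\,\dot x_k, \qquad h''(\theta) = [D_{ij}u - g_{ij}]\,\dot x_i \dot x_j + D_ih(\theta)\,\ddot x_i,
\]
with $g$ and its derivatives evaluated at $(x_\theta, y_0, z_0)$. The target inequality then reduces to two rewrites: converting $g_{ij}(x_\theta,y_0,z_0)$ to $A_{ij}(x_\theta,u,Du)$, and re-expressing $D_ih\,\ddot x_i$ in terms of $\dot x$ by means of the $g$-segment property.

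For the first rewrite, observe that $g_{ij}(x_\theta,y_0,z_0) = A_{ij}(x_\theta,\tilde u,\tilde p)$ with $\tilde u := g(x_\theta, y_0, z_0)$ and $\tilde p_i := g_i(x_\theta, y_0, z_0)$, so that $u - \tilde u = h$ and $Du - \tilde p = Dh$. A second-order Taylor expansion of $A_{ij}$ in its $(u,p)$-slot around $(u,Du)$ yields
\[
A_{ij}(x_\theta,\tilde u,\tilde p) = A_{ij}(x_\theta,u,Du) - h\,A_{ij,u} - D_kh\,A_{ij,p_k} + \tfrac12 D_kh\,D_lh\,A_{ij,p_kp_l} + O(|h|^2 + |h||Dh|).
\]
Substituting gives the desired $[D_{ij}u - A_{ij}(x,u,Du)]\dot x_i\dot x_j$ and $A_{ij,u}\dot x_i\dot x_j\,h$ terms, leaving behind spurious pieces $D_kh\,A_{ij,p_k}\dot x_i\dot x_j$ and $-\tfrac12 D_kh\,D_lh\,A_{ij,p_kp_l}\dot x_i\dot x_j$ to be reconciled with $D_ih\,\ddot x_i$.

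For the second rewrite, use that along the $g$-segment the vector $q_k(\theta) := -g_{y_k}/g_z(x_\theta,y_0,z_0)$ is affine in $\theta$. A single differentiation gives $\dot q_k = -(E_{ik}/g_z)\dot x_i$, so the equation $\ddot q_k = 0$ yields an explicit formula expressing $E_{ik}\ddot x_i$ as a quadratic form in $\dot x$ whose coefficients are built from third derivatives of $g$ at $(x_\theta,y_0,z_0)$. Using the identity $Y^k_{p_l} = (E^{-1})^{kl}$, which follows from differentiating $g_{x_i}(x,Y,Z) = p_i$ in $p$, together with the corresponding chain-rule expression for $A_{ij,p_kp_l}$ obtained by differentiating $A_{ij}(x,u,p) = g_{ij}(x,Y,Z)$ twice in $p$, the quantity $D_ih\,\ddot x_i$ can be recast and combined with the two Taylor leftovers. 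The standard MTW-style bookkeeping then shows that the net contribution is precisely $+D_kh\,D_lh\,A_{ij,p_kp_l}\dot x_i\dot x_j$, with all remaining cross-terms bounded by $C|h'|$ via $h' = D_kh\,\dot x_k$ and the a priori bound on $|Dh|$ (from Remark \ref{rem:g:semiconvex} and the $C^4$ regularity of $g$). This coefficient-matching, which crucially uses $Y_p = E^{-1}$, is the main technical obstacle.

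Finally, the consequence \eqref{eq:g:main-diff-useful} follows immediately from \eqref{eq:g:maindiffineq}: the bracket $[D_{ij}u - A_{ij}]\dot x_i\dot x_j$ is nonnegative since $u$ is a $C^2$ $g$-convex function; the $A_{ij,p_kp_l}$-term is bounded below by $-C|h'|$ via \eqref{eq:g:a3w-equiv} applied with $\xi = \dot x$, $\eta = Dh$, noting $\xi\cdot\eta = h'$ and $|\dot x|,|Dh|$ bounded; the $A_{ij,u}\dot x_i\dot x_j\,h$ term contributes the $-K_0|h|$ piece; and the $-C|h'|$ is already present on the right.
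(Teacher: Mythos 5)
Your route is essentially the paper's: compute $h''$ by differentiating twice along the $g$-segment, use the segment relation $\ddot q = 0$ (equivalently, the formula for $\frac{d^2}{d\theta^2}$ derived from the $g$-segment definition) to express $D_ih\,\ddot{x}_i$ in terms of $D_{p_k}g_{ij}\,D_kh\,\dot{x}_i\dot{x}_j$ plus an $O(|h'|)$ remainder, and then close by Taylor-expanding $A_{ij}$ in its $(u,p)$-slots. The only organizational difference is that you Taylor-expand around $(u,Du)$ first and then reconcile with $\ddot{x}$, while the paper first builds the $\frac{d^2}{d\theta^2}$ operator formula (which absorbs the linear-in-$Dh$ piece) and Taylor-expands afterward, freezing $u$ at $u_0$ before expanding in $p$; your derivation of \eqref{eq:g:main-diff-useful} from \eqref{eq:g:maindiffineq} via the degenerate-ellipticity of $g$-convexity and \eqref{eq:g:a3w-equiv} with $\xi=\dot x$, $\eta=Dh$ is also exactly the paper's step.
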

\begin{proof}
   We first compute a differentiation formula for second derivatives along $g$-segments. We suppose
  \begin{equation}
    \label{eq:g:gseglemdef} \frac{g_y}{g_z}(x_\theta,y_0,z_0) = \theta q_1+(1-\theta)q_0,
  \end{equation}
and set $q = q_1-q_0$. We begin with a formula for first derivatives. Since
  \begin{equation}
    \label{eq:g:3chain}
     \frac{d}{d\theta} = (\dot{x_\theta})_iD_{x_i}
  \end{equation}
  we need to compute $(\dot{x_\theta})_i$. Differentiate \eqref{eq:g:gseglemdef} with respect to $\theta$ and obtain  \[\left[\frac{g_{i,m}}{g_z}-\frac{g_{i,z}g_{,m}}{g_z^2}\right](\dot{x_\theta})_i = q_m,\]
  from which it follows that
  \begin{equation}
    \label{eq:g:xdot}
       (\dot{x_\theta})_i = g_zE^{m,i}q_m.
  \end{equation}
Thus \eqref{eq:g:3chain} becomes
  \begin{equation}
    \label{eq:g:3qderiv}
     \frac{d}{d\theta} = g_zE^{m,i}q_mD_{x_i}.
  \end{equation}
  Using this expression to compute second derivatives we have
  \begin{align*}
  \frac{d^2}{d\theta^2}&= g_zE^{n,j}D_{x_j}(g_zE^{m,i}D_{x_i})q_mq_n\\
                  &= g_{z}^2E^{n,j}E^{m,i}q_mq_nD_{x_ix_j} +g_z^2q_mq_nE^{n,j}D_{x_j}(E^{m,i})D_{x_i}\\
    &\quad\quad+ g_zg_{j,z}E^{n,j}E^{m,i}q_mq_nD_{x_i}.
  \end{align*}
  Equation \eqref{eq:g:xdot} and the formula for differentiating an inverse yield
  \begin{align}
      \label{eq:g:3tosub}   \frac{d^2}{d\theta^2} = (\dot{x_\theta})_i(\dot{x_\theta})_j&D_{x_ix_j} -g_z^2q_mq_nE^{n,j}E^{m,a}D_{x_j}(E_{ab})E^{b,i}D_{x_i}\\&+ g_zg_{j,z}E^{n,j}E^{m,i}q_mq_nD_{x_i}.\nonumber
  \end{align}
  Now compute
  \begin{align}
    D_{x_j}(E_{ab}) &= D_{x_j}\left[g_{a,b}-\frac{g_{a,z}g_{,b}}{g_z}\right] \nonumber\\
    &= g_{aj,b}-\frac{g_{aj,z}g_{,b}}{g_z}-\frac{g_{a,z}g_{j,b}}{g_z}+\frac{g_{j,z}g_{a,z}g_{,b}}{g_z^2}\nonumber\\
    &= -\frac{g_{a,z}}{g_z}E_{jb} +E_{l,b}D_{p_l}g_{aj} \label{eq:g:3invderiv}.
  \end{align}
  Here we've used 
  \[ E_{l,b}D_{p_l}g_{aj}(\cdot,Y(\cdot,u,p),Z(\cdot,u,p)) = g_{aj,b}-\frac{g_{aj,z}g_{,b}}{g_z}, \]
  which follows by computing $D_{p_l}g_{aj}$, differentiating \eqref{eq:g:yzdef1} with respect to $p$ to express  $Z_p$ in terms of $Y_p$, and employing
  \begin{equation}
    \label{eq:g:e-yp}
    E^{i,j} = D_{p_j}Y^i
  \end{equation}
  (which is obtained via calculations similar to those for \eqref{eq:g:premate}).

   Substitute \eqref{eq:g:3invderiv} into \eqref{eq:g:3tosub} to obtain
  \begin{align*}
   \frac{d^2}{d\theta^2} &= (\dot{x_\theta})_i(\dot{x_\theta})_j D_{x_ix_j} -g_z^2q_mq_nE^{n,j}E^{m,a}E_{l,b}D_{p_l}g_{aj}E^{b,i}D_{x_i}\\
               &\quad\quad+[g_zg_{a,z}E^{n,j}E^{m,a}E_{j,b}E^{b,i}D_{x_i+}g_zg_{j,z}E^{n,j}E^{m,i}D_{x_i}]q_mq_n\\
    &= (\dot{x_\theta})_i(\dot{x_\theta})_j D_{x_ix_j} -g_z^2q_mq_nE^{n,j}E^{m,a}D_{p_i}g_{aj}D_{x_i}\\
               &\quad\quad+[g_zg_{a,z}E^{n,i}E^{m,a}D_{x_i+}g_zg_{j,z}E^{n,j}E^{m,i}D_{x_i}]q_mq_n\\
    &= (\dot{x_\theta})_i(\dot{x_\theta})_j(D_{x_i,x_j}-D_{p_k}g_{ij}D_{x_k})\\
               &\quad\quad+g_{j,z}\Big(E^{m,j}q_m\frac{d}{d\theta}+E^{n,j}q_n\frac{d}{d\theta}\Big),
  \end{align*}
  where in the last equality we swapped  the dummy indices $i$ and $a$ on the second term to allow us to collect like terms and also used \eqref{eq:g:3qderiv}.

  Now let's use this identity to compute $h''(\theta)$. We have 
  \begin{align*}
    h''(\theta) &= \big[D_{ij}u(x_{\theta})-g_{ij}(x_{\theta},y_0,z_0)\\
           &\quad-D_{p_k}g_{ij}(x_{\theta},y_0,z_0)(D_ku(x_\theta)-D_kg(x_{\theta},y_0,z_0))\big](\dot{x_\theta})_i(\dot{x_\theta})_j \\
           &\quad\quad+g_{j,z}(E^{m,j}q_mh'+E^{n,j}q_nh'). 
  \end{align*}
  Terms on the final line are bounded below by $-K|h'(\theta)|$. Adding and subtracting $g_{ij}(x_{\theta},y,z)$ for $y=Yu(x_\theta),z=Zu(x_\theta)$ yields
        \begin{align}
         \label{eq:g:need-app} h''(\theta) &\geq  \big[D_{ij}u(x_{\theta}) -g_{ij}(x_{\theta},y,z)\big](\dot{x_\theta})_i(\dot{x_\theta})_j +  [g_{ij}(x_{\theta},y,z)-g_{ij}(x_{\theta},y_0,z_0)\\
      \nonumber           &\quad-D_{p_k}g_{ij}(x_{\theta},y_0,z_0)(D_ku(x_\theta)-D_kg(x_{\theta},y_0,z_0))\big](\dot{x_\theta})_i(\dot{x_\theta})_j \\
      \nonumber      &\quad\quad-K|h'(\theta)|.
        \end{align}
        Set $u_0 = g(x_{\theta},y_0,z_0), \ u_1=u(x_\theta)$, $p_0 = g_x(x_{\theta},y_0,z_0),$ and $ p_1 = Du(x_\theta)$. Then rewriting in terms of the matrix $A$ we have
\begin{align*}
  h''(\theta) &\geq  \big[D_{ij}u(x_{\theta}) -g_{ij}(x_{\theta},y,z)\big](\dot{x_\theta})_i(\dot{x_\theta})_j  +\big[A_{ij}(x_{\theta},u_1,p_1)\\
         &\quad-A_{ij}(x_{\theta},u_0,p_0)-D_{p_k}A_{ij}(x_{\theta},u_0,p_0)(p_1-p_0)\big](\dot{x_\theta})_i(\dot{x_\theta})_j -K|h'(\theta)|\\
         &= \big[D_{ij}u(x_{\theta}) -g_{ij}(x_{\theta},y,z)\big](\dot{x_\theta})_i(\dot{x_\theta})_j +\big[A_{ij}(x_{\theta},u_0,p_1) \\
         &\quad-A_{ij}(x_{\theta},u_0,p_0)-D_{p_k}A_{ij}(x_{\theta},u_0,p_0)(p_1-p_0)\big](\dot{x_\theta})_i(\dot{x_\theta})_j\\
         &\quad\quad+ A_{ij,u}(x_\theta,u_\tau,p)(u_1-u_0)(\dot{x_\theta})_i(\dot{x_\theta})_j-K|h'(\theta)|
        \end{align*} 
        Here $u_\tau = \tau u +(1-\tau)u_0$ for some $\tau \in [0,1]$ results from a Taylor series. Applying another Taylor series for $f(t):= A_{ij}(x_{\theta},u_0,t p_1+(1-t)p_0)$, we obtain
        \begin{align*}
           h''(\theta) &\geq  \big[D_{ij}u(x_{\theta}) -g_{ij}(x_{\theta},y,z)\big](\dot{x_\theta})_i(\dot{x_\theta})_j  + A_{ij,u}(u_1-u_0)(\dot{x_\theta})_i(\dot{x_\theta})_j\\ &-K|h'(\theta)| + D_{p_kp_l}A_{ij}(x_{\theta},u_0,p_t)(\dot{x_\theta})_i(\dot{x_\theta})_j(p_1-p_0)_k(p_1-p_0)_l. 
        \end{align*}
        This is \eqref{eq:g:maindiffineq}. Inequality \eqref{eq:g:main-diff-useful} follows from non-negativity of $D_{ij}u(x_{\theta}) -g_{ij}(x_{\theta},y,z)$ and \eqref{eq:g:a3w-equiv}.
      \end{proof}

\begin{proof}[Proof: (Theorem \ref{thm:g:gconvsection})]
  The proof of $g$-convexity of sections now follows.
  We fix our $g$-convex function $u$ and any two $x_0,x_1$ in $S^{x_0,y_0}_h$, where, for now, we assume $h>0$. Let $\{x_\theta\}_{\theta \in [0,1]}$ denote the $g$-segment joining $x_0$ to $x_1$ with respect to $y_0,z_0-h$. Suppose, for a contradiction, that at some  $x_T$ on the $g$-segment 
  \begin{equation}
  u(x_T) \geq g(x_T,y_0,z_0-h)\label{eq:g:tocont}.
\end{equation}
There exists a support $g(\cdot,y_T,z_T)$ for $u$ at $x_T$ which, by virtue of being less than $u$, satisfies
\[ g(x_0,y_T,z_T) \leq u(x_0) < g(x_0,y_0,z_0-h),\]
and similarly at $x_1.$ Choose $\delta$ so large that $\sup_{\theta}g(x_\theta,y_T,z_T+\delta) - g(x_0,y_0,z_0-h) = 0$ (that is shift the $g$-affine function down until it just contacts $g(x_0,y_0,z_0-h)$). This step relies on A1 and that $g(\Omega,y_0,z_0-h) \subset J$ for $h$ sufficiently small.  Applying Lemma \ref{lem:g:maindiffineq} with $u = g(\cdot,y_T,z_T+\delta)$ and $z_0$ replaced by $z_0+h$ we obtain that
\[ h(\theta): =g(x_\theta,y_T,z_T+\delta) - g(x_0,y_0,z_0-h) \]
satisfies $h(\theta)\leq0$ for $\theta \in [0,1]$ and $h(\theta_0) = 0$ at some $\theta_0 \in (0,1)$. Since Lemma \ref{lem:g:maindiffineq} implies this function satisfies
\[ h'' \geq -K|h'| -K_0|h|,\]
we contradict the maximum principle (in say the form \cite[Theorem 2.10]{HanLin97}). When $h=0$ for a contradiction we instead assume \eqref{eq:g:tocont} is a strict inequality. This ensures when we shift down it is by positive $\delta$, giving again strict inequality at the end points and proving the result by the same contradiction. 
\end{proof}

\begin{theorem}\label{thm:g:gconvymapping}
  Let $g$ be a generating function satisfying A3w. Let $u \in C^0(\Omega)$ be a $g$-convex function. For each $x \in \Omega$ the set $Yu(x)$ is $g^*$-convex with respect to $x,u(x)$. 
\end{theorem}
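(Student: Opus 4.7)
The plan is to argue by contradiction using a one-dimensional maximum principle in the $g^*$-segment parameter, in direct analogy with the proof of Theorem \ref{thm:g:gconvsection}. Fix $x_0 \in \Omega$ and $y_0, y_1 \in Yu(x_0)$, let $\{y_\theta\}_{\theta \in [0,1]}$ denote the $g^*$-segment joining $y_0$ to $y_1$ with respect to $x_0, u(x_0)$, and put $z_\theta := g^*(x_0, y_\theta, u(x_0))$. By definition of $g^*$ we have $g(x_0, y_\theta, z_\theta) = u(x_0)$ for every $\theta$, so to establish $y_\theta \in Yu(x_0)$ it suffices to check the support inequality
\[ g(x, y_\theta, z_\theta) \leq u(x) \qquad \text{for all } x \in \Omega,\ \theta \in [0,1]. \]

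Suppose this fails: there exist $x^* \in \Omega$ and $\theta^* \in (0,1)$ with $g(x^*, y_{\theta^*}, z_{\theta^*}) > u(x^*)$. The function
\[ h(\theta) := g(x^*, y_\theta, z_\theta) - u(x^*),\]
which is smooth in $\theta$ since $g$ and the $g^*$-segment are, satisfies $h(0), h(1) \leq 0$ (because $y_0, y_1 \in Yu(x_0)$) and $h(\theta^*) > 0$; hence $h$ attains a strictly positive interior maximum on $[0,1]$. The desired contradiction then follows from the strong maximum principle \cite[Theorem 2.10]{HanLin97}, exactly as in the final step of Theorem \ref{thm:g:gconvsection}, once we verify
\[ h''(\theta) \geq -K\,|h'(\theta)| - K_0\,|h(\theta)|\]
for constants $K, K_0$ depending only on $g$ and the data.

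The only substantial step is thus the derivation of this differential inequality, and I would do it in one of two equivalent ways. The direct route is to compute $h''(\theta)$ by mirroring the calculation in Lemma \ref{lem:g:maindiffineq} but with the roles of $x$ and $(y,z)$ interchanged: differentiating the $g^*$-segment equation together with $z_\theta = g^*(x_0, y_\theta, u(x_0))$ expresses $\dot y_\theta$ and $\dot z_\theta$ in terms of a dual matrix built from $g^*$, and after regrouping one obtains a curvature term of the form $D_{q_k q_l} A^*_{ij}\,\dot y_i\,\dot y_j\, D_k h\, D_l h$, where $A^*$ is the analog of $A$ built from the dual generating function. By A3w and its symmetric reformulation in terms of $g^*$ (obtained by differentiating the identities \eqref{eq:g:deriv-equiv}), this term carries the correct sign and produces the required inequality. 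A conceptually cleaner route, which I would prefer, is to pass to the $g^*$-transform $v$ of $u$: by Lemma \ref{lem:g:gstarsubdiff}, $y \in Yu(x_0)$ exactly when $v(y) = g^*(x_0, y, u(x_0))$, so $Yu(x_0)$ is the contact set of $v$ with the $g^*$-affine function $g^*(x_0, \cdot, u(x_0))$. The theorem then becomes the $h = 0$ instance of the $g^*$-dual of Theorem \ref{thm:g:gconvsection} applied to $v$, and its proof proceeds verbatim once the A3w/A3w${}^*$ symmetry is in hand. The main obstacle in either approach is precisely this A3w symmetry computation; everything else is the maximum principle machinery already used for Theorem \ref{thm:g:gconvsection}.
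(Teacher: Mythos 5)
Your preferred route is precisely the paper's proof: pass to the $g^*$-transform $v$, identify $Yu(x_0)$ as the contact set $\{y : v(y) = g^*(x_0,y,u(x_0))\}$, and invoke the $h=0$ case of the dual of Theorem \ref{thm:g:gconvsection}, with the A3w $\Rightarrow$ A3w$^*$ symmetry (the paper's Lemma \ref{lem:a:a3w-star}) supplying the differential inequality. One small slip worth fixing: you cite Lemma \ref{lem:g:gstarsubdiff} for the equivalence ``$y \in Yu(x_0)$ exactly when $v(y) = g^*(x_0,y,u(x_0))$,'' but that lemma gives only the forward implication; the converse is a short separate step --- from the defining supremum one has $g^*(x,y,u(x)) \leq g^*(x_0,y,u(x_0))$ for all $x$, and applying $g(x,y,\cdot)$ (which is decreasing) yields the support inequality $u(x) \geq g(x,y,g^*(x_0,y,u(x_0)))$.
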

\begin{proof}
  We use that Theorem \ref{thm:g:gconvsection} holds in the dual form. This follows because  A3w implies A3w$^*$ \index{A3w$^*$}, that is, the A3w condition for $g^*$ (see Lemma \ref{lem:a:a3w-star} for a proof). Let $v$ be the $g^*$ transform of $u$. We claim
  \begin{equation}
    \label{eq:g:yu-claim}
       Yu(x_0) := \{y ; v(y) = g^*(x_0,y,u(x_0))\},
  \end{equation}
  the latter set being $g^*$-convex with respect to $x_0,u(x_0)$ yields the result.

  To show \eqref{eq:g:yu-claim} take $y_0 \in Yu(x_0)$ and note Lemma \ref{lem:g:gstarsubdiff} implies $v(y_0) = g^*(x_0,y_0,u(x_0))$. If instead we take $y_0$ satisfying $v(y_0) = g^*(x_0,y_0,u(x_0))$ then the definition of $v$ implies for any $x \in \Omega$
  \[ g^*(x,y_0,u(x)) \leq g^*(x_0,y_0,u(x_0)).\]
  Applying $g(x,y_0,\cdot)$ to both sides we have
  \[ u(x) \geq g(x,y_0,g^*(x_0,y_0,u(x_0)),\]
  so $y_0 \in Yu(x_0)$. 
\end{proof}

The following result, due to Loeper \cite{Loeper09}, played a foundational role in understanding the A3w condition. We derive it from Theorem \ref{thm:g:gconvymapping}, however we could have just as easily proved Corollary \ref{cor:g:loeper} first and then derived Theorem \ref{thm:g:gconvymapping}. 
\begin{corollary}[Loeper's Maximum Principle]\index{Loeper's maximum principle}\label{cor:g:loeper}
  Suppose $x_0 \in \Omega$, $y_0,y_1 \in \Omega^*$ and $u_0 \in J$ are given. Let $\{y_\theta\}_{\theta \in [0,1]}$ denote the $g^*$-segment joining $y_0$ to $y_1$ with respect to $x_0,u_0$ and set $z_\theta = g^*(x_0,y_\theta,u(x_0))$. Then for any $x \in \Omega$
\[  g(x,y_\theta,z_\theta) \leq \max\{g(x,y_0,z_0),g(x,y_1,z_1)\} .\]
\end{corollary}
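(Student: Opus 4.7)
The plan is to deduce the maximum principle from Theorem \ref{thm:g:gconvymapping} by constructing a specific $g$-convex function whose $Y$-image at $x_0$ contains both $y_0$ and $y_1$, and then exploiting the $g^*$-convexity of that image.

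Concretely, I would first define
\[ u(x) := \max\{g(x,y_0,z_0),\ g(x,y_1,z_1)\}, \]
where $z_0, z_1$ are as in the statement. Using \eqref{eq:g:gstardef2}, both $g(x_0,y_0,z_0) = u_0$ and $g(x_0,y_1,z_1) = u_0$, so $u(x_0) = u_0$ and likewise $g(x_0,y_\theta,z_\theta) = u_0$ for every $\theta \in [0,1]$. Next I would check that $u$ is $g$-convex: the inequalities $u(x) \geq g(x,y_i,z_i)$ hold by construction, with equality at $x_0$ for $i = 0,1$, so each $g(\cdot,y_i,z_i)$ is a $g$-support at $x_0$; at any other point $x$, whichever $g$-affine function achieves the maximum at $x$ serves as a $g$-support there. (The containment $g(\overline{\Omega},y_i,z_i)\subset J$ required by the ``very nice'' part of the $g$-convexity definition is inherited from the fact that $y_i \in \Omega^*\subset V$ and $u_0 \in J$ via the standing hypotheses on $\Gamma$ and $J$, as set up in Chapter \ref{chap:g}.)

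From step two it follows directly that $y_0, y_1 \in Yu(x_0)$. Theorem \ref{thm:g:gconvymapping} then tells us that $Yu(x_0)$ is $g^*$-convex with respect to $x_0,u(x_0) = x_0, u_0$. Since $\{y_\theta\}_{\theta \in [0,1]}$ is precisely the $g^*$-segment with respect to $(x_0,u_0)$ joining $y_0$ to $y_1$, the $g^*$-convexity forces $y_\theta \in Yu(x_0)$ for every $\theta$. By Remark \ref{rem:g:support}, the unique $g$-support at $x_0$ with $g$-subgradient $y_\theta$ is $g(\cdot,y_\theta,g^*(x_0,y_\theta,u(x_0))) = g(\cdot,y_\theta,z_\theta)$, and hence for every $x \in \Omega$
\[ g(x,y_\theta,z_\theta) \leq u(x) = \max\{g(x,y_0,z_0),g(x,y_1,z_1)\}, \]
which is the desired inequality.

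The potential obstacle is the bookkeeping in verifying that $u$ qualifies as a $g$-convex function in the strong sense (the ``very nice'' containment condition) used throughout the chapter, since Theorem \ref{thm:g:gconvymapping} is stated for this class. I expect this to be a short check using the assumption that $J$ is an open interval hit by $g(x,y,I_{x,y})$ for all $x \in \overline{U}, y \in \overline{V}$, which guarantees the supports at $x_0$ can be legitimately used across all of $\Omega$; everything else in the proof is formal and driven by Theorem \ref{thm:g:gconvymapping} together with the duality identity \eqref{eq:g:gstardef2}.
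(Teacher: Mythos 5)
Your proposal is correct and matches the paper's own proof exactly: both define $u := \max\{g(\cdot,y_0,z_0),g(\cdot,y_1,z_1)\}$, observe $y_0,y_1 \in Yu(x_0)$, and invoke Theorem \ref{thm:g:gconvymapping} to place $y_\theta$ in $Yu(x_0)$. The extra bookkeeping you include (verifying the ``very nice'' containment and identifying $z_\theta$ via Remark \ref{rem:g:support}) is sound and simply fills in steps the paper leaves implicit.
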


\begin{proof}
  Apply Theorem \ref{thm:g:gconvymapping} to the $g$-convex function
  \[ u(x):= \max\{g(x,y_0,z_0),g(x,y_1,z_1)\}. \]
  That is, we note $y_0,y_1 \in Yu(x_0)$ and by $g^*$-convexity of this set so too is $y_\theta$. 
\end{proof}

Our next theorems yield local characterisations of $g$-convexity under A3w. 

  \begin{theorem}\label{thm:g:locglob}
    Suppose $u \in C^2(\Omega)$ is locally $g$-convex on $\Omega$, that is, for each $x \in \Omega$
    \[D^2u(x) \geq g_{xx}(x,Yu(x),Zu(x)).\]
    If $\Omega$ is $g$-convex with respect to $(y,z)$ for all $y \in Yu(\Omega)$ and $z \in g^*(\cdot,y,u)(\Omega)$ then $u$ is $g$-convex.
\end{theorem}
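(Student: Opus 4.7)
My plan is to show that, for each $x_0 \in \Omega$, the $g$-affine function $g(\cdot, y_0, z_0)$ with
$y_0 := Y(x_0, u(x_0), Du(x_0))$ and $z_0 := Z(x_0, u(x_0), Du(x_0))$
is a global $g$-support of $u$ at $x_0$. By the defining relations \eqref{eq:g:yzdef1}--\eqref{eq:g:yzdef2} this choice already gives $u(x_0) = g(x_0, y_0, z_0)$ and $Du(x_0) = g_x(x_0, y_0, z_0)$, so only the inequality $u \geq g(\cdot, y_0, z_0)$ on $\Omega$ requires proof; the auxiliary condition $g(\overline{\Omega}, y_0, z_0) \subset J$ needed for our stronger notion of $g$-convexity follows from A0 together with the bounds available on $u$ and $Du$.

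Given an arbitrary $x_1 \in \Omega$, I would connect $x_0$ to $x_1$ by the $g$-segment $\{x_\theta\}_{\theta \in [0,1]}$ with respect to $(y_0, z_0)$. Because $y_0 \in Yu(\Omega)$ and $z_0 = g^*(x_0, y_0, u(x_0)) \in g^*(\cdot, y_0, u)(\Omega)$, the hypothesis on $\Omega$ guarantees this $g$-segment lies in $\Omega$. Setting $h(\theta) := u(x_\theta) - g(x_\theta, y_0, z_0)$, the first-order contact gives $h(0) = 0$ and $h'(0) = 0$. Applying Lemma \ref{lem:g:maindiffineq}, the first term of \eqref{eq:g:maindiffineq} is non-negative by the local $g$-convexity hypothesis $D^2u \geq g_{xx}(\cdot, Yu, Zu)$; the A3w bound \eqref{eq:g:a3w-equiv} with $\xi = \dot{x}_\theta$ and $\eta = Dh$ (so that $\xi \cdot \eta = h'$ and $|Dh|$ is uniformly bounded in terms of the $C^2$ data) absorbs the second term into $-C|h'|$; and the zeroth-order term is $O(|h|)$. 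Together these yield the one-sided ODE inequality
\[ h''(\theta) \geq -K|h'(\theta)| - K_0|h(\theta)|, \qquad \theta \in [0,1]. \]

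The main obstacle is to pass from this inequality together with the vanishing Cauchy data to the pointwise conclusion $h \geq 0$ on $[0,1]$, since the one-sided nature of the bound precludes direct application of uniqueness. I would handle this by a shifting and maximum principle argument patterned on the proof of Theorem \ref{thm:g:gconvsection}. Suppose for contradiction $h$ is negative somewhere. Using $g_z < 0$, introduce the comparison family $k_\lambda(\theta) := u(x_\theta) - g(x_\theta, y_0, z_0 - \lambda)$ for $\lambda \geq 0$; any positive $\lambda$ makes both endpoint values strictly negative. Let $\lambda_0 \geq 0$ be the smallest parameter for which $\sup_{[0,1]} k_{\lambda_0} \leq 0$. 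If $\lambda_0 > 0$ this supremum is attained at an interior point of the segment, and in the region $\{k_{\lambda_0} \leq 0\}$ the analogous inequality derived via the same computation as in Lemma \ref{lem:g:maindiffineq} reads $k_{\lambda_0}'' + K|k_{\lambda_0}'| - K_0\,k_{\lambda_0} \geq 0$, an operator with non-positive zeroth-order coefficient to which the strong maximum principle (e.g.\ Han--Lin Theorem 2.10) applies, forcing $k_{\lambda_0} \equiv 0$ on $[0,1]$ and contradicting the strict endpoint negativity. If instead $\lambda_0 = 0$, then $h \leq 0$ on $[0,1]$ with $h(0) = 0$ the maximum, and Hopf's boundary point lemma forces $h'(0) < 0$ unless $h \equiv 0$, both of which are incompatible with $h'(0) = 0$ together with $h$ being negative somewhere.

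Since $x_1 \in \Omega$ was arbitrary, the support inequality $u \geq g(\cdot, y_0, z_0)$ holds throughout $\Omega$; letting $x_0$ range over $\Omega$ establishes the $g$-convexity of $u$.
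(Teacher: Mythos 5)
Your approach follows the same broad outline as the paper's---reduce to a one-dimensional ODE inequality along $g$-segments via Lemma \ref{lem:g:maindiffineq}, then conclude $h \geq 0$---but you replace the paper's auxiliary step with Hopf's lemma. In the paper, the initial positivity $h \geq 0$ near $\theta = 0$ is obtained by perturbing $u$ to $\tilde{u} := u + \epsilon |x-x_0|^2/2$, which is strictly locally $g$-convex on a neighbourhood independent of $\epsilon$, then sending $\epsilon \to 0$. You instead exploit the first-order contact $h'(0) = 0$ directly, noting that if $h \leq 0$ on $[0,1]$ with $h(1)<0$, Hopf's boundary point lemma at $\theta=0$ forces $h'(0) < 0$, contradicting the tangency. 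This is a cleaner substitute for the same obstacle, and the rest of your shifting argument parallels the paper's.

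Two points need repair. First, for the differential inequality from Lemma \ref{lem:g:maindiffineq} to hold for $k_\lambda$, the underlying curve must be the $g$-segment with respect to $(y_0, z_0 - \lambda)$, not the fixed segment $\{x_\theta\}$ taken with respect to $(y_0, z_0)$: your definition $k_\lambda(\theta) = u(x_\theta) - g(x_\theta, y_0, z_0-\lambda)$ runs along a path that is not a $g$-segment for the affine function being subtracted, so the computation in the lemma does not deliver the claimed inequality. The paper handles this explicitly by writing $h_\delta(\theta) = u(x_\theta^\delta) - g(x_\theta^\delta, y_0, z_0 + \delta)$ with $x_\theta^\delta$ the segment re-taken with respect to the shifted parameter. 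Second, the assertion that ``any positive $\lambda$ makes both endpoint values strictly negative'' requires $h(1) \leq 0$: since $k_\lambda(1) = h(1) - [\,g(x_1,y_0,z_0-\lambda) - g(x_1,y_0,z_0)\,]$ and the bracketed term is positive, $k_\lambda(1)<0$ for small $\lambda>0$ only if $h(1)\leq 0$. Under your contradiction hypothesis ``$h$ is negative somewhere'' one could have $h(1)>0$, in which case the supremum of $k_{\lambda_0}$ could sit at $\theta=1$ and the interior strong maximum principle gives nothing. The natural repair is to take as the contradiction hypothesis exactly the negation of the goal at $x_1$---namely $h(1)<0$---which also makes the endpoint-negativity claim correct; alternatively, truncate the segment at a point where $h$ is negative.
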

\begin{proof}
 Fix $x_0 \in \Omega$. We show $g(\cdot,Yu(x_0),Zu(x_0))$ is a $g$-support at $x_0$. Let $x_1 \in \Omega$ be arbitrary and $\{x_\theta\}_{\theta \in [0,1]}$ be the $g$-segment with respect to $y_0 = Yu(x_0)$ and $z_0 = Zu(x_0)$ joining $x_0$ to $x_1$. Again, the function $h(\theta):=u(x_\theta)-g(x_\theta,y_0,z_0)$ satisfies
  \begin{equation}
    \label{eq:g:quick_diff_ineq}
       h''(\theta) \geq -K|h'(\theta)|-K_0|h(\theta)|.
  \end{equation}
  We assume initially  for $\theta$ in some small neighbourhood $(0,\kappa)$ that $h(\theta) \geq 0$. If $h(\theta) \leq 0$ in $[0,1]$ then by the maximum principle \cite[Theorem 2.10]{HanLin97} $h(\theta) \equiv 0$ and we are done. Otherwise we suppose $h(\theta) > 0$ for some $\theta \in (0,1)$ yet $h(1) < 0$. Set
  \[ h_\delta(\theta) := u(x_\theta^\delta)-g(x_\theta^\delta,y_0,z_0+\delta),\]
  where $x_\theta^\delta$ is the $g$-segment with respect to $y_0,z_0+\delta$. Because, $h_0(0) = 0, h_0(1) < 0$ for $\delta \geq 0$ large enough $h_\delta$ attains a zero interior maximum, again contradicting the maximum principle.
  
  Now we show $h(\theta) \geq 0$ on a small neighbourhood $(0,\kappa)$. To this end consider
  \[ \tilde{u}(x):= u(x)+\epsilon|x-x_0|^2/2.\]
  Now $\tilde{u} \geq g(\cdot,y_0,z_0)$ on a neighbourhood of $x_0$ (this neighbourhood depends on $\epsilon$). However, using a Taylor Series
  \begin{align*}
    D^2\tilde{u}(x) - A_{ij}(x,\tilde{u},D\tilde{u}) &= D^2u - A_{ij}(\cdot,u,Du) \\&\quad\quad+ \epsilon[I - A_{ij,u}|x-x_0|^2+A_{ij,p_k}\cdot (x-x_0)].
  \end{align*}
  The final matrix is positive definite on a sufficiently small neighbourhood which depends on $|A_{ij,u}|,|A_{ij,p_k}|$ but is independent of $\epsilon$. Thus $D^2\tilde{u}(x) - A_{ij}(x,\tilde{u},D\tilde{u}) \geq 0$ on a neighbourhood independent of $\epsilon$. Subsequently $\tilde{h}(\theta) := \tilde{u}(x_\theta)- g(x_\theta,y_0,z_0)$ is initially greater than 0 on a small neighbourhood $(0,\tau)$ for $\tau$ depending on $\epsilon$, and satisfies \ref{eq:g:quick_diff_ineq} on $[0,\kappa]$ for $\kappa$ independent of $\epsilon$. The maximum principle, used as above, implies $\tilde{h} \geq 0$ on $[0,\kappa]$ and sending $\epsilon\rightarrow0$ we have the same for $h$, as was required.    
\end{proof}

The previous theorem says that $g$-convexity can be checked locally. Thus the following theorem, which relates $Yu(x)$ to a local quantity --- the subdifferential --- should come as no surprise. We recall the subdifferential is defined for any semi convex function $u$ by
\[ \partial u(x_0) = \{ p \in \mathbf{R}^n; u(x) \geq u(x_0)+p\cdot(x-x_0)+o(|x-x_0|)\}.\]
We  use that if $p$ is an extreme point\footnote{An extreme point of a convex set $A \subset \mathbf{R}^n$ is a point $x \in \partial A$ such that there is a plane $P$ with $P \cap \partial A = \{x_0\}$.} of the (classically) convex set $\partial u(x_0)$ there is a sequence $x_k \rightarrow x_0$ with $u$  differentiable at $x_k $ and $Du(x_k) \rightarrow p$. \index{subdifferential}\index{extreme point} 
  \begin{theorem}\label{thm:g:y-sub}
    Suppose $g$ is a generating function satisfying A3w and $u:\Omega \rightarrow \mathbf{R}$ is a  $g$-convex function. Then for every $x \in \Omega$
    \[ Yu(x) = Y(x,u(x),\partial u(x)).\]
  \end{theorem}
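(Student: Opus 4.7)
The plan is to prove the two inclusions separately. The easy direction $Yu(x) \subseteq Y(x,u(x),\partial u(x))$ is essentially immediate from the definitions: if $y_0 \in Yu(x)$ and $z_0 = g^*(x,y_0,u(x))$ then $g(\cdot,y_0,z_0)$ is a $g$-support at $x$, so $u - g(\cdot,y_0,z_0)$ attains its minimum at $x$, which forces $p := g_x(x,y_0,z_0) \in \partial u(x)$. The defining relations \eqref{eq:g:yzdef1}--\eqref{eq:g:yzdef2} then identify $Y(x,u(x),p) = y_0$.

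The reverse inclusion is the substantive part, and I would handle the extreme points of $\partial u(x)$ first, then extend via Carath\'eodory together with the $g^*$-convexity from Theorem \ref{thm:g:gconvymapping}. Fix an extreme point $p$ of $\partial u(x)$; the standard fact recalled just before the theorem yields a sequence $x_k \to x$ at which $u$ is differentiable with $Du(x_k) \to p$. At each such $x_k$ the unique $g$-support of $u$ is $g(\cdot,y_k,z_k)$ with $y_k = Y(x_k,u(x_k),Du(x_k))$ and $z_k = Z(x_k,u(x_k),Du(x_k))$, and by definition $u(\tilde x) \geq g(\tilde x,y_k,z_k)$ for every $\tilde x \in \Omega$. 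Continuity of $Y$ and $Z$ gives $y_k \to y_0 := Y(x,u(x),p)$ and $z_k \to z_0 := Z(x,u(x),p)$, and passing to the limit in the pointwise inequality (and checking that the containment $g(\overline{\Omega},y_k,z_k) \subset J$ passes to the limit) shows that $g(\cdot,y_0,z_0)$ is a $g$-support at $x$, i.e.\ $y_0 \in Yu(x)$.

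For a general $p \in \partial u(x)$ I would invoke Carath\'eodory's theorem: since $\partial u(x) \subset \mathbf{R}^n$ is classically convex, we may write $p = \sum_{i=0}^{n} \lambda_i p_i$ for extreme points $p_i$. Each $y_i := Y(x,u(x),p_i)$ belongs to $Yu(x)$ by the previous step, and Theorem \ref{thm:g:gconvymapping} says $Yu(x)$ is $g^*$-convex with respect to $x,u(x)$. The key observation is that, by the definition of $g^*$-segment combined with \eqref{eq:g:yzdef1}--\eqref{eq:g:yzdef2}, straight segments in the $p$-variable at fixed $(x,u(x))$ correspond precisely to $g^*$-segments in the $y$-variable with respect to $x,u(x)$. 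Iterating pairwise $g^*$-convexity across the Carath\'eodory decomposition then places $Y(x,u(x),p)$ inside $Yu(x)$, closing the inclusion.

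The main obstacle will be the limit step for extreme points: verifying that every ingredient of being a $g$-support --- the pointwise inequality, equality at $x$, and the ``very nice'' condition $g(\overline{\Omega},y_0,z_0) \subset J$ --- survives the limit, and that $(x,u(x),p) \in \mathcal{U}$ so that $Y(x,u(x),p)$ is even defined. These are continuity and closure properties of $\mathcal{U}$ and $J$; once they are disposed of, the Carath\'eodory/$g^*$-convexity extension is clean and uses only results already proved.
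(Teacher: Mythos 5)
Your proposal is correct and follows essentially the same route as the paper: the easy inclusion via the supporting-function Taylor expansion, and the reverse inclusion by first handling extreme points of $\partial u(x)$ through a sequence $x_k \to x$ of differentiability points and then extending via the $g^*$-convexity of $Yu(x)$ from Theorem~\ref{thm:g:gconvymapping}. The only stylistic difference is the final step: where you unpack the extension via Carath\'eodory plus iterated $g^*$-segments, the paper equivalently observes that $g_x(x,\cdot,g^*(x,\cdot,u_0))(Yu(x))$ is a classically convex set containing all extreme points of the convex set $\partial u(x)$, and so contains $\partial u(x)$ outright.
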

  \begin{proof}
    Fix $x \in \Omega$ and set $u_0 = u(x)$. Since $y \mapsto g_x(x,y,g^*(x,y,u_0))$ is injective, it suffices to prove
    \begin{align*}
      g_x(x,\cdot,g^*(x,\cdot,u_0))(Yu(x)) &= g_x(x,\cdot,g^*(x,\cdot,u_0))(Y(x,u_0,\partial u(x))) \\
      &= \partial u(x).
    \end{align*}

    Take $y \in Yu(x)$ and set $z = g^*(x,y,u(x))$. Because $g(\cdot,y,z)$ is a support, a Taylor series yields for any $x' \in \Omega$
    \[ 0 \leq u(x') -g(x',y,z) = u(x')-g(x,y,z) - g_x(x,y,z)\cdot(x'-x)+o(|x'-x|). \]
    So as required $g_x(x,y,z) \in \partial u(x)$, that is $ g_x(x,\cdot,g^*(x,\cdot,u_0))(Yu(x)) \subset \partial u(x)$. 

   For the other subset relation, let $p$ be an extreme point of $\partial u(x)$ and take a sequence $x_k \rightarrow x$  with $Du(x_k) \rightarrow p$. Set $y_k = Y(x_k,u(x_k),Du(x_k))$ and note $y_k \rightarrow Y(x_0,u(x_0),p)$. On the other hand taking $k \rightarrow \infty$ in 
    \[ u(x) \geq g(x,y_k,g^*(x_0,y_k,u(x_k)))\]
    implies $Y(x_0,u(x_0),p)\in Yu(x_0)$, that is $p \in g_x(x,\cdot,g^*(x,\cdot,u_0))(Yu(x))$. By A3w $Yu(x)$ is $g^*$-convex with respect to $x,u_0$. Thus  $g_x(x,\cdot,g^*(x,\cdot,u_0))(Yu(x))$ is a convex set containing all extreme points of the convex set $\partial u(x)$. So $\partial u(x) \subset g_x(x,\cdot,g^*(x,\cdot,u_0))(Yu(x))$ and this completes the proof. 
  \end{proof}
  
\clearpage{}
\clearpage{}\chapter{Aleksandrov solutions}\label{chap:w}
With the convexity theory of Chapter \ref{chap:g} in hand we start our study of solutions to GJE. This chapter sees us introduce a notion of weak solution that is an extension of Aleksandrov solutions to Monge--Amp\`ere equations.  We prove such solutions exist, satisfy a comparison principle, and, when strictly $g$-convex,  are smooth when the data is smooth. The results in this chapter are from \cite{Trudinger14,Trudinger20}, though we provide some additional details.

First some motivation. Suppose $u$ is a $C^2$ $g$-convex solution of
\begin{equation}
  \label{eq:w:gje}
  \det DYu = \frac{f(\cdot)}{f^*(Yu)} \text{ in }\Omega,
\end{equation}
with $f,f^*>0$ and $x \mapsto Yu(x)$ a diffeomorphism onto its image. Then for all Borel $E\subset \Omega$ 
\[ \int_{E}f^*(Yu) \det DYu  = \int_{E}f .\]
The change of variables formula implies 
\begin{equation}
\int_{Yu(E)}f^* = \int_Ef.\label{eq:w:weak}
\end{equation}
 Moreover, under the assumption that $u$ is $C^2$, the converse holds: If equation \eqref{eq:w:weak} holds for all Borel $E \subset \Omega$ then $u$ satisfies \eqref{eq:w:gje}. This suggests requiring \eqref{eq:w:weak} hold for all Borel $E\subset\Omega$ provides a good notion of weak solution. We begin by considering the properties of the function $E \mapsto\int_{Yu(E)}f^*$. 
\section{The $g$-Monge--Amp\`ere measure and Aleksandrov solutions}
\label{sec:w:gmongeampere}
\begin{definition}\index{$g$-Monge--Amp\`ere measure}
  Let $u \in C^0(\Omega)$ be a $g$-convex function and $f^* \in L^1_{\text{loc}}(\mathbf{R}^n)$ be a nonnegative function. Let $\mu_{u,f^*}$ be the function defined on Borel $E\subset \Omega$ by 
  \begin{equation}
    \label{eq:w:ma-meas}
      \mu_{u,f^*}(E) := \int_{Yu(E)}f^*(y) \ dy.
  \end{equation}
  Then $\mu_{u,f^*}$ is called the $g$\textit{-Monge--Amp\`ere measure of $u$ with respect to $f^*$.}
\end{definition}
We write $\mu$ or $\mu_u$ for $\mu_{u,f^*}$ depending on what is clear from context. In this section we prove that this function is a Radon measure and behaves well with respect to convergence. The proofs we present follow those from the Monge--Amp\`ere case as in, say, Figalli's book \cite[Theorem 2.3, Proposition 2.6]{Figalli17}.

\begin{lemma}
  The function  $\mu_{u,f^*}$ is a Borel measure on $\Omega$.  Moreover because $f^* \in L^1_{\text{loc}}(\mathbf{R}^n)$ the function $\mu_{u,f^*}$ is a Radon measure.
\end{lemma}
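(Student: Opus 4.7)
The plan is to follow the standard Monge--Amp\`ere template (as announced in the paragraph preceding the lemma). Three things must be established: (a) $Yu(E)$ is Lebesgue measurable whenever $E \subset \Omega$ is Borel, so that the integral \eqref{eq:w:ma-meas} makes sense, (b) the set function $E \mapsto \mu_{u,f^*}(E)$ is countably additive, and (c) $\mu_{u,f^*}$ is finite on compact sets. Steps (b) and (c) will be short consequences of Lemma \ref{lem:g:sharedy} and a compactness fact, so the real work is in (a).

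The preliminary I would establish first is that $Yu(K)$ is compact for every compact $K \subset \Omega$. By Theorem \ref{thm:g:y-sub}, $Yu(x) = Y(x,u(x),\partial u(x))$. Semiconvexity of $u$ (Remark \ref{rem:g:semiconvex}) gives local Lipschitz continuity and the classical fact that the subdifferential multifunction $x \mapsto \partial u(x)$ is locally bounded with closed graph and nonempty compact values. Composing with the continuous single-valued map $Y$ on $\mathcal{U}$ transfers these properties to $Yu$. A standard result on upper semicontinuous set-valued maps with compact values then yields compactness of $Yu(K)$.

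For (a), I would consider the collection $\mathcal{F} = \{E \subset \Omega : Yu(E) \text{ is Lebesgue measurable}\}$. By the previous step every closed subset of $\Omega$ belongs to $\mathcal{F}$ (it is $\sigma$-compact, and so has $F_\sigma$ image), and $\mathcal{F}$ is trivially closed under countable unions. The delicate point is closure under relative complements, and this is where Lemma \ref{lem:g:sharedy} is essential. If $E \in \mathcal{F}$, then for any $y \in Yu(\Omega\setminus E) \cap Yu(E)$ there exist distinct $x_1 \in \Omega \setminus E$ and $x_2 \in E$ with $y \in Yu(x_1) \cap Yu(x_2)$, so $y \in \mathcal{Z}$. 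Together with the elementary inclusion $Yu(\Omega) \setminus Yu(E) \subset Yu(\Omega\setminus E)$, this yields
\[ Yu(\Omega \setminus E) \;=\; (Yu(\Omega)\setminus Yu(E)) \cup N, \qquad N \subset \mathcal{Z}, \]
and completeness of the Lebesgue measure then makes the right-hand side measurable whenever $Yu(E)$ is. Hence $\mathcal{F}$ is a $\sigma$-algebra containing the open sets, so it contains every Borel set.

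Steps (b) and (c) are then routine. For (b), if $\{E_k\}$ is a disjoint Borel family, the same overlap estimate shows the sets $Yu(E_k)$ are pairwise disjoint modulo the null set $\mathcal{Z}$, so countable additivity of $\int f^*\,dy$ gives $\mu_{u,f^*}(\bigcup_k E_k) = \sum_k \mu_{u,f^*}(E_k)$. For (c), the compactness of $Yu(K)$ together with $f^* \in L^1_{\mathrm{loc}}(\mathbf{R}^n)$ yields $\mu_{u,f^*}(K) < \infty$; a locally finite Borel measure on the locally compact Hausdorff space $\Omega$ is a Radon measure, which completes the argument. The main obstacle is the closure-under-complements argument in (a); once compactness of $Yu(K)$ and the null-overlap Lemma \ref{lem:g:sharedy} are in hand, everything else is bookkeeping.
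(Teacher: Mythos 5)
Your proof is correct and rests on the same two pillars as the paper's: compactness of $Yu(K)$ for compact $K$, and the null-overlap Lemma~\ref{lem:g:sharedy} to handle complements. The main organizational difference is that you prove countable additivity directly on Borel sets, while the paper extends $\mu$ to an outer measure on $\mathbf{R}^n$ via an infimum over Borel supersets and then verifies Caratheodory's criterion and Borel regularity; both routes are sound, and yours is arguably more economical. The paper also concludes the Radon property more cheaply: because $Yu$ takes values in the bounded set $V$ and $f^*\in L^1_{\mathrm{loc}}$, the measure is in fact finite, so one need not invoke the ``locally finite $\Rightarrow$ Radon'' theorem for $\sigma$-compact locally compact spaces.

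One point worth flagging: your route to compactness of $Yu(K)$ goes through Theorem~\ref{thm:g:y-sub}, which requires the A3w condition, whereas the lemma here is stated (and proved in the paper) without A3w. This is an unnecessary strengthening of the hypotheses. Compactness of $Yu(K)$ follows directly from the definition of the $Y$-mapping: if $x_k\to x$ in $K$ and $y_k\in Yu(x_k)$, take a subsequence $y_k\to y$ (possible since $V$ is bounded), let $z_k = g^*(x_k,y_k,u(x_k))\to z = g^*(x,y,u(x))$, and pass to the limit in $u(x')\ge g(x',y_k,z_k)$ to conclude $y\in Yu(x)$. This avoids any appeal to the generalized subdifferential structure and keeps the lemma at the intended level of generality.
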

\begin{proof}
  It is convenient, though not strictly necessary, to extend $\mu$ as an outer measure on $\mathbf{R}^n$. Define $\mu$ on any $E\subset \mathbf{R}^n$ by
  \begin{equation}
    \label{eq:w:outer}
     \mu(E) = \inf_{\substack{B\supset E\\ B \text{ Borel }}}\int_{Yu(B\cap \Omega)}f^*(y) \ dy. 
  \end{equation}
  This measure clearly agrees with our original on Borel subsets of $\Omega$. We show it satisfies Caratheodory's criterion, is Borel regular, and is finite on compact sets. 

First though, we show $\int_{Yu(B \cap \Omega)}f^*$ is well defined for all Borel $B$, specifically that $Yu(B \cap \Omega)$ is Lebesgue measurable.   This is the case when $B \subset\Omega$ is compact, since in this case $Yu(B)$ is compact. In addition if, for a sequence of sets $B_k,$ $Yu(B_k)$ is Lebesgue measurable, then  so is $Yu(\cup_k B_k) = \cup_kYu(B_k)$. By writing $\Omega$ as a countable union of compact subsets we see $Yu(\Omega)$ is Borel measurable and so is $Yu(B\cap \Omega)$ for any compact $B$. Next for any set $B$ with $Yu(B)$ Lebesgue measurable write
  \[ Yu(\Omega\setminus B) = [Yu(\Omega) \setminus Yu(B)] \cup [Yu(\Omega\setminus B) \cap Yu(B)].\]
  Using Lemma \ref{lem:g:sharedy} we see $Yu(\Omega\setminus B) \cap Yu(B)$ has measure 0. Thus $Yu(\Omega \cap B^c) = Yu(\Omega\setminus B)$ is Lebesgue measurable because it differs from the Lebesgue measurable set $Yu(\Omega) \setminus Yu(B)$ by a set of measure 0. We've shown the family of sets $B$ for which $Yu(B)$ is Lebesgue measurable is a $\sigma$-algebra containing all compact sets, and subsequently contains the Borel sets. Thus the  outer measure defined by \eqref{eq:w:outer} is well defined. 
  
  As defined $\mu$ is a subadditive function with $\mu(\emptyset) = 0$. To see subadditivity fix $\epsilon > 0$ and assume $E = \cup E_i$. Take Borel $B_i$ containing $E_i$ and satisfying $\int_{Yu(B_i \cap \Omega)}f^* \leq \mu(E_i)+\epsilon2^{-i}$. Put
  \[ \mu(E) \leq \int_{\cup Yu(B_i \cap \Omega)} f^* \leq \sum\mu(E_i)+\epsilon,\]
and send $\epsilon \rightarrow 0$. 
  
Next, $\mu$ is a Borel measure by Caratheodory's criterion and Lemma \ref{lem:g:sharedy}. Indeed, let $A,B$ be subsets of $\mathbf{R}^n$ with $\text{dist}(A,B) =\delta > 0$ and fix $\epsilon > 0$. Choose Borel measurable $C$ containing $A \cup B$ with
\[ \int_{Yu(C \cap \Omega)}f^* \leq \mu(A \cup B)+\epsilon.\]
The sets $A_\delta = C \cap \{x ; \text{dist}(x,A) < \delta/2\}$ and the corresponding $B_\delta$ are disjoint Borel sets containing, respectively, $A$ and $B$. Taking  $\mathcal{Z}$  as the measure 0 set from Lemma \ref{lem:g:sharedy}, we see $Yu(A_\delta)\setminus\mathcal{Z}$ and $Yu(B_\delta)\setminus\mathcal{Z}$ are disjoint. Thus
\begin{align*}
  \mu(A)+\mu(B) &\leq \int_{Yu(A_\delta)\setminus\mathcal{Z}}f^*+\int_{Yu(B_\delta)\setminus\mathcal{Z}}f^*\\
            &= \int_{Yu(A_\delta) \cup Yu(B_\delta)} f^*\leq \int_{Yu(C)}f^* \leq \mu(A \cup B)+\epsilon.
\end{align*}
Caratheodory's criterion follows by sending $\epsilon \rightarrow 0$.

To see $\mu$ is Borel regular take any $A \subset \mathbf{R}^n$ and a sequence of bounded Borel sets with $B_i \supset A_i$ and $\mu(A) = \lim_{i \rightarrow \infty} \mu(B_i)$. Continuity from above implies $\mu(A) = \mu(\cap B_i)$. We obtain Borel regularity since $\cap B_i$ is Borel. Finally, $\mu$ is trivially a Radon measure because, according to our definitions, $Yu$ takes values in the bounded set $V$.
\end{proof}

\begin{lemma}\label{lem:w:weak_conv}\index{weak convergence}
  Suppose $\{u_k\}_{k=1}^\infty$ is a sequence of $g$-convex functions on $\Omega$ converging pointwise  to a $g$-convex function $u$. Then the corresponding Monge--Amp\`ere measures of $u_k$ converge weakly to that of $u$. That is
  \[ \mu_{u_k,f^*} \rightharpoonup \mu_{u,f^*}\]
  as $k \rightarrow \infty$.
\end{lemma}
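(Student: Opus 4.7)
The plan is to follow the analogous result for classical Monge--Amp\`ere measures (see \cite[Proposition 2.6]{Figalli17}), adapting the argument to the $g$-convex setting via continuity of $g$ and $g^*$ together with the semiconvexity of $g$-convex functions. By the Portmanteau theorem for Radon measures, weak convergence on $\Omega$ is equivalent to the two one-sided bounds $\limsup_k \mu_k(K)\le \mu(K)$ for all compact $K\subset\Omega$ and $\liminf_k \mu_k(U)\ge \mu(U)$ for all open $U$ with compact closure in $\Omega$.

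First I would upgrade pointwise convergence to locally uniform convergence. By Remark \ref{rem:g:semiconvex} the functions $u_k$ are semiconvex with a common constant, hence locally equi-Lipschitz; Arzel\`a--Ascoli combined with pointwise convergence then forces $u_k\to u$ locally uniformly in $\Omega$.

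For the upper bound the central claim is the set inclusion
\[\limsup_{k\to\infty} Yu_k(K)\subset Yu(K).\]
Given $y=\lim_j y_{k_j}$ with $y_{k_j}\in Yu_{k_j}(x_{k_j})$ and $x_{k_j}\in K$, extract $x_{k_j}\to x\in K$ and set $z_{k_j}=g^*(x_{k_j},y_{k_j},u_{k_j}(x_{k_j}))$. The support inequality $u_{k_j}(\cdot)\ge g(\cdot,y_{k_j},z_{k_j})$ passes to the limit by locally uniform convergence and continuity of $g$ and $g^*$, yielding $u(\cdot)\ge g(\cdot,y,g^*(x,y,u(x)))$ with equality at $x$, so $y\in Yu(x)$. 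Reverse Fatou applied to the $f^*$-weighted Lebesgue measure (whose total mass on the bounded set $V$ is finite since $f^*\in L^1_{\mathrm{loc}}$) gives
\[\limsup_k\mu_k(K)=\limsup_k\int_{Yu_k(K)}f^*\le \int_{\limsup_k Yu_k(K)}f^*\le \int_{Yu(K)}f^*=\mu(K).\]

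For the lower bound I would dualise. Let $v_k$ and $v$ denote the $g^*$-transforms of $u_k$ and $u$; the locally uniform convergence $u_k\to u$ together with continuity of $g^*$ gives $v_k\to v$ pointwise on $V$, and by the same semiconvexity principle, locally uniformly. By Lemma \ref{lem:g:sharedy}, for $f^*$-a.e.\ $y\in Yu(U)$ the set $Xv(y)$ is a singleton $\{x\}$ with $x\in U$, and moreover $v$ is differentiable at such $y$. At these $y$ the locally uniform convergence of uniformly semiconvex functions forces $Dv_k(y)\to Dv(y)$ (a standard consequence of semiconvexity plus uniform convergence at differentiability points of the limit). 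Hence $X(y,v_k(y),Dv_k(y))\to x$, and since $U$ is open these points lie in $U$ for $k$ large; by the $X$--$Y$ duality this means $y\in Yu_k(U)$ for $k$ large. Fatou's lemma applied to $\mathbf{1}_{Yu_k(U)}f^*$ then produces $\liminf_k\mu_k(U)\ge\mu(U)$.

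The main obstacle is the lower semicontinuity: while the upper bound is a clean stability statement for $g$-supports under uniform convergence, the lower bound depends on the duality together with the measure-zero exceptional set from Lemma \ref{lem:g:sharedy}. One technical point that must be verified in passing is that the containment condition $g(\overline{\Omega},y_{k_j},z_{k_j})\subset J$ in the definition of $g$-support transfers to the limit, which requires openness of $J$ and locally uniform convergence on a neighbourhood of $\overline{\Omega}$ (or restricting attention to $K\Subset\Omega$, as is already done here).
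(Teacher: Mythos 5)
Your upper-bound argument mirrors the paper's exactly: the set inclusion $\bigcap_{i}\bigcup_{k\ge i}Yu_k(K)\subset Yu(K)$, upgraded from pointwise to locally uniform convergence via semiconvexity, then a $\limsup$ integral bound. Both are the same.

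Your lower-bound argument, however, takes a genuinely different route from the paper. The paper works entirely on the $x$-side: it takes the unique support $g(\cdot,y,g^*(\tilde x,y,u(\tilde x)))$ at $\tilde x$, lowers it by $\delta$ so that it lies strictly below $u$ on $\partial U$ but strictly above at $\tilde x$, transfers these strict inequalities to $u_k$ for large $k$ by locally uniform convergence, and then raises the $g$-affine function until it first touches $u_k$ from below at some interior point of $U$. This is a self-contained sandwich argument using only the support inequalities. You instead dualize: pass to the $g^*$-transforms $v_k\to v$, use Lemma~\ref{lem:g:sharedy} together with semiconvexity to reduce to $y$ where $v$ is differentiable and $Xv(y)=\{x\}\subset U$, and then invoke stability of subdifferentials. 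This is also correct and arguably more systematic, but there is a small imprecision to repair: $v_k$ need not be differentiable at a fixed $y$, so you cannot write $Dv_k(y)$. What is true is the Hausdorff convergence $\partial v_k(y)\to\{Dv(y)\}$ for uniformly semiconvex $v_k\to v$ locally uniform with $v$ differentiable at $y$; any element of $Xv_k(y)$ has the form $X(y,v_k(y),p)$ for some $p\in\partial v_k(y)$, and so by continuity of $X$ every such element converges to $x\in U$. This suffices. Your approach buys conceptual clarity by running the upper and lower bounds through the same stability-of-the-$Y$-map mechanism (once on the primal side, once on the dual), at the cost of importing the duality machinery of Lemma~\ref{lem:g:gstarsubdiff} and the involution of the transforms; the paper's sandwich argument is more self-contained and elementary. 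Your closing remark about the containment condition $g(\overline{\Omega},y_k,z_k)\subset J$ is well placed: the paper handles it implicitly via ``$u_k(\Omega)\subset J$ for $k$ sufficiently large,'' and the same observation is needed in your version to guarantee that the $g^*$-transforms are well defined and that the approximating supports are admissible.
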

\begin{proof}
   The functions $u_k$ and $u$ are locally semiconvex with a constant independent of $n$ (Remark \ref{rem:g:semiconvex}). Thus the convergence is locally uniform \cite[\S 3.3]{Bakelman94}. To show weak convergence it suffices to show
  \begin{align}
    \label{eq:w:main1} \text{for each compact }K \subset \Omega \text{ there holds } &\limsup_{k \rightarrow \infty}\mu_{u_k}(K) \leq \mu_u(K)
  \end{align}
  and
  \begin{align}
  \label{eq:w:main2}\text{for each open }U\subset\subset \Omega \text{ there holds } &\liminf_{k \rightarrow \infty}\mu_{u_k}(U) \geq \mu_u(U).
  \end{align}

  Fix a compact $K\subset\Omega$. To begin we show
  \begin{align}
    \label{eq:w:conv1}
    Yu(K) \supset \bigcap_{i=1}^\infty\bigcup_{k=i}^\infty Yu_k(K).
  \end{align}
  Indeed, if we take $y$ in the right hand side then there is a sequence of $x_k \in K$ with $y \in Yu_k(x_k)$. Necessarily (Remark \ref{rem:g:support}) for all $x \in \Omega$
  \begin{equation}
    \label{eq:w:tolim}
    u_k(x) \geq g(x,y,g^*(x_k,y,u_k(x_k))).
  \end{equation}
  Up to a subsequence $x_k$ converges to some $ \overline{x} \in K$. Using locally uniform convergence to take $k \rightarrow \infty$ in \eqref{eq:w:tolim} yields
  \[ u(x) \geq g(x,y,g^*(\overline{x},y,u(\overline{x}))).\]
  This implies $y \in Yu(\overline{x})$. So \eqref{eq:w:conv1} holds and \eqref{eq:w:main1} follows by the continuity properties of the measure $E \mapsto \int_{E}f^*$. That is,
  \begin{align}
  \label{eq:w:reasoning}  \int_{Yu(K)}f^* &\geq \int_{\cap_{i=1}^\infty\cup_{k=i}^\infty Yu_k(K)} f^*\\
   \nonumber               &= \lim_{i \rightarrow \infty} \int_{\cup_{k=i}^\infty Yu_k(K)}f^*\\
   \nonumber               &\geq\lim_{i \rightarrow \infty} \limsup_{k \rightarrow \infty}\int_{Yu_k(K)}f^* = \limsup_{k \rightarrow \infty}\int_{Yu_k(K)}f^*.
  \end{align}

  For \eqref{eq:w:main2} we fix open $U\subset\subset\Omega$. As $\mu_u$ is a Radon measure it satisfies \cite[Theorem 1.8]{EvansGariepy15}
  \[ \mu_u(U) = \sup\{\mu(K); K \subset U, K \text{ compact}\}.\] Thus to show \eqref{eq:w:main2} we'll show for every compact $K\subset U$ there holds \[\liminf_{k \rightarrow \infty}\mu_{u_k}(U) \geq \mu_u(K) .\] Recalling the set $\mathcal{Z}$ from \ref{lem:g:sharedy}, it suffices to show
  \begin{equation}
    \label{eq:w:conv2}
    Yu(K) \setminus \mathcal{Z} \subset \bigcup_{i=1}^\infty \bigcap_{k=i}^\infty Yu_k(U),  
  \end{equation}
  for then we can conclude as in \eqref{eq:w:reasoning}.

  We take $y \in Yu(K) \setminus \mathcal{Z}$. There is $\tilde{x} \in K$ such that $g(\cdot,y,g^*(\tilde{x},y,u(\tilde{x})))$ is a $g$-support at $\tilde{x}$ and for all other $x$, $u(x) > g(x,y,g^*(\tilde{x},y,u(\tilde{x})))$. Hence for sufficiently small $\delta > 0$ 
  \[ \phi_\delta(\cdot) := g(\cdot,y,g^*(\tilde{x},y,u(\tilde{x}))-\delta),\]
  satisfies
  \[ \phi_\delta(\tilde{x}) > u(\tilde{x}) \text{ and }\phi_\delta(x) < u(x) \text{ for }x \in \partial U.\]
  Locally uniform convergence implies the same holds for all  $u_k$ provided $k$ is sufficiently large. For each such $u_k$ we  decrease $\delta$ to some $\delta_k$ for which $\phi_{\delta_k}$ touches $u_k$ from below at some $x' \in U$. Such a choice is possible because $u_k(\Omega) \subset J$ for $k$ sufficiently large.  Thus $y \in Yu_k(U)$ for all $k$ sufficiently large and \eqref{eq:w:conv2} follows.  
\end{proof}

Both previous results are crucial for our study of Aleksandrov solutions which we now define.

\begin{definition}\label{def:w:aleksandrov}
  Let $f \in L^1(\Omega)$, $f^* \in L^1(\Omega^*)$ and, after extending as 0, assume both are defined on $\mathbf{R}^n$. A $g$-convex function $u:\Omega \rightarrow \mathbf{R}$ is called an Aleksandrov (or generalised) solution of
  \begin{equation}
\tag{GJE}
       \det DYu(\cdot) = \frac{f(\cdot)}{f^*(Yu(\cdot))} \text{ in }\Omega,
  \end{equation}
  provided for every Borel $E\subset\Omega$ there holds
  \[ \int_{Yu(E)}f^*(y) \ dy = \int_E f(x) \ dx.\]\index{Aleksandrov solution}\index{generalised solution}

  Moreover $u$ is called a generalised solution of the second boundary value problem 
  \begin{equation}
\tag{2BVP}
    Yu(\Omega) = \Omega^*
  \end{equation}
  provided $\Omega^* \subset Yu(\overline{\Omega})$ and
  \begin{equation}
    \label{eq:w:g2bvp}
       |\{x ; f(x) > 0 \text{ and } Yu(x) \setminus\overline{\Omega^*} \text{ is nonempty}\}|  = 0. 
  \end{equation}
\end{definition}

This definition first appeared for the Monge--Amp\`ere equation in two dimensions in the works of Aleksandrov \cite{Aleksandrov42,Aleksandrov42a}. It was generalized to all dimensions in the works of Aleksandrov \cite{Aleksandrov58} and Bakelman \cite{Bakelman57,Bakelman58}. Extensions to optimal transport were made in the work of Ma, Trudinger, and Wang \cite{MTW05} based on earlier work of Wang \cite{Wang96} where the above definition of the generalized second boundary value problem first appeared. The extension to generated Jacobian equations, that is the definition above, is due to Trudinger \cite{Trudinger14}.

\section{Existence of Aleksandrov solutions}\label{sec:w:weak_exist}

Let's start our study of Aleksandrov solutions with the fact that, under very general hypothesis, they exist. We make use of an additional condition on the generating function.\\
\index{A5}
\textbf{A5. }\setref{}{ref:a5} There exist $K_0$ depending on $\Omega,\Omega^*, J$ such that whenever $x\in\Omega , y \in \Omega^*$ and $u:=g(x,y,z) \in J$ then $|g_x(x,y,z) |,|g^*_y(x,y,u)| \leq K_0$. Here $J$ is as given in A0.\\

This condition ensures the second boundary value problem implies a gradient bound. This is trivial in the Monge--Amp\`ere case where the second boundary value problem is $Du(\Omega) = \Omega^*.$

The goal of this section is to prove the following pair of theorems.\index{existence of weak solutions} 

\begin{theorem}\label{thm:w:weakexist}
  Let $g$ be a generating function satisfying $A5$. Let $f \in L^1(\Omega)$ and $f^*\in L^1(\Omega^*)$ be positive functions satisfying the mass balance condition. Then for any $x_0\in \Omega$ and $u_0 \in J$ satisfying $u_0+K_0 \text{diam}(\Omega) \in J$ there exists an Aleksandrov solution of \eqref{eq:g:gje} subject to \eqref{eq:g:2bvp} satisfying $u(x_0) = u_0.$
 \end{theorem}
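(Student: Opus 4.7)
The plan is a Perron--Oliker--Prussner approximation: solve a discrete version of the problem and then pass to the limit using the weak continuity of $g$-Monge--Amp\`ere measures (Lemma \ref{lem:w:weak_conv}). For each large $N$, I would partition $\Omega^*$ into small Borel cells $V_1, \ldots, V_N$, pick $y_i \in V_i$, and set $c_i := \int_{V_i} f^*$. To encode the normalisation $u(x_0) = u_0$, I add an extra atom at $y_0 := Y(x_0, u_0, p_0)$ for an admissible $p_0$ with $(x_0, u_0, p_0) \in \mathcal{U}$, and attach to it the height $z_0 := g^*(x_0, y_0, u_0)$, so that $g(x_0, y_0, z_0) = u_0$ by construction. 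The unknowns are the heights $z_1, \ldots, z_N$, and I seek them so that the contact set $W_i := \{u_N = g(\cdot, y_i, z_i)\}$ of the $g$-convex function $u_N := \max_{0 \leq i \leq N} g(\cdot, y_i, z_i)$ carries $f$-mass exactly $c_i$ for $i \geq 1$; mass balance then forces $\int_{W_0} f = 0$, so $u_N$ will be an Aleksandrov solution for the discrete target $\sum c_i \delta_{y_i}$ with $u_N(x_0) = u_0$.

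Producing the heights is the main technical step. Following Caffarelli's Perron argument in the Monge--Amp\`ere case, I would start with all $z_i$ ($i \geq 1$) so large that each $g(\cdot, y_i, z_i)$ lies pointwise below $g(\cdot, y_0, z_0)$ on $\Omega$, and then decrease each $z_i$ individually until $\int_{W_i(z)} f = c_i$. Continuity and monotonicity of the map $z \mapsto \bigl(\int_{W_i(z)} f\bigr)_i$, which is a consequence of Lemma \ref{lem:w:weak_conv} applied to sequences of heights together with Lemma \ref{lem:g:sharedy} to control overlap of contact sets, makes this iterative procedure converge; alternatively one casts it as a topological degree argument on the product of admissible height intervals. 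The delicate point is to keep the $z_i$ from drifting to $-\infty$ and to ensure each $g(\Omega, y_i, z_i) \subset J$ so that the supports remain valid. This is precisely where A5 is used: the bound $|g_x|, |g_y^*| \leq K_0$ together with the hypothesis $u_0 + K_0\,\mathrm{diam}(\Omega) \in J$ confines the supports a priori and prevents either failure.

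Once $u_N$ exists, A5 gives the uniform Lipschitz estimate $|u_N(x) - u_0| \leq K_0 |x - x_0|$, since every $g$-support of $u_N$ has gradient of magnitude at most $K_0$. Combined with the uniform semiconvexity of $g$-convex functions (Remark \ref{rem:g:semiconvex}), Arzel\`a--Ascoli yields a subsequence converging locally uniformly on $\overline{\Omega}$ to a $g$-convex function $u$ with $u(x_0) = u_0$. Writing $f_N^* := \sum_{i}(c_i/|V_i|) \mathbf{1}_{V_i}$ for the natural cell-average of $f^*$, the discrete construction gives $\mu_{u_N, f_N^*} = f\,dx$ on $\Omega$. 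Since $f_N^* \to f^*$ in $L^1(\Omega^*)$, a routine truncation argument extends Lemma \ref{lem:w:weak_conv} to sequences with varying densities and yields $\mu_{u_N, f_N^*} \rightharpoonup \mu_{u, f^*}$; uniqueness of weak limits then forces $\mu_{u, f^*} = f\,dx$, which is exactly \eqref{eq:g:gje}. Finally, the inclusion $\Omega^* \subset Yu(\overline{\Omega})$ follows because $\{y_i\}$ is dense in $\overline{\Omega^*}$ and each $y_i \in Yu_N(\overline{\Omega})$ passes to a $g$-subgradient of $u$ on $\overline{\Omega}$ by the boundary extension of $Yu$, while the measure-zero escape condition \eqref{eq:w:g2bvp} holds because $f$-a.e.\ $x$ lies in some $W_i^N$ with $y_i \in V_i \subset \Omega^*$ for all large $N$, so $Yu(x) \subset \overline{\Omega^*}$ in the limit on $\{f > 0\}$.
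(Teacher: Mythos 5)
Your plan is the \emph{primal} face of the Caffarelli--Oliker argument the paper runs on the \emph{dual} side: you discretize the target $\Omega^*$ into atoms $y_i$ and build $u_N = \max_i g(\cdot,y_i,z_i)$ directly, whereas the paper discretizes the source $\Omega$ into atoms $x_i$ and builds the dual potential $v_{\mathbf{u}}(y)=\sup_i g^*(x_i,y,u_i)$ (Lemma \ref{lem:w:finiteexist}), recovering $u$ afterwards by $g$-transform (Lemma \ref{lem:w:dualsolutions}). Both are legitimate ways to organize the Perron approximation. However, as written, your proposal has two real problems.

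First, and most important, you do not justify that the normalization $u_N(x_0)=u_0$ survives the height-adjustment. Fixing $z_0=g^*(x_0,y_0,u_0)$ pins $g(x_0,y_0,z_0)=u_0$, but you need $g(x_0,y_i,z_i)\le u_0$ for every $i\ge 1$, i.e.\ $z_i\ge g^*(x_0,y_i,u_0)$, for $u_N(x_0)=u_0$ to actually hold. As you decrease $z_i$, $g(x_0,y_i,z_i)$ \emph{increases} (since $g_z<0$); there is no a priori reason the mass condition $\int_{W_i}f=c_i$ is reached before $g(\cdot,y_i,z_i)$ overtakes $g(\cdot,y_0,z_0)$ at $x_0$. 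Mass balance only tells you $\int_{W_0}f=0$, which is consistent with $W_0=\emptyset$ and $u_N(x_0)>u_0$. Enforcing and preserving this constraint is precisely the new work in the theorem (it is what the literature does not supply), and the paper solves it with a constrained minimization of $\sum u_i$ over the set where $g^*(x_0,\cdot,u_0)$ remains a support (conditions 1 and 3 in Lemma \ref{lem:w:finiteexist}), showing the minimizer must saturate the mass constraints. Your ``decrease each $z_i$ until the mass is right, or use degree theory'' sketch needs an analogous mechanism; without it the argument does not close.

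Second, the identity $\mu_{u_N,f_N^*}=f\,dx$ is false. For the piecewise $g$-affine $u_N$, the interior of each contact set $W_i$ is mapped by $Yu_N$ to the single point $y_i$, so $\mu_{u_N,f_N^*}(W_i^\circ)=\int_{\{y_i\}}f_N^*=0\ne\int_{W_i^\circ}f$. What the discrete construction actually delivers is the \emph{Brenier} identity $\int_{Yu_N^{-1}(E^*)}f=\sum_{y_i\in E^*}c_i$, i.e.\ $\mu^*_{u_N,f}=\sum c_i\delta_{y_i}$. Passing to the limit you obtain a Brenier solution, and only after invoking the $g^*$-convexity of $\Omega^*$ (via Theorem \ref{thm:w:g2bvp} and Remark \ref{rem:w:brenier}) do you upgrade it to an Aleksandrov solution. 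This is exactly why the paper phrases its weak-convergence step on the $X$-side (Lemma \ref{lem:a:1}) and separately records the Brenier--to--Aleksandrov implication; your limit argument should be rewritten in the same terms rather than through $\mu_{u_N,f_N^*}$.
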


 \begin{theorem}\label{thm:w:g2bvp}
   Suppose $u$ is an Aleksandrov solution of \eqref{eq:g:gje}. Suppose $f>0$ and $\Omega^*$ is $g^*$-convex with respect to $u$. Then $Yu(\Omega) \subset \overline{\Omega^*}$. 
 \end{theorem}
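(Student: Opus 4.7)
The plan is to argue in two stages: first show the inclusion $Yu(x)\subset\overline{\Omega^*}$ holds for almost every $x\in\Omega$, using only the Aleksandrov identity $\mu_u=f\,dx$ together with $f>0$ and the fact that $f^*$ vanishes outside $\Omega^*$; then upgrade this to every $x\in\Omega$ using Theorem \ref{thm:g:y-sub} and the $g^*$-convexity of $\Omega^*$ with respect to $u$.

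For the almost-everywhere statement, let $D$ be the set of differentiability points of $u$. By semiconvexity (Remark \ref{rem:g:semiconvex}), $|\Omega\setminus D|=0$, and at each $x\in D$ the set $Yu(x)$ reduces to the single point $Y(x,u(x),Du(x))$. Define $B=\{x\in D \,:\, Y(x,u(x),Du(x))\notin\overline{\Omega^*}\}$, which is measurable since the map $x\mapsto Y(x,u(x),Du(x))$ is measurable. Because $f^*$ is extended by zero outside $\Omega^*$, one has $\mu_u(B)=\int_{Yu(B)}f^*=0$. The Aleksandrov identity gives $\mu_u(B)=\int_B f$; combined with $f>0$, this forces $|B|=0$.

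Now fix an arbitrary $x_0\in\Omega$ and $y_0\in Yu(x_0)$. By Theorem \ref{thm:g:y-sub}, $y_0=Y(x_0,u(x_0),p_0)$ for some $p_0\in\partial u(x_0)$, and by Carath\'eodory's theorem I may write $p_0=\sum_{i=0}^{n}\lambda_i p_i$ with extreme points $p_i$ of the convex set $\partial u(x_0)$, $\lambda_i\ge 0$, $\sum\lambda_i=1$. Each such extreme point arises as $p_i=\lim_{k\to\infty}Du(x_k^i)$ along a sequence $x_k^i\to x_0$ of differentiability points, and by density I may further choose $x_k^i\in D\setminus B$. Then $Y(x_k^i,u(x_k^i),Du(x_k^i))\in\overline{\Omega^*}$, and continuity of $Y$ yields $y_i:=Y(x_0,u(x_0),p_i)\in\overline{\Omega^*}$ for each $i$.

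It remains to pass from the $y_i$ to $y_0$. A direct unwinding of the definition shows that the $g^*$-segment with respect to $(x_0,u(x_0))$ joining $Y(x_0,u(x_0),q_0)$ and $Y(x_0,u(x_0),q_1)$ is precisely $\{Y(x_0,u(x_0),(1-\theta)q_0+\theta q_1)\}_{\theta\in[0,1]}$, so convex combinations in the $p$-variable correspond to iterated $g^*$-convex combinations in the $y$-variable. Since $\Omega^*$ is $g^*$-convex with respect to $(x_0,u(x_0))$ and $g^*$-segments depend continuously on their endpoints, $\overline{\Omega^*}$ inherits the same $g^*$-convexity. Rewriting $p_0$ as a sequence of pairwise convex combinations of the $p_i$ (say by recursively combining $p_j$ with the partial sum of its predecessors) and tracking the corresponding $g^*$-segments, each intermediate point remains in $\overline{\Omega^*}$, and I conclude $y_0\in\overline{\Omega^*}$. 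The main obstacle I expect is precisely this last bookkeeping: making the iterated pairwise $g^*$-hull argument rigorous and checking carefully that the passage from $\Omega^*$ to $\overline{\Omega^*}$ preserves $g^*$-convexity with respect to the fixed pair $(x_0,u(x_0))$.
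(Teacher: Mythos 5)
Your argument is essentially the same as the paper's, but with a somewhat different route through the almost-everywhere step and an unnecessary detour at the end. The paper first shows that at \emph{every} differentiability point $x$, $Yu(x)\in\overline{\Omega^*}$, by a local contradiction: if $Yu(x)\notin\overline{\Omega^*}$ then (using continuity of $x\mapsto Y(x,u(x),Du(x))$ on the set of differentiability points) a full-measure neighbourhood of $x$ has the same property, giving $\int_{B_\epsilon(x)}f=0$. You instead only establish the inclusion for a.e.\ differentiability point, by the direct observation $\mu_u(B)=\int_{Yu(B)}f^*=0\Rightarrow |B|=0$. Your version of step one is cleaner, but it forces an extra hypothesis in step two: you need the approximating sequence $x_k^i$ to lie in the good set $D\setminus B$, not merely in $D$. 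You assert this by ``density'' without justifying it; the justification is that $Du$ is continuous on $D$ (semiconvexity), so for any differentiability point $x_k$ with $Du(x_k)$ near $p_i$ the set $\{x\in D: |Du(x)-Du(x_k)|<\epsilon\}$ is a relatively open subset of the full-measure set $D$, hence has positive Lebesgue measure, hence meets $D\setminus B$. That closes the gap.

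The obstacle you flag at the end is not where the difficulty lies. That $\overline{\Omega^*}$ inherits $g^*$-convexity with respect to $(x_0,u(x_0))$ is immediate: by definition, $g^*$-convexity of a set $B$ with respect to $(x_0,u_0)$ is exactly the statement that $\{p: Y(x_0,u_0,p)\in B\}$ is (classically) convex, and the map $p\mapsto Y(x_0,u_0,p)$ is a homeomorphism onto its image, so the preimage of $\overline{\Omega^*}$ is the closure of the preimage of $\Omega^*$, hence convex. Once this is recognised, the Carath\'eodory decomposition and the iterated pairwise $g^*$-hull argument are superfluous: the convex set $K:=\{p: Y(x_0,u(x_0),p)\in\overline{\Omega^*}\}$ contains every extreme point of the compact convex set $\partial u(x_0)$, hence by Minkowski's theorem contains $\partial u(x_0)$ itself, and in particular $p_0$. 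This is exactly how the paper concludes.
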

 
There are a number of steps to the proof of these theorems. Here is an outline:
The condition $u(x_0) = u_0$ is easier to enforce in its dual form --- that $g^*(x_0,\cdot,u_0)$ is a support of $v$ (the $g^*$transform of $u$). Thus our approach is to solve the dual problem, that is the corresponding equation for $v,$ in its generalized form. This, in turn, is achieved by solving a sequence of finite approximations. This approach is due to Caffarelli and Oliker \cite{CaffarelliOliker08}, and the extension to generated Jacobian equations due to Trudinger \cite{Trudinger14}. 

To begin we consider the duality structure of solutions. For GJE this result is from \cite{Trudinger14}, though we use some details from the optimal transport case \cite{MTW05}. \index{dual problem}

\begin{lemma}\label{lem:w:dualsolutions}
  Suppose $u:\Omega \rightarrow \mathbf{R}$ is a generalized solution of \eqref{eq:g:gje} subject to \eqref{eq:g:2bvp} with $f \in L^1(\Omega)$, $f^*\in L^1(\Omega^*)$ nonnegative functions satisfying the mass balance condition. Let $v$ denote the $g^*$-transform of $u$. Then $v|_{\Omega^*}$ is an Aleksandrov solution of
  \begin{align}
    \tag{GJE*} \det DXv = \frac{f^*(\cdot)}{f(Xv(\cdot))} \text{ on }\Omega^*. \label{eq:w:gjestar}
  \end{align}
  If, in addition, $f > 0$ a.e on $\Omega$ then $v$ is a generalized solution of the second boundary value problem
  \begin{equation}
     \tag{2BVP*} Xv(\Omega^*) = \Omega. \label{eq:w:2bvpstar}
   \end{equation}
\end{lemma}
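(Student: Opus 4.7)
My strategy is to establish the pointwise duality
\[ x \in \Omega,\ y \in V \implies \bigl(y \in Yu(x) \Leftrightarrow x \in Xv(y)\bigr), \]
and then translate the Aleksandrov identity for $u$ into one for $v$. The forward implication is Lemma \ref{lem:g:gstarsubdiff}. For the reverse, suppose $x_0 \in Xv(y_0)$, so $v(y_0) = g^*(x_0,y_0,u(x_0))$. For any $x \in \Omega$ the definition of $v$ gives $v(y_0) \geq g^*(x,y_0,u(x))$, hence $g^*(x,y_0,u(x)) \leq g^*(x_0,y_0,u(x_0))$. Applying the strictly decreasing map $g(x,y_0,\cdot)$ (A2) yields $u(x) \geq g(x,y_0,g^*(x_0,y_0,u(x_0)))$, i.e.\ $y_0 \in Yu(x_0)$.

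For the first claim, fix a Borel $E^* \subset \Omega^*$ and set $F := Xv(E^*) \cap \Omega$; by the duality above, $F = \{x \in \Omega : Yu(x) \cap E^* \neq \emptyset\}$, which is Borel by the measurability arguments used for $\mu_u$. Applying the generalised equation for $u$,
\[ \int_F f = \mu_u(F) = \int_{Yu(F)} f^*, \]
so it suffices to show $\int_{Yu(F)} f^* = \int_{E^*} f^*$. I would handle the two discrepancies separately. If $y \in Yu(F) \setminus E^*$ then there exist $y \in Yu(x)$ and $y' \in Yu(x) \cap E^*$ with $y' \neq y$, forcing $u$ to be non-differentiable at $x$; hence $Yu(F) \setminus E^* \subset Yu(\mathcal{N})$, where $\mathcal{N}$ is the Lebesgue-null non-differentiability set of the semiconvex function $u$ (Remark \ref{rem:g:semiconvex}). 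The hypothesis $\mu_u = f\,dx$ then yields the crucial estimate
\[ \int_{Yu(F) \setminus E^*} f^* \leq \int_{Yu(\mathcal{N})} f^* = \mu_u(\mathcal{N}) = \int_{\mathcal{N}} f = 0. \]
Conversely, $E^* \cap Yu(\Omega) \subset Yu(F)$ is immediate, while the leftover $E^* \setminus Yu(\Omega)$ has $f^*$-measure zero: combining mass balance with the generalised (2BVP) (applied via $\mu_u(\Omega) = \int_\Omega f = \int_{\Omega^*} f^*$) gives $\int_{Yu(\Omega)} f^* = \int_{\Omega^*} f^*$, so $\int_{\Omega^* \setminus Yu(\Omega)} f^* = 0$. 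Combining both pieces yields $\int_F f = \int_{E^*} f^*$, i.e.\ \eqref{eq:w:gjestar}.

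For the generalised (2BVP*) under the additional hypothesis $f > 0$ a.e., the containment $Xv(y) \subset \overline{\Omega}$ holds tautologically (the supremum defining $v$ is taken over $\Omega$), so the analogue of \eqref{eq:w:g2bvp} is automatic. For the inclusion $\Omega \subset Xv(\overline{\Omega^*})$, fix $x_0 \in \Omega$; since $f > 0$ a.e.\ and the generalised (2BVP) for $u$ says $\{f>0,\ Yu(x) \not\subset \overline{\Omega^*}\}$ is null, the set $\{Yu(x) \subset \overline{\Omega^*}\}$ has full measure. Choose $x_k \to x_0$ in this set and $y_k \in Yu(x_k) \subset \overline{\Omega^*}$; boundedness of $\Omega^*$ gives $y_k \to y_0 \in \overline{\Omega^*}$ along a subsequence, and the closure convention defining $Yu$ at boundary points (together with local uniform convergence from semiconvexity) yields $y_0 \in Yu(x_0)$, so $x_0 \in Xv(y_0) \subset Xv(\overline{\Omega^*})$.

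The main obstacle is that $Yu(\mathcal{N})$ can carry positive Lebesgue measure, which would doom any naive ``symmetric difference'' argument. The rescue is entirely measure-theoretic: the assumption that $\mu_u$ equals $f\,dx$ forces the $f^*$-integral of $Yu(\mathcal{N})$ to vanish, and this is what powers the duality argument. The boundary-gap $E^* \setminus Yu(\Omega)$ is handled by the same idea applied at the global scale via mass balance.
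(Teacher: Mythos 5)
Your argument for \eqref{eq:w:gjestar} is correct and reorganizes the paper's proof: rather than the paper's three preliminary steps (the involution $v^* = u$ on $\Omega$, the measure-preservation identity $\int_{E^*} f^* = \int_{Yu^{-1}(E^*)} f$, and then $Xv(E^*) \cap \Omega = Yu^{-1}(E^*)$), you decompose $Yu(F)$ against $E^*$ directly, absorbing one discrepancy into the null non-differentiability set $\mathcal{N}$ and the other into mass balance. That is a legitimate and slightly more streamlined route, though the ingredients are ultimately the same. One step in your pointwise duality is under-justified: from $x_0 \in Xv(y_0) \cap \Omega$ you immediately write $v(y_0) = g^*(x_0,y_0,u(x_0))$, but the support parameter in the definition of $Xv(y_0)$ is a priori some $u_0 \in J$, not $u(x_0)$. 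You get $u_0 \le u(x_0)$ from the definition of $v$; the reverse inequality needs an extra line --- either invoke the involution $v^* = u$, or pick any $y' \in Yu(x_0)$, apply Lemma~\ref{lem:g:gstarsubdiff} to get $v(y') = g^*(x_0,y',u(x_0))$, and compare with the support inequality $v(y') \ge g^*(x_0,y',u_0)$.

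The genuine gap is in your treatment of the measure-zero half of (2BVP*). The claim that ``$Xv(y) \subset \overline{\Omega}$ holds tautologically because the supremum defining $v$ is taken over $\Omega$'' is not correct: $Xv(y)$ is the set of all $x_0 \in U$ for which some $g^*(x_0,\cdot,u_0)$ supports $v$ at $y$, and this coincides with the argmax set of the defining supremum only where $v$ is differentiable. At non-differentiability points $Xv(y)$ can be strictly larger and escape $\overline{\Omega}$; in the Legendre-transform case with $\Omega$ the planar annulus $\{1 < |x| < 2\}$ and $u(x) = |x|^2$, one finds $\partial v(0) = \overline{B}_1$, which contains the entire hole $B_1$ disjoint from $\overline{\Omega}$. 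The measure-zero condition can indeed be rescued (the offending $y$ lie inside the null set where $v$ fails to be differentiable), but that observation, or the paper's more robust Lebesgue-point argument using $f>0$ and the just-established \eqref{eq:w:gjestar}, is what actually carries the step --- it is not tautological, and the fact that the hypothesis $f > 0$ plays no role in your stated reasoning for this part should have been a warning that something was being swept under the rug.
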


\begin{proof} The idea is that heuristically $Xv = Yu^{-1}$. The proof is all about dealing with the measure 0 sets which prevent this from being more than heuristic. \\
  \textit{Step 1. The $g$-transform is an involution (on the domain of definition of $u$)}\\
  We have
  \begin{align*}
    v(y):= \sup_{x \in \Omega}g^*(x,y,u(x)),\\
    v^*(x) := \sup_{y \in V}g(x,y,v(y)).
  \end{align*}
  Take $x \in \Omega$ and $y \in Yu(x)$. From Lemma \ref{lem:g:gstarsubdiff} we have $x \in Xv(y)$ and $v(y) = g^*(x,y,u(x))$. The same argument, this time with $v$ and $v^*$, implies
  \[v^*(x) = g(x,y,v(y)) = g(x,y,g^*(x,y,u(x))) = u(x).\]

  \textit{Step 2. $Yu$ is measure preserving}\\
  We fix an $E^* \subset \Omega^*$ and aim to show
  \begin{equation}
    \label{eq:w:meas_pres}
    \int_{E^*}f^* =  \int_{Yu^{-1}(E^*)}f,
  \end{equation}
  where $Yu^{-1}(E^*) = \{x \in \Omega ; Yu(x) \cap E^* \neq \emptyset\}.$ Let $E_u$ denote the measure 0 set of points at which $u$ is not differentiable. Then because $u$ is an Aleksandrov solution
  \[ \int_{Yu(Yu^{-1}(E^*)\setminus E_u)}f^* = \int_{Yu^{-1}(E^*)\setminus E_u} f = \int_{Yu^{-1}(E^*)}f,\]
  and, provided the left most integral equals $\int_{E^*}f^*$, \eqref{eq:w:meas_pres} follows. For this we note that, with $\mathcal{Z}$ as defined in Lemma \ref{lem:g:sharedy}, there holds
  \begin{equation}
    \label{eq:w:set_rel}
       E^*\setminus[\mathcal{Z}\cup Yu(E_u)] \subset Yu(Yu^{-1}(E^*)\setminus E_u) \subset E^*.  
  \end{equation}
  Indeed if $y$ is in the left most set then $y \in E^* \subset \Omega^*$ so by the generalized second boundary value problem $y \in Yu(x)$ for some $x \in \overline{\Omega}$. However based on the sets we have excluded this is the only $x$ for which $y \in Yu(x)$ and, furthermore, $Yu(x)$ is a singleton. That is, $x \in Yu^{-1}(E^*)\setminus E_u$ and so $y \in Yu(Yu^{-1}(E^*)\setminus E_u)$. If instead $y \in Yu(Yu^{-1}(E^*)\setminus E_u)$ then $y = Yu(x)$ where $x$ satisfies that $Yu(x)$ is a singleton in $E^*$. So \eqref{eq:w:set_rel} holds and \eqref{eq:w:meas_pres} follows by integrating $f^*$ over the sets in \eqref{eq:w:set_rel}.

  \textit{Step 3. Integrals over $Xv(E^*)$ agree with those over $Yu^{-1}(E^*)$}\\
  Now we show for each $E^* \subset \Omega^*$ that
  \begin{equation}
    \label{eq:w:int-rel}
     \int_{Xv(E^*)} f = \int_{Yu^{-1}(E^*)}f.
  \end{equation}
  We recall $f$ is extended as 0 outside $\Omega$. Thus it suffices to show $Xv(E^*)\cap\Omega = Yu^{-1}(E^*)$. Indeed if $x \in Xv(E^*)\cap \Omega$ then $x \in Xv(y)$ for $y \in E^*$. So $y \in Yv^*(x)$ and because $v^* \equiv u$ on $\Omega$ we have $x \in Yu^{-1}(E^*)$. Alternatively, if $x \in Yu^{-1}(E^*)$ then $x \in \Omega$ and $Yu(x)$ contains $y \in E^*$. Thus $x \in Xv(y) \subset Xv(E^*)$. This shows \eqref{eq:w:int-rel} and completes the proof that $v$ is an Aleksandrov solution of \eqref{eq:w:gjestar}. 

  \textit{Step 4. $v$ is a generalised solution of \eqref{eq:w:2bvpstar}}\\
Now suppose $f > 0$ a.e on $\Omega$ and take $x \in \Omega$. Then, for every $\epsilon > 0$, the set $\{x' \in B_\epsilon(x); f(x') > 0\}$ has positive measure. So, because $u$  is an Aleksandrov solution satisfying the generalized second boundary value problem, there is a sequence $x_k \rightarrow x$ with $Yu(x_k) =y_k$ a singleton in $\overline{\Omega^*}$. In particular $x_k \in Xv(y_k)$ so that after taking a subsequence we see $x \in Xv(\overline{y})$ for some $\overline{y}\in \overline{\Omega^*}$.  This proves the first part of the definition of generalized solution.

  Next we show 
  \begin{equation}
    \label{eq:w:meas-0-cond}
     |\{y \in \Omega^*; f^*(y) > 0 \ \text{ and } \ Xv(y) \setminus \overline{\Omega} \neq \emptyset\} | = 0.
  \end{equation}
  Take $y$ in this set. Either $v$ is not differentiable at $y$, or there is a neighbourhood, $B_\epsilon(y)$ such that for almost all  $y' \in B_\epsilon(y)$ we have $Xv(y')$ is also disjoint from\footnote{If not there is a sequence of $y_k \rightarrow y$ with $x_k \in Xv(y_k)\cap \overline{\Omega}$ so that $x_k \rightarrow x = Xv(y) \in \overline{\Omega}$, a contradiction.} $\overline{\Omega}$. In the latter case
  \[ 0 = \int_{Xv(B_\epsilon(y))}f = \int_{B_\epsilon(y)}f^*,\]
 Dividing by $|B_\epsilon|$ and sending $\epsilon \rightarrow 0$ we see  $y$ is not a Lebesgue point of $f^*$.  We've shown $y$ must lie in one of two sets of measure 0. This gives \eqref{eq:w:meas-0-cond}.
\end{proof}

\begin{remark}\label{rem:w:brenier}
  If $u$ is $g$-convex and satisfies for every $E^* \subset \Omega^*$
  \begin{equation}
    \label{eq:w:bren-def}
     \int_{Yu^{-1}(E^*)}f = \int_{E^*}f^*,
  \end{equation}
  then $u$ is called a Brenier solution of \eqref{eq:g:gje}\index{Brenier solution}. The above proof shows, under our hypothesis, Aleksandrov solutions are Brenier solutions. Specifically we are using that $f,f^*$ are extended outside $\Omega,\Omega^*$ as 0. These assumptions, along with $f,f^*>0$ on $\Omega,\Omega^*$ respectively, imply the converse: Brenier solutions are Aleksandrov solutions. Indeed if $u$ is a Brenier solution then by \eqref{eq:w:int-rel} and \eqref{eq:w:bren-def} $v$ is an Aleksandrov solution of \eqref{eq:w:gjestar}. Then a straight forward modification of step 4 of the previous proof, using now that $v$ is an Aleksandrov solution, implies $v$ is a generalised solution of \eqref{eq:w:2bvpstar}. Then using the dual of Lemma \ref{lem:w:dualsolutions} $u$ is an Aleksandrov solution of \eqref{eq:g:gje} subject to the generalised second boundary value problem. 

  Explicitly, by extending $f^*$ as 0 outside $\Omega^*$ any $g$-Monge--Amp\`ere mass outside $\Omega^*$ is ignored. When $f^*$ is nonzero outside $\Omega^*$ the convexity conditions of Theorem \ref{thm:w:g2bvp} are needed to ensure Brenier solutions are Aleksandrov solutions. 
\end{remark}

We now prove an existence result for finite approximations of the dual problem. 
\begin{lemma} \label{lem:w:finiteexist}
  Suppose $u_0 \in J$ is such that $u_0+K_0 \text{diam}(\Omega) \in J$. Let $\mu_0 = \sum_{i=0}^Nf_i\delta_{x_i}$ for $x_0,\dots,x_N \in \Omega$ with $f_0 > 0$. Assume $f^* \in L^1(\Omega^*)$ is a positive function satisfying 
  \[ \int_{\Omega^*}f^* = \sum_{i=0}^Nf_i = \mu_0(\Omega).\]
  There exists a $g^*$-convex function $v:\overline{\Omega^*}\rightarrow\mathbf{R}$ such that for each $i=1,\dots,N$
  \[ \int_{Xv^{-1}(x_i)}f^*(y) \ dy = f_i. \]
  Moreover the function $g^*(x_0,\cdot,u_0)$ is a support of $v$ at some $y \in \Omega^*$.
\end{lemma}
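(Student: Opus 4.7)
The plan is to construct $v$ explicitly as a maximum of $g^*$-affine functions anchored at the prescribed points $x_0, \ldots, x_N$, following the scheme of Caffarelli--Oliker as extended by Trudinger. For $\mathbf{u} = (u_1, \ldots, u_N) \in \mathbf{R}^N$, with $u_0$ fixed as in the statement, I would set
\[ v_{\mathbf{u}}(y) := \max_{0 \leq i \leq N} g^*(x_i, y, u_i), \qquad y \in \overline{\Omega^*}, \]
which is $g^*$-convex since it is a max of $g^*$-affine pieces. Define the contact sets $W_i(\mathbf{u}) = \{y \in \overline{\Omega^*} : v_{\mathbf{u}}(y) = g^*(x_i, y, u_i)\}$ and the quantities $\mu_i(\mathbf{u}) = \int_{W_i(\mathbf{u})} f^*(y)\,dy$. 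The $W_i$ cover $\overline{\Omega^*}$ and, by the dual form of Lemma \ref{lem:g:sharedy}, overlap only on a set of Lebesgue measure zero, so $\sum_i \mu_i(\mathbf{u}) = \int_{\Omega^*} f^* = \sum_i f_i$ for every admissible $\mathbf{u}$. The task is to find $\mathbf{u}^*$ such that $\mu_i(\mathbf{u}^*) = f_i$ for $i = 1, \ldots, N$; the $i=0$ identity then comes for free from mass balance.

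The first step is to record the crucial monotonicity and continuity: since $g^*_u = 1/g_z < 0$, the function $u_i \mapsto g^*(x_i, y, u_i)$ is strictly decreasing, so $u_i \mapsto \mu_i(\mathbf{u})$ is strictly decreasing on the set where $W_i$ is nontrivial, and $u_i \mapsto \mu_j(\mathbf{u})$ is non-decreasing for $j \neq i$. Continuity of $\mu_i$ in $\mathbf{u}$ follows from the dual version of Lemma \ref{lem:w:weak_conv} together with the absolute continuity of $f^* dy$. The bound $u_0 + K_0 \operatorname{diam}(\Omega) \in J$ combined with A5 ensures $g^*(x_i, \cdot, u_i)$ stays in the admissible range throughout the argument, since $g^*$-convex competitors are $K_0$-Lipschitz.

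The second step is the Caffarelli--Oliker iteration. Start with $u_i^{(0)}$ very negative for $i \geq 1$, so that $g^*(x_i, \cdot, u_i^{(0)})$ dominates and $\mu_i(\mathbf{u}^{(0)}) \geq f_i$ for all $i \geq 1$. Then, cyclically in $i = 1, \ldots, N$, replace $u_i$ by the unique value making $\mu_i = f_i$ (possible by monotonicity, continuity, and the boundary behaviour $\mu_i \to 0$ as $u_i \to +\infty$, $\mu_i \to \int_{\Omega^*} f^*$ as $u_i \to -\infty$). This produces non-decreasing sequences $\{u_i^{(k)}\}_k$; they are bounded above because, with $u_0$ fixed, the competitor $g^*(x_0, \cdot, u_0)$ holds $v_{\mathbf{u}}$ down, preventing $W_i$ from collapsing too fast (making any $u_i \to \infty$ would force $\mu_i \to 0 < f_i$). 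Passing to limits $u_i^{(k)} \to u_i^*$ and using continuity of $\mu_i$ gives $\mu_i(\mathbf{u}^*) = f_i$ for all $i \geq 1$.

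Finally, set $v = v_{\mathbf{u}^*}$. By mass balance $\mu_0(\mathbf{u}^*) = f_0 > 0$, hence $W_0 \cap \Omega^*$ has positive Lebesgue measure (using $f^* > 0$ on $\Omega^*$), so $g^*(x_0, \cdot, u_0)$ supports $v$ at some interior point, yielding the last conclusion. The identity $\int_{Xv^{-1}(x_i)} f^* = f_i$ then reduces to showing $Xv^{-1}(x_i)$ coincides with $W_i$ up to a Lebesgue-negligible set, which follows from the dual of Lemma \ref{lem:g:sharedy}. I expect the main technical obstacle to be the boundedness of the iterates $u_i^{(k)}$ and the careful handling of admissibility (i.e.\ that all the $g^*(x_i, \cdot, u_i)$ remain defined on $\overline{\Omega^*}$ throughout the iteration); this is where the hypotheses $u_0 + K_0 \operatorname{diam}(\Omega) \in J$ and A5 are indispensable.
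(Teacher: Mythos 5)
Your proposal takes a genuinely different route from the paper. The paper implements Caffarelli--Oliker variationally: it introduces the subsolution family $\mathcal{V}$ (those $v_{\mathbf{u}}$ with $\mu_i \leq f_i$ for $i \geq 1$, together with the $g^*(x_0,\cdot,u_0)$ support condition), shows $\mathcal{V}$ is nonempty with the underlying $\mathbf{u}$'s bounded via A5, and then minimizes $H(v_{\mathbf{u}}) = \sum_i u_i$; a first-variation argument shows that at the minimizer no $\mu_i$ can be strictly less than $f_i$. You instead propose a cyclic Gauss--Seidel type sweep. Both routes use the same ingredients --- monotonicity of each $\mu_i$ in $u_i$ and $u_j$, stability under convergence, and the A5 bound $|u_i - u_0| \leq K_0 \operatorname{diam}(\Omega)$ --- so the sweep is a legitimate alternative in principle, and arguably closer in spirit to the original Minkowski-problem iteration.

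There is, however, a concrete gap in your initialization. You assert that taking all $u_i^{(0)}$ very negative (for $i \geq 1$) gives $\mu_i(\mathbf{u}^{(0)}) \geq f_i$ for all $i \geq 1$. This is not true in general. Sending all $u_i \to -\infty$ together only forces $\mu_0 \to 0$; the excess mass $\sum_i f_i$ is then distributed among $\mu_1,\dots,\mu_N$ according to the geometry of the mutually competing $g^*(x_i,\cdot,u_i)$'s, not according to the prescribed $f_i$'s. (If $f^*$ is concentrated where the piece anchored at $x_1$ wins, one can have $\mu_1 \gg f_1$ while $\mu_2 < f_2$.) Since your monotone scheme needs $\mu_i \geq f_i$ before each adjustment raises $u_i$, the claimed non-decreasing behaviour of the iterates, and hence their convergence, does not follow from the stated starting point --- and note that "all pieces dominate'' is not well-defined since they would have to dominate each other. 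The standard fix, which aligns with what the paper's minimization secretly does, is to start from the other end: choose $u_i^{(0)}$ so large (permitted by A5 and the hypothesis $u_0 + K_0\operatorname{diam}(\Omega) \in J$) that $v_{\mathbf{u}^{(0)}} \equiv g^*(x_0,\cdot,u_0)$, hence $\mu_i(\mathbf{u}^{(0)}) = 0 \leq f_i$ for all $i \geq 1$. Each sweep then lowers the $u_i$, the sequences are non-increasing, and after any full sweep $\mu_0 \geq f_0 > 0$, so $g^*(x_0,\cdot,u_0)$ stays a support on a set of positive measure; applying $g(x_i,y,\cdot)$ at such a contact point $y$ gives the A5 lower bound $u_i \geq g(x_i,y,g^*(x_0,y,u_0))$ needed for convergence. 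The rest of your argument --- stability under the limit via the dual of Lemma \ref{lem:w:weak_conv}, the final identification of $Xv^{-1}(x_i)$ with $W_i$ modulo the null set from Lemma \ref{lem:g:sharedy}, and the support conclusion from $\mu_0 = f_0 > 0$ --- is sound.
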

\begin{proof}
  It is convenient to work with functions indexed by\footnote{In this section, and this section only, we use bold face for elements of $\mathbf{R}^{N+1}$ so as to avoid confusion with functions.} a vector $\mathbf{u} \in \mathbf{R}^{N+1}$. Indeed, let $\mathbf{u} = (u_0,u_1,\dots,u_N)$ where $u_0$ is fixed, and $u_1,\dots,u_N \in J$. Define 
\begin{align}
 \nonumber  v_{\mathbf{u}}&:\overline{\Omega^*}\rightarrow\mathbf{R}\\
    v_\mathbf{u}(y) &:= \sup\{g^*(x_i,y,u_i); i = 0,\dots,N\}.
    \label{eq:w:vdef}
  \end{align}
  The function $v_{\mathbf{u}}$ is $g^*$-convex since it is a supremum of $g^*$-affine functions with $u_i \in J$. Thus the (multi-valued) mapping $Xv$ is well defined. We consider the set $\mathcal{V}$ consisting of functions $v=v_\mathbf{u}$ for some $\mathbf{u}$ which satisfy:
  \begin{enumerate}
  \item The function $g^*(x_0,\cdot,u_0)$ is a support at some $y \in \Omega^*$.\\
  \item For $i=1,\dots,N$ there holds $ \int_{Xv^{-1}(x_i)}f^*(y) \ dy \leq f_i$.\\
    \item $\int_{Xv^{-1}(x_0)}f^*(y)  = \mu_0(\Omega) - \sum_{i=1}^n\int_{Xv^{-1}(x_i)}f^*(y) \ dy $.
    \end{enumerate}

Condition 3 is superfluous for functions of the form \eqref{eq:w:vdef}, but it will be helpful to have taken special note of it.

    We claim $\mathcal{V}$ is nonempty. Indeed for a particular choice of $u_1,\dots,u_N$ sufficiently large we obtain  $v_{\mathbf{u}}(\cdot) = g^*(x_0,\cdot,u_0)$ which is clearly in $\mathcal{V}$. To check such a choice of $u_1,\dots,u_N$ is possible note we must have, for each $y \in \Omega^*$
    \[ g^*(x_i,y,u_i) \leq g^*(x_0,y,u_0).\]
    Applying $g(x_i,y,\cdot)$ to both sides it suffices to ensure
    \[ u_i \geq g(x_i,y,g^*(x_0,y,u_0)).\]
    However because, by condition A5, $g(x_i,y,g^*(x_0,y,u_0)) \leq K_0\text{diam}(\Omega)+u_0 \in J$ (via a Taylor series), such a choice is always possible. 

    Since we now know $\mathcal{V}$ is nonempty we may pick any element $v_{\tilde{\mathbf{u}}}$ and further restrict our attention to the set $\tilde{\mathcal{V}}$ defined as the set of $v_{\mathbf{u}}$ lying above $v_{\tilde{\mathbf{u}}}$, that is those satisfying $u_i \leq \tilde{u_i}.$
    In addition,  the condition that $g^*(x_0,\cdot,u_0)$ is a support implies a lower bound on $u_1,\dots,u_n$.  That is, there is some $y \in \Omega^*$ with
    \[ g^*(x_i,y,u_i) \leq g^*(x_0,y,u_0) \quad \text{ for }i=1,2,\dots,N. \] By applying $g(x_i,y,\cdot)$ to both sides we see
    \[ u_i = g(x_i,y,g^*(x_i,y,u_i)) \geq g(x_i,y,g^*(x_0,y,u_0)).\]
    A lower bound for the $u_i$ follows by taking the infimum over $x_i$ and $y\in\Omega^*$.

    Thus the set of $\mathbf{u}$ with $v_{\mathbf{u}} \in \tilde{\mathcal{V}}$ is bounded. Define $H:\tilde{\mathcal{V}}\rightarrow\mathbf{R}$ by
    \[ H(v_{\mathbf{u}}) = \sum u_i.\]
    Then there exists a sequence $\{\mathbf{u}^k\}_{k=1}^\infty\subset \mathbf{R}^{N+1}$ such that $v_{\mathbf{u}^k} \in \tilde{\mathcal{V}}$ and
    \[ H(v_{\mathbf{u}^k}) \rightarrow \inf_{v_\mathbf{u} \in \tilde{\mathcal{V}}}H(v_\mathbf{u}).\]
    Up to a subsequence $\mathbf{u}^k $ converges to some $ \overline{\mathbf{u}}$. We show $v_{\overline{\mathbf{u}}}$ is the desired function.

    Necessarily $v_{\mathbf{u}^k}\rightarrow v_{\overline{\mathbf{u}}}$ and since these functions are semiconvex this convergence is locally uniform. Moreover the resulting function satisfies condition 1 and condition 3.

    We argue that $v_{\mathbf{\overline{u}}}$ satisfies condition 2 as follows. Let $E_v$ denote the set of points where $v_{\overline{\mathbf{u}}}$ is not differentiable. We claim
    \begin{equation}
    X^{-1}_{v_{\overline{\mathbf{u}}}}(x_i)\setminus E_v   \subset  \bigcup_{j=1}^\infty\bigcap_{k=j}^\infty X^{-1}_{v_{\mathbf{u}^k}}(x_i).\label{eq:w:Xident}
   \end{equation}
   Indeed if $y$ is in the left hand side we have $v_{\overline{\mathbf{u}}}(y) > g(x_j,y,\overline{u}_j)$ for $j \neq i$. Via locally uniform convergence $v_{\mathbf{u}^k}(y) > g(x_j,y,u^k_j)$ for all $k$ sufficiently large. Subsequently for all such $k$ the $g^*$-support of $v_{\mathbf{u}^k}$ at $y$ is $g^*(x_i,y,u^k_i)$ proving \eqref{eq:w:Xident}. It follows that $v_{\overline{\mathbf{u}}}$ satisfies condition 2 (cf. \eqref{eq:w:conv2},\eqref{eq:w:reasoning}).

Now suppose $v_{\overline{\mathbf{u}}}$ is not the desired solution. Then for some $i \neq 0$ we have
    \begin{equation}
      \label{eq:w:strictineq}
            \int_{Xv^{-1}(x_i)}f^*(y) \ dy < f_i.  
    \end{equation}

    The new function obtained by replacing $\overline{u}_i$ with ${u}_i -\epsilon$ for $\epsilon$ sufficiently small strictly decreases $H$ and, we claim, still lies in $\tilde{\mathcal{V}}$. To check this new function still lies in $\tilde{\mathcal{V}}$ we note condition 3 and \eqref{eq:w:strictineq} implies $g^*(x_0,\cdot,y_0)$ is a support of $v_{\overline{\mathbf{u}}}$ on a set of positive measure. Thus provided $\epsilon$ is taken small enough our new function still has $g^*(x_0,\cdot,y_0)$ as a support on a set of positive measure. Condition 2 is also satisfied since decreasing $u_i$ decreases $\int_{Xv^{-1}(x_k)}f^*(y) $ for $k \neq i$, and, for small enough $\epsilon$, \eqref{eq:w:strictineq} is still satisfied. 
 \end{proof}

 Theorem \ref{thm:w:weakexist} now follows from Lemmas \ref{lem:w:dualsolutions} and \ref{lem:w:finiteexist}.

 \begin{proof} [Proof (Theorem \ref{thm:w:weakexist}).]
   With all quantities as in Theorem \ref{thm:w:weakexist} we  consider a sequence of approximating problems for $K \in \mathbf{N}$. Begin by dividing the domain $\Omega$ into a finite number of Borel sets with positive measure and diameter less than $1/K$. Call these sets $\omega^K_i$ for $i=0,1,\dots,N$ for some $N \in \mathbf{N}$. Take $x_i^K \in \omega^K_i$ and relabel as necessary to ensure $x_0 = x_0^K \in \omega^K_0$. Set
   \[ f_i^K = \int_{\omega^K_i}f(x) \ dx,\]
   and use these to define $\mu_0^K = \sum_{i=0}^Nf_i^K\delta_{x^K_i}$.
   Employ Lemma \ref{lem:w:finiteexist} to solve the dual problem for some function $v_K$.

   Since $v_k$ is a sequence of bounded $g^*$-convex functions, equicontinuous by condition A5,  up to a subsequence they converge uniformly to some function $v$.
   We check that $v$ satisfies
   \begin{enumerate}
   \item There is $\overline{y} \in \overline{\Omega^*}$ such that $v(\overline{y}) = g^*(x_0,\overline{y},u_0)$ and for all other $y \in \Omega^* $there holds $v(y) \geq  g^*(x_0,y,u_0)$.
     \item For every $E \subset \Omega$ there holds $\int_{Xv^{-1}(E)}f^* = \int_E f$.  
     \item $v$ is a generalized solution of \eqref{eq:w:2bvpstar}. 
     \end{enumerate}
     For the first let $y_k$ satisfy $v_k(y_k) = g^*(x_0,y_k,u_0)$, noting elsewhere $v(y) \geq g^*(x_0,y,u_0)$, and take $k \rightarrow \infty$ using uniform convergence.

    The second point follows because $Xv^{-1}$ satisfies the same convergence properties  as $Xv$ (Lemma \ref{lem:a:1}) and $\mu^K_0 $ converges weakly to the measure $\mu$ defined by $\mu(E) = \int_E f$. As noted in Remark \ref{rem:w:brenier} this implies that infact $v$ is an Aleksandrov solution of \eqref{eq:w:gjestar}.
     
 For the third take any $x \in \Omega$. There is a sequence $x^K_i \rightarrow x$ with $x^K_i \in Xv(y_K)$ for some $y_K$. Subsequently $x \in Xv(\lim y_K)$ (again by uniform convergence). That is, $\Omega\subset Xv(\overline{\Omega^*})$. Furthermore, by construction, if $v$ is differentiable at $y$ then $Xv(y) \in \overline{\Omega}$. Thus the second requirement to be a generalized solution to \eqref{eq:w:2bvpstar} is satisfied.

 Hence using Lemma \ref{lem:w:dualsolutions} (though in the dual form) we obtain that the $g$-transform is an Aleksandrov solution of \eqref{eq:g:gje} with $u_0 = u(x_0)$. 
\end{proof}

Let's conclude this section with the proof of Theorem \ref{thm:w:g2bvp}. Our proof is taken from \cite[Lemma 5.1]{MTW05}. 

\begin{proof}[Proof (Theorem \ref{thm:w:g2bvp}).] We now suppose $u$ is a generalized solution of \eqref{eq:g:gje} with $f > 0$ on $\Omega$ and $\Omega^*$ is $g^*$-convex with respect to $u$. We show $Yu(\Omega) \subset \overline{\Omega^*}$. First we show if $Yu(x)$ is a singleton then $Yu(x) \in \overline{\Omega^*}$. Indeed suppose, for a contradiction, $Yu(x) \notin \overline{\Omega^*}$. Let $E_u$ be the points where $u$ is not differentiable. We claim there is $\epsilon > 0$ such that $Yu(B_\epsilon(x)\setminus E_u)$ is disjoint from $\overline{\Omega^*}$. If not there is a sequence $x_k\rightarrow x$ with $Yu(x_k)$ a singleton in $\overline{\Omega^*}$. Then $Yu(x)  = \lim_{k \rightarrow \infty} Yu(x_k) \in \overline{\Omega^*}$. Since $f^*$ is 0 outside $\overline{\Omega^*}$ we have
  \[ \int_{B_\epsilon(x)}f = \int_{B_\epsilon(x)\setminus E_u} f =\int_{Yu(B_\epsilon(x)\setminus E_u)}f^* = 0 .\]
  This contradicts $f>0$ on $\Omega$.

  Now if $Yu(x)$ is not a singleton, then $\partial u(x)$ is a convex set containing more than one point. Let $p$ be an extreme point of $\partial u(x)$. There is (as in Theorem \ref{thm:g:y-sub}) a sequence $x_k \rightarrow x$ with $u$ differentiable at $x_k$ and $Du(x_k)\rightarrow p$. We note $Yu(x_k) = Y(x_k,u(x_k),Du(x_k)) \in \overline{\Omega^*}$ so $Y(x,u(x),p) \in \overline{\Omega^*}$. Thus, by the $g^*$-convexity of $\Omega^*$, $\{p; Y(x,u(x),p) \in \overline{\Omega^*}\}$ is a convex set containing the extreme points of $\partial u(x)$. Subsequently it contains $\partial u(x)$ and $Yu(x) = Y(x,u(x),\partial u(x)) \subset \overline{\Omega^*}$.
  
\end{proof}

\section{Comparison principle}

Maximum and comparison principles are central to elliptic PDE. However, when our differential operator depends on the lowest order term $u$ in an unkown way, these results are lost. A cornerstone of GJE is that despite the unwieldy $u$ dependence the convexity theory yields comparison principles. \index{comparison principle}

\begin{lemma}\label{lem:w:comp}
  Let $u,v$ be $g$-convex functions with $u = v$ on $\partial \Omega$. If $u \leq v$ in $\Omega$ then $Yv(\Omega)\subset Yu(\Omega)$.
\end{lemma}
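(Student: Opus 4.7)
The strategy is to take any $y_0 \in Yv(\Omega)$ and ``slide'' the corresponding $g$-affine support of $v$ until it becomes a $g$-support of $u$. Concretely, pick $x_0 \in \Omega$ with $y_0 \in Yv(x_0)$ and set $z_0 = g^*(x_0, y_0, v(x_0))$, so that $g(\cdot, y_0, z_0) \leq v$ on $\Omega$ with equality at $x_0$. Because $u \leq v$ in $\Omega$ and $u = v$ on $\partial\Omega$, the $g$-affine function $g(\cdot, y_0, z_0)$ lies below $u$ on $\partial\Omega$, while at $x_0$ we have $g(x_0, y_0, z_0) = v(x_0) \geq u(x_0)$.

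Next I would exploit $g_z < 0$ on the compact set $\overline{\Gamma}$ to slide this support downward by increasing $z$: since $g_z \leq -c < 0$ uniformly, $g(\cdot, y_0, z)$ decreases uniformly in $z$, so for sufficiently large $z$ we have $g(\cdot, y_0, z) \leq u$ on $\overline{\Omega}$. Define
\[
 z^* = \inf\{z \geq z_0 : g(\cdot, y_0, z) \leq u \text{ on } \overline{\Omega}\}.
\]
By continuity the infimum is attained and there is a contact point $x_1 \in \overline{\Omega}$ with $g(x_1, y_0, z^*) = u(x_1)$.

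The proof then splits into two cases. If $x_1 \in \Omega$, then $g(\cdot, y_0, z^*)$ is a $g$-support of $u$ at $x_1$, so $y_0 \in Yu(x_1) \subset Yu(\Omega)$. If instead $x_1 \in \partial\Omega$, then $u(x_1) = v(x_1)$ forces $g(x_1, y_0, z^*) = v(x_1) \geq g(x_1, y_0, z_0)$, and strict monotonicity of $g$ in $z$ gives $z^* \leq z_0$; combined with $z^* \geq z_0$ this yields $z^* = z_0$. Then $g(\cdot, y_0, z_0) \leq u$ on $\overline{\Omega}$, while at $x_0$ we already have $g(x_0, y_0, z_0) \geq u(x_0)$, so equality holds at $x_0 \in \Omega$ and $y_0 \in Yu(x_0)$. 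In both cases $y_0 \in Yu(\Omega)$, completing the proof.

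The main obstacle I expect is the bookkeeping needed to ensure that the slid-down function $g(\cdot, y_0, z^*)$ is still a legitimate support in this paper's convention, that is, that the containment $g(\overline{\Omega}, y_0, z^*) \subset J$ survives the sliding. The upper bound $g(\cdot, y_0, z^*) \leq g(\cdot, y_0, z_0) < \sup J$ is automatic from $z^* \geq z_0$ and the support property of $v$; the lower bound is more delicate and would be handled by combining the pointwise identity $g(x_1, y_0, z^*) = u(x_1) \in J$ at the contact point with the uniform $g_x$-bound of A5, which controls the oscillation of $g(\cdot, y_0, z^*)$ across $\overline{\Omega}$.
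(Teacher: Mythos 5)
Your proof is correct and follows essentially the same path as the paper: take a support of $v$ at $x_0$, slide it downward (increase $z$) until it first touches $u$, and observe that the boundary condition $u=v$ together with $g_z<0$ forces the contact point to be interior (or already at $x_0$ if no sliding is needed). The worry in your last paragraph is unnecessary: in this paper's convention, the requirement $g(\overline{\Omega},y_0,z)\subset J$ is built into the definition of a $g$-convex function, so it is automatic once the support relation $u\geq g(\cdot,y_0,z^*)$ with equality at an interior point is established; the paper sidesteps your explicit $\inf$/$z^*$ bookkeeping by writing $z=\sup_{x\in\Omega}g^*(x,y_0,u(x))$, whose well-definedness follows directly from $u(\overline{\Omega})\subset J$.
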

\begin{proof}
  Take $y_0 \in Yv(\Omega)$ then $g(\cdot,y_0,z_0)$ supports $v$ at some $x_0 \in \Omega$. If $g(\cdot,y_0,z_0) \leq u$ in $\Omega$ then, because $u \leq v$, we have 
\[g(x_0,y_0,z_0) \leq u(x_0) \leq v(x_0) = g(x_0,y_0,z_0),\]
  so that these are equalities and subsequently $y_0 \in Yu(x_0)$.  Otherwise shift the support until it supports $u$. Explicitly, we set $z = \sup_{x \in \Omega}g^*(x,y_0,u(x))$, which is well defined because $u(\overline{\Omega}) \subset J$. Then for each $x \in \Omega$
  \[ u(x) = g(x,y,g^*(x,y,u(x))) \geq g(x,y,z),\]
  with equality at some interior contact point. (Interior because we have shifted down by a positive amount.) Thus $y \in Yu(\Omega)$. 
\end{proof}

\begin{corollary}\label{cor:w:comp}
  Let $u,v$ be $g$-convex functions on $\Omega$ with $u\leq v$ on $\partial \Omega$. Assume for $f^*\in L^1_{\text{loc}}(V)$ that, in the Aleksandrov sense,
  \[ f^*(Yu)\det DYu > f^*(Yv)\det DYv.\]
  By this we mean for all Borel $E \subset \Omega$ of positive measure
  \begin{align}
    \label{eq:w:aleks-ineq}
    \int_{Yu(E)}f^* > \int_{Yv(E)}f^*. 
  \end{align}
  Then $u \leq v$ in $\Omega$.
\end{corollary}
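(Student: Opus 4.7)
The plan is to argue by contradiction. Suppose $u(x_0) > v(x_0)$ for some $x_0 \in \Omega$ and set
\[ E := \{x \in \Omega : u(x) > v(x)\}. \]
Because $u,v$ are semiconvex (Remark \ref{rem:g:semiconvex}) and in particular continuous, $E$ is a nonempty open set. Since $u \leq v$ on $\partial \Omega$, the set $E$ cannot ``touch'' the boundary in a nontrivial way: continuity of $u-v$ together with $u > v$ inside $E$ and $u \leq v$ on $\partial \Omega$ forces $u = v$ at every point of $\partial E$ (whether that point lies in $\Omega$ or on $\partial \Omega$). In particular $E$ is a bounded open set on whose boundary $u = v$.

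Next I would apply Lemma \ref{lem:w:comp} on the domain $E$, with the roles of $u$ and $v$ swapped. Concretely, $v$ and $u$ are both $g$-convex on $E$ (the restriction of a $g$-convex function to any subdomain is still $g$-convex, and the ``very nice'' condition $g(\overline{E},y_0,z_0) \subset J$ is inherited from $g(\overline{\Omega},y_0,z_0) \subset J$), they agree on $\partial E$, and $v \leq u$ throughout $E$. Lemma \ref{lem:w:comp} then gives
\[ Yu(E) \subset Yv(E). \]
Consequently
\[ \int_{Yu(E)} f^* \; \leq \; \int_{Yv(E)} f^*. \]

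On the other hand, $E$ is a nonempty open subset of $\Omega$ and hence has positive Lebesgue measure, so the Aleksandrov-sense hypothesis \eqref{eq:w:aleks-ineq} applied with this choice of Borel set yields
\[ \int_{Yu(E)} f^* \; > \; \int_{Yv(E)} f^*, \]
which is a direct contradiction. Hence $E$ must be empty, i.e.\ $u \leq v$ throughout $\Omega$.

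The routine ingredients are the continuity of $u,v$ and the reduction to Lemma \ref{lem:w:comp} on the subdomain $E$; the only genuinely delicate point is the boundary-behavior check that $u = v$ on all of $\partial E$, which I would handle exactly as above by separately examining points of $\partial E \cap \Omega$ (limits from inside $E$ force $u \geq v$, limits from outside $E$ force $u \leq v$) and points of $\partial E \cap \partial \Omega$ (where the boundary hypothesis gives $u \leq v$ while the interior limit gives $u \geq v$).
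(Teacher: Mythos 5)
Your argument is essentially identical to the paper's: both reduce to the set where $u>v$, observe that $u=v$ on its boundary, invoke Lemma~\ref{lem:w:comp} (with the roles of $u,v$ swapped) to get $Yu\subset Yv$ there, and then contradict \eqref{eq:w:aleks-ineq}. The only cosmetic difference is that the paper works with a single maximal connected component of $\{u>v\}$ rather than the full open set $E$ — this is just to stay within the paper's convention that a ``domain'' (and hence the $\Omega$ of Lemma~\ref{lem:w:comp}) is connected; since the lemma applies to each component and $Yu(E)=\bigcup_i Yu(E_i)$, your version is equivalent.
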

\begin{proof}
  If not we may consider a maximal connected component of the set $\{ u > v\}$. Call this component $\Omega'$. Note $\Omega' \subset \Omega$ with $u = v$ on $\partial \Omega'$. Our previous lemma implies $Yv(\Omega') \supset Yu(\Omega')$. This contradicts \eqref{eq:w:aleks-ineq} with $E= \Omega'$. 
\end{proof}

We consider extensions to uniqueness results proper in Chapter \ref{cha:u}. 

\section{Interior regularity}
\label{sec:interior-regularity}

Initially Aleksandrov solutions have no regularity beyond what is guaranteed by $g$-convexity. In this section we show if $g$ satisfies A3w  the only real hindrance to regularity is a lack of strict $g$-convexity. That is, we show if $u$ is a strictly $g$-convex Aleksandrov solution and $f,f^*$ are $C^2$ then $u \in C^3(\Omega)$. Higher regularity follows from the elliptic theory provided $g,f,f^*$ have higher regularity.  

The result is proved using standard tools of elliptic PDE: the method of continuity and apriori estimates.  The estimates follow from Pogorelov type arguments. These arguments are a powerful method for obtaining $C^2$ estimates. However their proofs are tedious and we use a number of these estimates.  So we save their proofs for Chapter \ref{chap:r2}.\index{pogorelov estimates}

\begin{theorem}\label{thm:w:pogorelov_boundary}
 Let $u \in C^4(\Omega) \cap C^2(\overline{\Omega})$ be an elliptic solution of
  \begin{align}
   \label{eq:r2:mate-dirichlet} \det [D^2u -A(\cdot,u,Du)] &= B(\cdot,u,Du) \text{ in }\Omega\\
  \label{eq:r2:dirichlet}  u &= \phi \text{ on }\partial \Omega. 
  \end{align}
  where $A,B$ are $C^2$, $A$ satisfies A3w and $B>0$. Assume there exists a barrier $\underline{u} \in C^2(\overline{\Omega})$ satisfying $\underline{u} = \phi$ on $\partial \Omega$ along with
  \begin{align}
\label{eq:r2:underu2}   D^2\underline{u} - A(\cdot,u,D\underline{u}) &\geq 0,\\
  \label{eq:r2:underu1}  \text{ and }\quad\det[D^2\underline{u} - A(\cdot,u,D\underline{u})] &\geq B(\cdot,u,D\underline{u}).
  \end{align}
    Then there is $C$ depending only on $\Omega,\underline{u},A,B,\Vert u \Vert_{C^1(\Omega)}$ such that
\[   \sup_{\partial \Omega}|D^2u| \leq C. \] 
\end{theorem}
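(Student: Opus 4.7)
The plan is to follow the standard three-step boundary Pogorelov scheme for Monge--Amp\`ere type equations. We fix an arbitrary point $x_0 \in \partial \Omega$, introduce local coordinates in which $\partial \Omega$ is flattened to $\{x_n = 0\}$ near $x_0$, and obtain bounds on (i) pure tangential second derivatives $D_{\alpha\beta}u$ for $1 \leq \alpha,\beta \leq n-1$, (ii) mixed tangential--normal derivatives $D_{\alpha n}u$, and (iii) the pure normal derivative $D_{nn}u$. Once we have (i) and (ii), the bound (iii) follows from the equation.

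For step (i), the Dirichlet condition $u = \phi$ on $\partial \Omega$ means $u - \phi$ vanishes to first order tangentially along the boundary. Differentiating this relation twice in tangential directions and using the formula expressing the boundary Hessian of $u-\phi$ in terms of the boundary second fundamental form and the normal derivative of $u-\phi$, we recover $D_{\alpha\beta}u$ on $\partial \Omega$ in terms of $\phi$, the curvatures of $\partial \Omega$, and $D_n(u-\phi)$. Since $\Vert u \Vert_{C^1}$ and $\phi$ are controlled, this yields $|D_{\alpha\beta}u(x_0)| \leq C$.

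Step (ii) is the main obstacle, and here the role of the barrier $\underline{u}$ is essential. We work with the linearised operator
\[
L w := F^{ij}\bigl(D_{ij}w - D_{p_k}A_{ij}(\cdot,u,Du)\,D_k w\bigr),
\]
where $F^{ij}$ are the cofactors of $D^2 u - A(\cdot,u,Du)$; differentiating \eqref{eq:r2:mate-dirichlet} in $x_k$ and using A3w in the form \eqref{eq:g:a3w-equiv} produces controlled estimates for $L$ applied to the quantities $D_k u$ and $D_k(u - \phi)$. For a fixed tangential direction $\tau$ at $x_0$, the function $D_\tau(u - \phi)$ vanishes on $\partial \Omega$, and using \eqref{eq:r2:underu2} together with the concavity of $\det^{1/n}$ one checks that on $\Omega$
\[
L(u - \underline{u}) \leq -c_0 \sum_i F^{ii} + C,
\]
for a positive constant $c_0$ depending on the eigenvalue gap that \eqref{eq:r2:underu1} produces between $D^2\underline u - A$ and $D^2 u - A$. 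This lets us assemble an auxiliary barrier $w = a_1 (u - \underline{u}) + a_2 |x - x_0|^2 \pm D_\tau(u - \phi)$ on a small half-ball $\Omega \cap B_\rho(x_0)$ for which $Lw \leq 0$ and $w \geq 0$ on $\partial(\Omega \cap B_\rho(x_0))$, after tuning $a_1,a_2,\rho$ in terms of the given data. The maximum principle then gives $w \geq 0$ in $\Omega \cap B_\rho(x_0)$, and evaluating $D_n w(x_0)$ using $w(x_0)=0$ yields $|D_{\alpha n}u(x_0)| \leq C$.

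Step (iii) is obtained from the equation together with the bounds from (i) and (ii). On $\partial\Omega$ the Hessian $D^2u - A(\cdot,u,Du)$ has an $(n-1)\times(n-1)$ tangential block whose entries are now controlled; combining the subsolution inequality \eqref{eq:r2:underu1} with the comparison $\underline{u} \leq u$ (which, via the standard maximum principle argument for the linearised operator, forces $D_n\underline u \leq D_n u$ at $x_0$ and in turn a positive lower bound on the tangential block's determinant), we deduce that this tangential block is uniformly positive definite. The equation $\det(D^2 u - A) = B$ with $B$ bounded above then forces an upper bound on $D_{nn}u - A_{nn}$, and hence on $D_{nn}u$, at $x_0$. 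Since $x_0 \in \partial \Omega$ was arbitrary, this gives $\sup_{\partial \Omega}|D^2 u| \leq C$ as required; the heaviest work and the main obstacle is the construction and verification of the linear barrier in step (ii).
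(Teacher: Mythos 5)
Your three-step scheme (tangential, mixed, double-normal) matches the paper's structure, and step (i) is fine. However, there are two genuine gaps.

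The first is in step (ii). You claim
\[
L(u - \underline{u}) \leq -c_0 \sum_i F^{ii} + C,
\]
but this linear estimate does not hold for $p$-dependent $A$. Carrying out the computation: writing $\underline{w} = D^2\underline{u} - A(\cdot,u,D\underline{u})$ and $w = D^2 u - A(\cdot,u,Du)$, a Taylor expansion of $A_{ij}(\cdot,u,D\underline{u})$ about $Du$ gives
\[
L(u - \underline{u}) = F^{ij}\bigl(w_{ij} - \underline{w}_{ij}\bigr) - \tfrac{1}{2}F^{ij}D_{p_kp_l}A_{ij}(\cdot,u,p_\tau)D_k(\underline{u}-u)D_l(\underline{u}-u).
\]
The first term is bounded above by $-\delta F^{ii} + C$ as you intend. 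But after applying A3w in the form \eqref{eq:r1:a3w-use}, the second term is only bounded below by $-\epsilon F^{ii}|\eta|^2 - \frac{C}{\epsilon}F^{ij}\eta_i\eta_j$ where $\eta = D(\underline{u}-u)$. The quantity $F^{ij}\eta_i\eta_j$ is \emph{not} controlled by $F^{ii}$ or by anything else in your list of constants; it can be comparable to $F^{ii}|\eta|^2$, and so the sign of $L(u-\underline u)$ is indeterminate. The way to absorb this term is precisely the exponential: replace $u - \underline{u}$ by $e^{K(\underline{u}-u)}$, whose second derivative produces a good $+K^2 e^{(\cdot)} w^{ij}\eta_i\eta_j$ that dominates $-\frac{KC}{\epsilon}w^{ij}\eta_i\eta_j$ once $K$ is large. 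This is exactly Theorem \ref{thm:r1:lin}, and your barrier should be $\tilde\psi = a\bigl(1 - e^{K(\underline u - u)} + \mu x_n - Kx_n^2\bigr) + b|x|^2$ rather than the linear function you wrote.

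The second gap is in step (iii). Your argument --- ``$D_n\underline u \leq D_n u$ at $x_0$ forces a positive lower bound on the tangential block's determinant'' --- is not correct as stated, since the tangential block of $w$ depends on $A_{\alpha\beta}(\cdot,u,Du)$ evaluated at the full gradient of $u$, and the difference between $D_n u$ and $D_n\underline u$ enters $A_{\alpha\beta}$ nonlinearly. The paper's argument needs the full machinery: consider $W[u] := w_{\alpha\beta}\xi_\alpha\xi_\beta$ at the point $(\bar x,\bar\xi)$ where $W + K|x|^2$ attains its boundary minimum; extend $W$ into the domain and show $L\tilde w \leq C(1 + w^{ii})$ using the codimension-one convexity that A3w gives; then run a barrier argument to get $D_n W(\bar x) \geq -C$, which becomes $A_{\alpha\beta,p_n}\xi_\alpha\xi_\beta D_{nn}u(\bar x) \leq C$; and finally show $A_{\alpha\beta,p_n}\xi_\alpha\xi_\beta$ has a positive lower bound by combining $W[\underline u] \geq \delta$ from \eqref{eq:r2:underu2} with the convexity of $A_{\alpha\beta}\xi_\alpha\xi_\beta$ in $D_n u$. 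None of these ingredients appear in your sketch, and the conclusion you claim does not follow from the inequality $D_n\underline u \leq D_n u$ alone.

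Additionally, your step (i) re-derives the tangential bound through the second fundamental form, whereas the paper simply observes $u=\underline u$ on $T$ (after flattening) so that tangential derivatives of $u$ equal those of $\underline u$, which are controlled. Both work, but yours is more roundabout than necessary given the presence of the $C^2$ barrier.
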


\begin{theorem}\label{thm:w:pogorelov_strictly}
    Assume that $u \in C^4(\Omega) \cap C^2(\overline{\Omega})$ is an elliptic solution of
    \begin{align*}
      \det[D^2u-A(\cdot,u,Du)] = B(\cdot,u,Du) \text{ in }\Omega, \\
      u = g(\cdot,y,z) \text{ on }\partial \Omega,
    \end{align*}
where $A,B$ are $C^2$, $A$ satisfies A3w and $B>0$. 
  Then there exists $\beta,d,C > 0$ such that provided $\text{diam}(\Omega) < d$ we have the estimate 
  \begin{align*}
       \sup_{\Omega}(g(\cdot,y,z)-u)^{\beta}|D^2u| \leq C,
  \end{align*}
  where $C$ depends on $\Omega,A,B,\Vert u \Vert_{C^1(\Omega)}$.

\end{theorem}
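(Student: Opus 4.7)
The plan is to adapt the classical Pogorelov argument from the Monge--Amp\`ere setting, working with the MATE formulation \eqref{eq:g:mate}. First, I would verify that the quantity $\eta := g(\cdot,y,z) - u$ is nonnegative in $\Omega$ and vanishes on $\partial\Omega$. This follows from the ellipticity hypothesis: $u$ is $g$-convex in the classical sense ($D^2 u - A(\cdot,u,Du) > 0$), and hence, by Theorem \ref{thm:g:locglob} (applied on small enough subdomains) together with the maximum principle for the fully nonlinear operator, the $g$-convex function $u$ lies below its $g$-affine boundary data just as a convex function lies below its affine boundary data. Then introduce the test function
\[
W(x,\xi) = \beta \log \eta(x) + \tfrac{\kappa}{2}|Du(x)|^2 + \log\bigl(w_{\xi\xi}(x)\bigr),
\]
where $w_{ij}(x) := u_{ij}(x) - A_{ij}(x,u,Du)$ and $\xi$ ranges over unit vectors, and $\kappa$ is a constant to be chosen. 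Since $W \to -\infty$ on $\partial\Omega$, it attains an interior maximum at some $(x_0, \xi_0)$.

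At $x_0$, rotate coordinates so that $\xi_0 = e_1$ and $w_{ij}(x_0)$ is diagonal. The first-order condition $D_k W = 0$ yields an identity expressing $D_k w_{11}/w_{11}$ in terms of $\beta D_k \eta/\eta$ and $\kappa u_k u_{kk}$, which will be used to substitute for awkward third-order terms later. The second-order condition $(D_{ij} W) \leq 0$, contracted with the positive definite matrix $w^{ij}$ (the inverse of $w_{ij}$, well-defined by ellipticity), produces the master inequality
\[
\beta w^{ij} \tfrac{D_{ij}\eta}{\eta} - \beta w^{ij}\tfrac{D_i\eta D_j\eta}{\eta^2}
+ \kappa w^{ij} u_{ki} u_{kj} + \kappa u_k w^{ij} D_{ij} u_k
+ \tfrac{w^{ij} D_{ij} w_{11}}{w_{11}} - \tfrac{w^{ij}D_i w_{11} D_j w_{11}}{w_{11}^2} \leq 0.
\]
Next I would compute $w^{ij} D_{ij} w_{11}$ by twice-differentiating the equation $\log \det w_{ij} = \log B(\cdot,u,Du)$ in the $e_1$-direction; this produces the concavity identity $w^{ij} D_{11} w_{ij} + w^{ik}w^{jl} D_1 w_{ij} D_1 w_{kl} = D_{11}\log B$, together with the commutator term coming from replacing $D_{ij}w_{11}$ by $D_{11}w_{ij}$, which contributes $-w^{ij} D_{11} A_{ij} = -w^{ij}[A_{ij,11} + 2 A_{ij,k,1} u_{k1} + D_{p_kp_l} A_{ij} u_{k1}u_{l1} + (\text{lower})]$.

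The decisive step, and the main obstacle, is controlling the quadratic form $-w^{ij} D_{p_k p_l} A_{ij} u_{k1} u_{l1}$. Here A3w enters through its consequence \eqref{eq:g:a3w-equiv}: at $x_0$ the diagonal structure gives $w^{ij} u_{k1} u_{l1} \xi_i \xi_j = w^{11} u_{k1} u_{l1}$ (with $\xi = e_1$), and applying \eqref{eq:g:a3w-equiv} to $\xi = e_1$ and $\eta_k = u_{k1}$ yields the estimate $D_{p_k p_l} A_{11} u_{k1} u_{l1} \geq -C |u_{11}| |Du_1|$, in which the only unsigned part has one $u_{11}$ factor that will be absorbed by the positive $\kappa w^{ij} u_{ki} u_{kj}$ provided $\kappa$ is chosen large. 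The remaining third-order terms (both those in $D_1 w_{ij}$ and the cross terms) are handled by the standard Cauchy--Schwarz trick together with the first-order condition to eliminate $D_k w_{11}/w_{11}$ in favour of $D_k \eta / \eta$ and $u_k u_{kk}$; the $|D\eta|^2/\eta^2$ terms generated this way are then absorbed into the negative $-\beta w^{ij} D_i\eta D_j \eta/\eta^2$ term from the master inequality by choosing $\beta$ appropriately.

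After these absorptions we obtain at $x_0$ an estimate of the form $c\,w_{11}(x_0) \leq C\bigl(1 + \eta(x_0)^{-1}\bigr)$, with constants depending only on $A$, $B$, $\Vert u \Vert_{C^1}$, and the diameter. The smallness hypothesis $\mathrm{diam}(\Omega) < d$ is precisely what allows one to dominate the lower-order terms (which blow up like $\eta^{-1}$ in isolation) by the $\eta^\beta$-weight after multiplying through and rearranging, producing $\eta^\beta w_{11} \leq C$ at $x_0$, and hence everywhere by the maximality. Since $w_{11}$ dominates $|D^2u|$ up to $|A|$, and $|A|$ is bounded by $\Vert g_{xx}\Vert_{C^0(\overline{\Gamma})}$, this yields the claimed estimate. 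The bookkeeping of lower-order terms and the careful choice of $\beta, \kappa, d$ is the main technical labour; the conceptual heart is the single application of A3w to absorb the only term with the wrong sign.
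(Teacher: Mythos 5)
Your proposal correctly identifies the Pogorelov framework and the role of A3w in controlling the single bad term $w^{ij}D_{p_kp_l}A_{ij}u_{k1}u_{l1}$, but the test function you write down is missing one essential ingredient, and this produces a genuine gap. Your
\[
W(x,\xi) = \beta \log \eta(x) + \tfrac{\kappa}{2}|Du(x)|^2 + \log\bigl(w_{\xi\xi}(x)\bigr)
\]
has only three terms, whereas the paper's test function (Theorem \ref{thm:r2:pog-local}) has a fourth: a barrier term $\kappa\phi(x)$ with $\phi = |x-x_1|^2$, chosen so that for small enough diameter $L\phi \geq w^{ii}-C$ and $|D\phi|$ is as small as desired. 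That term is not optional. When you contract the second-order maximum condition against $w^{ij}$, the computations of $L(|Du|^2/2)$, $L(\log w_{\xi\xi})$, and $L(\log\eta)$ all throw off negative error terms proportional to $w^{ii}=\operatorname{tr}(w^{-1})$ (for instance, $L(|Du|^2/2) \geq w_{ii} - C(1+w^{ii})$, and $L(\log\eta) \geq -C/\eta - Cw^{ii} - Cw^{ii}(D_i\eta/\eta)^2$). Since $\det w$ is fixed by the equation, a large $w_{\xi\xi}$ forces a small eigenvalue and hence a large $w^{ii}$; these $-Cw^{ii}$ terms are genuinely unbounded and cannot be absorbed by the positive trace term $w_{ii}$ coming from the gradient-squared piece, because the two traces are not comparable ($w^{ii}$ can grow like $(w_{ii})^{n-1}$). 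The only source of a compensating $+\kappa w^{ii}$ is the barrier $\kappa L\phi$. Without it the master inequality at the maximum has an uncontrolled negative $w^{ii}$ coefficient and the argument does not close.

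A related consequence is that you have misattributed the role of the diameter-smallness hypothesis. It is not there to tame the $\eta^{-1}$-type lower-order terms (those are actually what you \emph{want} to keep: they lead directly to $\eta\, w_{11}(x_0) \leq C$ at the maximum, which after adjusting $\beta$ is the conclusion). The smallness of $\operatorname{diam}(\Omega)$ is used twice and only in connection with the barrier: first so that $\phi = |x-x_1|^2$ satisfies $L\phi \geq w^{ii}-C$ at all, and second so that $|D\phi|$ can be made small enough that the term $-C\kappa^2|D\phi|^2 w^{ii}$ (which appears when the first-order condition $DW(x_0)=0$ is used to eliminate $D w_{11}/w_{11}$) is dominated by $+\kappa w^{ii}$. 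To repair the proof, reinstate the $\kappa\phi$ term in $W$, separate the constants for the $\phi$-term ($\kappa$) and the $|Du|^2$-term ($\tau$, say), and then choose in order: the diameter small, $\beta$ large (at least $\tau^2$), $\tau$ large, and finally $\kappa$ large. With the barrier in place your A3w step and the final rearrangement do go through exactly as you sketch.
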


With these estimates in hand we prove the following. 

\begin{theorem}\label{thm:w:regularity}
  Let $u$ be a strictly $g$-convex Alexandrov solution of \eqref{eq:g:gje} subject to \eqref{eq:g:2bvp}. Assume $g$ satisfies A3w, $f,f^*$ are $C^2$ functions satisfying  $0 < \lambda < f,f^* \leq \Lambda \leq \infty$, and $\Omega^*$ is $g^*$-convex with respect to $u$. Then $u \in C^3(\Omega)$.
\end{theorem}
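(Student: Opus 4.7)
The plan is to localize the problem using strict $g$-convexity, construct a smooth solution on a small section via the method of continuity together with the Pogorelov estimates, and finally identify that smooth solution with $u$ via the comparison principle.

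First, I would fix an arbitrary $x_0 \in \Omega$, set $y_0 \in Yu(x_0)$ and $z_0 = g^*(x_0, y_0, u(x_0))$, and consider the section
\[ S_h := \{x \in \Omega : u(x) < g(x, y_0, z_0 - h)\}. \]
By strict $g$-convexity of $u$, for all sufficiently small $h > 0$ the closure of $S_h$ is compactly contained in $\Omega$, and $S_h$ is $g$-convex with respect to $(y_0, z_0 - h)$ by Theorem \ref{thm:g:gconvsection} (which uses A3w). On $\partial S_h$, $u$ coincides with the $g$-affine function $g(\cdot, y_0, z_0 - h)$. Choosing $h$ small enough that $\mathrm{diam}(S_h)$ is below the threshold $d$ of Theorem \ref{thm:w:pogorelov_strictly}, it suffices to prove $u \in C^3(S_h)$ since $x_0$ was arbitrary.

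Next, writing the equation in its MATE form $\det[D^2 w - A(\cdot,w,Dw)] = B(\cdot,w,Dw)$ on $S_h$ with boundary data $w = g(\cdot, y_0, z_0-h)$, I would produce a classical solution $\tilde u \in C^{3,\alpha}(\overline{S_h})$ via a method-of-continuity deformation between the equation of interest and a trivial reference equation (for instance one with right-hand side chosen so that $g(\cdot, y_0, z_0-h)$ itself solves it). The required apriori estimates, uniform along the continuity path, are assembled as follows. The $C^0$ bound comes from comparison with the $g$-affine boundary data. The $C^1$ bound follows from $g$-convexity together with A5, since the image $Y\tilde u(S_h)$ is controlled by the boundary gradient. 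The interior second derivative bound is exactly Theorem \ref{thm:w:pogorelov_strictly}, valid thanks to our choice of $h$. The boundary second derivative bound is Theorem \ref{thm:w:pogorelov_boundary}, with the natural barrier $\underline w = g(\cdot, y_0, z_0-h)$ (verified to be a subsolution along the path after shifting). Evans--Krylov then promotes the $C^2$ bound to $C^{2,\alpha}$, and linear Schauder theory yields $C^{3,\alpha}$. Openness and closedness along the path complete the continuity argument.

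Finally, I would identify $\tilde u$ with $u$. Both are $g$-convex Aleksandrov solutions of the same GJE on $S_h$ with the same boundary data $g(\cdot, y_0, z_0-h)$. Applying Corollary \ref{cor:w:comp} after perturbing the right-hand side of $\tilde u$ by $(1\pm\epsilon)$ and sending $\epsilon \to 0$ forces $u = \tilde u$ in $S_h$, so $u \in C^3(S_h)$. Higher regularity of $f, f^*, g$ gives higher regularity of $u$ by the standard elliptic bootstrap.

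The main obstacle I anticipate is the method-of-continuity step: one must choose a homotopy that preserves ellipticity uniformly and for which a common subsolution barrier exists, so that Theorems \ref{thm:w:pogorelov_strictly} and \ref{thm:w:pogorelov_boundary} apply along the entire path. A secondary delicate point is the uniqueness argument identifying $\tilde u$ with $u$: the comparison principle is stated with strict inequalities on the $g$-Monge--Amp\`ere measures, so the $\pm \epsilon$ perturbation of $f$ must be carried out with care to stay within the class of problems to which the comparison principle applies.
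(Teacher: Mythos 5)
Your overall architecture matches the paper's: localize using strict $g$-convexity, solve a Dirichlet problem by the method of continuity with Pogorelov-type estimates supplying the closedness, then identify the smooth solution with $u$ via the comparison principle (with an $\epsilon$-perturbation to get the strict measure inequality that Corollary~\ref{cor:w:comp} requires). The identification step and the use of Theorem~\ref{thm:w:pogorelov_strictly} on small sections are both exactly right.

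There is, however, a genuine gap in the Dirichlet step, and it is more serious than the difficulties you flag at the end. You propose to run the method of continuity on $S_h = \{u < g(\cdot,y_0,z_0-h)\}$ with boundary data $g(\cdot,y_0,z_0-h)$. But $\partial S_h$ only inherits the regularity of $u$, which a priori is merely Lipschitz. The method of continuity requires the linearised Dirichlet problems to be solvable and the Schauder/Evans--Krylov boundary estimates to apply, and both need a smooth (say $C^{2,\alpha}$ or better) boundary. Producing $\tilde u \in C^{3,\alpha}(\overline{S_h})$ directly is therefore out of reach. The paper avoids this precisely by solving the Dirichlet problem on a ball $B_r(x_0)$ --- which is smooth --- at the cost of using mollified boundary data $u_m$ and passing to the limit $m\to\infty$; the sections only enter afterwards, as the stage on which the interior Pogorelov estimate Theorem~\ref{thm:w:pogorelov_strictly} is applied to the already-smooth approximants $w_m$.

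A secondary problem is that both your proposed start point for the continuity path (a right-hand side tuned so $g(\cdot,y_0,z_0-h)$ solves it) and your proposed barrier $\underline w = g(\cdot,y_0,z_0-h)$ are degenerate: for a $g$-affine function one has $D^2\underline w - A(\cdot,\underline w,D\underline w) \equiv 0$, so $\det$ vanishes and neither the ellipticity required for the start point nor the barrier inequality $\det[D^2\underline w - A(\cdot,u,D\underline w)] \geq B$ holds. The fix, as in the paper's choice $v = u_m + \tfrac{k}{2}(|x|^2 - r^2)$, is to add a uniformly convex correction vanishing on the boundary and take $k$ large; but such a correction exists easily on a ball, whereas on your $S_h$ one would first need a smooth defining function --- returning to the regularity-of-$\partial S_h$ obstruction. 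If you replace $S_h$ by a smooth interior domain with mollified boundary data, your argument becomes essentially the paper's.
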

\begin{proof}
  We solve, via the method of continuity, a family of approximating problems. These have smooth solutions which converge to a solution of \eqref{eq:g:gje}. Using the estimates in Theorem \ref{thm:w:pogorelov_strictly} we show this limiting function, a posteriori our original function, retains this smoothness.

  \textit{Step 1. Construction of approximating problem}\\
  Fix  $x_0 \in \Omega$, without loss of generality $x_0=0$, and put $B_r:=B_r(0)$ for $r$ to be chosen small. Via semiconvexity $u+C|x|^2/2$ is convex for $C$ chosen large, depending only on $g$.  Let $\tilde{u}_m$ denote a sequence of mollifications of $u+C|x|^2/2$ and put
  \[ u_m = \tilde{u}_m-\frac{C}{2}|x|^2.\]
  Because the mollification of a convex function is convex, $D^2u_m \geq -CI$. Moreover by choosing $C$ larger as necessary we have
  \begin{equation}
    \label{eq:w:um-choices}
    |Du_m| +|u_m|\leq C_0 \text{ on }B_r. 
  \end{equation}
  This is possible because $u_m \rightarrow u$ and pointwise $Du_m(x_0) \rightarrow p \in \partial u(x_0)$ with such $p$ bounded by the local Lipschitz property of semiconvex functions. In addition this convergence implies $(x,u_m(x),Du_m(x)) \in \mathcal{U}$ for $m$ sufficiently large. 

  We consider for $\epsilon>0$ small $w_m \in C^{4,\alpha}(\overline{B_r})$ solving
  \begin{align}
    \label{eq:w:approx-eqn} \det DYw_m &= \frac{(1+\epsilon)f(\cdot)}{f^*(Yw_m(\cdot))}  \quad \text{ in }B_r\\
    \label{eq:w:approx-bc} w_m &= u_m \quad\quad\quad\quad\quad\text{ on }\partial B_r.
  \end{align}

  \textit{Step 2. Solvability of \eqref{eq:w:approx-eqn} subject to \eqref{eq:w:approx-bc}}\\
  To begin we show, using the method of continuity, that \eqref{eq:w:approx-eqn} subject to \eqref{eq:w:approx-bc} has a solution $w_m \in C^{4,\alpha}(\overline{B_r})$. As the start point of the method of continuity, set
  \begin{equation}
    \label{eq:v-def}
    v = u_m + \frac{k}{2}(|x|^2-r^2),
  \end{equation}
  for $k$ to be chosen. Using a Taylor series we compute
  \begin{align*}
    \det [D^2v - A(\cdot,v,Dv)] &= \det \big[D^2u_m - A(\cdot,u_m,Du_m)\\
    &\quad+k(I - A_{ij,u}(|x|^2-r^2)-A_{ij,p_k}\cdot x)\big],\\
    B(\cdot,u,Du) &= B(\cdot,u_m,Du_m)+\frac{k}{2}B_u(|x|^2-r^2)+kB_{p_k}\cdot x.
  \end{align*}
  We can ensure, by a choice of first $k$ large then $r$ and subsequently $|x|$ small, that $(x,v(x),Dv(x)) \in \mathcal{U}$ and, in addition,
  \begin{equation}
    \label{eq:w:strict-ineq}
       \det[D^2v-A(\cdot,v,Dv)] \geq B(\cdot,v,Dv)+c,
  \end{equation}
  for some small $c$. Moreover, again by $k$ large then $r$ small, we have $D^2v,D^2v-A(\cdot,v,Dv)\geq0$. This ensures convexity and, by Lemma \ref{thm:g:locglob}, $g$-convexity.  We consider for each $t \in [0,1]$ an elliptic solution $w \in C^{4,\alpha}(\overline{B_r})$ of 
  \begin{align}
    \label{eq:w:6}
    &\det [D^2w - A(\cdot,w,Dw)] = tB(\cdot,w,Dw)\\
 \nonumber   &\quad\quad+ (1-t)\det [D^2v - A(\cdot,v,Dv)]\text{ in }B_r,\\
    &w = u_m \text{ on }\partial B_r.
  \end{align}
  For $t=0$ the problem is solved, by construction, by $v$. We obtain a solution for $t=1$ by showing the set of $t$ for which \eqref{eq:w:6} is solvable is open and closed in $[0,1]$. Closure follows from Arzel\`a--Ascoli and uniform estimates in $C^{4,\alpha}$. These, in turn, follow from $C^2$ estimates along with the Evans-Krylov boundary estimates \cite[Theorem 17.26']{GilbargTrudinger01} and elliptic Schauder estimates (implicit in \cite[Lemma 6.19]{GilbargTrudinger01}). So we're left to obtain the $C^2$ estimates. These follow from Theorem \ref{thm:w:pogorelov_boundary} provided we have $C^1$ estimates. The function $v$ satisfies the barrier requirement for Theorem \ref{thm:w:pogorelov_boundary}. More precisely, for the barrier requirement, note $A,B$ are bounded functions but $D^2v$ is as large as desired (by a choice of $k$ large). Finally, the $C^1$ estimates are as follows. 

  First we show $w \geq v$. Combining \eqref{eq:w:strict-ineq} and we have \eqref{eq:w:6}
  \begin{align*}
    0 &\leq \log\det[D^2v-A(\cdot,v,Dv)] - \log\det[D^2w-A(\cdot,w,Dw)] \\
    &\quad\quad\tilde{B}(\cdot,v,Dv) - \tilde{B}(\cdot,w,Dw)),
  \end{align*}
  where 
  \[ \tilde{B}(x,u,p):= -\log\big[tB(x,u,p)+(1-t)\det[D^2v-A(\cdot,v,Dv)]\big]. \]
So by linearising\footnote{For details on the linearisation see Lemma \ref{lem:u:harn}.}
  \begin{align*}
    0 \leq a^{ij}D_{ij}(v-w) + b^kD_k(v-w)+c(v-w)
  \end{align*}
  where, with $w_\tau := \tau v+(1-\tau)w$
  \begin{align*}
    a^{ij} &= [D^2w-A(\cdot,w,Dw)]^{ij},\\
    b^i &= -a^{ij}A_{ij,p_k}(\cdot,w_\tau,Dw_\tau)+\tilde{B}_{p_k}(\cdot,w_\tau,Dw_\tau),\\
    c &= -a^{ij}A_{ij,u}(\cdot,w_\tau,Dw_\tau)+\tilde{B}_{u}(\cdot,w_\tau,Dw_\tau).
  \end{align*}
  Using the maximum principle on small domains  we obtain $w \geq v$. The precise form of the maximum principle is given in Lemma \ref{lem:a:mp}. It requires a further choice of $r$ small (importantly this choice is independent of $m$), and the observation $\det Dw >c$ and subsequently $\text{trace}{(w)} \geq c$ for a small constant independent of $m$. 

  Now, $w \geq v$ in $B_r$ implies
  \[ Dw(\partial B_r) \subset Dv(\overline{B_r}),\]
  using the convexity of $v$. Indeed we note any tangent plane to $w$ on the boundary is either already a tangent plane to $v$ or can be made one by shifting down and using the convexity. Now, $\det DYw \neq 0$ in $B_r$ implies $|Yw|^2$ cannot have an interior max (at an interior max $DYw = 0$). However  $Yw = Y(x,w(x),Dw(x))$ and both $w$ and $Dw$ are under control on the boundary. To obtain an upper bound for $w$ the positivity of $D^2w-A(\cdot,w,Dw)$ implies
  \[ \Delta w \geq \sum_{i}g_{ii}(\cdot,Yw,Zw).\]
  Thus for $k$ large depending only on $\Vert g \Vert_{C^2}$ we have that $\Delta(w+K|x|^2) \geq 0$ and the maximum principle for Laplace's equation implies $w \leq C$ depending only on $r,K$ and $\sup |u_m|$ which, for $m$ large, can be estimated independently of $m$. We've established estimates for $|Yw|$ and $|w|$ independent of $m$. Writing
  \[ Dw = g_{x}(x,Yw(x),g^*(x,Yw(x),w(x))),\]
  we obtain the desired $\Vert w \Vert_{C^1(\overline{B_r})}$ estimates.

  The set of $t$ for which \eqref{eq:w:6} is open follows by the implicit function theorem in Banach spaces provided we have unique solvability of the associated linearized operators \cite[Theorem 17.6]{GilbargTrudinger01}. The Schauder theory (say in the form \cite[Theorem 6.15]{GilbargTrudinger01}) reduces the solvability to uniqueness. The uniqueness, however, holds by the same maximum principle as before (Lemma \ref{lem:a:mp}). Here again we are using the smallness of the balls $B_r$, though the required choice of $r$ is independent of $m$.

  Thus we conclude the solvability of \eqref{eq:w:approx-eqn} subject to \eqref{eq:w:approx-bc}

  \textit{Step 3. Convergence of approximating solution}\\
  We take $m \rightarrow \infty$. Immediately we obtain, from the uniform $C^1$ bounds, that there is a subsequence of the $w_m$ that converges uniformly to an Aleksandrov solution $w$ of \eqref{eq:w:approx-eqn} with $w =u$ on $\partial B_r$. The comparison principle implies $w \leq u$ in $B_r$ (using crucially the factor $(1+\epsilon)f$). Now fix any $x_0 \in B_r$ and a support $g(\cdot,y_0,z_0)$ of our original solution $u$. Strict $g$-convexity implies for small $h$ the section $G^h_u := \{ u < g(\cdot,y_0,z_0-h)\}$ is strictly contained in $B_r$. Moreover since $w_{m}$ converges to $w \leq u$ the sections $G^h_{w_m},G^h_w$ are also strictly contained (for $m$ large). Theorem \ref{thm:w:pogorelov_strictly} implies local $C^2$ estimates for the $w_m$ that are stable under the convergence. 
  Then we obtain, via Theorem \ref{thm:w:pogorelov_strictly}, interior $C^2$ estimates for $w_m$ independent of $m$. Subsequently via the elliptic regularity theory and Arzela--Ascoli $w \in C^3(B_r)$.
  At this point we send $\epsilon \rightarrow 0$ (our estimates are independent of $\epsilon$) and obtain a $C^3(B_r)$ solution of the Dirichlet problem equal to $u$ on $\partial B_r$.
  
\textit{Step 4. Approximating solution is our original solution (i.e. uniqueness)}\\
We need to show $w \equiv u$. The inequality $w \leq u$ follows from our construction. We suppose for a contradiction $w < u$ somewhere in $B_r.$ Then, provided $\epsilon >0$ is sufficiently small, the same is true for
\[ u_\epsilon := u + \frac{\epsilon}{2}(|x|^2-r^2). \]
That is, $\Omega' := \{w < u_\epsilon\}$ is nonempty and a subset of $B_r$ (not excluding the possibility $\Omega' = B_r$). At any point of $C^2$ differentiability
\begin{align}
\label{eq:w:use-later}  \det DYu_\epsilon &= \det E^{-1}(\cdot,u_\epsilon,Du_\epsilon)[D^2u + \epsilon I - A(\cdot,u_\epsilon,Du_\epsilon)],
\end{align}
and $|u - u_\epsilon|,|Du- Du_\epsilon| \leq C\epsilon r$. Thus the $\epsilon I$ term means provided $r$ was initially chosen small, depending on the derivatives of $f,f^*,$ and $g$, we obtain the strict inequality
\begin{equation}
  \label{eq:w:aleksandrov-contradict}
   f^*(Yu_\epsilon)\det DYu_\epsilon > f^*(Yu) \det DYu. 
\end{equation}
Now the $g$-Monge--Amp\`ere measure is concentrated on the set of points where $u$ is twice differentiable. Here we are using the semiconvexity to convclude the Aleksandrov twice differentiability almost everywhere. Thus if $E_u$ denotes the measure 0 set of points where $u$ is not twice differentiable we have
\[ \int_{Yu(E\setminus E_u)} f^* = \int_{Yu(E)}f^*.\]
So \eqref{eq:w:aleksandrov-contradict} holds in the Aleksandrov sense. Since, in addition, $\Omega'$ is nonempty we contradict Corollary \ref{cor:w:comp} and see we must have $w=u$. This completes the proof of $C^3$ differentiability for $u$. 
\end{proof}

\clearpage{}
\clearpage{}\chapter{Strict convexity and its consequences}
\label{chap:sc}

In the close of the previous chapter we emphasized the importance of strict $g$-convexity. Here we prove the strict $g$-convexity of Aleksandrov solutions of the second boundary value problem. Apart from the two dimensional case, which we pay special attention to, the ideas in this chapter originated in the works of Caffarelli \cite{Caffarelli,Caffarelli92a}. His results were extended to the optimal transport case, by Figalli, Kim and McCann \cite{FKM13}, Guillen and Kitagawa \cite{GuillenKitagawa15}, V\'etois \cite{Vetois15}, and Chen and Wang \cite{ChenWang16}. Each proves the strict $c$-convexity under different hypothesis. Guillen and Kitagawa have extended their results to Generated Jacobian equations \cite{GuillenKitagawa17}.

In this chapter we follow the ideas of Chen and Wang, though adapted to the GJE case, and prove the strict $g$-convexity under their hypothesis. These hypothesis are natural and seem almost optimal: $g^*$-convexity of the target (which is necessary), and that the source domain $\Omega$ is compactly contained in a $g$-convex domain. To emphasize, there is a convexity condition on a domain containing $\Omega$, but no convexity condition on $\Omega$. An immediate corollary is the strict $g$-convexity when $\Omega$ is uniformly $g$-convex and the generating function is defined on a domain of $x$ values containing $\overline{\Omega}$. This is the form in which we'll use the result for applications to global regularity in Chapters \ref{chap:r1} and \ref{chap:r2}. Guillen and Kitagawa have also proved the strict convexity for GJEs. Our domain hypothesis are weaker, but an advantage of their result is it only requires a $C^2$ generating function. 

This is a long chapter. Here's an outline. In Section \ref{sec:sc:transformations-1} we introduce a  transformation  of the coordinates and generating function. These make the generating function asymptotically close to $g(x,y,z) = x\cdot y - z$. In Section \ref{sec:sc:g-cones-subd} we introduce an object called a $g$-cone and derive estimates for its $Y$ mapping. We use these in Section \ref{sec:uniform-estimates} to derive estimates for the size of sections in terms of their height. These estimates are the crucial tool used to derive strict $g$-convexity in Section \ref{sec:sc:strict-conv}. We prove some stronger two dimensional results in Section \ref{sec:sc:2d} and conclude with $C^1$ differentiability as a consequence of strict convexity in Section \ref{sec:sc:furth-cons-strict}.

\section{Transformations}
\label{sec:sc:transformations-1}
In this section we show that in appropriate coordinates the generating function is close to $x\cdot y -z $. Assume $x_0 \in U,y_0 \in V,u_0\in J$ and $h\geq0$ are given. For context, we usually have a $g$-convex function $u:\Omega \rightarrow \mathbf{R}$ and consider $x_0 \in \Omega$ with $y_0 \in Yu(x_0),$ $u_0 = u(x_0)$ and $h$ a shift of the support. Without loss of generality $x_0,y_0,u_0=0$.  Set $z_h=g^*(0,0,h)$. After replacing $g$ by the function which maps  $(x,y,z)$ to $ g(x,y,z+g^*(0,0,h))$ we assume $z_h = 0$. Furthermore by working in the coordinates $y' := E(0,0,0)y$  we have $E(0,0,0) = \text{Id}$. (We recall $E$ is the matrix from assumption A2)

\subsection*{Transformed coordinates}
\label{sec:sc:transf-coord}

Define
\begin{align}
  q(x) &:= g_z(0,0,0)\left[\frac{g_y}{g_z}(x,0,0)-\frac{g_y}{g_z}(0,0,0)\right],\label{eq:sc:xdef}\\
  p(y) &:= g_x(0,y,g^*(0,y,h))-g_x(0,0,0). \label{eq:sc:ydef}
\end{align}
Conditions A1,A1$^*$, and A2  imply $x \mapsto q(x)$ and $y\mapsto p(y)$ are diffeomorphisms, so we may write $q=q(x)$, or $x=x(q)$ as necessary, similarly for $y$ and $p$. The Jacobian of the first transform is
\[ \frac{\partial q_i}{\partial x_j} =  \frac{g_z(0,0,0)}{g_z(x,0,0)}E_{ji}(x,0,0).\]
Because $g_z \det E \neq 0$ on $\overline{\Gamma}$ there is a constant $C_{xy} > 0$ such that for any set $D$ in the $x$ coordinates and $D_q$ its image in the $q$ coordinates there holds
\begin{equation}
  \label{eq:sc:xxbarest}
  C_{xy}^{-1}|D| \leq | D_q|\leq C_{xy}|D|. 
\end{equation}
The same estimate hold for the $y$ to $p$ transformation.

\subsection*{Generating function transformation}
\label{sec:sc:gener-funct-nearly}

Set
\[ \tilde{g}(x,y,z) = \frac{g_z(0,0,0)}{g_z(x,0,0)}[g(x,y,g^*(0,y,h-z)) - g(x,0,0)] ,\]
and subsequently \index[notation]{$\tilde{g}$, \ \ transformed generating function}\index[notation]{$\overline{g}$, \ \ transformed generating function}
\[ \overline{g}(q,p,z) = \tilde{g}(x(q),y(p),z),\]
where $x,q$ and $y,p$ satisfy \eqref{eq:sc:xdef} and \eqref{eq:sc:ydef} respectively. As motivation note in the optimal transport case, where $g(x,y,z) = c(x,y)-z$ is a cost function, we have
\[ \tilde{g}(x,y,z) = [c(x,y)-c(0,y)] - [c(x,0)-c(0,0)]-z,\]
which is a frequently used transformation \cite{FKM13,LTW15,ChenWang16} and is the inspiration for $\overline{g}$. We note a different transformed generating function is used by Jhaveri \cite{Jhaveri17}. 

The key facts concerning the function $\overline{g}$ are summarized in the following lemma.
\begin{lemma}\label{lem:sc:transform_facts}
  Let $g$ be a generating function satisfying A3w. Then:\\
  (1) $\overline{g}$ is a $C^3$ generating function satisfying A3w.\\ 
  (2) A function $u$ is $g$-convex if and only if the corresponding function
  \begin{equation}
    \label{eq:sc:u_transform}
         \overline{u}(q):= \frac{g_z(0,0,0)}{g_z(x(q),0,0)}[u(x(q)) - g(x(q),0,0)], 
  \end{equation}
    is $\overline{g}$-convex. Moreover, with $\overline{Y}$ defined for $\overline{g}$ just as $Y$ was for $g$, we have  $y \in Yu(x)$ if and only if $p \in \overline{Y}\overline{u}(q)$.\\
  (3) We have the following expansion for $\overline{g}$
    \begin{align}
      \label{eq:sc:genexp}  &\overline{g}(q,p,\overline{z}) =q\cdot p-z +a_{ij,kl}(q,p)q_iq_jp_kp_l  \\
     &+z[b_{ij}(q,p)q_iq_j+c_{ij}(q,p)q_ip_j+d_{ij}(q,p)p_ip_j]+f(x,y,z)z^2. \nonumber
    \end{align}
    Here the functions $a$ through $f$ represent remainder terms of Taylor series. Using the integral expression for remainder terms these are $C^1$. 
\end{lemma}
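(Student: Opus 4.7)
The plan is to prove the three claims in order, exploiting that $\overline{g}$ is obtained from $g$ by composing three structure-preserving operations: a vertical shift in $z$ (so that $g^*(0,0,h) = 0$ and $g(0,0,0) = h$), an outer rational normalisation (multiplication by $g_z(0,0,0)/g_z(x,0,0)$ combined with subtraction of $g(x,0,0)$ and the $z$-substitution $z \mapsto g^*(0,y,h-z)$) engineered to trivialise $\tilde g$ and $\tilde g_z$ along the slices $\{y=0\}$ and $\{x=0\}$, and finally the linear coordinate changes $x \leftrightarrow q$ and $y \leftrightarrow p$, whose Jacobians at the origin equal the identity because $E(0,0,0) = I$. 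For part (1), I would verify A0--A2 by direct computation: smoothness is inherited from $g \in C^4$, A1 and A1$^*$ reduce to the corresponding one-jet injectivity for $g$ precomposed with bijections, and A2 follows because $\overline g_z$ differs from $g_z$ by nonzero factors while $\overline E$ differs from $E$ by conjugation with the invertible Jacobians. For A3w I would appeal to its known invariance under fibre-preserving transformations of the $(x, u, p)$-variables, a fact central to the Ma--Trudinger--Wang framework and carried into the generating function setting in \cite{Trudinger14}; the dual-variable analogue Lemma \ref{lem:a:a3w-star} furnishes a template.

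For part (2), the support correspondence is explicit: given a $g$-support $g(\cdot, y, z) \leq u$ with equality at $x_0$, set $\overline z := h - g(0, y, z)$. The positivity of the outer factor $g_z(0,0,0)/g_z(x,0,0)$ together with the identity $g^*(0, y, h - \overline z) = z$ shows $\overline g(\cdot, p(y), \overline z) \leq \overline u$ with equality at $q(x_0)$, and the converse is symmetric; the $Y$-mapping correspondence then follows from A1.

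For part (3), I would first establish four structural identities: $\tilde g(x, 0, 0) \equiv 0$ (because $g^*(0, 0, h) = 0$ makes the bracket equal $g(x, 0, 0) - g(x, 0, 0)$), $\tilde g(0, y, z) \equiv -z$ (because $g(0, y, g^*(0, y, h - z)) = h - z$ and $g(0, 0, 0) = h$), and consequently $\tilde g_z(x, 0, 0) \equiv -1$. Passing to $(q, p)$, these become $\overline g(q, 0, 0) = 0$, $\overline g(0, p, z) = -z$, $\overline g_z(q, 0, 0) = \overline g_z(0, p, 0) = -1$, and combined with the coordinate normalisations $E(0) = I$ and $\partial q/\partial x|_0 = \partial p/\partial y|_0 = I$ they yield $\overline g_{qp}(0) = I$ as well as the stronger identities $\overline g_p(q, 0, 0) = q$ and $\overline g_q(0, p, 0) = p$. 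Together these force every Taylor coefficient $\overline g_{q^m p^n}(0)$ with $\min(m, n) \leq 1$ except $(m, n) = (1, 1)$ to vanish, accounting for the absence of pure $q$, pure $p$, $q^m p$ for $m \geq 2$, and $q p^n$ for $n \geq 2$ terms in the $z^0$ slice, and for the analogous vanishing in the $z^1$ slice. The remainder functions $a, b, c, d, f$ are then manufactured by iterated use of the integral Taylor remainder $F(s) - F(0) = s \int_0^1 F'(ts) \, dt$ applied successively in $z$, $q$, and $p$; the $C^1$ regularity of the coefficients is immediate from the integral form.

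The main obstacle is the preservation of A3w under the combined transformation. The nonlinear $z$-substitution $z \mapsto g^*(0, y, h - z)$ and the outer rational factor both feed into the chain-rule computation for the tensor $D_{p_k p_l} A_{ij}$, and one must verify that the cross terms vanish when tested on $\xi \otimes \eta$ with $\xi \cdot \eta = 0$. Once this invariance is established, the remaining calculations in parts (2) and (3) are careful but mechanical applications of the chain rule and Taylor's theorem.
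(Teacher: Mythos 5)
Your parts (2) and (3), and the A0--A2 verification in part (1), follow essentially the same line as the paper: the explicit support correspondence $\overline z = h - g(0,y,z)$ for (2), direct computation of $\tilde g_x$ and $\tilde g_y/\tilde g_z$ plus the identification of $\overline E$ via $D_P\overline Y$ for A1, A1$^*$, A2, and for (3) the derivation of the Taylor expansion from the structural vanishing $\tilde g(x,0,0)\equiv 0$, $\tilde g(0,y,z)\equiv -z$, $\tilde g_z(x,0,0)\equiv -1$ together with the Jacobian normalisations $E(0)=I$, $\partial q/\partial x|_0 = \partial p/\partial y|_0 = I$. (The paper channels the $z^0$ bookkeeping through the auxiliary cost $c(q,p):=\tilde g(x(q),y(p),0)$ and the moment relations \eqref{eq:sc:r1}--\eqref{eq:sc:r2}, but the content is identical to what you describe.)

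For the A3w claim in part (1), however, you take a genuinely different route than the paper, and your citation needs correcting. The paper states that it found the direct chain-rule verification of A3w difficult and instead invokes the Loeper--Trudinger equivalence (\cite[Theorem 2.1]{LoeperTrudinger21}) of A3w with the Loeper maximum principle, then verifies the latter for $\overline g$ using the explicit formula \eqref{eq:sc:ny}. You propose instead to cite the coordinate invariance (tensoriality) of A3w. That route is valid, and the paper's own footnote acknowledges it as ``more powerful, and conceptually simpler'' --- but it is not established in \cite{Trudinger14} as you state. For cost functions the tensoriality is due to Kim and McCann \cite{KimMcCann10}, and the extension to generating functions is due to Zhang \cite{Zhang18}; one of those is what you want to cite. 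If instead you intend to establish the invariance yourself via the chain rule, note that Lemma \ref{lem:a:a3w-star} is only a loose template: it treats the duality transform $g\mapsto g^*$, not the mixed transform here, which combines $q$- and $p$-coordinate changes, a fibrewise $z$-substitution $z\mapsto g^*(0,y,h-z)$, and an $x$-dependent affine renormalisation of the function itself --- all of which feed into $D_{p_kp_l}A_{ij}$. You flag this as the ``main obstacle'' without resolving whether you are citing or proving; pinning that down is the one thing the proposal leaves genuinely open.

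Two small precision notes on part (3): the claim $\overline g\in C^3$ (not $C^4$) arises because $q(x)$ and $p(y)$ are built from first derivatives of $g$, so the inverse maps $x(q)$, $y(p)$ are only $C^3$; worth saying rather than ``inherited from $g\in C^4$.'' And the ``analogous vanishing in the $z^1$ slice'' must be read carefully: the pure-$q$ and pure-$p$ coefficients of $\overline g_z(\cdot,\cdot,0)$ vanish to all orders (giving $b_{ij}(q,0)=d_{ij}(0,p)=0$), while the $(0,0)$ coefficient is $-1$ and the $(1,1)$ coefficient $c_{ij}(0,0)$ generally survives --- exactly as the paper's expansion \eqref{eq:sc:genexp} records.
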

\begin{proof}
  Point (2) follows from a direct calculation. Indeed any $g$-support, $g(\cdot,y,z)$, of $u$ gives rise to a $\overline{g}$-support of $\overline{u}$ of the form $\overline{g}(\cdot,p(y),\overline{z})$ for $\overline{z}=h-g(0,y,z)$. Similarly a $\overline{g}$ support of a function $\overline{u}$ gives rise to a $g$-support of the function $u$ defined by solving \eqref{eq:sc:u_transform}.  \\
  To show (1) we verify conditions A1,A1$^*$,A2, and A3w. Beginning with A1, we note 
  \begin{align}   
  \label{eq:sc:newgx}  \tilde{g}_x(x,y,z) &= \frac{-g_{xz}(x,0,0)}{g_z(x,0,0)}\tilde{g}(x,y,z) \\
\nonumber   &\quad\quad+ \frac{g_z(0,0,0)}{g_z(x,0,0)}[g_x(x,y,g^*(0,y,h-z))-g_x(x,0,0)],
  \end{align}
  so that \textit{for fixed} $x$ the mapping $(y,z) \mapsto(\tilde{g}(x,y,z),\tilde{g}_x(x,y,z))$ is injective by A1. Similarly for A1$^*$ by computing
  \begin{align*}
    -\frac{\tilde{g}_y}{\tilde{g}_z}(x,y,z) = \frac{1}{g^*_u(0,y,h-z)}\frac{g_y}{g_z}(x,y,g^*(0,y,h-z)) + \frac{g^*_y}{g^*_u}(0,y,h-z),
  \end{align*}
  we see \textit{for fixed} $(y,z)$ the mapping $x\mapsto \frac{\tilde{g}_y}{\tilde{g}_z}(x,y,z)$ is injective by A1$^*$. We use the notation $\overline{Y}(q,U,P),\overline{Z}(q,U,P)$ to denote $\overline{Y},\overline{Z}$ solving
  \begin{align*}
    \overline{g}(q,\overline{Y}(q,U,P),\overline{Z}(q,U,P)) = U,\\
    \overline{g}_q(q,\overline{Y}(q,U,P),\overline{Z}(q,U,P)) =P .
  \end{align*}

  The calculation $\tilde{g}_z<0$ implies $\overline{g}_z<0$. Moreover to check $\det \overline{E} \neq 0$ it suffices to check $ \det D_P\overline{Y} \neq 0$ (recall \eqref{eq:g:e-yp}). However this follows by computing $\overline{Y}$ in terms of $Y$ and subsequently $D_P\overline{Y}$ in terms of $D_pY$. Indeed by direct calculation
  \begin{align}
 \nonumber \overline{Z}(q,U,P) = h - &g\left[0,\overline{Y}(x,U,P),g^*\left(x,\overline{Y}(x,U,P),\frac{g_z(x,0,0)}{g_z(0,0,0)}U+g(x,0,0)\right)\right]\\
   \label{eq:sc:ny} \overline{Y}(q,U,P) &= p\Big[Y\Big(x,\frac{g_z(x,0,0)}{g_z(0,0,0)}U+g(x,0,0),\\
   \nonumber     &\quad\quad\frac{g_z(x,0,0)}{g_z(0,0,0)}\frac{\partial q}{\partial x}P+\frac{g_{x,z}(x,0,0)}{g_z(0,0,0)}U+g(x,0,0)\Big)\Big]
  \end{align}
  So the A2 condition follows. The key point is that, despite the unwieldy expression, $\overline{Y}(q,U,P) = p(Y(x,l_1(U),l_2(P)))$ for some function $l_2(P)$  which is linear in P. 

  We found verifying A3w by direct calculation difficult \footnote{It was pointed out by one of the examiners of this thesis that this result has already appeared in the work of Zhang\cite{Zhang18}. In fact Zhang proves the more powerful, and conceptually simpler result, that A3w is tensorial, and thus coordinate independent (proved for cost functions by Kim and McCann \cite{KimMcCann10}). }
  . However, by the work of Loeper and Trudinger \cite[Theorem 2.1]{LoeperTrudinger21} it suffices to verify that $\overline{g}$ satisfies the Loeper maximum principle. That is we need to verify for each $q,q',U$
  \begin{align*}
   \overline{g}(q',&\overline{Y}(q,U,P_\theta),\overline{Z}(q,U,P_\theta)) \\&\leq \max\{\overline{g}(q',\overline{Y}(q',U,P_0),\overline{Z}(q,U,P_0)),\overline{g}(q,\overline{Y}(q,U,P_1),\overline{Z}(q,U,P_1))\},
  \end{align*}
whenever $\{P_\theta\}_{\theta \in[0,1]} $ is a line segment for which the above quantities are well defined. This follows from a direct calculation using the definition of $\overline{g}$ and \eqref{eq:sc:ny}.
  
Finally we prove the expansion in \eqref{eq:sc:genexp}. First, a Taylor series in $z$, and then another in $(p,q)$ implies
  \begin{align}
  \label{eq:sc:1-ineq}  \overline{g}&(q,p,z) = \overline{g}(q,p,0)+\overline{g}_z(q,p,0)z+\frac{1}{2}\overline{g}_{zz}(q,p,\tau z)z^2\\
\nonumber                        &= \overline{g}(q,p,0) + \overline{g}_z(0,0,0)z+\overline{g}_{q_i,z}(0,0,0)q_iz+\overline{g}_{p_i,z}(0,0,0)p_iz\\
\nonumber    &+z[b_{ij}(q,p)q_ip_j+c_{ij}(q,p)q_jq_i+d_{ij}(q,p)p_jp_i]+f(q,p,z)z^2.
  \end{align}
  Here $b,c,d,f$ arise as Taylor series remainder terms, and are $C^1$ by the integral form of the remainder term. Using $\overline{g}_z(0,p,0) = \overline{g}_z(q,0,0) = -1,$ and subsequently  $\overline{g}_{q_i,z}(0,0,0) = \overline{g}_{p_i,z}(0,0,0) = 0$, this simplifies further to
  \begin{align}
    \nonumber    \overline{g}(q,p,z) &=  \overline{g}(q,p,0)  -z\\
    &\quad\quad+z[a_{ij}(q,p)q_ip_j+b_{ij}(q,p)q_jq_i+c_{ij}(q,p)p_jp_i]+d(q,p,z)z^2. \label{eq:sc:sub-to-fin}
  \end{align}

  We're left to deal with the term $\overline{g}(q,p,0)$. Set $\tilde{c}(x,y) := \tilde{g}(x,y,0)$ and $c(q,p) = \tilde{c}(x(q),y(p))$. The virtue of this highly suggestive notation is that $c$ is a cost function as in optimal transport. That is, by freezing a height we can regard $g$ as a cost function and reuse some calculations from the optimal transport case. Note $\tilde{c}$ satisfies $\tilde{c}_x(0,y) = p$ and $\tilde{c}_y(x,0) = q$ along with $\tilde{c}(x,0) \equiv 0, $ $\tilde{c}(0,y) \equiv 0$ and $\tilde{c}_{i,j}(0,0) = \delta_{ij}$. In particular these imply
  \begin{align}
 \label{eq:sc:r1}   &c(0,0) = 0 &&c_{q}(0,0) = 0 &&&c_{p}(0,0) = 0\\
\label{eq:sc:r2}    &c_{p_i,q_j}(0,0) = \delta_{ij} && c_{qq,p}(q,0) = 0 &&& c_{q,pp}(0,p) = 0.
  \end{align}
  Thus via a Taylor series
  \begin{align}
   \nonumber    c(q,p) &= c(0,0) + c_{q_i}(0,0)q_i + c_{p_j}(0,0)p_j\\
    \label{eq:sc:t1}    &\quad\quad +\frac{1}{2}c_{q_iq_j}(tq,tp)q_iq_j+c_{q_i,p_j}(tq,tp)q_ip_j + \frac{1}{2}c_{p_ip_j}(tq,tp)p_ip_j.
  \end{align}
  Further Taylor series yield 
  \begin{align}
    \label{eq:sc:t2}  c_{q_iq_j}(tq,tp) &=  c_{q_iq_j}(tq,0)+ c_{q_iq_j,p_k}(tq,0)p_k+a^{(1)}_{ij,kl}(q,p)p_kp_l,\\
  \label{eq:sc:t3}     c_{p_ip_j}(tq,tp) &=  c_{p_ip_j}(0,tp)+ c_{q_k,p_ip_j}(0,tp)q_k+a^{(2)}_{ij,kl}(q,p)q_kq_l
  \end{align}
  and also
  \begin{align}
 \nonumber c_{q_i,p_j}(tq,tp) &= c_{q_i,p_j}(0,0) + c_{q_iq_k,p_j}(\tau q,\tau p)q_k + c_{q_i,p_jp_k}(\tau q, \tau p)p_k\\
    \label{eq:sc:t4}    &= c_{q_i,p_j}(0,0) + c_{q_iq_k,p_j}(\tau q,0)q_k +a^{(3)}_{ij,kl}(q,p)q_kp_l \\
                              &\quad\quad+ c_{q_i,p_jp_k}(0, \tau p)p_k + a^{(4)}_{ij,kl}(q,p)q_kp_l.  \nonumber 
  \end{align}
  Using the integral form of the remainder term the $a$ terms are $C^1$. Now combining \eqref{eq:sc:t1}-\eqref{eq:sc:t4} and using the relations \eqref{eq:sc:r1} and \eqref{eq:sc:r2} we obtain
  \begin{align*}
    \overline{g}(q,p,0) = c(q,p) = q \cdot p + a_{ij,kl}q_iq_jp_kp_l.
  \end{align*}
  Substituting into \eqref{eq:sc:sub-to-fin} concludes the proof. 
\end{proof}

\section{$g$-cones}
\label{sec:sc:g-cones-subd}
Cones are a basic tool for obtaining estimates in the theory of Monge--Amp\`ere equations. A similar function was introduced in the optimal transport setting by Figalli, Kim, and McCann \cite[\S 6.2]{FKM13}. The defining feature of this so-called $c$-cone is that its $Y$ mapping is concentrated at a point, that is for some $x_0$ $Yu(x_0) = Yu(\Omega)$. The generalization to $g$-cones is due to Guillen and Kitagawa \cite{GuillenKitagawa17}. In each case we want estimates for the $Y$-mapping of the generalized cone in terms of the measure of its base and its height. These estimates were given by Guillen and Kitagawa. Here we offer an alternate derivation which follows closely that of Chen and Wang though, necessarily, uses the expansion \eqref{eq:sc:genexp}.

Let $u:\Omega \rightarrow \mathbf{R}$ be a $g$-convex function. Assume $x_0 \in \Omega, y_0 \in V$ are given and $u_0:=u(x_0)$. For $h > 0$ set $z_h = g^*(x_0,y_0,u_0+h)$ and assume
\begin{equation}
  \label{eq:sc:D-def}
  D := \{x \in \Omega;u(x) < g(x,y_0,z_h) \} \subset\subset \Omega.
\end{equation}
We define the $g$-cone with vertex $(x_0,u_0)$ and base $\{(x,g(x,y_0,z_h)); x \in \partial D\}$ by  \index{$g$-cone}\index[notation]{$\vee$, $\vee_{D,h}$, \ \ $g$-cone}
\begin{align}
  \label{eq:sc:2vdef}
  \vee(x) &= \sup\{\phi_y(x) := g(x,y,g^*(x_0,y,u_0)) ;\\
  &\quad\quad\quad  \phi_y(x) \leq g(x,y_0,z_h) \text{ on }\partial D\}. \nonumber
\end{align}
This function depends on $x_0,y_0,u_0,h$ and $D$. When we need to emphasize some of these dependencies we  include them as a subscript, e.g. $\vee_{D,h}$ if $x_0,y_0,u_0$ are clear from context. The expression \eqref{eq:sc:2vdef} does not require $D$ arise as a section, like in \eqref{eq:sc:D-def}. However because $D$ is given by \eqref{eq:sc:D-def} we have $\vee = g(\cdot,y_0,z_h)$ on $\partial D$ provided $Yu(\Omega)$ is $g^*$-convex with respect to $x_0,u_0+h$.

Our goal is to estimate $Y \vee_{D,h}(x_0)$ in terms of $D$ and $h$. As in Section \ref{sec:sc:transformations-1} we assume, without loss of generality, that $x_0,y_0,u_0,z_h = 0$. 

Using Lemma  \ref{lem:sc:transform_facts}(2) it suffices to work in the coordinates given by \eqref{eq:sc:xdef}, \eqref{eq:sc:ydef} and estimate the $\overline{Y}$ mapping of
\[ \overline{\vee}(q) := \frac{g_z(0,0,0)}{g_z(x(q),0,0)}[\vee(x(q)) - g(x(q),0,0)].\]
 By direct calculation we see $\overline{\vee}$ is the $\overline{g}$-cone with base $\partial D_q\times \{0\}$ and vertex $(0,-h)$ (recall $D_q$ is the image of $D$ in the $q$ coordinates). Thus
\begin{equation}
  \label{eq:sc:simple}
   \overline{\vee}(q) = \sup\{\phi_p(q) := \overline{g}(q,p,h); \phi_p \leq 0 \text{ on }D_q\},
\end{equation}
and $D_q$ is convex.

To simplify our notation we switch back to $x,y,g,\vee$, though now for a generating function with the expansion \eqref{eq:sc:genexp} and $\vee$ defined as in \eqref{eq:sc:simple}.  

\begin{lemma}\label{lem:sc:upper_est}
  Suppose $g$ is a generating function of the form \eqref{eq:sc:genexp} satisfying A3w and  $D$ is  a convex domain containing 0. Suppose $\vee$ is as defined in \eqref{eq:sc:simple} and $K$ is the cone with vertex $(0,-h)$ and base $\partial D\times \{0\}$. There exists $d_0,h_0,C>0$ such that if $\text{diam}(D) \leq d_0$ and $h \leq h_0$ then
  \begin{equation}
    \label{eq:sc:y-upper}
    Y \vee(0) \subset 2\partial K(0). 
  \end{equation}
Where $C,K$ depend on $\Vert g \Vert_{C^4},\text{diam}(D),\text{diam}(V)$. 
\end{lemma}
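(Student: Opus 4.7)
The plan is to use the near-affine expansion \eqref{eq:sc:genexp} to compare $Y\vee(0)$ with the Euclidean subdifferential $\partial K(0) = \{y : y \cdot x \leq h \text{ for all } x \in \overline{D}\}$ of the standard cone. The factor of $2$ in the target inclusion is slack that will absorb all error terms from \eqref{eq:sc:genexp} once $d_0$ and $h_0$ are chosen small (depending on $\text{diam}(V)$ and the $C^1$ norms of the remainder coefficients $a,b,c,d,f$).

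First I would unpack $y \in Y\vee(0)$: there is a $z_y$ with $g(\cdot, y, z_y)$ supporting $\vee$ at $0$, so $g(0, y, z_y) = \vee(0)$ and $g(x, y, z_y) \leq 0$ on $\overline{D}$ (the latter because $\vee \leq 0$ on $\overline{D}$ follows, via the near-affine form of $g$, from the admissibility condition in \eqref{eq:sc:simple}). Two preliminary scale estimates are then needed: putting $p = 0$ in the defining supremum and applying the expansion gives $\vee(0) \geq g(0,0,h) \geq -(1+\eta) h$ for any prescribed $\eta>0$ after $d_0, h_0$ are taken small; and since $y$ lies in a bounded set, inverting the implicit relation $g(0,y,z_y) = \vee(0)$ yields $|z_y| \leq C h$.

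Next I would substitute \eqref{eq:sc:genexp} into both $g(x, y, z_y) \leq 0$ (on $\overline{D}$) and $g(0, y, z_y) = \vee(0)$ and subtract to cancel the leading $-z_y$, obtaining
\[
x \cdot y \;\leq\; -\vee(0) + R(0, y, z_y) - R(x, y, z_y),
\]
where $R$ collects the remainder terms $a_{ij,kl} x_i x_j y_k y_l$, $z[b x^2 + c xy + d y^2]$ and $f z^2$ from \eqref{eq:sc:genexp}. Each contribution to $R(0, y, z_y) - R(x, y, z_y)$ either vanishes at $x = 0$ or differs from its $x$-evaluation by $O(|x|)$ using the $C^1$ smoothness of the coefficients. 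Combined with $|x| \leq d_0$, $|z_y| \leq C h$, and the a priori bound on $|y|$, this yields $|R(0, y, z_y) - R(x, y, z_y)| \leq \eta h$ once $d_0, h_0$ are small. Together with $-\vee(0) \leq (1+\eta)h$ this produces $x \cdot y \leq 2h$ for every $x \in \overline{D}$, which is exactly $y/2 \in \partial K(0)$.

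The main obstacle is keeping the error bookkeeping clean: the terms in \eqref{eq:sc:genexp} have different scalings in $|x|, |y|, |z|$ and each contribution to the difference must be shown to be $\leq \eta h$ for appropriate $d_0, h_0$. In particular, the term $z\, d_{ij}(x, y) y_i y_j$ carries a potentially large factor $|y|^2$ not paired with $x$; the remedy is that, in the difference, one writes $d_{ij}(0, y) - d_{ij}(x, y) = O(|x|)$ via $C^1$-regularity, restoring the missing $|x|$ factor. Once this is done uniformly for all five remainder coefficients, the conclusion $Y\vee(0) \subset 2\partial K(0)$ follows.
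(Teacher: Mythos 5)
Your plan reduces the inclusion to a Taylor-remainder estimate $|R(0,y,z_y)-R(x,y,z_y)|\le \eta h$, but this estimate is false without additional structure, and the gap cannot be closed by smallness of $d_0,h_0$ alone. The problematic term is
\[
a_{ij,kl}(x,y)\,x_i x_j y_k y_l,
\]
which carries no factor of $z=h$. Under the only bounds you invoke, $|x|\le d_0$ and $|y|\le \mathrm{diam}(V)$, this term is of size $O(d_0^2)$, independent of $h$. Since the lemma allows $h\downarrow 0$ with $d_0$ fixed, $O(d_0^2)\le \eta h$ fails. The $C^1$ regularity of the coefficient $a_{ij,kl}$ cannot rescue this: the term already vanishes at $x=0$, so the finite-difference trick you use on $d_{ij}$ gives nothing new. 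You might hope for a sharper a priori bound $|y|\lesssim h/\mathrm{size}(D)$, which would make this term $O(h^2)$ --- but that bound is essentially the content of the lemma, so the argument would be circular. (Also note: the set $D$ can be a thin slab, so the correct bound on $|y|$ involves the \emph{inradius}, not $\mathrm{diam}(D)$, which makes a clean uniform estimate even harder.) A further red flag: your proof never invokes A3w, which the lemma explicitly hypothesizes; this strongly suggests a purely Taylor-theoretic route is missing a structural input.

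The paper's proof supplies exactly that input. It never attempts a two-sided error estimate. It proves the one-sided inequality $g(x,y,h)\ge \tfrac34\,x\cdot y-\tfrac32 h$ for $x\cdot y>0$, from which the conclusion follows as in your last step. The proof splits $x$ into components parallel and perpendicular to $y$ (after rotating so $y=y_n e_n$): the parallel part $g_{x_n}(x_\tau,y,0)x_n$ is bounded below using the \emph{sign} $x_n y_n>0$, which lets the error coefficient $K|x|$ be absorbed multiplicatively into $x_n y_n$; and the perpendicular part $g(x',y,0)\ge 0$ is proved by a convexity argument --- the section $\{g(\cdot,y,0)<0=g(\cdot,0,0)\}$ is $g$-convex with respect to $(0,0)$, i.e.\ convex in the transformed coordinates, and this is exactly where A3w (via Theorem \ref{thm:g:gconvsection}) is used. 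The perpendicular contribution of the $a$-remainder, which is what defeats your argument, is handled not by estimation but by this convexity statement. To repair your proof you would need to incorporate this step; without it the $a$ term is genuinely uncontrolled.
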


\begin{proof}
    We prove the transformed generating function satisfies
  \begin{equation}
    \label{eq:sc:almost-affine}
     g(x,y,h) \geq \frac{3}{4} x \cdot y - \frac{3h}{2},
  \end{equation}
  for $\vert x \vert,h$ sufficiently small and $x \cdot y ,h>0$,\footnote{we note if $x \cdot y$ or $h<0$ \eqref{eq:sc:almost-affine} holds with $3/4$ replaced by $5/4$ or $3/2$ replaced by $1/2$. }; \eqref{eq:sc:y-upper} is a straightforward consequence. Indeed, take $y \in Y\vee(0)$ and suppose $y \notin 2\partial K(0)$, that is $x \cdot y > 2h$ for some boundary point $x \in \partial D$. By \eqref{eq:sc:almost-affine} $g(x,y,h) > 0$ and so $g(\cdot,y,h)$ can not be a support of $\vee$.

Take $y \in V$ and rotate so $y = (0,\dots,0,y_n)$. Let $x = (x_1,\dots,x_n) \in D$ satisfy $x_ny_n >0$ and set $x'=(x_1,\dots,x_{n-1},0)$. We assume, for now, $g(x',y,0) \geq 0$ (we'll see this is a consequence of LMP). Now, \eqref{eq:sc:genexp} implies
  \begin{equation}
   g_{x_n}(x_\tau,y,0)x_n \geq x_ny_n - K\vert x \vert  x_n y_n ,\label{eq:sc:x-est-cone}
 \end{equation}
  for $x_\tau= \tau x + (1-\tau)x'$ and $\tau \in [0,1]$ where $K$ depends on $\Vert g \Vert_{C^3}$. Since the transformed generating function satisfies $g_z(0,p,0)= -1$ a choice of $\text{diam}(D),h$ sufficiently small implies
  \begin{align*}
   g(x,y,h) \geq g(x,y,0) - \frac{3h}{2}.
\end{align*}
A Taylor series for $h(t):= g(tx+(1-t)x',y,0)$, our assumption $g(x',y,0) \geq 0$, and  \eqref{eq:sc:x-est-cone} imply
\begin{align*}
g(x,y,h) &\geq g(x',y,0)+g_{x_n}(x_\tau,y,0)x_n-\frac{3}{2}h \geq x_ny_n(1- K \vert x \vert ) - \frac{3}{2}h.
  \end{align*}
  Choosing $\text{diam}(D)$ small to ensure $K \vert x \vert  \leq 1/4$ we obtain \eqref{eq:sc:almost-affine}.

  To conclude we show $g(x',y,0) \geq 0$. Since $x' \cdot y = 0$ it suffices to show whenever $x \cdot y > 0$ then $g(x,y,0) \ge 0$ and use continuity.  Note if $x \cdot y > 0$ the expression \eqref{eq:sc:genexp} implies
  \[ g(tx,y,0) > 0 \text{ and } g(-tx,y,0) < 0,\]
  for $t > 0$ sufficiently small. If $g(x,y,0) < 0$ then the $g$-convexity of the section $\{g(\cdot,y,0) < 0 = g(\cdot,0,0)\}$ with respect to $0,0$ (which is just convexity), is violated. So as required $ g(x,y,0) \ge 0$.
\end{proof}

The estimates in the other direction are formulated differently. As motivation consider the rectangle
  \begin{equation}
   R = \{ x \in \mathbf{R}^d; -b_i \leq x_i \leq a_i\},\label{eq:sc:R}
 \end{equation}
  for $a_i,b_i>0$,  and cone
 \[ K(x) = \sup\{ l(x):=p \cdot x - h; l(x) \leq 0 \text{ on }\partial R \}. \]
 Then $\partial K(0)$ contains the points $he_i/a_i,-he_i/b_i$. Thus for
   \begin{equation}
   R^* := \{x \in \mathbf{R}^d; -b_i^{-1} \leq x_i \leq a_i^{-1}\}.\label{eq:sc:Rstar}
 \end{equation}
we have
 \begin{align}
  \label{eq:sc:k-est-1} \partial K(0) &\supset C_n h R^*\\
  \label{eq:sc:k-est-2}   |\partial K(0)| &\geq C_n h^n  \prod_{i=1}^n\left(\frac{1}{b_i}+\frac{1}{a_i}\right). 
 \end{align}
 Next we decrease the base of the cone: consider a domain $D$ with $0 \in D \subset R$ and
 \[ K_D(x) :=\sup\{ l(x):=p \cdot x - h; l(x) \leq 0 \text{ on }\partial D \}. \]
 Because $\partial K(0) \subset \partial K_D(0)$, \eqref{eq:sc:k-est-1} and \eqref{eq:sc:k-est-2} hold for $K_D$. This motivates the following result. 

\begin{lemma}\label{lem:sc:lower_est}
  Suppose $g$ is a generating function of the form \eqref{eq:sc:genexp} satisfying A3w and  $D$ is a convex domain with $0 \in D \subset R$. Let $\vee$ be given by \eqref{eq:sc:simple}. There is $d_0,h_0 > 0$  such that if $\text{diam}(D) \leq d_0$ and $h \leq h_0$ then \eqref{eq:sc:k-est-1} and \eqref{eq:sc:k-est-2} hold with $\vee$ in place of $K$. The quantities $d_0,h_0$ depend on $\text{diam}(V), \Vert g \Vert_{C^4}$, 
\end{lemma}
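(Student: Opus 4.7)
The plan is to exhibit, for each $p$ in a suitable rescaled copy of $R^*$, an explicit $\overline{g}$-affine function that supports $\vee$ at $0$; the containment analogue of \eqref{eq:sc:k-est-1} with a dimensional constant $C_n$ then follows immediately, and \eqref{eq:sc:k-est-2} is obtained from the computation $|R^*| = \prod_i(a_i^{-1} + b_i^{-1})$. I continue to work in the transformed coordinates of Section \ref{sec:sc:transformations-1}, so that $\overline{g}$ satisfies the expansion \eqref{eq:sc:genexp} and $\vee$ is the $\overline{g}$-cone with vertex $(0,-h)$; in particular $\vee(0) = -h$.

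For each candidate slope $p$ with $|p|$ small, define $z_p$ by $\overline{g}(0, p, z_p) = -h$. From \eqref{eq:sc:genexp} at $q = 0$,
\[
\overline{g}(0, p, z) = -z\bigl(1 - d_{ij}(0, p)p_ip_j\bigr) + f(0, p, z)z^2,
\]
which is strictly decreasing in $z$ by A2, so $z_p$ is well defined and an implicit differentiation gives $z_p = h + O\bigl(h^2 + h|p|^2\bigr)$. Set $\phi_p(q) := \overline{g}(q, p, z_p)$, so that $\phi_p(0) = -h = \vee(0)$ by construction. Then $p \in Y\vee(0)$ precisely when $\phi_p \leq \vee$ on $D$, which, since $\vee \equiv 0$ on $\partial D$ and $\vee$ is the supremum of admissible $\overline{g}$-affine competitors, reduces to $\phi_p \leq 0$ on $\partial D$.

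The main step is to verify this admissibility condition for $p \in c_n h R^*$, where $c_n$ is a small dimensional constant to be fixed. Substituting the expansion \eqref{eq:sc:genexp},
\[
\phi_p(q) = q\cdot p - z_p + a_{ij,kl}q_iq_jp_kp_l + z_p\bigl[b_{ij}q_iq_j + c_{ij}q_ip_j + d_{ij}p_ip_j\bigr] + f\,z_p^2.
\]
For $p \in c_n h R^*$, i.e.\ $p_i \in [-c_n h/b_i,\ c_n h/a_i]$, and $q \in R$, the coordinatewise bound $|p_iq_i| \leq c_n h$ yields $q \cdot p \leq nc_n h$. The remaining terms are controlled since $|q| \leq \mathrm{diam}(D) \leq d_0$, $z_p \leq 2h$, and $|p|$ is bounded by a constant depending only on $\mathrm{diam}(V)$ (as $\phi_p$ must correspond to an admissible $y$-value, $p$ lies in the image of $V$ under \eqref{eq:sc:ydef}); the coefficient functions in \eqref{eq:sc:genexp} are $C^1$ on $\overline{\Gamma}$ and hence uniformly bounded. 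Choosing $c_n = 1/(2n)$ and then $d_0,h_0$ small enough makes the error at most $h/4$, so $\phi_p(q) \leq -h/4 < 0$ on $\partial D \subset R$. Hence $Y\vee(0) \supset c_n h R^*$, and \eqref{eq:sc:k-est-2} follows from $|c_n h R^*| = c_n^n h^n \prod_i(a_i^{-1} + b_i^{-1})$, the Jacobian factors from \eqref{eq:sc:xxbarest} being absorbed into $C_n$.

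The main technical obstacle is the uniform control of the error terms, which must hold regardless of the aspect ratio of $R$; this is precisely where the boundedness of admissible $p$-values through $\mathrm{diam}(V)$ is essential, since a priori $|p|$ could be large for asymmetric rectangles with some $a_i$ or $b_i$ very small.
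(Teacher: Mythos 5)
Your approach inverts the paper's: rather than starting from the known supports of $\vee$ (anchored at the extremal points of $D$) and estimating their slopes at $0$, you posit a candidate $p \in c_nhR^*$ and try to verify $\phi_p(q):=\overline{g}(q,p,h) \leq 0$ on $\partial D$ directly. This runs into a gap you do not resolve.

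The central difficulty is the error term $a_{ij,kl}q_iq_jp_kp_l$. Your coordinate-wise bound $q_ip_i \leq c_nh$ controls the product when the $q$-index matches the $p$-index, but it gives nothing for cross terms $q_ip_k$ with $i \neq k$; these can be as large as $\max(a_i,b_i)\cdot c_nh/\min(a_k,b_k)$, which is not $O(h)$ for asymmetric rectangles. Even granting your assertion that $|p| \leq C_V$ (about which more below), the best available bound is $|a_{ij,kl}q_iq_jp_kp_l| \leq C d_0^2 C_V^2$ --- a \emph{fixed} constant independent of $h$. The claim ``choosing $d_0,h_0$ small enough makes the error at most $h/4$'' cannot be true: the error is $O(d_0^2)$ while the budget is $O(h)$, and the lemma must hold for all $h \leq h_0$ at fixed $d_0$, so the error eventually dominates. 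You acknowledge the aspect-ratio issue in your last paragraph but do not actually overcome it.

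Furthermore, the justification ``$|p|$ is bounded by a constant depending on $\mathrm{diam}(V)$ since $p$ lies in the image of $V$ under \eqref{eq:sc:ydef}'' is circular in your setting: you are trying to \emph{prove} that $p$ is the slope of an admissible $\overline{g}$-affine support, and until you have done so you cannot assume it corresponds to a $y$-value in $V$. (In the paper the analogous bound comes for free because the support is constructed first --- its $y$-coordinate $\hat y$ lies in $V$ by definition of the cone --- and only afterwards is its slope at $0$ estimated.)

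The deeper point is that the paper's estimates are \emph{relative}: the perturbation $|g_x(\hat x,\hat y,h) - g_x(0,\hat y,h)|$ is bounded linearly in $|\hat y|$, which via \eqref{eq:sc:haty_est} is comparable to $|g_x(0,\hat y,h)|$, and everything is then divided through by $|g_x(0,\hat y,h)| \geq h/(2a_n)$. This cancellation is what lets $d_0$ and $h_0$ be chosen independently of each other and of the aspect ratio of $R$. Your absolute verification of $\phi_p \leq 0$ on $\partial D$ has no such denominator, so the error cannot be absorbed, and the argument does not close. To repair the proposal you would need to start, as the paper does, from the support guaranteed at the extremal point $\hat x$ of $D$ and deduce the lower bound on $|g_x(0,\hat y,h)|$ from there.
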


\begin{proof}[Proof. (Lemma \ref{lem:sc:lower_est})]
  Using convexity it would suffice to show $\partial \vee(0)$ contains the points $Che_i/a_i$ and $-Che_i/b_i$ for $i=1,\dots,n$. This is beyond us, so instead we show $\partial {\vee}(0)$ contains points close to these points. That is, we show for $q := \kappa h e_n/a_n$ for some $\kappa \geq 1/4$ there is $p \in \partial \vee(0)$  satisfying
  \begin{equation}
    \label{eq:sc:pqrel}
      |p-q| \leq \frac{1}{16}|q|.
  \end{equation}
 Our proof also applies to $\kappa h e_i/a_i$ and  $-\kappa h e_i/b_i$ for $i=1,\dots,n$, so  $ChR^* \subset \partial  \vee(0)$.

  To begin, choose $\hat{x}$ realizing $\hat{x}_n = \sup\{x_n ; x = (x_1,\dots,x_n) \in D\}$. We see, by taking a limit of the $\phi_y$ used in \eqref{eq:sc:simple} to define $\vee$, that there is $\hat{y}$ for which $g(\cdot,\hat{y},h)$ supports $\vee$ at $\hat{x}$ and $0$. In particular, since $\vee = 0$ at $\hat{x}$ and is less than or equal to $0$ on $\partial D$ we have, for  $\hat{y}$ appropriately chosen and some $\beta \geq 0$,
  \begin{equation}
    \label{eq:sc:onxn}
       g_x(\hat{x},\hat{y},h) = \beta e_n. 
  \end{equation}
We'll prove that $p = g_x(0,\hat{y},h)$ and $q = (p\cdot e_n)e_n$ satisfy \eqref{eq:sc:pqrel}. 

Choose $d^*$ so that $g(d^*e_n,\hat{y},h) = 0$. We claim $d^*\leq a_n$. Indeed
\[S:= \{x; g(x,\hat{y},h) < 0 = g(x,0,0)\}  \] is convex because it is a section. Furthermore since $g_x(\hat{x},\hat{y},h) = \beta e_n$ and $g(\hat{x},\hat{y},h) = 0$ the plane  $P:=\{x; x_n = \hat{x}_n\}$ is supporting to $S$. Thus, since $S$ contains $0$ and lies on one side of $P$, $S$ is contained in $\{x;x_n \leq \hat{x}_n\}$ and $d^* \leq a_n$.

  Now \eqref{eq:sc:genexp} implies
  \begin{equation}
    \label{eq:sc:est}
        |g_x(x,\hat{y},h) - g_x(0,\hat{y},h)| \leq C(|x|+h)||\hat{y}| + Kh(h+|x|),
      \end{equation}
      where $C,K$ depend on $\Vert g \Vert_{C^4},$ $\text{diam}(V)$ and we assume $\text{diam}(D) \leq1$. Subsequently
  \begin{align}
  \nonumber  h &= g(d^*e_n,\hat{y},h) - g(0,\hat{y},h)\\
  \nonumber    &= d^*g_{x_n}(\tau d^*e_n,\hat{y},h)\\
   \label{eq:sc:d-ineq} &\leq d^* |g_x(0,\hat{y},h)|+Cd^*(d^*+h)|\hat{y}| + Kh(h+d^*).
  \end{align}
  To estimate $|\hat{y}|$ in terms of $|g_x(0,\hat{y},h)|$ write
  \begin{equation}
    \label{eq:sc:haty_est}
       |\hat{y}| = |g_x(0,\hat{y},0)| \leq |g_x(0,\hat{y},h)| + |g_{xz}(0,\hat{y},\tau h)|h. 
  \end{equation}
  Combining \eqref{eq:sc:d-ineq} and \eqref{eq:sc:haty_est} we have
  \[ h \leq d^*|g_x(0,\hat{y},h)|[1+Cd^*(d^*+h)] + Kh(h+d^*).\]
  We choose $\text{diam}(D)$ and $h$ small to ensure both $(1+Cd^*(d^*+h)) \leq 3/2$ and $K(h+d^*) \leq 1/4$. Combining with $d^* \leq a_n$ yields
  \begin{equation}
    \label{eq:sc:subdiff_est}
   \frac{h}{2a_n} \leq   |g_x(0,\hat{y},h)|. 
  \end{equation}

  Using, once again, \eqref{eq:sc:est} (with $x = \hat{x}$) and \eqref{eq:sc:haty_est} we have
  \[  |g_x(\hat{x},\hat{y},h) - g_x(0,\hat{y},h)| \leq C(|\hat{x}|+h)||g_x(0,\hat{y},h)|+Kh(|\hat{x}|+h) . \]
  Dividing through by $|g_x(0,\hat{y},h)|$, using \eqref{eq:sc:subdiff_est} and choosing $h,|\hat{x}|$ sufficiently small we can ensure 
  \[ \left|\frac{g_x(\hat{x},\hat{y},h)}{|g_x(0,\hat{y},h)|}- \frac{g_x(0,\hat{y},h)}{|g_x(0,\hat{y},h)|}\right| \leq 1/16.\]
  The first vector lies on the $e_n$ axis (recall \eqref{eq:sc:onxn}). Thus the unit vector $\frac{g_x(0,\hat{y},h)}{|g_x(0,\hat{y},h)|}$, and consequently $g_x(0,\hat{y},h)$ make angle $\theta$ with the $e_n$ axis for $\theta$ satisfying $\sin(\theta) \leq 1/16$, i.e. $\theta \leq 1/8$. This, with \eqref{eq:sc:subdiff_est} implies both
  \[ e_n \cdot g_x(0,\hat{y},h) = \cos (\theta)|g_x(0,\hat{y},h)| \geq \frac{h}{4a_n},\]
  and
  \[ |g_x(0,\hat{y},h)-(e_n \cdot g_x(0,\hat{y},h))e_n | \leq \sin (\theta) |g_x(0,\hat{y},h)| \leq \frac{1}{16}|(e_n \cdot g_x(0,\hat{y},h))e_n|,\]
  which is \eqref{eq:sc:pqrel}. \end{proof}

We also have an extension to the case when $x_0$ is close to the boundary. We make use of the minimum ellipsoid (see \cite[\S 2.1]{LiuWang15})\index{minimum ellipsoid} and the following lemma due to Figalli, Kim, and McCann. 
\begin{lemma}\cite[Lemma 6.9]{FKM13}\label{lem:sc:fkm}
  Let $D \subset \mathbf{R}^n$ be a convex domain. Assume $D$ contains a ``vertical'' line segment $\{(x',t_0+t); x' \in \mathbf{R}^{n-1} , t \in [0,d]\}$ of length $d$. Let
  \[D': = \{(x_1,\dots,x_{n-1},0); x = (x_1,\dots,x_n) \in D\}\] be the projection of $D$ onto $\mathbf{R}^{n-1}$. There is $C>0$ depending only on $n$ such that
  \[ |D|\geq  Cd \mathcal{H}^{n-1}(D'),\]
  where $\mathcal{H}^{n-1}$ is the $n-1$ dimensional Hausdorff measure. 
\end{lemma}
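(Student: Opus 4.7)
After translating we may assume the vertical segment of length $d$ in $D$ passes through the origin, so $0 \in D'$. The plan is to show that for each $z' \in D'$ the vertical fiber of $D$ above $z'$ has length at least $(1-\rho(z'))d$, where $\rho$ denotes the Minkowski gauge of the convex set $D' \ni 0$, and then to integrate via Fubini and a polar computation.

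For the fiber estimate, pick for each $y' \in D'$ a lift $y_n(y') \in \mathbf{R}$ with $(y', y_n(y')) \in D$ (for instance, the midpoint of the convex fiber of $D$ above $y'$, a measurable choice). For any $\lambda \in [0,1]$ and $t \in [0,d]$, convexity of $D$ yields
\begin{equation*}
\lambda(y', y_n(y')) + (1-\lambda)(0, t) = \bigl(\lambda y',\, \lambda y_n(y') + (1-\lambda)t\bigr) \in D,
\end{equation*}
so varying $t$ over $[0,d]$ shows the vertical fiber of $D$ above $\lambda y'$ has length at least $(1-\lambda)d$. Given $z' \in D'$, setting $\lambda = \rho(z')$ and $y' = z'/\rho(z') \in \overline{D'}$ writes $z' = \lambda y'$, and hence
\begin{equation*}
\ell(z') := \mathcal{H}^1\bigl(D \cap (\{z'\} \times \mathbf{R})\bigr) \geq (1-\rho(z'))\,d.
\end{equation*}

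By Fubini, $|D| = \int_{D'}\ell(z')\, dz' \geq d \int_{D'}(1-\rho(z'))\, dz'$. Parametrize $D'$ radially by $z' = r\theta$ with $\theta \in S^{n-2}$, $r \in [0, R(\theta)]$ and $R(\theta) := \sup\{r > 0 : r\theta \in D'\}$; then $\rho(r\theta) = r/R(\theta)$ and $dz' = r^{n-2}\, dr\, d\theta$. A direct computation gives
\begin{equation*}
\int_{D'}(1-\rho)\, dz' = \int_{S^{n-2}} R(\theta)^{n-1}\!\int_0^1 (1-s)s^{n-2}\, ds\, d\theta = \frac{1}{n(n-1)}\int_{S^{n-2}}\! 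R(\theta)^{n-1}\, d\theta = \frac{1}{n}\,\mathcal{H}^{n-1}(D'),
\end{equation*}
the last equality using $\mathcal{H}^{n-1}(D') = \tfrac{1}{n-1}\int_{S^{n-2}} R(\theta)^{n-1}\, d\theta$. We conclude $|D| \geq (d/n)\,\mathcal{H}^{n-1}(D')$, which is the claim with $C = 1/n$.

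The entire argument rests on a single ingredient---convexity of $D$---which produces the fiber-length bound via a one-line affine combination; no real obstacle arises. The only delicate point is the measurability of the lift $y' \mapsto y_n(y')$, but this is immediate because the upper and lower boundary functions of $D$, viewed as graphs over $D'$, are respectively concave and convex, hence continuous on the interior of $D'$.
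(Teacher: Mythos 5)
The paper does not prove this lemma; it cites [FKM13, Lemma 6.9] and relies on it as a black box, so there is no in-text argument to compare yours against. Your proof is correct and self-contained: the convex-combination argument gives the fiber bound $\ell(z') \geq (1-\rho(z'))\,d$, Fubini reduces the volume to $\int_{D'}\ell$, and the polar computation correctly evaluates $\int_{D'}(1-\rho) = |D'|/n$ (this is just the cone-volume formula, with the graph of $(1-\rho(\cdot))d$ being the lateral surface of the cone over $D'$ with apex height $d$), yielding $C = 1/n$, which is sharp.

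Two small tidy-ups. First, you define lifts $y_n(y')$ for $y' \in D'$ but then apply the estimate with $y' = z'/\rho(z') \in \overline{D'}$, which may lie on $\partial D'$; either pass to $\overline{D}$ and note that for $z'$ interior to $D'$ the fibers of $D$ and $\overline{D}$ over $z'$ have the same length, or simply take $\lambda \searrow \rho(z')$ with $y' = z'/\lambda \in D'$ and let $\lambda \to \rho(z')$. Second, the closing remark about measurability of the lift is superfluous: the lift is used only to prove a pointwise lower bound on the fiber-length function $\ell$, whose measurability (indeed concavity on $D'$) is automatic from convexity of $D$, so no measurable-selection argument is required.
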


\begin{lemma}\label{lem:sc:lower_est_close}   Suppose $g$ is a generating function  of the form \eqref{eq:sc:genexp} satisfying A3w and $D$ is a convex domain with $0 \in D$. Let $\vee$ be given by \eqref{eq:sc:simple}.  Assume $0$ is close to the boundary, in the sense that there is a unit vector $\nu$ and positive $d$ such that
  \begin{equation}
   \sup_{x \in D}\langle x,\nu \rangle = \epsilon d, \label{eq:sc:close_cond}
 \end{equation}
 and in addition $D$ contains a line segment of length $d$ parallel to $\nu$. There is $C,d_0,h_0 > 0$ depending on $\text{diam}(V), \Vert g \Vert_{C^4}$, such that if $\text{diam}(D) \leq d_0$ and $h \leq h_0$ then
  \[ h^n \leq  C\epsilon |\partial \vee(x_0)||D|. \]
\end{lemma}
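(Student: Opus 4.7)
The strategy is to enclose $D$ in an axis-aligned rectangle $R$ with one axis parallel to $\nu$, apply Lemma~\ref{lem:sc:lower_est} to obtain a lower bound on $|\partial\vee(0)|$ in terms of the side lengths of $R$, and pair this with the projection volume estimate of Lemma~\ref{lem:sc:fkm}. The extra factor of $\epsilon$ will enter only through the ``$\nu$-slot'' of $R$, whose $\nu$-extent on the positive side is $\epsilon d$ by hypothesis \eqref{eq:sc:close_cond}.

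After rotating so that $\nu=e_n$, write $x=(x',x_n)\in\mathbf{R}^{n-1}\times\mathbf{R}$ and let $D'\subset\mathbf{R}^{n-1}$ denote the projection of $D$ onto $\{x_n=0\}$. Applying John's theorem to the convex set $D'$ produces an ellipsoid $E'$ with $E'\subset D'\subset (n-1)E'$. After a further rotation within $e_n^\perp$ we may take $E'$ to be axis-aligned, so the bounding box of $(n-1)E'$ is a rectangle $R'=\prod_{i<n}[-b_i,a_i]$ with $a_i,b_i>0$ (the latter because $0\in\mathrm{int}(D')\subset\mathrm{int}(R')$). A direct volume comparison between $R'$, $E'$, and the inclusion $E'\subset D'$ then gives a dimensional constant $C_n$ with
\[
  \mathcal{H}^{n-1}(R')\,\le\,C_n\,\mathcal{H}^{n-1}(D').
\]
Setting $a_n:=\epsilon d$ and $b_n:=-\inf_{x\in D}x_n$ (which is strictly positive, since no segment of length $d$ parallel to $e_n$ fits inside $\{0\le x_n\le\epsilon d\}$) and $R:=R'\times[-b_n,a_n]$, we have $D\subset R$ and $0\in\mathrm{int}(R)$.

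Lemma~\ref{lem:sc:lower_est} then yields
\[
  |\partial\vee(0)|\,\ge\,Ch^n\prod_{i=1}^n\!\Big(\tfrac{1}{a_i}+\tfrac{1}{b_i}\Big)\,\ge\,\frac{Ch^n}{\epsilon d}\prod_{i<n}\!\Big(\tfrac{1}{a_i}+\tfrac{1}{b_i}\Big)\,\ge\,\frac{4^{n-1}Ch^n}{\epsilon d\,\mathcal{H}^{n-1}(R')},
\]
where I use $1/a_n+1/b_n\ge 1/(\epsilon d)$ and the termwise inequality $(1/a+1/b)(a+b)\ge 4$. Lemma~\ref{lem:sc:fkm}, applied with the length-$d$ segment parallel to $e_n$, gives $|D|\ge Cd\,\mathcal{H}^{n-1}(D')\ge (C/C_n)d\,\mathcal{H}^{n-1}(R')$. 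Multiplying the two inequalities cancels $\mathcal{H}^{n-1}(R')$ and produces $|\partial\vee(0)|\,|D|\ge Ch^n/\epsilon$, which rearranges to the claim.

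The main obstacle is arranging simultaneous coordinate alignment: one axis must coincide with $\nu$ to exploit the narrowness \eqref{eq:sc:close_cond}, while the remaining $n-1$ axes must align with the John ellipsoid of $D'$ so that the bounding rectangle $R'$ is volume-comparable to $\mathcal{H}^{n-1}(D')$. These alignments are orthogonal and can be realized by a single rotation within $\nu^\perp$ after setting $\nu=e_n$, so nothing is lost. A secondary technical matter is that Lemma~\ref{lem:sc:lower_est} requires $\mathrm{diam}(D)\le d_0$ and $h\le h_0$; we simply take the constants $d_0,h_0$ in the present lemma no larger than those provided by Lemma~\ref{lem:sc:lower_est}.
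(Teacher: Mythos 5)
Your proof is correct and follows essentially the same route as the paper: rotate so $\nu=e_n$, project to $D'$, use a John/minimum-ellipsoid box $R'$ to make $\mathcal{H}^{n-1}(R')$ comparable to $\mathcal{H}^{n-1}(D')$, extend to a full box $R$ with top face at height $\epsilon d$, apply Lemma~\ref{lem:sc:lower_est}, and pair with Lemma~\ref{lem:sc:fkm}. (A trivial shortcut: $b_n>0$ follows immediately because $D$ is open and contains $0$, so you need not argue via the length-$d$ segment.)
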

\begin{proof}
  We assume, without loss of generality that $\nu = e_n$. Let \[D' = \{x' = (x_1,\dots,x_{n-1},0); x \in D\}\] be the projection of $D$ onto $\mathbf{R}^{n-1}$. Then up to a choice of the remaining coordinates we assume the minimum ellipsoid of $D'$ (as a subset of $\mathbf{R}^{n-1}$) is
  \[ E':= \{x' = (x_1,\dots,x_{n-1},0); \sum_{i=1}^{n-1}\left(\frac{x_i-\overline{x}_i}{b_i/2}\right)^2 \leq 1\}.\] Then $\mathcal{H}^{n-1}(D') \geq C_n b_1 \dots b_{n-1}$ and
  \[ D \subset [-b_1,b_1]\times \dots \times [-b_{n-1},b_{n-1}]\times [-K,\epsilon d],\]
  for some $K>0$. Then Lemma \ref{lem:sc:lower_est} implies
  \begin{equation}
    \label{eq:sc:cb}
      |\partial \vee(0)| \geq C_n h^n \frac{1}{\epsilon d b_1 \dots b_{n-1}}. 
  \end{equation}
  On the other hand Lemma \ref{lem:sc:fkm} implies
  $|D| \geq C_n d |D'| \geq C_n d b_1 \dots b_{n-1} $, that is,
  \[ \frac{1}{ d b_1 \dots b_{n-1}} \geq \frac{1}{|D|}.\]
  Which combined with \eqref{eq:sc:cb} completes the proof. 
\end{proof}

\section{Uniform estimates}
\label{sec:uniform-estimates}

In this section we consider $g$-convex Aleksandrov solutions of
\begin{align}
\label{eq:sc:caff-style} \lambda&\leq \det DYu \leq \Lambda   \text{ in }D,\\
\label{eq:sc:dir}  u &= g(\cdot,y_0,z_0) \text{ on }\partial D.
\end{align}

Here $\lambda,\Lambda$ are positive constants and $D$ (being a section) is necessarily $g$-convex with respect to $y_0,z_0$.

Our goal is to estimate $|u-g(\cdot,y_0,z_0)|$ in D, that is, estimates on how $u$ separates from its boundary values. Thanks to the $g$-cone estimates our proofs are adaptations of the proofs in the Monge--Amp\`ere case. To use the $g$-cone estimates we assume throughout this section that $\text{diam}(D)$ and $h := \sup |u-g(\cdot,y_0,z_0)|$ are sufficiently small as required by Lemmas \ref{lem:sc:upper_est}, \ref{lem:sc:lower_est} and \ref{lem:sc:lower_est_close}. In practice we will need to check this assumption before using the theorems of this section.

\begin{theorem}\label{thm:sc:upper_uniform}
Assume $g$ is a generating function satisfying A3w.  Assume $u$ is $g$-convex and satisfies \eqref{eq:sc:caff-style}, \eqref{eq:sc:dir}. Then there is $C>0$ independent of $u$ such that
  \begin{equation}
    \label{eq:sc:upper_uniform}
       \sup_{D}|u(\cdot)-g(\cdot,y_0,z_0)|^n \leq C|D|^2.
     \end{equation}
\end{theorem}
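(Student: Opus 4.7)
The plan is to adapt the classical Aleksandrov maximum principle, using the $g$-cone estimates of Section \ref{sec:sc:g-cones-subd} in place of the convex cone estimates. First I would apply the coordinate change of Section \ref{sec:sc:transformations-1} with $x_0$ chosen as the point where $g(\cdot,y_0,z_0)-u$ attains its maximum $h$. After this reduction $D$ is classically convex, the maximum point sits at the origin, the generating function has the form \eqref{eq:sc:genexp}, and the transformed solution $\overline u$ satisfies $\overline u(0)=-h$ and $\overline u=0$ on $\partial\overline D$. Applying Corollary \ref{cor:w:comp} to $\overline u$ against the constant $\overline v\equiv 0$ (whose $\overline g$-Monge--Amp\`ere measure vanishes while that of $\overline u$ is bounded below by $\lambda$) forces $\overline u\leq 0$ throughout $\overline D$.

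Next I would introduce the $\overline g$-cone $\vee$ with vertex $(0,-h)$ and base $(\partial\overline D,0)$ as in \eqref{eq:sc:simple} and establish the key inclusion $Y\vee(0)\subset Y\overline u(\overline D)$. Fix $p\in Y\vee(0)$ and consider the $\overline g$-affine family
\[
  \phi^s(q) := \overline g(q,p,h-s),
\]
so that $\phi^0$ is the $\overline g$-support of $\vee$ at the origin with gradient $p$, $\phi^s(0)=s-h$, and $s\mapsto\phi^s$ is pointwise increasing (because $\overline g_z<0$). The constraint built into the definition of $\vee$ gives $\phi^0\leq 0=\overline u$ on $\partial\overline D$. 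Set $s^*:=\sup\{s:\phi^s\leq\overline u\text{ on }\overline D\}$. Since $\phi^s(0)=s-h>-h=\overline u(0)$ for $s>0$, necessarily $s^*\leq 0$. If $s^*=0$ then $\phi^0$ is a $\overline g$-support of $\overline u$ at the origin, so $p\in Y\overline u(0)$. Otherwise $s^*<0$, in which case $\phi^{s^*}$ is strictly below $0=\overline u$ on $\partial\overline D$ and strictly below $-h=\overline u(0)$ at the origin, so the contact of $\phi^{s^*}$ with $\overline u$ must occur at some interior $x_1\in\overline D$, yielding $p\in Y\overline u(x_1)$. Either way $p\in Y\overline u(\overline D)$.

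With the inclusion in hand the conclusion follows by combining it with two-sided bounds on the $\overline g$-Monge--Amp\`ere measures. Let $R=\prod_{i=1}^n[-b_i,a_i]$ denote a minimum enclosing box of the convex set $\overline D$, so that $|R|\leq C_n|\overline D|$ by John's theorem. Since the smallness hypotheses of Lemma \ref{lem:sc:lower_est} are part of the standing assumption on $h$ and $\text{diam}(D)$, that lemma gives
\[
  |Y\vee(0)| \;\geq\; C_n h^n \prod_{i=1}^n\Bigl(\tfrac{1}{a_i}+\tfrac{1}{b_i}\Bigr) \;\geq\; \frac{C_n h^n}{|R|} \;\geq\; \frac{C_n h^n}{|\overline D|},
\]
the middle step using $(a_i+b_i)^2\geq 4a_ib_i$. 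The upper bound from the equation gives $|Y\overline u(\overline D)|\leq\Lambda|\overline D|$ in the Aleksandrov sense, and chaining these with $Y\vee(0)\subset Y\overline u(\overline D)$ yields $h^n\leq C|\overline D|^2$; the volume comparison \eqref{eq:sc:xxbarest} transfers this back to the original coordinates.

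The principal obstacle is the inclusion $Y\vee(0)\subset Y\overline u(\overline D)$. Unlike the Monge--Amp\`ere setting, one does not automatically have $\vee\geq\overline u$ or $\vee\leq\overline u$, so the inclusion cannot be read off from a pointwise ordering. The sliding argument sidesteps this by exploiting only the defining constraint $\phi^0\leq 0$ on $\partial\overline D$ together with the strict monotonicity of $\overline g$ in the $z$-variable. A secondary care point is ensuring the smallness hypotheses of Lemma \ref{lem:sc:lower_est} are met, which restricts the argument to $h$ and $\text{diam}(D)$ below thresholds depending on $\Vert g\Vert_{C^4}$ and $\text{diam}(V)$.
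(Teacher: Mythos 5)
Your proof is correct and shares the same overall architecture as the paper's: normalize with the transformations of Section \ref{sec:sc:transformations-1}, introduce the $g$-cone $\vee$, obtain a lower bound on the $Y$-image at the vertex from Lemma \ref{lem:sc:lower_est}, and combine with the Aleksandrov upper bound $|Yu(D)| \leq \Lambda |D|$ and the volume comparison from the minimum ellipsoid/box. The one genuine point of difference is the inclusion $Y\vee(0) \subset Yu(D)$. The paper disposes of this by citing Lemma \ref{lem:w:comp}, which formally requires $\vee = u$ on $\partial D$ and $u \leq \vee$ in $D$; as you correctly flag, neither ordering is obvious here (the paper itself records boundary equality for the cone only under an additional $g^*$-convexity hypothesis, and the classical fact that a convex function lies below its cone does not transfer automatically to $g$-convexity). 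Your sliding argument replaces this citation with a self-contained deduction using only what is actually available: a support $\phi^0 = \overline g(\cdot,p,h)$ of $\vee$ at $0$ satisfies $\phi^0 \leq \vee \leq 0 = \overline u$ on $\partial D$ and $\phi^0(0) = -h = \overline u(0)$, and lowering $\phi^s = \overline g(\cdot,p,h-s)$ to the first contact with $\overline u$ produces a support of $\overline u$ at some point of $\overline D$ carrying the same $Y$-value $p$. This is, in effect, the shifting device from inside the proof of Lemma \ref{lem:w:comp}, re-run so that exact boundary equality is not needed; it is a cleaner justification of the step. Two minor remarks. First, Lemma \ref{lem:sc:lower_est} bounds $|\partial\vee(0)|$, not $|Y\vee(0)|$; insert the intermediate step $|Y\vee(0)| \geq C|\partial\vee(0)|$, via $Y\vee(0) = Y(0,\vee(0),\partial\vee(0))$ and the bounded Jacobian of the $Y$-map, as the paper does. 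Second, the appeal to Corollary \ref{cor:w:comp} to obtain $\overline u \leq 0$ is unnecessary: $D$ is a section $\{u < g(\cdot,y_0,z_0)\}$ by the standing setup of Section \ref{sec:uniform-estimates}, so $\overline u < 0$ in $D$ holds by definition after the normalization.
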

\begin{proof} Fix any $x_0$, after translating assumed to be 0. It suffices to obtain these estimates after applying the transformations in Section \ref{sec:sc:transformations-1}. Thus we may assume $D$ is convex, $g$ is given by \eqref{eq:sc:genexp} and $u \equiv 0$ on $\partial D$.   Let the minimum ellipsoid of $D$ be given by
  \[  E := \big\{x; \sum\frac{(x_i-\overline{x}_{i})}{a_i^2} \leq 1\big\}, \]
  for some $\overline{x} \in D$. 
 Note  $D \subset R := \{-2a_i\leq x_i \leq 2a_i\}$ and $c_na_1\dots a_n \leq |D| \leq C_n a_1 \dots a_n.$

  Now consider the $g$-cone from Lemma \ref{lem:sc:lower_est} with $h = -u(0)$. That is, the $g$-cone $\vee$ with vertex $(0,u(0))$ and base $\partial D \times \{0\}$. Lemma \ref{lem:sc:lower_est} implies $|\partial \vee(0)| \geq |u(0)|^n|R^*|$. Moreover $Y\vee(0) = Y(0,\vee(0),\partial \vee(0))$ so that $|Y\vee(0)| \geq C|\partial \vee(0)|$. Subsequently
  \[ |Y\vee(0) | \geq C|u(0)|^n|R^*| .\]

  The comparison principle (Lemma \ref{lem:w:comp}) implies $Y\vee(0)\subset Yu(D)$.  Thus
  \[ |u(0)|^n|R^*| \leq C|Y\vee(0)| \leq C\Lambda|D|.\]
  Since $|R^*| = C_n |R|^{-1},$ and $R$, being defined in terms of the of the minimum ellipsoid, satisfies $C_n|R|\leq |D|$ we obtain \eqref{eq:sc:upper_uniform}. 
\end{proof}

If we instead use Lemma \ref{lem:sc:lower_est_close} we obtain the following:

\begin{theorem}\label{thm:sc:upper_uniform_close} Assume $D$ is a convex domain and $g$ has the form \eqref{eq:sc:genexp} and satisfies A3w. Let $u$ satisfy \eqref{eq:sc:caff-style} and \eqref{eq:sc:dir}. Suppose further that $x_0 \in D$ is close to the boundary in the sense that there is a unit vector $\nu$ and $\epsilon,d>0$ with
  \[ \sup_{x \in D}\langle x-x_0,\nu \rangle = \epsilon d \]
  and $D$ contains a line segment of length $d$ parallel to $\nu$. There is $C>0$ independent of $u$ such that
  
  \[ |u(x_0)-g(x_0,y_0,z_0)|^n \leq C\epsilon|D|^2.\]
\end{theorem}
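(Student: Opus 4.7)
My plan is to mirror the proof of Theorem~\ref{thm:sc:upper_uniform}, replacing the interior cone estimate Lemma~\ref{lem:sc:lower_est} with the boundary-adapted Lemma~\ref{lem:sc:lower_est_close}. First I would translate so that $x_0$ becomes the origin, redoing (if necessary) the normalisations of Section~\ref{sec:sc:transformations-1} at this new base point so that $g$ retains the form \eqref{eq:sc:genexp} and $g(0,y_0,z_0)=0$. Setting $h := -u(0) = g(x_0,y_0,z_0)-u(x_0) \geq 0$, the close-to-boundary hypothesis becomes exactly what Lemma~\ref{lem:sc:lower_est_close} requires: $\sup_{x \in D}\langle x,\nu\rangle = \epsilon d$ with a line segment of length $d$ parallel to $\nu$ lying in $D$.

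Let $\vee$ denote the $g$-cone of \eqref{eq:sc:2vdef} with vertex $(0,u(0))$ and base $\partial D \times \{0\}$; under these normalisations it agrees with the object in \eqref{eq:sc:simple}. Provided $\text{diam}(D)$ and $h$ are small enough (the standing assumption of this section), Lemma~\ref{lem:sc:lower_est_close} applies and yields
\[ h^n \leq C\epsilon\,|\partial\vee(0)|\,|D|. \]

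Next I would convert $|\partial\vee(0)|$ into $|Y\vee(0)|$ and invoke the equation exactly as in the proof of Theorem~\ref{thm:sc:upper_uniform}. From the identity $Y\vee(0)=Y(0,\vee(0),\partial\vee(0))$ and condition A2 (which makes $p \mapsto Y(0,\vee(0),p)$ a diffeomorphism with Jacobian bounded above and below on the compact set $\overline{\Gamma}$), one has $|\partial\vee(0)| \leq C|Y\vee(0)|$. Since the $g$-convex function $u$ lies below its $g$-cone $\vee$ in $D$ with $u = \vee$ on $\partial D$, the comparison principle Lemma~\ref{lem:w:comp} gives $Y\vee(D) \subset Yu(D)$; combined with the defining cone property $Y\vee(D)=Y\vee(0)$, this produces $Y\vee(0) \subset Yu(D)$. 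The Aleksandrov upper bound in \eqref{eq:sc:caff-style} then gives $|Yu(D)|\leq \Lambda|D|$, and chaining the inequalities,
\[ h^n \leq C\epsilon\,|\partial\vee(0)|\,|D| \leq C'\epsilon\,|Y\vee(0)|\,|D| \leq C'\Lambda\epsilon\,|D|^2, \]
which is the desired estimate.

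The only substantive difference from Theorem~\ref{thm:sc:upper_uniform} is the substitution of the boundary-adapted cone estimate, which is precisely what yields the gain of the factor $\epsilon$. The only minor obstacle is reconciling the translation of $x_0$ to the origin with the pre-imposed form \eqref{eq:sc:genexp}; this is a routine bookkeeping matter since the expansion \eqref{eq:sc:genexp} is a Taylor-type identity that can be redone at any base point with the same structure, and the constants depend only on $\|g\|_{C^4}$ and $\text{diam}(V)$.
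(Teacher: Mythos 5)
Your proposal is correct and follows exactly the route the paper intends: the proof of Theorem~\ref{thm:sc:upper_uniform_close} is obtained from that of Theorem~\ref{thm:sc:upper_uniform} by substituting Lemma~\ref{lem:sc:lower_est_close} (which carries the extra factor of $\epsilon$) for Lemma~\ref{lem:sc:lower_est}, with the remaining steps — comparison of $\vee$ and $u$ via Lemma~\ref{lem:w:comp}, the identity $Y\vee(D)=Y\vee(0)$, the Jacobian bound relating $|\partial\vee(0)|$ and $|Y\vee(0)|$, and the Aleksandrov bound $|Yu(D)|\leq\Lambda|D|$ — unchanged. The renormalisation at the new base point is indeed routine, as the paper itself performs it for an arbitrary $x_0$ in the proof of Theorem~\ref{thm:sc:upper_uniform}.
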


\begin{remark}\label{rem:sc:ineq}
  For both Theorems \ref{thm:sc:upper_uniform} and \ref{thm:sc:upper_uniform_close} it suffices to have the inequality $u \geq g(\cdot,y_0,z_0)$ on the boundary. In this case we apply the above proofs to $D' = \{ u \leq g(\cdot,y_0,z_0)\} \subset D$. To be precise we obtain the estimate for $\text{min}\{u-g(\cdot,y_0,z_0),0\}$.  Similarly for the lower bound, Theorem \ref{thm:sc:lower_uniform}, it suffices to have $u \leq g(\cdot,y_0,z_0)$.
\end{remark}

The lower bound uses a lemma of Guillen and Kitagawa's \cite{GuillenKitagawa17}. We restate it here using our terminology, and for completeness include its proof after the proof of the lower bounds. 

\begin{lemma}\cite[Lemma 6.1]{GuillenKitagawa17}\label{lem:sc:gk}
 Let $g$ be the transformed generating function from Lemma \ref{lem:sc:transform_facts}. Suppose $u$ is $g$-convex and $D := \{u \leq 0\}$ has $0$ as the centre of its minimum ellipsoid. Set $h = \sup_{D}|u|$. There is constants $C,K > 0$ depending on $g$ such that
  \[ Yu\left(\frac{1}{K}D\right) \subset Y\vee_{\frac{1}{K}D,Ch}(0),\]
  where $\vee_{\frac{1}{K}D,Ch}$ is the $g$-cone with base $\frac{1}{K}D \times \{0\}$ and vertex $(0,-Ch)$.
\end{lemma}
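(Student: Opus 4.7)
The plan is a ``shift'' argument. Given $y \in Yu(x_0)$ with $x_0 \in \tfrac{1}{K}D$, let $g(\cdot, y, z_0)$ be the corresponding $g$-support of $u$ at $x_0$, so $g(\cdot, y, z_0) \leq u \leq 0$ on $D$. I shift it vertically to pass through $(0, -Ch)$ by setting $z_0' := g^*(0, y, -Ch)$ and $\phi(x) := g(x, y, z_0')$, so that $\phi(0) = -Ch$. The aim is to show $g(0, y, z_0) \geq -Ch$: if so, then $g_z < 0$ forces $z_0' \geq z_0$, whence $\phi \leq g(\cdot, y, z_0) \leq 0$ on $D \supset \partial(\tfrac{1}{K}D)$. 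Thus $\phi$ is admissible in the supremum \eqref{eq:sc:2vdef} defining $\vee_{\tfrac{1}{K}D, Ch}$, and combined with $\phi(0) = -Ch = \vee(0)$ it is a $g$-support of $\vee$ at $0$, giving $y \in Y\vee(0)$.

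After applying Lemma \ref{lem:sc:transform_facts} centred at the origin, $g$ has the expansion \eqref{eq:sc:genexp}, so $g(x,y,z) = x\cdot y - z + R(x,y,z)$ with $R$ a remainder whose terms are products of at least three factors drawn from $|x|, |y|, |z|$ (specifically $|x|^2|y|^2$, $z|x|^2$, $z|x||y|$, $z|y|^2$, and $z^2$). Substituting into $g(x_0, y, z_0) = u(x_0)$ yields $z_0 = x_0\cdot y - u(x_0) + R_0$ (writing $R_0 := R(x_0, y, z_0)$), and consequently
\[
g(0, y, z_0) = -z_0 + R(0, y, z_0) = u(x_0) - x_0\cdot y + \bigl(R(0, y, z_0) - R_0\bigr).
\]
Since $u(x_0) \geq -h$, to obtain $g(0, y, z_0) \geq -Ch$ it suffices to bound $x_0\cdot y$ from above by $O(h)$.

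For this I exploit that $Kx_0 \in D$ (since $x_0 \in \tfrac{1}{K}D$) and that $g(\cdot, y, z_0) \leq 0$ holds on all of $D$, not merely on $\partial D$. Writing the latter inequality at $Kx_0$ and subtracting the identity for $x_0$ gives
\[
(K-1)\, x_0\cdot y \leq -u(x_0) + R(x_0, y, z_0) - R(Kx_0, y, z_0) \leq h + \mathrm{error}.
\]
Condition A5 provides an a priori bound $|y| \leq M$ depending only on $g$ and the data, and the identity for $z_0$ then gives $|z_0| \leq M\,\mathrm{diam}(D) + h + O(\mathrm{error})$; once $\mathrm{diam}(D) \leq d_0$ and $h \leq h_0$ are taken small, the remainder contributions are dominated by $h$ times constants depending on $g$. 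Choosing $K$ large (to make $1/(K-1)$ small) and then $C$ correspondingly large to absorb the $h/(K-1)$ term yields $g(0, y, z_0) \geq -h\bigl(1 + 2/(K-1)\bigr) \geq -Ch$, completing the argument.

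The main obstacle is the control of the remainder $R$: its contributions must be shown to be strictly subleading compared to the linear part $x \cdot y - z$ and to $h$. This hinges on first securing a priori bounds on $|y|$ and $|z_0|$ via A5 and the identity from $g(x_0, y, z_0) = u(x_0)$, and only then on exploiting smallness of $\mathrm{diam}(D)$ and $h$ to dominate the quadratic-and-higher terms in the expansion \eqref{eq:sc:genexp}.
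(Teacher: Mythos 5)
Your approach has a genuine gap in the control of the remainder terms, and the gap is precisely where the paper invokes the quantitative quasiconvexity \eqref{eq:sc:gqq}, which is the key structural consequence of A3w that your Taylor-expansion argument tries to do without.

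To make this concrete, take the inequality you obtain from the subtraction trick:
\[
(K-1)\, x_0\cdot y \ \leq\ h \;+\; R(x_0,y,z_0) - R(Kx_0,y,z_0).
\]
The remainder in \eqref{eq:sc:genexp} contains the term $a_{ij,kl}(q,p)\,q_iq_jp_kp_l$, so the difference $R(x_0)-R(Kx_0)$ contributes, at leading order, a quantity of size $K^2|x_0|^2|y|^2 \sim |Kx_0|^2|y|^2 \sim \mathrm{diam}(D)^2\,|y|^2$. You claim this is ``dominated by $h$ times constants depending on $g$'' once $\mathrm{diam}(D)\le d_0$ and $h\le h_0$, citing an a priori bound $|y|\le M$ from A5. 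But $M$ is a fixed constant depending only on $g$ and $\Omega^*$; it is not small and it carries no $h$-dependence. Consequently the error is $O\!\left(\mathrm{diam}(D)^2 M^2\right)$, a quantity bounded by a small constant but \emph{not} by $Ch$. There is no a priori relation $\mathrm{diam}(D)^2 \lesssim h$: for an elongated section $D$ the two quantities are independent, and in fact for very thin sections one cannot improve $|y|\le M$ to $|y|=O(h)$ either (the support at a point close to the short side of $D$ may have gradient of order $h$ divided by the short axis, which is $O(1)$). The same difficulty re-enters when you try to close the estimate on $|z_0|$: since $|z_0|\lesssim|x_0\cdot y|+h+|R(x_0,y,z_0)|$, and $|R(Kx_0,y,z_0)|$ feeds back into the bound on $x_0\cdot y$, the iteration produces $|z_0|\lesssim h+\mathrm{diam}(D)^2$, not $|z_0|\lesssim h$. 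Since the constants $C,K$ in the statement must depend only on $g$, this does not suffice.

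What the paper's argument does instead is run the comparison along a $g$-segment from $0$ through $x_0$ to $\partial D$ and apply the quantitative quasiconvexity
\[
g(x_\theta,y_1,z_1)-g(x_\theta,y_0,z_0)\ \leq\ M\theta\,[\,g(x_1,y_1,z_1)-g(x_1,y_0,z_0)\,]_+,
\]
(Lemma \ref{lem:a:gqq}), which is derived from A3w via the differential inequality in Lemma \ref{lem:g:maindiffineq}. This is the nonlinear surrogate for the linear interpolation inequality used in the Monge--Amp\`ere case, and it is exactly what allows one to conclude $g(x_0,y,0)\leq\frac{M}{K}[g(x_1,y,0)]_+$ without any remainder error proportional to $\mathrm{diam}(D)^2$. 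Your brute-force expansion replaces this differential-inequality input with pointwise Taylor remainder bounds, and those do not cancel: the remainder $a_{ij,kl}q_iq_jp_kp_l$ is nonlinear in $q$ on the scale $|q|\sim\mathrm{diam}(D)$ and has no definite sign, so it cannot be absorbed. To repair the argument you would need to replace the pointwise estimate at $x_0$ and $Kx_0$ by the quasiconvexity inequality along the full $g$-segment, at which point you recover the paper's proof.
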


\begin{theorem}\label{thm:sc:lower_uniform}
  Assume $g$ is a generating function satisfying A3w and $u$ is a $g$-convex solution of \eqref{eq:sc:caff-style} subject to \eqref{eq:sc:dir}. There is a constant $C$ independent of $u$ such that
  \[ C|D|^2 \leq \sup_{D}|u-g(\cdot,y_0,z_0)|^n.\]
\end{theorem}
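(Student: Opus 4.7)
The plan is to mirror the proof of Theorem \ref{thm:sc:upper_uniform}, but replacing the role of the lower $g$-cone estimate (Lemma \ref{lem:sc:lower_est}) with the upper $g$-cone estimate (Lemma \ref{lem:sc:upper_est}), and replacing the comparison principle step with Guillen and Kitagawa's containment Lemma \ref{lem:sc:gk}. Since the Aleksandrov inequality $\lambda\leq \det DYu$ gives a lower bound on $|Yu(E)|$, and we want a lower bound on $h$, we need an upper bound on $|Yu(\tfrac{1}{K}D)|$ in terms of $h$ and $|D|$.

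First, I would apply the transformations of Section \ref{sec:sc:transformations-1} centered at the point $x_0$ chosen to be the center of the minimum ellipsoid of $D$. This reduces matters to the case where $D$ is a convex domain with $0$ as the center of its minimum ellipsoid, where $g$ has the expansion \eqref{eq:sc:genexp}, and where the transformed solution $\bar u$ vanishes on $\partial D$. By \eqref{eq:sc:xxbarest} the ratio between $|D|$ and its image under the transformation is uniformly controlled, so it suffices to prove the estimate in the transformed coordinates. Set $h=\sup_{D}|\bar u|$, which (up to a universal constant) equals $\sup_{D}|u-g(\cdot,y_0,z_0)|$.

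Next, Lemma \ref{lem:sc:gk} produces constants $C,K>0$ such that
\[
\bar Y \bar u\bigl(\tfrac{1}{K}D\bigr) \subset \bar Y\!\vee_{\tfrac{1}{K}D,Ch}(0),
\]
where $\vee_{\tfrac{1}{K}D,Ch}$ is the $\bar g$-cone with base $\partial(\tfrac{1}{K}D)\times\{0\}$ and vertex $(0,-Ch)$. Applying Lemma \ref{lem:sc:upper_est} to this $\bar g$-cone (the smallness hypotheses on $\text{diam}(D)$ and $h$ are standing assumptions) gives $\bar Y\!\vee_{\tfrac{1}{K}D,Ch}(0)\subset 2\partial K(0)$, where $K$ is the classical linear cone with the same base and vertex. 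For this linear cone a direct calculation identifies $\partial K(0)$ with $CKh\cdot D^\circ$, where $D^\circ=\{p:p\cdot x\leq 1 \text{ for all }x\in D\}$ is the polar body. Hence
\[
|\bar Y \bar u(\tfrac{1}{K}D)| \leq C\, h^n |D^\circ|.
\]

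On the other hand the hypothesis $\det DYu\geq \lambda$ interpreted in the Aleksandrov sense (which, as in Theorem \ref{thm:sc:upper_uniform}, transfers to $\bar u$ up to the Jacobian factor in \eqref{eq:sc:xxbarest}) gives the matching lower bound
\[
|\bar Y\bar u(\tfrac{1}{K}D)| \geq c\,\lambda\, K^{-n}|D|.
\]
Combining the two estimates yields $|D|\leq C h^n |D^\circ|$. Since $0$ is the center of the minimum ellipsoid of $D$, John's theorem and the exact identity for polars of ellipsoids give $|D|\,|D^\circ|\leq C_n$, so $|D^\circ|\leq C_n/|D|$ and therefore $|D|^2\leq C h^n$, which is the claimed inequality. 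The main subtlety lies in tracking how the Aleksandrov density bounds transfer through the coordinate transformations of Section \ref{sec:sc:transformations-1}, and in ensuring the minimum-ellipsoid centering can be arranged compatibly with the normalization $x_0=0$ used to set up $\bar g$; both are handled by \eqref{eq:sc:xxbarest} together with the fact that the transformations are diffeomorphisms with bounded Jacobians.
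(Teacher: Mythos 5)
Your argument is correct and follows essentially the same route as the paper: normalize via Section~\ref{sec:sc:transformations-1} so that $D$ is convex with the origin at the center of its minimum ellipsoid, contain $\bar Y\bar u(\tfrac{1}{K}D)$ in $\bar Y\vee(0)$ via Lemma~\ref{lem:sc:gk}, pass to the classical cone $\hat K$ via Lemma~\ref{lem:sc:upper_est}, and compare against the Aleksandrov lower bound. The one place you depart is in estimating $|\partial\hat K(0)|$: the paper inscribes a scaled coordinate rectangle $\tfrac{1}{n\sqrt n}R\subset D$ and invokes the explicit formula \eqref{eq:sc:k-est-2}, whereas you identify $\partial\hat K(0)$ as a dilate of the polar body $D^\circ$ and bound the volume product $|D|\,|D^\circ|\leq C_n$ using John's theorem --- a slightly cleaner packaging of the same computation. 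One small imprecision to fix in writing: since $x\mapsto q(x)$ is nonlinear, the center of the minimum ellipsoid of $D$ in the original coordinates is not what you want; as in the paper you should first apply \eqref{eq:sc:xdef} to convexify $D$ and then translate so the center of the minimum ellipsoid of the \emph{transformed} set is at the origin --- the bounded Jacobian estimate \eqref{eq:sc:xxbarest} on its own does not handle this ordering.
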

\begin{proof}
  We assume $y_0,z_0 = 0$ then pick any $x_0 \in D$ and make the change
  \eqref{eq:sc:xdef} so that $D$ is convex. After further translation we assume the center of the minimum ellipsoid for $D$ is $0$. Then we make the remaining changes in Section \ref{sec:sc:transformations-1} so  $g$ has the form \eqref{eq:sc:genexp} and $u = 0$ on $\partial D$. 

Let  $\vee = \vee_{\frac{1}{K}D,Ch}$ be the $g$-cone from Lemma \ref{lem:sc:gk} and note 
  \[ \frac{\lambda}{K^n}|D| \leq \left|Yu\left(\frac{1}{K}D\right)\right| \leq |Y\vee(0)|.\]  Let $\hat{K}$ be the classical cone with the same base and vertex as $\vee$. By Lemma \ref{lem:sc:upper_est}
  \begin{equation}
    \label{eq:sc:lower-1}
     \frac{\lambda}{K^n}|D| \leq C|\partial \hat{K}(0)|. 
  \end{equation}
The estimate 
\begin{equation}
  \label{eq:sc:lower-2}
  |\partial \hat{K}(0)| \leq \frac{Ch^n}{|D|},
\end{equation}
is straight forward: Consider the rectangle $R$ whose side lengths correspond to the axis of the minimum ellipsoid. Then the cone $\tilde{K}$ with base $\frac{1}{n\sqrt{n}}R \subset D$ and vertex $(0,-Ch)$ satisfies $\partial \hat{K}(0) \subset \partial \tilde{K}(0)$ and $|\partial \tilde{K}(0)| \leq Ch^n/|D|$.

Combining \eqref{eq:sc:lower-1} and \eqref{eq:sc:lower-2} completes the proof. 
\end{proof}

The proof of Lemma \ref{lem:sc:gk} uses the A3w condition via the quantitative quasiconvexity of Guillen and Kitagawa. More precisely if A3w holds then so does the following statement: Let $x_0,x_1 \in U$ $y_0,y_1 \in V$ and $u_0 \in J$ be given. Let $\{x_\theta\}_{\theta \in [0,1]}$ denote the $g$-segment from $x_0$ to $x_1$ with respect to $y_0,z_0 = g^*(x_0,y_0,u_0)$ and set $z_1 = g^*(x_0,y_1,u_0)$. Then there is $M$ depending only on $g$ such that
  \begin{align}
    g(x_\theta,y_1,z_1) - g(x_\theta,y_0,z_0) \leq M \theta [g(x_1,y_1,z_1) - g(x_1,y_0,z_0)]_{+}.\label{eq:sc:gqq}
  \end{align}
  We prove that this is implied by A3w in Lemma \ref{lem:a:gqq}. There's more to say here: Guillen and Kitagawa proved this condition (actually, a slightly more general one) implies A3w. The interested reader should see \cite{GuillenKitagawa17}. 

\begin{proof}[Proof. (Lemma \ref{lem:sc:gk})] 
  We assume $K$ has been fixed small, to be chosen in the proof, and show there is $C$ such that
  \begin{equation}
    \label{eq:sc:gk-main}
      Yu\left(\frac{1}{K}D\right) \subset Y\vee_{\frac{1}{K}D,Ch}(0).
  \end{equation}

  To this end, fix $x \in \frac{1}{K}D$ and $y \in Yu(x)$. Let $g_0$ be the corresponding support
  \[ g_0(\cdot):= g(\cdot,y,g^*(x,y,u(x))) =g(\cdot,y,|g_0(0)|),\]
  where the second equality is because the transformed generating function satisfies $g^*(0,y,u) = -u$. 
  To prove \eqref{eq:sc:gk-main} it suffices to show there is $C$ (independent of $x,y$) such that
  \begin{equation}
    \label{eq:sc:gk-equiv}
       |g_0(0)| \leq Ch.
  \end{equation}
  For in this case the function $g(\cdot,y,Ch)$ passes through the vertex of $\vee = \vee_{\frac{1}{K}D,Ch}$ and lies below $g_0$ so is nonpositive on $\frac{1}{K}D$. Thus $y \in Y\vee(0)$.

  So let's prove \eqref{eq:sc:gk-equiv}. A Taylor series implies for any $x' \in D$
  \[ g(x',y,0) = g_0(x') - g_z(x',y,z_\tau)|g_0(0)|.\]
  Then for appropriate positive constants $C^+,C^-$ which depend only on $g_z < 0$ we have that for any $x' \in D$
  \begin{align}
     \label{eq:sc:g1-1} g(x',y,0) &\geq g_0(x') + C^-|g_0(0)|\\
   \label{eq:sc:g1-2} g(x',y,0)  &\leq g_0(x') + C^+|g_0(0)| \leq C^+|g_0(0)|.   
  \end{align}
 
  Now let $\{x_\theta\}_{\theta \in [0,1]}$ denote the $g$-segment with respect to $0,0$ that starts at $0$, passes through $x$, and hits $\partial D$ at some $x_1$. Because $x \in \frac{1}{K}D$ there is $\theta' \leq 1/K$ with $x_{\theta'} = x$. So \eqref{eq:sc:gqq} implies
  \begin{align}
       g(x,y,0) \leq M\theta' [g(x_1,y,0)]_+, \label{eq:sc:pos-part}
  \end{align}
  where we've used \eqref{eq:sc:gqq} with $y_1=y$ and $y_0,z_0,u_0 = 0$ so $g(\cdot,y_0,z_0) = 0$.
  If $g(x_1,y,0) \leq 0$, then  $g(x,y,0) \leq 0$ and we obtain \eqref{eq:sc:gk-equiv} from \eqref{eq:sc:g1-1} with $x' = x$ (because $g_0$ is a support at $x$ we have $g_0(x)=u(x) \geq -h$).

  Otherwise, combine \eqref{eq:sc:pos-part} with \eqref{eq:sc:g1-1} on the left hand side and \eqref{eq:sc:g1-2} on the right hand side to obtain
  \[ g_0(x) + C^-|g_0(0)| \leq M\theta' C^+|g_0(0)|. \]
  Recalling $g_0(x) \geq -h$ and choosing $K$ large to ensure $M\theta' C^+ \leq C^-/2$ completes the proof. \end{proof}

\section{Strict convexity assuming a $g$-convex containing domain}
\label{sec:sc:strict-conv}
In this section we prove the strict convexity under Chen and Wang's \cite{ChenWang16} hypothesis from optimal transport. The main tool in their proof is the uniform estimates. Having established these our proof follows theirs.

\begin{theorem}\label{thm:sc:contain}
  Assume $g$ is a generating function satisfying A3w and $\Omega\subset\subset U$. Assume for positive constants $\lambda,\Lambda>0$, $u:\Omega \rightarrow \mathbf{R}$ is an Aleksandrov solution of
  \[ \lambda \leq \det DYu \leq \Lambda  \text{ in }\Omega,\]
  and a generalised solution of \eqref{eq:g:2bvp}. Assume $U$ and $\Omega^*$ are, respectively, $g$ and $g^*$-convex with respect to $u$. Then $u$ is strictly $g$-convex in $\Omega$. 
\end{theorem}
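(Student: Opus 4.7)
The plan is to argue by contradiction. Suppose $u$ fails to be strictly $g$-convex at some $x_0 \in \Omega$, so that a single $g$-support $g(\cdot,y_0,z_0)$ of $u$ at $x_0$ also satisfies $u(x_1) = g(x_1,y_0,z_0)$ for some $x_1 \neq x_0$. After normalizing as in Section \ref{sec:sc:transformations-1} centered at $(x_0,y_0,0)$ with $h=0$, I replace $g,u$ by $\overline{g},\overline{u}$ of Lemma \ref{lem:sc:transform_facts}; then $\overline{g}$ has the form \eqref{eq:sc:genexp}, the support becomes $\overline{g}(\cdot,0,0) \equiv 0$, and $\overline{u} \geq 0$ with $\overline{u}(0) = 0$. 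The contact set $C := \{\overline{u} = 0\}$ contains $0$ together with the image of $x_1$. In these coordinates $g$-segments with respect to $(0,0)$ are ordinary line segments (since $q(x)$ is affine in $g_y/g_z(x,0,0)$), so the sections $S_h := \{\overline{u} < h\}$ are classically convex by Theorem \ref{thm:g:gconvsection}, and hence $C = \bigcap_{h>0} \overline{S_h}$ is a convex set of dimension at least $1$.

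Following the Chen--Wang strategy, I would pick an extremal point $x^* \in C$, translate to make $x^* = 0$, and rotate so that $C \subset \{x_n \leq 0\}$ while some $\overline{x} \in C$ satisfies $\overline{x}_n = -\delta < 0$. The aim is to produce a family of sections centered at $x^*$ whose shape is incompatible with the uniform estimates of Section \ref{sec:uniform-estimates}. To do this I tilt the support in the $e_n$ direction: for small $t>0$, consider the $g$-support $\overline{g}(\cdot, te_n, z(t))$ passing through $(0,0)$, whose expansion via \eqref{eq:sc:genexp} is $tx_n + O(t|x|^2 + t^2)$. This function is zero at $x^*$ but strictly negative on $C$ at distance $\gtrsim \delta$ in the $-e_n$ direction. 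Lifting by a small height $h>0$ defines
\[
S_{h,t} := \{\overline{u} < \overline{g}(\cdot, te_n, z(t) - h)\},
\]
which for $h \ll t$ is thin in the $e_n$ direction (width $\sim h/t$) but retains the full tangential extent of $C$ at $x^*$.

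Now the uniform estimates yield a contradiction. Applying Theorems \ref{thm:sc:upper_uniform} (or Theorem \ref{thm:sc:upper_uniform_close} if $x^*$ is close to $\partial\Omega$) and \ref{thm:sc:lower_uniform} to $S_{h,t}$, which is $g$-convex with boundary value equal to its defining support and on which the Monge--Amp\`ere mass is bounded between $\lambda$ and $\Lambda$, forces $|S_{h,t}|^2 \sim h^n$. On the other hand the geometric description above gives $|S_{h,t}| \sim (h/t)\,\mathrm{diam}(C)^{n-1}$; squaring and sending $h \to 0$ with $t$ fixed (but chosen small enough first) contradicts $h^n$ for $n \geq 2$.

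The main obstacles are twofold. First, one must rigorously justify the geometry of $S_{h,t}$: the $O(t|x|^2)$ and $O(h|x|^2)$ corrections in \eqref{eq:sc:genexp} must be controlled uniformly as $h, t \to 0$, which is why the transformation in Section \ref{sec:sc:transformations-1} was designed to bring $\overline{g}$ within these explicit error terms of $x\cdot y - z$. Second, when $x^*$ sits near $\partial\Omega$ the sections $S_{h,t}$ need not be contained in $\Omega$; this is precisely where the hypothesis that $U$ is $g$-convex with respect to $u$ becomes essential, since it lets the perturbed supports and $g$-cone constructions of Section \ref{sec:sc:g-cones-subd} live in a $g$-convex ambient domain, and it is what permits the use of Theorem \ref{thm:sc:upper_uniform_close} in place of Theorem \ref{thm:sc:upper_uniform}. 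The $g^*$-convexity of $\Omega^*$ with respect to $u$ enters through Theorem \ref{thm:g:gconvymapping}, guaranteeing that the tilted directions $te_n$ correspond to admissible points in the target so the perturbation remains within the class of legitimate supports.
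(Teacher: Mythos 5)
Your proposal reproduces the first half of the paper's argument but omits the second half, and this is a genuine gap. The paper begins by extending $u$ to $\tilde{u}$ on all of $U$ via the supremum of supports, so the contact set $G = \{\tilde{u} = g(\cdot,y_0,z_0)\}$ lives in $\overline{U}$, not just in $\overline{\Omega}$. The tilting argument you sketch (pick an extreme point $x^*$ of $G$, rotate so $G\subset\{x_n\le0\}$, tilt the support by $te_n$, and pit Theorem \ref{thm:sc:lower_uniform} against Theorem \ref{thm:sc:upper_uniform_close}) is precisely Step~1 of the paper's proof, and its conclusion is only that \emph{no extreme point of $G$ lies in the interior of $U$}. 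This leaves open the possibility that every extreme point of $G$ lies on $\partial U$, and ruling that out requires a completely separate argument --- Step~2 in the paper --- which you do not address.

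Your proposal implicitly treats the boundary obstacle as a matter of $x^*$ being close to $\partial\Omega$ and suggests the $g$-convexity of $U$ lets you run the same estimates; but that conflates $\partial\Omega$ with $\partial U$. Theorem \ref{thm:sc:upper_uniform_close} handles vertices near $\partial\Omega$ inside the Step~1 machinery, whereas an extreme point actually sitting on $\partial U$ breaks the tilting construction itself (the sections $D_t$ need room on both sides of $x^*$ inside $U$). Step~2 of the paper chooses a specific extreme point of $\tilde{G}$ by sliding a paraboloid onto it, rotates the picture so that a cone $\mathcal{C}\subset\overline{\Omega^*}$ (guaranteed by $g^*$-convexity of $\Omega^*$) points along the normal to the supporting plane, and then uses the generalised second boundary value problem $Yu(\Omega)=\Omega^*$ to manufacture a support through a target point $te_1\in\Omega^*$ that would have to touch $u$ at a point disjoint from $\overline{\Omega}$. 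This is where the $g^*$-convexity of $\Omega^*$ with respect to $u$ actually earns its keep, not (as you suggest) through Theorem \ref{thm:g:gconvymapping}, and where $\Omega\subset\subset U$ is used so that the contact set keeps a positive distance from $\Omega$ near the boundary extreme point. Without this second step your argument does not close.

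A smaller issue: your volume heuristic $|S_{h,t}|\sim (h/t)\,\mathrm{diam}(C)^{n-1}$ is the right intuition but is not the mechanism the paper uses, and it is delicate to make a two-sided estimate of this form rigorous. The paper instead feeds the factor $\epsilon_t\to 0$ from Theorem \ref{thm:sc:upper_uniform_close} --- where $\epsilon_t$ measures the eccentricity of $D_t$ as the supporting hyperplane degenerates toward $\{x_1=0\}$ --- into the inclusion $D_t\subset G^{Ct}$ and the lower bound $C|G^h|^{2/n}\le h$. If you do want to keep your volume-based phrasing you would need to establish the width estimate for $S_{h,t}$ uniformly, which is not trivial off the contact set where $u$ lifts away from the support.
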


\begin{proof} [Proof (Theorem \ref{thm:sc:contain}).]
  We extend $u$ to $\tilde{u}$ defined on $U$ as 
\[ \tilde{u}(x) := \sup\{g(x,y_0,z_0); g(\cdot,y_0,z_0) \text{ is a $g$-support of $u$ in }\Omega\}.\]
This extension is equal to $u$ on $\Omega$ and satisfies
\begin{align}
  \label{eq:sc:gjeext}  \lambda\chi_{\Omega} \leq \det DY\tilde{u} &\leq \Lambda\chi_{\Omega} ,\\
  \label{eq:sc:2bvpext}Y\tilde{u}(U) &= \overline{\Omega^*}.
\end{align}
We assume this is the solution we are working with, though keep the notation $u$.

  For a contradiction we suppose there is a support $g(\cdot,y_0,z_0)$ such that
  \[ G:= \{x \in  \overline{U}; u(x) = g(x,y_0,z_0)\},\]
  contains more than one point in $\Omega$. The first step of the proof is to show that, after the coordinate transform with respect to $y_0,z_0$, any extreme point of $G$ lies  on $ \partial U$. The second step is to choose a particular extreme point and obtain a contradiction from the fact that it must lie on $\partial U$. 

  \textit{Step 1. Extreme points cannot be in the interior}\\
  Without loss of generality $y_0,z_0=0$. Applying the transformation \eqref{eq:sc:xdef} of the $x$-coordinates we have that $G$ and $U$ are convex. Without loss of generality $0$ is an extreme point of $G$, where, for a contradiction, we assume $0 \in \text{interior}(U)$. After transforming the $y$ coordinates and generating function as in Section \ref{sec:sc:transformations-1} we have
   \[ G  = \{ x;u(x) = 0 = g(x,0,0) \},\]
  $u\geq0$, and $g$ has the form \eqref{eq:sc:genexp}. Let $P$ be a supporting plane of $G$ at the extreme point $0$ and choose appropriate coordinates so that $P = \{x_{1} = 0\}$ and
   \begin{align}
  G \cap P = \{0\},\quad\quad G \subset  \{x; x_1 \leq 0\} \quad \text{ and }\quad -a e_1  \in G,
   \end{align}
   for some $a >0$.
 Set
  \[ G^h := \{u < g(\cdot,0,-h)\}.\]
  Because $\Omega$ is open there is $x \in G\cap \Omega$ with $B_r(x) \subset \Omega$ for sufficiently small $r>0$. In particular $\det DYu \geq \lambda$ on $B_r(x)$. Choose $r,h$ small enough to ensure Theorem \ref{thm:sc:lower_uniform} holds on $B_r\cap G^h $ (recalling Remark \ref{rem:sc:ineq}). Then
  \begin{equation}
    \label{eq:sc:ghest1}
       C |G^h\cap B_r(x)|^{2/n} \leq h.
  \end{equation}
The $g$-convexity of $G^h$ (with respect to $0,-h$) implies
  \begin{equation}
  C |G^h| \leq  |G^h\cap B_r(x)| .\label{eq:sc:convballest}
  \end{equation}
  where $C$ depends on $r$, $g$, and some upper bound $h_0 \geq h$. The proof of \eqref{eq:sc:convballest} is given in Lemma \ref{lem:sc:convballest} at the conclusion of this proof. By \eqref{eq:sc:ghest1} and \eqref{eq:sc:convballest}
  \begin{equation}
    \label{eq:sc:to_cont}
    C |G^h|^{2/n} \leq h.
  \end{equation}

For a contradictory estimate we consider sections that behave like $\{u < t(x_1+a) \}$ in the convex case. By the derivation of \eqref{eq:sc:genexp}, taking only a first order Taylor series in \eqref{eq:sc:1-ineq}, we obtain
  \begin{equation}
    \label{eq:sc:genineq}
    x \cdot y + a_{ij,kl}x_ix_jy_ky_l - C^-z \leq g(x,y,z) \leq x \cdot y + a_{ij,kl}x_ix_jy_ky_l - C^+z,
  \end{equation}
for appropriate $C^-,C^+>0$.  Now by \eqref{eq:sc:genexp} for $t$, sufficiently small,
 \begin{equation}
   \label{eq:sc:gstar-rep}
      g(-ae_1,te_1,0) = -at + O(t^2) < 0.
 \end{equation}
  This implies $ g(\cdot,te_1,0) < g(\cdot,te_1,g^*(-ae_1,te_1,0))$ and subsequently
  \[ D_t := \{u < g(\cdot,te_1,g^*(-ae_1,te_1,0))\},\]
  satisfies $0 \in D_t$, and $-ae_1 \in \partial D_t$. On the other hand $g(\cdot,te_1,g^*(-ae_1,te_1,0)) \rightarrow 0$ as $t \rightarrow 0$ implies
  \begin{equation}
    \label{eq:sc:lim-cond}
     a^+_t := \sup\{x_1 = x\cdot e_1; x \in D_t\} \rightarrow 0.
   \end{equation}
   Finally by \eqref{eq:sc:genineq} and \eqref{eq:sc:gstar-rep} the height of $g(0,te_1,g^*(-ae_1,te_1,0))$ above $u(0)$ satisfies
   \begin{equation}
     \label{eq:sc:0est} |u(0) - g(0,te_1,g^*(-ae_1,te_1,0))| \geq Ct.
   \end{equation}

 To apply Theorem \ref{thm:sc:upper_uniform_close}, transform the $x$ coordinates according to \eqref{eq:sc:xdef} with respect to $y_t:=te_1,z_t:=g^*(-a e_1,y_t,0)$ and let $\tilde{D_t}$ denote the image of $D_t$. Take the line segment joining the images of $-ae_1$ and $0$. Its length is greater than $Ca$ for a constant depending on $g$. One of the supporting planes orthogonal to this line segment converges to the plane $P = \{x_{1} = 0\}$ as $t \rightarrow 0$. The other remains a distance of at least $Ca$ from the image of $0$.   Thus Theorem \ref{thm:sc:upper_uniform_close} implies, for some $\epsilon_t \rightarrow 0$, that
  \begin{equation}
    \label{eq:sc:aona}
     t \leq C\epsilon_t|D_t|^{2/n},
  \end{equation}
  where we've used \eqref{eq:sc:0est}.
  Finally, again by \eqref{eq:sc:genineq} and \eqref{eq:sc:gstar-rep} we obtain $D_t \subset G^{Ct}$ for some $C>0$ and $t$ sufficiently small. Thus \eqref{eq:sc:aona} with $t = h/C$ contradicts \eqref{eq:sc:to_cont}.\\ \\
  \textit{Step 2. An extreme point that cannot be on the boundary}\\
  \textit{Step 2a. Coordinate transform}
  The argument in this step requires a delicate coordinate transform, so we are explicit with the details.  We begin with the original coordinates and generating function, and the contact set $G = \{u \equiv g(\cdot,y_0,z_0)\}$ which is assumed to contain more than one point in $\Omega$. Without loss of generality $y_0,z_0= 0$. Introduce the coordinates
  \begin{equation}
    \label{eq:sc:x-exp-1}
       \tilde{x} = \frac{-g_y}{g_z}(x,0,0).
  \end{equation}
  Pick any $x_1 \in G$ and denote its image under \eqref{eq:sc:x-exp-1} by $\tilde{x_1}$. Define
  \[ \overline{y} = E^{-1}(x_1,0,0)[g_x(x_1,y,g^*(x_1,y,u(x_1))) - g_x(x_1,0,0)].\]
  The $E^{-1}$ factor implies, via a Taylor series, that
  \begin{equation}
    \label{eq:sc:y-exp}
    \overline{y} = y + O(|y|^2).
  \end{equation}
The images of $\Omega,\Omega^*$ in these coordinates are denoted by\footnote{In step 2a the overline notation is used for coordinates not closures.} $\tilde{\Omega},\overline{\Omega^*}$. Both are convex and $0 \in \overline{\Omega^*}$. By a rotation, which is applied to $\tilde{x}$ and $\overline{y}$, we assume $t_0e_1 \in \overline{\Omega^*}$ for some small $t_0$. By convexity $\overline{\Omega^*}$ contains a cone
  \begin{equation}
    \label{eq:sc:cone-arg}
       \mathcal{C} := \{t \xi; 0 < t \leq t_0, \xi \in B_r'\},
  \end{equation}
  where $B_r '$ is a geodesic ball in the hemisphere $\mathbf{S}^{n-1} \cap \{x_1 > 0\}$ centered on $(1,0,\dots,0)$.

  Now choose a specific extreme point of $\tilde{G}$ as follows: Take the paraboloid
\[ \tilde{P}_M = \{\tilde{x} = (\tilde{x}_1,\dots,\tilde{x}_n) ; \tilde{x}_1 = -\epsilon(\tilde{x}_2^2+\dots+\tilde{x}_n^2) + M\}\]
for large $M$ and small $\epsilon$. Decrease $M$ until the paraboloid first touches $\tilde{G}$, necessarily at an extreme point $\tilde{x_0}$. We take $\tilde{P}$ as the tangent plane to $\tilde{P}_M$ at $\tilde{x_0}$ and note $\tilde{P}$ supports $\tilde{G}$ at $\tilde{x_0}$. Provided $\epsilon$ is sufficiently small (depending on  $\text{diam}(G)$) the normal to $\tilde{P}$ is in $B_r'$ (see Figure \ref{fig:transform1}).

  Our final coordinate transform is
  \begin{equation}
    \label{eq:sc:x-fin}
      \overline{x} \mapsto -g_z(x_0,y_0,z_0)[\tilde{x}-\tilde{x_0}].
  \end{equation}
  which is a dilation and translation (but no rotation). In these coordinates $\overline{U}$ is convex, and provided $\epsilon$ was chosen small depending also on $|g_z|$ (but importantly independent of $x_0$) we have the normal to $\overline{P}$, the image of $\tilde{P}$ under \eqref{eq:sc:x-fin}, at $0$ is in $B_r'$. Set
  \begin{align*}
    \tilde{g}(x,y,z) &= \frac{g_z(x_0,0,0)}{g_z(x,0,0)}[g(x,y,g^*(x_0,y,u(x_0)-z)) - g(x,0,0)],
   \end{align*}
  and define $\overline{g}(\overline{x},\overline{y},z) = \tilde{g}(x,y,z)$ for $\overline{x},\overline{y}$ the image of $x,y$. A direct calculation, using a Taylor series (only in $y,z$) implies
  \begin{equation}
    \label{eq:sc:genexp-2}
       \overline{g}(\overline{x},\overline{y},z) = \overline{x} \cdot \overline{y} -z+ O(|\overline{y}|^2)+O(|\overline{y}||z|) +O(|z|^2)
  \end{equation}

  Finally we rotate the $\overline{x}$ and $\overline{y}$ coordinates so that the supporting plane $\overline{P}$ becomes
  \[\overline{P} = \{\overline{x}=(\overline{x}_1,\dots,\overline{x}_n); \overline{x}_1 = 0\},\] and $\overline{G} \subset \{ \overline{x} ; \overline{x}_1 \leq 0\}$. Because, originally, the normal to the supporting plane was in $B_r'$ after this rotation we still have $\overline{y_0}':=t_0e_1 \in \overline{\Omega^*}$. This is by inclusion of the cone \eqref{eq:sc:cone-arg} (see Figure \ref{fig:transform2}).  Finally pick any $\overline{x_0'}\in \overline{G}$ such that $\overline{x_0'} \cdot \overline{y_0 '}\neq 0$. We are in exactly the setting to use the transformation (4.7) from \cite{ChenWang16} which preserves that $\overline{g}$ has the form \eqref{eq:sc:genexp-2} and after which we have
  \begin{align*}
      &\overline{G}  \subset \{\overline{x}; \overline{x}_1 \leq 0\},  &&\overline{G} \cap \{\overline{x}_1 = 0\} = 0,\\
    &-a e_1  \in \overline{G}, && be_1 \in \overline{\Omega^*}. 
  \end{align*}
  The rest of the argument takes place in this setting. For ease of notation we drop the overline.

  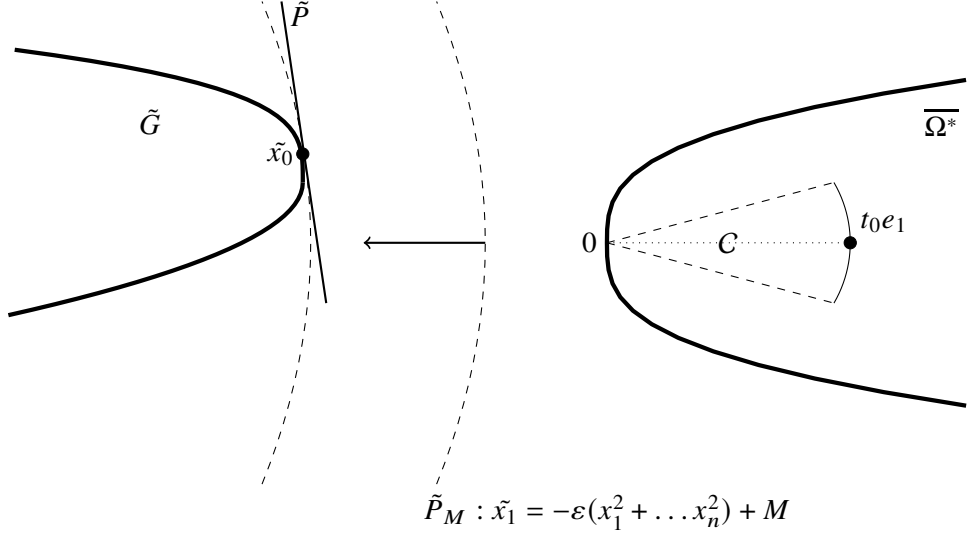
\begin{figure}
  \begin{tikzpicture}[scale=0.8]
\node (c) at (15.5,7) {$\overline{\Omega^*}$};
    \draw[ultra thick, domain=2.3:7.7, variable=\y] plot ({10+0.3*(\y-5)*(\y-5)*abs((\y-5))}, \y);
    \node [above right] at (14,5) {$t_0e_1$};
    \draw[fill] (14,5) circle [radius=0.1];
    \draw [dotted] (10,5) -- (14,5) ;
    \draw[domain=4:6, variable=\y] plot ({12+sqrt(4-pow((\y-5),2))}, \y);
    \draw [dashed] (10,5) -- (13.73205,4) ;
    \draw [dashed] (10,5) -- (13.73205,6) ;
    \node  at (12,5) {$\mathcal{C}$};
    \node  [left] at (10,5) {$0$};

\draw[ultra thick, domain=6:8.2, variable=\y] plot ({5-0.3*pow((\y-6),3.5)}, \y)  ;
    \draw[ultra thick, domain=3.8:6, variable=\y] plot ({5-pow((\y-6),2)}, \y)  ;

    \draw[dashed, domain=1:9, variable=\y] plot ({8-0.05*pow((\y-5),2)}, \y)  ;
    \draw[dashed, domain=1:9, variable=\y] plot ({5.13-0.05*pow((\y-5),2)}, \y)  ;
    \draw[fill] (5.0,6.47196) circle [radius=0.1];
    \node  at (2.5,7) {$\tilde{G}$};
    \node [below] at (10,1) {$\tilde{P}_M: \tilde{x_1} = -\epsilon(x_1^2+\dots x_n^2)+M$};
    \draw [->,thick] (8,5) -- (6,5);
    \node  [left] at (5.021666,6.47196) {$\tilde{x_0}$};
    \draw[thick,domain=4:9, variable=\y] plot ({-0.147196*\y+5.974303}, \y)  ;
     \node at (4.95,8.8) {$\tilde{P}$};
  \end{tikzpicture}
  \caption{Our choice of $\overline{y}$ coordinates implies $\overline{\Omega^*}$ contains the cone $\mathcal{C}$. By choosing $\epsilon$ sufficiently small the normal to the paraboloid at the extreme point lies in the cone. }\label{fig:transform1}
\end{figure}

\begin{figure}
 \begin{tikzpicture}[scale=0.8,rotate=-8]
\node (c) at (15,10) {$\overline{\Omega^*}$};
    \draw[ultra thick, domain=5.3:10.7, variable=\y] plot ({10+0.3*(\y-8)*(\y-8)*abs((\y-8))}, \y);
    \node [above right] at (13.92,8.57) {$t_0e_1$};
    \draw[fill] (13.92,8.57) circle [radius=0.1];
    \draw[domain=7:9, variable=\y] plot ({12+sqrt(4-pow((\y-8),2))}, \y);
    \draw [dashed] (10,8) -- (13.73205,7) ;
    \draw [dashed] (10,8) -- (13.73205,9) ;
    \draw [dotted] (10,8) -- (13.92,8.57) ;
    \node  at (12,8) {$\mathcal{C}$};
    \node  [left] at (10,8) {$0$};

\draw[ultra thick, domain=6:8.2, variable=\y] plot ({6-0.3*pow((\y-6),3.5)}, \y)  ;
    \draw[ultra thick, domain=3.8:6, variable=\y] plot ({6-pow((\y-6),2)}, \y)  ;

    \draw[fill] (6.0,6.47196) circle [radius=0.1];
    \node  at (3.5,7) {$\overline{G}$};
    \node  [left] at (6.021666,6.47196) {$0$};
    \draw[thick,domain=4:9, variable=\y] plot ({-0.147196*\y+6.974303}, \y)  ;
     \node at (5.95,8.8) {$\overline{P}$};
   \end{tikzpicture}
     \caption{We rotate so $\tilde{P}$ becomes $\overline{P} = \{\overline{x};\overline{x}_1=0\}$. After this rotation $\overline{\Omega^*}$ still contains $t_0e_1$. Moreover any $\overline{x_0'}$ in $\overline{G}$ satisfies $\overline{x_0'} \cdot (t_0e_1)\neq0$. This is exactly the situation to apply a linear transformation due to Chen and Wang \cite{ChenWang16}.}\label{fig:transform2}
\end{figure}
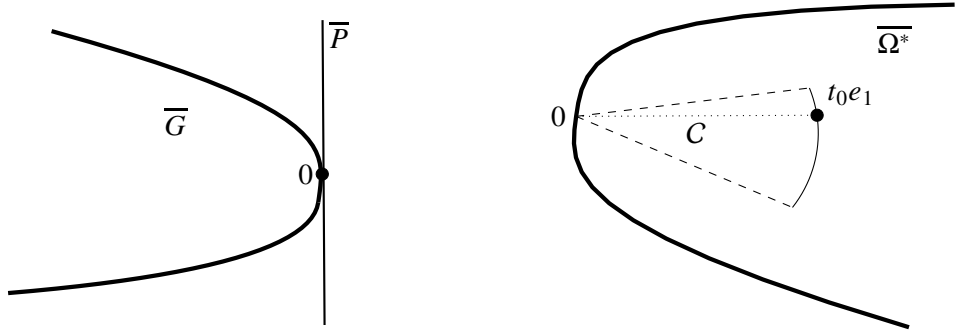

\pagebreak
\textit{Step 2b. Obtaining the contradiction in this setting}\\
  We see $F_\tau := G\cap \{x_1 \geq -\tau\}$ decreases to the point $\{0\}$ as $\tau \rightarrow 0$. Because $\Omega$ lies a positive distance from $0 \in \partial U$, for $\tau$ sufficiently small $\text{dist}(\Omega,F_\tau) >  0$. On the other hand, provided in addition, $\tau < -a/2$ the set
  \begin{equation}
    \label{eq:sc:new-set}
     \{ u < g(\cdot,te_1,g^*(-\tau e_1,te_1,u(-\tau e_1))\} ,   
  \end{equation}
 decreases to $F_\tau$ as $t\rightarrow0$. To see this note
  \[ g(x,te_1,g^*(-\tau e_1,te_1,u(-\tau e_1)) = (x_1+\tau)t+O(t^2),\]
  which follows from \eqref{eq:sc:genexp-2} and a similar expansion for $g^*$ obtained using  \eqref{eq:g:deriv-equiv}. Thus for $t$ small the set in \eqref{eq:sc:new-set} is disjoint from $\overline{\Omega}$. So for all $x \in \overline{\Omega}$,
\begin{equation}
  \label{eq:sc:final_cont}
  u(x) > g(x,te_1,g^*(-\tau e_1,te_1,u(-\tau e_1)).
\end{equation}

On the other hand $te_1 \in \Omega^*$ for $t$ small. Thus there is $x_2 \in \overline{\Omega}$ with $te_1 \in Yu(x_2)$. So
\begin{equation}
  \label{eq:sc:gives_cont}
   u(x) \geq g(x,te_1,g^*(x_2,te_1,u(x_2))),
\end{equation}
for all $x \in U$. To obtain a contradiction evaluate \eqref{eq:sc:gives_cont} at $x = -\tau e_1$, then apply
\[g(x_2,te_1,g^*(-\tau e_1,te_1,\cdot)) \]
to both sides. This contradicts \eqref{eq:sc:final_cont} with $x = x_2$.
\end{proof}

So we're left to deal with the  lemma we skipped over in the previous proof. The author relied on Figalli's book \cite[pg. 101]{Figalli17} for details in the convex case.

\begin{lemma}\label{lem:sc:convballest}
  In the context of the previous proof \eqref{eq:sc:convballest} holds. 
\end{lemma}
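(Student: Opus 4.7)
The plan is to reduce the inequality to the classical scaling argument for convex sets, after applying an auxiliary change of variables that turns the $g$-convex section $G^h$ into a genuine convex set. The starting point is Theorem \ref{thm:g:gconvsection}, which tells us that $G^h$, being a section of the $g$-convex function $u$, is $g$-convex with respect to $(0,-h)$.

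First I would introduce the map
\[ \Psi(x) := -\frac{g_y}{g_z}(x,0,-h). \]
By the very definition of $g$-segments, $\Psi$ sends $g$-segments with respect to $(0,-h)$ to ordinary line segments, so $\tilde G^h := \Psi(G^h)$ is convex in the usual sense. A direct computation using the expansion \eqref{eq:sc:genexp} of the transformed generating function shows that for $h \leq h_0$ sufficiently small the Jacobian $D\Psi$ is close to the identity; in particular it is bounded above and below by positive constants depending only on $g$ and $h_0$. Hence Lebesgue measures are comparable under $\Psi$,
\[ c_1 |E| \leq |\Psi(E)| \leq c_2 |E| \quad \text{for every Borel } E \subset G^h, \]
and there exists $c > 0$, depending only on $g$ and $h_0$, such that $\Psi(B_r(x)) \supset B_{cr}(\Psi(x))$.

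Next I would carry out the scaling argument on the convex set $\tilde G^h$. Since $G^h \subset U$ is bounded and $\Psi$ is Lipschitz, $\tilde G^h$ has diameter at most some $D_0$ depending only on $g$, $h_0$, and $U$. Setting $t := cr/D_0 \in (0,1]$, the affine contraction $\Phi(y) := \Psi(x) + t(y - \Psi(x))$ satisfies $\Phi(\tilde G^h) \subset \tilde G^h$ by convexity of $\tilde G^h$ together with $\Psi(x) \in \tilde G^h$; since $|\Phi(y) - \Psi(x)| \leq t D_0 = cr$, we also have $\Phi(\tilde G^h) \subset B_{cr}(\Psi(x))$. Therefore
\[ |\tilde G^h \cap B_{cr}(\Psi(x))| \geq |\Phi(\tilde G^h)| = t^n |\tilde G^h|, \]
and transferring back through $\Psi^{-1}$ using the two-sided Jacobian bound yields $|G^h \cap B_r(x)| \geq C |G^h|$, with $C$ depending only on $r$, $g$, and $h_0$, as claimed.

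I expect no serious obstacle. The only point requiring a moment of care is the interaction between the auxiliary map $\Psi$ and the normalisations already in force in Step 1 of the outer proof; but the uniform Jacobian bound for $\Psi$ and the uniform diameter bound on $\tilde G^h$ both follow immediately from \eqref{eq:sc:genexp} once $h_0$ is chosen small relative to $\Vert g \Vert_{C^2}$ and $\text{diam}(U)$, so this is a routine verification rather than a genuine difficulty.
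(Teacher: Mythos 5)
Your proof is correct, and it takes a genuinely different route from the paper's. Both arguments begin the same way: change coordinates via the map $x \mapsto -\tfrac{g_y}{g_z}(x,y_0,z_0-h)$ so that the section $G^h$ becomes classically convex, and use the uniform Jacobian bound to move measure estimates back and forth. From there the paper invokes John's lemma to normalize the convex image $T(G^h)$ between $B_1$ and $B_n$, and gets an \emph{absolute} lower bound $|T(G^h)\cap B_{r/R}(Tx_0)|\geq C_n(r/R)^n$ from the convex cone with apex $Tx_0$ and base $B_1$ (whose opening angle is controlled because $|Tx_0|\leq n$), before converting to a relative bound via $|T(G^h)|\leq C_n$. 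You instead apply the affine dilation $\Phi$ by factor $t$ centred at $\Psi(x_0)\in\tilde G^h$: by convexity $\Phi(\tilde G^h)\subset\tilde G^h$, by the diameter bound $\Phi(\tilde G^h)\subset B_{cr}(\Psi(x_0))$, and $|\Phi(\tilde G^h)|=t^n|\tilde G^h|$, which gives the relative bound directly with no normalization. Your route is shorter and avoids John's lemma entirely; the paper's is the standard device from the Monge--Amp\`ere literature (cf.\ Figalli's book) and is perhaps more familiar. Two small remarks: the uniform Jacobian bound really follows from \eqref{eq:sc:xxbarest} (i.e.\ $g_z\det E\neq 0$ on $\overline{\Gamma}$), not from the expansion \eqref{eq:sc:genexp} of $\overline g$, which is produced by this very coordinate change; and you should take $t=\min\{cr/D_0,1\}$ rather than asserting $cr/D_0\leq 1$, though the estimate is trivial in the case $t=1$.
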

\begin{proof}
  Recall the setting is as follows, we have
  \begin{align*}
    G &:= \{u \equiv g(\cdot,y_0,z_0)\}\\
    G^h&:= \{u < g(\cdot,y_0,z_0-h)\},
  \end{align*}
  and $x_0 \in \Omega\cap G$ with $r$ chosen so small as to ensure $B_r(x_0)\subset\subset\Omega$.

  Switch to the coordinates centred on $x_0$ with respect to $y_0,z_0-h$ (so $G^h$ is convex). It suffices to prove the estimates in these coordinates. By \eqref{eq:sc:xxbarest} and continuity the image of $B_r$ contains a ball, which we still refer to as $B_r$.  We assume $G^h \subset B_R(0)$ for some large $R$. Now, normalize $G^h$ via John's lemma and an affine transformation $T$ so that
  \begin{equation}
    \label{eq:sc:tg-contain}
       B_1 \subset T(G^h) \subset B_n.
  \end{equation}
  Then $B_1(0) \subset T(G^h) \subset T(B_R)$, and, because $T$ is affine, this implies $B_{r/R}(Tx_0) \subset T(B_r) \subset T\Omega$.  Finally using \eqref{eq:sc:tg-contain} and $Tx_0 \in G^h$ we have, provided $r$ is sufficiently small, the estimate
  \[ |T(G^h)\cap B_{r/R}(Tx_0)|  \geq C_n \left(\frac{r}{R}\right)^n \geq C_n \left(\frac{r}{R}\right)^n |T(G^h)|. \]
The final equality uses the normalization to conclude $|T(G^h)| \leq C_n$.  
\end{proof}

\begin{remark}\label{rem:sc:unif-dom}
 A corollary of Theorem \ref{thm:sc:contain} is that when the source domain $\Omega$ is uniformly $g$-convex with respect to $u$ and $\Omega \subset\subset U$ then $u$ is strictly convex. The important part is that the uniform convexity of $\Omega$ means no convexity condition is needed on $U$. This corollary is immediate: for $\epsilon$ sufficiently small $\Omega_\epsilon := \{x ; \text{dist}(x,\Omega) < \epsilon\}$ is uniformly $g$-convex with respect to $u$ and strictly contains  $\Omega$.
\end{remark}

It is worth drawing attention to the strict convexity result of Figalli, Kim, and McCann in the optimal transport case \cite{FKM13}. Importantly, their result does not require the cost function be defined outside $\overline{\Omega} \times \overline{\Omega^*}$. Such a result is desireable for GJE. They rely crucially on a theorem that shows the $Y$ mapping maps boundaries to boundaries. I've been unable to extend their result --- it an interesting question as to whether the analog of \cite[Theorem 5.1]{FKM13} holds for generating functions.  The impediment is a troublesome Taylor series term $g_{i,j,z}x_iy_jz$ that occurs if one tries to repeat their equation (5.2). A starting point would be working out whether the result holds for the (contrived) example $g(x,y,z) = x \cdot y - z +za_{ij}x_iy_j$. 

\section{Strict convexity under only a one sided bound in 2 dimensions}
\label{sec:sc:2d}

We have stronger strict convexity results in two dimensions. Only a lower bound $\det DYu \geq \lambda$ and the $g$-convexity of $\Omega$ with respect to $u$ are required for strict convexity. Using duality  the $C^1$ differentiability holds if $\det DYu \leq \Lambda$ and the target is $g^*$-convex with respect to $u$. What's more, these results are proved by elementary means. Similar results were proved, using different techniques and stronger hypothesis on the domains, in the optimal transport case by Figalli and Loeper \cite{FigalliLoeper09}.

The results in this section require the A4w condition: \index{A4w}

\textbf{A4w.} \setref{}{ref:a4w}  For all $\xi\in \mathbf{R}^n$  and $(x,u,p) \in \mathcal{U}$ there holds
\begin{align}
   D_{u}A_{ij}(x,u,p)\xi_i\xi_j \geq 0. 
\end{align}
Using the dual matrix $A^*_{ij}$ we may define the dual condition A4w$^*$.
This condition is used to improve the differential inequality from Lemma \ref{lem:g:maindiffineq}. The condition A4w is satisfied in applications known to the author. However, it would still be worthwhile to investigate if the results in this section hold without this hypothesis. 

\begin{theorem}\label{thm:sc:main}
  Let $g$ be a generating function satisfying A3w and A4w, $\Omega \subset \mathbf{R}^2$, and $\lambda > 0$. Assume $u:\Omega \rightarrow \mathbf{R}$ be an Aleksandrov solution of
  \begin{align}
    \label{eq:sc:alekssoln}
       \det DYu \geq \lambda \text{ in }\Omega.
  \end{align}
If $\Omega$ is $g$-convex with respect to $u$ then $u$ is strictly $g$-convex. 
\end{theorem}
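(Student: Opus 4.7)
Suppose for contradiction that $u$ is not strictly $g$-convex, so some $g$-support $g(\cdot, y_0, z_0)$ of $u$ has contact set $G = \{x \in \Omega : u(x) = g(x, y_0, z_0)\}$ containing more than one point. I would apply the coordinate transformations of Section~\ref{sec:sc:transformations-1} at a point of $G$ with respect to $(y_0, z_0)$ to reduce to the normalised setting in which the transformed generating function has the expansion \eqref{eq:sc:genexp}, the transformed $G$ is convex, and the transformed $\Omega$ is convex (the latter from the hypothesis that $\Omega$ is $g$-convex with respect to $u$). Since $u$ is $g$-affine on $G$, the map $Yu$ collapses the relative interior of $G$ to a single point, so $|Yu(G)| = 0$. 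The Aleksandrov lower bound $|Yu(E)| \geq \lambda' |E|$, with $\lambda' > 0$ inherited from the Jacobians of the transformations, applied to $E = G$, forces $|G| = 0$. Hence in $\mathbf{R}^2$ the convex set $G$ is a non-degenerate line segment $L = [e_0, e_1]$, and by the $g$-convexity of $\Omega$ with respect to $u$ we have $L \subset \overline{\Omega}$; I would take $L$ maximal.

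To derive the contradiction I would rotate the transformed coordinates so that $L$ lies along the $x_1$-axis with $u \equiv 0$ on $L$ and $u \geq 0$ nearby. The plan is to apply Lemma~\ref{lem:g:maindiffineq} along $g$-segments through points of $L$ transverse to $L$, sharpened via the A4w condition into a Hopf-type inequality of the form $h'' \geq c_0 h - K|h'|$ with $c_0 > 0$, rather than the weaker $h'' \geq -K|h'| - K_0|h|$. Such an inequality forces $h(\theta) = u(x_\theta) - g(x_\theta, 0, 0)$ to grow at least linearly in the $\theta$ direction off $L$. Translating this growth estimate back through the transformation, one obtains control on the first component of $Yu$ on a thin tubular neighbourhood $R_\epsilon$ of $L$, in the form $|(Yu(x))_1 - y_{0,1}| = O(\mathrm{dist}(x,L))$. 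Combined with the bounded range of the second component (by semiconvexity of $u$), this yields $|Yu(R_\epsilon)| = O(\epsilon)$, which is incompatible with the Aleksandrov lower bound $|Yu(R_\epsilon)| \geq \lambda'|R_\epsilon|$ once the segment $L$ is used to its full length --- the crucial point being that the two-dimensional hypothesis forces $G$ to be one-dimensional, so the normal direction to $L$ is itself one-dimensional and the thin-strip estimate is sharp; in higher dimensions $G$ could have positive $(n-1)$-dimensional content and the argument would collapse.

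The main technical obstacle is the rigorous derivation of the sharpened Hopf-type inequality in the Aleksandrov setting, where $u$ is only semiconvex and not $C^2$. I expect to proceed by approximating $u$ with smooth $g$-convex functions (via sup-convolution or a similar regularisation), carrying out the Hopf analysis for the approximants where the sharpened differential inequality from A4w applies classically, and then passing to the limit using the stability of $Yu$ under locally uniform convergence established in Lemma~\ref{lem:w:weak_conv}. The A4w hypothesis is used precisely to supply the missing positive coefficient $c_0$ that drives the Hopf-type growth, which is why it is needed in addition to A3w for the two-dimensional one-sided result.
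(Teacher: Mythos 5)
Your proposal diverges sharply from the paper's argument and contains several genuine gaps, the most serious being your reading of what the A4w condition delivers.

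In Lemma~\ref{lem:g:maindiffineq} the $u$-dependence of $A$ contributes the term $A_{ij,u}\dot{x}_i\dot{x}_j\,h(\theta)$ to the lower bound for $h''$. What A4w ($D_uA_{ij}\xi_i\xi_j\ge 0$) buys is that this term is \emph{nonnegative} whenever $h\ge 0$, so it may simply be dropped, upgrading $h''\ge -K|h'|-K_0|h|$ to $h''\ge -K|h'|$. It does \emph{not} produce a positive coefficient $c_0>0$ in a Hopf-type inequality $h''\ge c_0h-K|h'|$; indeed when $A$ is independent of $u$ (the optimal transport case) $A_{ij,u}\equiv 0$ and there is nothing positive to extract. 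Moreover, even if such an inequality were available it could not drive linear growth off $L$: because $L$ is a contact set with the support $g(\cdot,y_0,z_0)$, at any interior point of $L$ one has both $h=0$ and $Dh=0$ (the latter since $Du=g_x(\cdot,y_0,z_0)$ on $L$). With $h(0)=h'(0)=0$ the inequality $h''\ge c_0h-K|h'|$ is satisfied by $h\equiv 0$, so no growth follows. Finally, your thin-strip counting estimate does not close as stated: you get $|Yu(R_\epsilon)|=O(\epsilon)$ while the lower bound $|Yu(R_\epsilon)|\ge\lambda'|R_\epsilon|$ is also $O(\epsilon)$, and nothing in the argument separates the constants. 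Your preliminary observation $|G|=0$ is vacuous in two dimensions, since a segment has Lebesgue measure zero regardless.

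The paper's proof (following Trudinger--Wang) is quantitative and structured differently. Step~1, for $C^2$ solutions, fixes two near-contact points $x_{-1},x_1$, sets $H$ as the dip of $u$ below the $g$-affine chord, and shows $H\ge C>0$ independently of second-derivative bounds. The mechanism is: $h''\ge -K|h'|$ (from A3w$+$A4w) gives $|h'|\le C(\epsilon+H)$ along a family of parallel $g$-segments $\{x_\theta^\epsilon\}$, while the PDE $\det[D^2u-A]\ge c\lambda$ gives a reciprocal lower bound on $h''$ in terms of the transverse second difference quotient; integrating in $\theta$ and $\epsilon$ and applying Jensen's inequality yields $\int_0^\delta C/(\epsilon+H)\,d\epsilon\le C$, forcing $H$ bounded below. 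Two-dimensionality enters precisely through writing $\det$ as a product of two eigenvalues. Step~2 passes to Aleksandrov solutions by constructing a smooth $g$-convex supersolution via a Dirichlet problem on a small $g$-convex ball (not by sup-convolution) and letting the approximation parameter go to zero. Your Step~2 instinct is at least in the right spirit, but Step~1 needs to be replaced entirely.
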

\begin{corollary}\label{cor:sc:c1}
  Let $g$ be a generating function satisfying A3w and A4w$^*$, $\Omega \subset \mathbf{R}^2$, and $\Lambda > 0$. Let $u:\Omega \rightarrow \mathbf{R}$ be an Aleksandrov solution
  \begin{align}
    \label{eq:alekssolnabove}
   \det DYu \leq \Lambda \text{ in }\Omega
  \end{align}
  If $\Omega^*:=Yu(\Omega)$ is $g^*$-convex with respect to $u$ then $u \in C^1(\Omega)$. 
\end{corollary}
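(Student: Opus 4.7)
The strategy is to derive the corollary from Theorem \ref{thm:sc:main} via duality: strict $g^*$-convexity of the $g^*$-transform $v$ will force each $Yu(x)$ to be a singleton, and continuity of $Du$ will then follow from semiconvexity.

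First, let $v$ denote the $g^*$-transform of $u$ restricted to $\Omega^*$. I would verify that the hypothesis $\det DYu \leq \Lambda$ on $\Omega$ transfers to the Aleksandrov-sense lower bound $\det DXv \geq 1/\Lambda$ on $\Omega^*$. This is essentially the measure-preserving computation carried out in steps 2 and 3 of Lemma \ref{lem:w:dualsolutions}, adapted to a one-sided inequality: using Lemma \ref{lem:g:sharedy} to handle the Lebesgue-null overlap set $\mathcal{Z}$, one has $Xv(F) \cap \Omega = Yu^{-1}(F)$ up to null sets, so that for every Borel $F \subset \Omega^*$, $|F| \leq \Lambda |Yu^{-1}(F)| \leq \Lambda |Xv(F)|$.

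Second, I would check the source-domain convexity hypothesis for the dual problem. By Lemma \ref{lem:g:gstarsubdiff}, for each $y \in \Omega^*$ there exists $x \in Xv(y)$ with $v(y) = g^*(x, y, u(x))$, so that the condition ``$\Omega^*$ is $g^*$-convex with respect to $u$'' (in the sense of Definition \ref{def:g:g-conv-wrt-fung}) transfers verbatim to ``$\Omega^*$ is $g^*$-convex with respect to $v$''.

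Third, I would apply Theorem \ref{thm:sc:main} in its dual form to $v$ on $\Omega^*$. The generating function $g$ satisfies A3w, and this passes to A3w$^*$ for $g^*$ (as cited in the proof of Theorem \ref{thm:g:gconvymapping}); the condition A4w$^*$ is assumed outright. The two-dimensional hypothesis holds. Hence Theorem \ref{thm:sc:main} yields that $v$ is strictly $g^*$-convex on $\Omega^*$.

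Fourth, I would translate strict $g^*$-convexity back into $C^1$-regularity of $u$. If $u$ were not differentiable at some $x_0 \in \Omega$, then by Theorem \ref{thm:g:y-sub} the set $Yu(x_0)$ would contain two distinct points $y_0, y_1$. Lemma \ref{lem:g:gstarsubdiff} gives $v(y_i) = g^*(x_0, y_i, u(x_0))$ for $i=0,1$, so the single $g^*$-affine function $g^*(x_0, \cdot, u(x_0))$ would be a $g^*$-support of $v$ at both $y_0$ and $y_1$ — contradicting strict $g^*$-convexity. Hence $Yu(x)$ is a singleton for every $x \in \Omega$, so $u$ is differentiable everywhere. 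Continuity of $Du$ then follows from the semiconvexity of $u$ (Remark \ref{rem:g:semiconvex}): a semiconvex function whose subdifferential is a singleton at every point is automatically $C^1$, since the subdifferential is upper semicontinuous as a set-valued map.

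The main obstacle I anticipate is making the one-sided duality in the first step rigorous, because we only know $\det DYu \leq \Lambda$ rather than an exact equation, and one must argue at the level of $g$-Monge--Amp\`ere measures rather than pointwise. Once the dual equation is set up on $\Omega^*$, the remainder is essentially a direct appeal to Theorem \ref{thm:sc:main} plus the standard dictionary between strict $g^*$-convexity of the dual and $C^1$-regularity of the primal.
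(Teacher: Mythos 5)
Your proposal is correct and follows the paper's own route: pass to the $g^*$-transform $v$, establish the dual measure inequality $|Xv(A)| \geq \Lambda^{-1}|A|$ (the paper's \eqref{eq:sc:dualineq}), apply Theorem~\ref{thm:sc:main} in its dual form to conclude $v$ is strictly $g^*$-convex, and derive the contradiction from the single $g^*$-affine function $g^*(x_0,\cdot,u(x_0))$ supporting $v$ at two distinct points of $Yu(x_0)$. The only difference is that you spell out a bit more carefully than the paper the derivation of the dual measure inequality and the final upgrade from pointwise differentiability to $C^1$ via upper semicontinuity of the subdifferential of a semiconvex function.
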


We make use of the differential inequality from Lemma \ref{lem:g:maindiffineq}. Let $x_0,x_1$ be given and $\{x_\theta\}_{\theta \in [0,1]}$ be the $g$-segment joining $x_0$ to $x_1$ with respect to $y_0,z_0$. If A3w,A4w are satisfied this lemma implies 
\[ h(\theta) := u(x_\theta) - g(x_\theta,y_0,z_0),\]
 satisfies  \[h''(\theta) \geq -C|h'(\theta)|,\] provided $h(\theta)\geq0$. Here $C$ depends on $\sup|Du|$ and $|\dot{x_\theta}|.$  This inequality implies estimates on  $h'$ in terms of $\sup |h|$ via the following lemma. 

\begin{lemma}\label{lem:sc:diffseg}
  Let $h \in C^2([a,b])$ satisfy $h'' \geq -K|h'|$. If $t \in (a,b)$ then
  \begin{equation}
    \label{eq:sc:diffseg}
      -C_0 \sup_{[a,t]}|h|\leq h'(t) \leq C_1\sup_{[t,b]}|h|,
  \end{equation}
  where $C_0,C_1$ depend on $t-a,b-t$ respectively and $K$.
\end{lemma}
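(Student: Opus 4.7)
The two inequalities in \eqref{eq:sc:diffseg} are symmetric under the reflection $s \mapsto a+b-s$, so I will focus on proving the upper bound $h'(t) \le C_1 \sup_{[t,b]}|h|$. If $h'(t) \le 0$ the upper bound is trivial, so assume $M := h'(t) > 0$. The strategy is to turn the hypothesis into a one-sided linear differential inequality, use it to propagate the positivity of $h'$ across $[t,b]$, and then integrate.

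The key observation is that on any subinterval where $h' \ge 0$, the inequality $h'' \ge -K|h'|$ reads $h'' + Kh' \ge 0$, which rewrites as $(h'(s)e^{Ks})' \ge 0$. Thus $s\mapsto h'(s)e^{Ks}$ is nondecreasing wherever $h'$ is nonnegative. My first step is to upgrade this to: $h' > 0$ on all of $[t,b]$. If not, let $s^\ast \in (t,b]$ be the first zero of $h'$ to the right of $t$; then $h' \ge 0$ on $[t,s^\ast]$, so the monotonicity gives $M e^{Kt} = h'(t)e^{Kt} \le h'(s^\ast)e^{Ks^\ast} = 0$, contradicting $M > 0$. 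Hence $h' > 0$ on $[t,b]$ and the monotonicity yields the pointwise lower bound
\[ h'(s) \;\ge\; M e^{-K(s-t)} \qquad \text{for all } s\in[t,b]. \]
Integrating from $t$ to $b$,
\[ h(b) - h(t) \;\ge\; M \int_t^b e^{-K(s-t)}\,ds \;=\; \frac{M}{K}\bigl(1-e^{-K(b-t)}\bigr), \]
and since $h(b) - h(t) \le 2\sup_{[t,b]}|h|$, I obtain $h'(t) \le C_1 \sup_{[t,b]}|h|$ with $C_1 = 2K/(1-e^{-K(b-t)})$, which depends only on $K$ and $b-t$ as required.

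For the lower bound, set $M := -h'(t)$ and assume $M > 0$. On any subinterval where $h' \le 0$, the hypothesis reads $h'' - Kh' \ge 0$, i.e.\ $(h'(s)e^{-Ks})' \ge 0$. Running the same contradiction argument backwards from $t$ shows $h' < 0$ throughout $[a,t]$, whence $h'(s) \le -M e^{-K(t-s)}$ on $[a,t]$; integrating from $a$ to $t$ and bounding $h(a) - h(t) \le 2\sup_{[a,t]}|h|$ yields the matching estimate with $C_0 = 2K/(1-e^{-K(t-a)})$.

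I expect no substantial obstacle; the only point that requires care is the sign-preservation step, which must precede the integration because the differential inequality is genuinely nonlinear at points where $h'$ changes sign, so one cannot simply apply a single Gronwall inequality over the whole interval.
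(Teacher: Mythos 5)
Your proof is correct and takes essentially the same route as the paper: both arguments first show the sign of $h'$ is preserved on the relevant side of $t$ via a Gronwall-type multiplier, then integrate the resulting exponential lower bound on $h'$ against the oscillation of $h$. The paper phrases the multiplier as $\frac{d}{ds}\log h'(s) \ge -K$ and uses the substitution $k(s)=h(-s)$ for the other inequality, whereas you use $(h'(s)e^{Ks})' \ge 0$ and the reflection $s\mapsto a+b-s$, but these are cosmetic variants of the same argument.
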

\begin{proof}
  First note if $h'(\tau) = 0$ at any $\tau \in (a,b)$ then $h'(a) \leq 0$. To see this assume $\tau$ is the infimum of points with $h'(\tau) = 0$. By continuity if $\tau=a$ we are done. Otherwise $h'$ is single signed on $(a,\tau)$ and if $h'<0$ on this interval then again by continuity we are done. Thus we assume $h'>0$ on $(a,\tau)$. The  inequality $h'' \geq -K|h'|$ implies $\frac{d}{dt}\log(h'(t)) \geq -K$ on $(a,\tau)$. Subsequently for $a<t_1<t_2<\tau$ integration gives
  \begin{equation}
    \label{eq:sc:diffint}
        h'(t_1) \leq e^{K(t_2-t_1)}h'(t_2),
  \end{equation}
  and sending $t_1 \rightarrow a, t_2 \rightarrow \tau$ gives $h'(a) \leq 0$.

  To prove \eqref{eq:sc:diffseg} we begin with the upper bound. We assume $h' >0$ on $[t,b]$ otherwise the argument just given implies $h'(t) \leq 0$. We obtain \eqref{eq:sc:diffint} for $t_1=t$ and $t_2 \in (t,b)$. Integrating with respect to $t_2$ from $t$ to $b$ establishes the result. The other inequality follows by applying the same argument to the function $k$ defined by $k(t):=h(-t)$. 
 \end{proof}

Now we present the proof of Theorem \ref{thm:sc:main}. The proof follows closely the proof of Trudinger and Wang \cite[Remark 3.2]{TrudingerWang08} who provided a proof of the result in the Monge--Amp\`ere case. The key ideas of our proof will be more transparent if the reader is familiar with their proof. There are two key steps: First we obtain a quantitative $g$-convexity estimate for $C^2$ solutions of $\det [D^2u-A(\cdot,u,Du)] \geq \lambda$ (importantly the estimate is independent of bounds on second derivatives). Then we obtain a convexity estimate for Aleksandrov solutions via a barrier argument. 
\begin{proof}[Proof. Theorem \ref{thm:sc:main}.] \textit{Step 1. Quantitative convexity for $C^2$ solutions}\\
  Initially we assume $u$ is $C^2$. Let $g(\cdot,y_0,z_0)$ be a support with $y_0 \in Yu(x),z_0 = g^*(x,y_0,u(x))$ for $x \in \Omega$. Assume for some $\sigma \geq 0$ there is distinct $x_{-1},x_{1} \in \Omega$ with
  \begin{align}
    \label{eq:sc:sigest1} u(x_{-1}) \leq g(x_{-1},y_0,z_0)+\sigma,\\
    \label{eq:sc:sigest2}    u(x_1) \leq g(x_1,y_0,z_0)+\sigma.
  \end{align}

  Let $\{x_\theta\}_{\theta \in [-1,1]}$ denote the $g$-segment with respect to $y_0,z_0$ that joins $x_{-1}$ to $x_{1}$ and set
\[ h_\sigma(x) = u(x) - g(x,y_0,z_0) - \sigma.\]
  We use the shorthand $h_\sigma(\theta) = h_\sigma(x_\theta)$. Lemma \ref{lem:g:maindiffineq} along with A3w and A4w yields $h''_0(\theta) \geq -K |h_0'(\theta)|$ with the same inequality holding for $h_\sigma$. The maximum principle implies $h_\sigma(\theta) \leq \text{max}\{h_\sigma(0),h_\sigma(1)\}$. Thus
  \begin{align}
  0 \geq h_\sigma(\theta) \geq \inf_{\theta \in [-1,1]} h_\sigma(\theta) =:-H.\label{eq:sc:heightest}
  \end{align}
  The convexity estimate we intend to derive is $H \geq C > 0$ where $C$ depends only on $|x_0-x_1|, \ \Vert u \Vert_{C^1}, \ g, \ \lambda$. We use $C$ to indicate any positive constant depending only on these quantities.  

  Because $h$ is locally Lipschitz for $\theta \in [-3/4,3/4]$ and $\xi \in \mathbf{R}^n$, \eqref{eq:sc:heightest} implies
  \begin{align}
        \label{eq:sc:lipest1}         h_\sigma(x_\theta+\xi)  &\leq h_\sigma(x_\theta) + C|\xi| \leq C|\xi|,\\
    \label{eq:sc:lipest2}         h_\sigma(x_\theta+\xi)  &\geq h_\sigma(x_\theta) - C|\xi| \geq -H - C|\xi|.
  \end{align}

  We let $\eta_\theta$ be a continuous unit normal vector field to the $g$-segment $\{x_\theta\}$. Fix $\delta > 0$ so that $x_{-1/2}+\delta \eta_{-1/2}$ and $x_{1/2}+\delta \eta_{1/2}$ lie in $\Omega$. For $\epsilon \in [0,\delta]$  let $\{x^\epsilon_\theta\}_{\theta \in [-1/2,1/2]}$ be the $g$-segment with respect to $y_0,z_0$ joining $x_{-1/2}+\epsilon \eta_{-1/2}$ to $x_{1/2}+\epsilon \eta_{1/2}$.  Using Lemma \ref{lem:sc:diffseg} for $\theta \in [-1/4,1/4]$ combined with \eqref{eq:sc:lipest1} and \eqref{eq:sc:lipest2} implies
  \begin{align}
  -C(\epsilon+H) \leq \frac{d}{d\theta}h_{\sigma}(x_\theta^\epsilon) \leq C(\epsilon+H). \label{eq:sc:diffineq2}
  \end{align}
  Here we have used that $|x_\theta - x_\theta^\epsilon| < C \epsilon$ for a Lipschitz constant independent of $\theta$.   

  This implies
  \begin{align}
\label{eq:sc:tosub1}    \int_{-1/4}^{1/4}\frac{d^2}{d\theta^2}h_{\sigma}(x_\theta^\epsilon) \ d \theta \leq C(\epsilon+H),
  \end{align}
  and we come back to this in a moment. For now note that since
  \[ \det [D^2u - A(\cdot,u,Du)]\geq  \lambda \inf\det E > 0,\] for any two orthogonal unit vectors $\xi,\eta$
 \[ [D_{\xi\xi}u-g_{\xi \xi}(x,Yu(x),Zu(x))][D_{\eta\eta}u-g_{\eta \eta}(x,Yu(x),Zu(x))] \geq C.\]
 In particular for $\eta_\theta^\epsilon$ a choice of unit normal vector field orthogonal to $\dot{x}_\theta^\epsilon$ (continuous in $\theta,\epsilon$) we have
 \begin{align*}
   C^{-1}&[D_{\dot{x}_\theta^\epsilon\dot{x}_\theta^\epsilon}u-g_{\dot{x}_\theta^\epsilon \dot{x}_\theta^\epsilon}(x,Yu(x),Zu(x))] \\
   &\geq |\dot{x}_\theta^\epsilon|^{2} [D_{\eta_\theta^\epsilon\eta_\theta^\epsilon}u-g_{\eta_\theta^\epsilon \eta_\theta^\epsilon}(x,Yu(x),Zu(x))]^{-1}.
 \end{align*}
  Employing this and \eqref{eq:sc:diffineq2} in Lemma \ref{lem:g:maindiffineq}  gives
  \begin{align}
\label{eq:sc:tosub2}   \frac{d^2}{d\theta^2}h_{\sigma}(x_\theta^\epsilon) &\geq C|\dot{x}_\theta^\epsilon|^2\big([D_{\eta_\theta^\epsilon \eta_\theta^\epsilon}u(x_\theta^\epsilon)-g_{\eta_\theta^\epsilon \eta_\theta^\epsilon}(x_\theta^\epsilon,Yu(x_\theta^\epsilon),Zu(x_\theta^\epsilon))]\big)^{-1}\\&\quad\quad - C(\epsilon+H),\nonumber
  \end{align}
  where initially this holds for $h_0$, and thus for $h_\sigma$. Note that by the equivalent form of the A3w condition (\ref{eq:g:a3w-equiv}), the $D_p^2A$ term in Lemma \ref{lem:g:maindiffineq} is bounded below by $-C|h'|$ and subsequently controlled by \eqref{eq:sc:diffineq2}.

  Substituting \eqref{eq:sc:tosub2} into \eqref{eq:sc:tosub1} we obtain
  \begin{align*}
    \int_{-1/4}^{1/4} |\dot{x}_\theta^\epsilon|^2\big([D_{ij}u(x_\theta^\epsilon)-g_{ij}(x_\theta^\epsilon)](\eta_\theta^\epsilon)_i (\eta_\theta^\epsilon)_j\big)^{-1} \ d \theta \leq C(\epsilon+H),
  \end{align*}
 where we omit that $g$ is evaluated at $(x_\theta^\epsilon,Yu(x_\theta^\epsilon),Zu(x_\theta^\epsilon))$. An application of Jensen's inequality implies
  \begin{align}
   \label{eq:sc:inttobound} &\int_0^\delta \int_{-1/4}^{1/4} |\dot{x}_\theta^\epsilon|^{-2}[D_{ij}u(x_\theta^\epsilon)-g_{ij}(x_\theta^\epsilon)](\eta_\theta^\epsilon)_i (\eta_\theta^\epsilon)_j \ d \theta \ d\epsilon \\\quad&\geq C \int_0^\delta \Big(\int_{-1/4}^{1/4}  |\dot{x}_\theta^\epsilon|^2\big([D_{ij}u(x_\theta^\epsilon)-g_{ij}(x_\theta^\epsilon)](\eta_\theta^\epsilon)_i (\eta_\theta^\epsilon)_j\big)^{-1}\ d \theta \Big)^{-1} \ d\epsilon \nonumber \\
\label{eq:sc:invint}    &\quad >  \int_0^\delta \frac{C}{\epsilon+H} \ d \epsilon. 
  \end{align}
This is the crux of the proof complete: the only way for the final integral to be bounded is if $H$ is bounded away from 0. We're left to show the integral \eqref{eq:sc:inttobound} is bounded in terms of the allowed quantities, and approximate when $u$ is not $C^2$.

   To bound \eqref{eq:sc:inttobound} use that $\det E \neq 0$ implies $|\dot{x}_\theta^\epsilon|$ is bounded below by a positive constant depending on $|x_1-x_0|$ and $g$. This gives the estimate
  \begin{align*}
    &\int_0^\delta \int_{-1/4}^{1/4} |\dot{x}_\theta^\epsilon|^{-2}[D_{ij}u(x_\theta^\epsilon)-g_{ij}(x_\theta^\epsilon)](\eta_\theta^\epsilon)_i (\eta_\theta^\epsilon)_j \ d \theta \ d\epsilon\\
   & \leq C  \int_0^\delta \int_{-1/4}^{1/4}  [D_{ij}u(x_\theta^\epsilon)-g_{ij}(x_\theta^\epsilon)](\eta_\theta^\epsilon)_i (\eta_\theta^\epsilon)_j \ d \theta \ d\epsilon\\
   & \leq  C  \int_0^\delta \int_{-1/4}^{1/4} \sum_i D_{ii}u(x_\theta^\epsilon)-g_{ii}(x_\theta^\epsilon) \ d \theta \ d \epsilon.
  \end{align*}
  The final line is obtained using positivity of $D_{ij}u-g_{ij}$ and is bounded in terms of $\Vert g \Vert_{C^2}$ and $\sup |Du|$ (compute the integral in the Cartesian coordinates and note the Jacobian for this transformation is bounded). Thus returning to \eqref{eq:sc:inttobound} and \eqref{eq:sc:invint} we obtain $H > C$ where $C$ depends on the stated quantities. 

  \textit{Step 2: Convexity estimates for Aleksandrov solutions via a barrier argument}\\
  We extend to Aleksandrov solutions via a barrier argument. Suppose $u$ is an Aleksandrov solution of \eqref{eq:sc:alekssoln} that is not strictly convex. There is a support $g(\cdot,y_0,z_0)$ touching at two points $x_1,x_{-1}$. Using Theorem \ref{thm:g:gconvsection} we  have $u \equiv g(\cdot,y_0,z_0)$ along the $g$-segment joining these points (with respect to $y_0,z_0$). Balls with sufficiently small radius are $g$-convex. This follows because, as noted in \cite[\S 2.2]{LiuTrudinger16}, $g$-convexity requires the boundary curvatures minus a function depending only on $\Vert g \Vert_{C^3}$ are positive. Thus we  assume $x_1,x_{-1}$ are sufficiently close to ensure that $B$, the ball with radius $|x_{1}-x_{-1}|/2$ and centre $(x_1+x_{-1})/2$ is $g$-convex with respect to $u$. Let $\epsilon > 0$ be given, and let $u_h$ be the mollification of $u$ with $h$ taken small enough to ensure $|u-u_h| < \epsilon/2$ on $\partial B.$ The Dirichlet theory for GJE, either Step 2. of the proof of Theorem \ref{thm:w:regularity} or, in a more explicit form, \cite[Lemma 4.6]{Trudinger14}, yields a $C^3$ solution of
  \begin{align*}
    \det DY(\cdot,v_h,Dv_h) = c/2 \text{ in }B,\\
    v_h = u_h+\epsilon \text{ on } \partial B,
  \end{align*}
  satisfying an estimate $|Dv_h| \leq K$ where $K$ depends on the local Lipschitz constant of $u$. Since $v_h \geq u$ on $\partial B$ the comparison principle, Corollary \ref{cor:w:comp} implies $v_h \geq u$ in $B$. Thus  our previous argument implies strict $g$-convexity of $v_h$ provided we note \eqref{eq:sc:sigest1} and \eqref{eq:sc:sigest2} are satisfied for $\sigma = 2\epsilon$. Hence at $x_{\theta_\epsilon}$ a point on the $g$-segment where the infimum defining $H$ is obtained we have 
  \[ u(x_{\theta_\epsilon}) - g(x_{\theta_\epsilon},y_0,z_0) - 2\epsilon \leq v_h(x_{\theta_\epsilon}) - g(x_{\theta_\epsilon},y_0,z_0) - 2\epsilon \leq -H < 0.\]
  As $\epsilon \rightarrow 0$ we contradict that $g(\cdot,y_0,z_0)$ supports $u$.  
 \end{proof}

 This result along with duality yields a quick proof of Corollary \ref{cor:sc:c1}.
 Let $u:\Omega \rightarrow \mathbf{R}$ be a $g$-convex function, put $\Omega^*:=Yu(\Omega)$, and let $v:V \rightarrow\mathbf{R}$ be its $g^*$-transform. Recall  if $y \in Yu(x)$ then $x \in Xv(y)$. We use this as follows. Suppose in addition $u$ satisfies that for all $E \subset \Omega$
\begin{align*}
  |Yu(E)| \leq c^{-1}|E|.
\end{align*}
Take $A \subset \Omega^*$ and let $E_u$ denote the measure $0$ set of points where $u$ is not differentiable. Necessarily $A\setminus Yu(E_u) = Yu(E)$ for  some $E\subset \Omega$. Our above reasoning implies $E \subset Xv(Yu(E))$. Hence 
\begin{align}
 \label{eq:sc:dualineq} |Xv(A)|\geq |Xv(A\setminus Yu(E_u))| \geq |E| \geq c|A|.
\end{align}

Corollary \ref{cor:sc:c1} follows: Let $u$ be the function given in Corollary \ref{cor:sc:c1} and $v$ its $g^*$ transform restricted to $\Omega^*$. Theorem \ref{thm:sc:main} holds in the dual form, that is, provided the relevant hypothesis are changed to their starred equivalents Theorem \ref{thm:sc:main} implies strict $g^*$-convexity. Thus the hypothesis of Corollary \ref{cor:sc:c1} along with \eqref{eq:sc:dualineq} allow us to conclude $v$ is strictly $g^*$-convex. 

  Suppose for a contradiction $u$ is not $C^1$. Then for some $x_0$ the set $Yu(x_0)$ contains two distinct points, say $y_0,y_1$. Our above working implies $g^*(x_0,\cdot,u(x_0))$ is a support touching at $y_0,y_1$. This contradicts strict $g^*$-convexity and proves the corollary.

\section{$C^1$ differentiability as a consequence of strict convexity}
\label{sec:sc:furth-cons-strict}

Once the strict convexity has been proved we obtain a short proof of the $C^1$ differentiability of strictly convex solutions using similar techniques to Theorem \ref{thm:sc:contain}.

\begin{theorem}
  Assume $g$ satisfies A3w and $\lambda,\Lambda > 0$. Suppose $u:\Omega \rightarrow \mathbf{R}$ is a strictly $g$-convex Aleksandrov solution of $ \lambda \leq \det DYu \leq \Lambda.$  Then $u \in C^1(\Omega)$.
\end{theorem}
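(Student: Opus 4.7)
The plan is a contradiction argument in the spirit of Step~1 of Theorem~\ref{thm:sc:contain}, using Theorem~\ref{thm:sc:upper_uniform_close} to exploit one-sided flatness of sections. Suppose $u$ fails to be differentiable at some $x_0 \in \Omega$. By Remark~\ref{rem:g:semiconvex}, $\partial u(x_0)$ contains more than one point; Theorem~\ref{thm:g:y-sub} then forces $Yu(x_0) = Y(x_0, u(x_0), \partial u(x_0))$ to contain distinct $y_0, y_1$, and by Theorem~\ref{thm:g:gconvymapping} the set $Yu(x_0)$ is $g^*$-convex with respect to $(x_0, u(x_0))$. Apply the transformations of Section~\ref{sec:sc:transformations-1} centred at $(x_0, y_0, u(x_0))$. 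In the new coordinates $\overline{g}$ has the form \eqref{eq:sc:genexp}, the transformed function $\overline{u}$ is a strictly $\overline{g}$-convex Aleksandrov solution of $\lambda' \leq \det DY\overline{u} \leq \Lambda'$, $\overline{u}(0) = 0$, and both $0$ and some $p_1 \neq 0$ lie in $\overline{Y}\overline{u}(0)$.

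For small $h>0$, let $S_h := \{q : \overline{u}(q) < \overline{g}(q, 0, -h)\}$. Strict $\overline{g}$-convexity gives $S_h \subset\subset$ the transformed domain, $\overline{u} = \overline{g}(\cdot, 0, -h)$ on $\partial S_h$, and $\operatorname{diam}(S_h) \to 0$; Theorem~\ref{thm:g:gconvsection} gives that $S_h$ is $\overline{g}$-convex, which via \eqref{eq:sc:genexp} is $O(h)$-close to Euclidean convex, so $S_h$ may be treated as convex after a further near-identity adjustment. Theorems~\ref{thm:sc:upper_uniform} and \ref{thm:sc:lower_uniform} (with Remark~\ref{rem:sc:ineq}) give $ch^{n/2} \leq |S_h| \leq Ch^{n/2}$. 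The second subgradient yields one-sided flatness: since $p_1 \in \overline{Y}\overline{u}(0)$, the expansion \eqref{eq:sc:genexp} evaluated at $z=0$ gives $\overline{u}(q) \geq \overline{g}(q, p_1, 0) = q \cdot p_1 + a_{ij,kl}(q, p_1)\, q_i q_j (p_1)_k (p_1)_l$, so $\overline{u}(q) < h + O(h^2)$ on $S_h$ yields, with $\nu := p_1/|p_1|$,
\begin{equation*}
\sup_{q \in S_h} q \cdot \nu \leq Ch + C(\operatorname{diam} S_h)^2.
\end{equation*}

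Combining $|S_h| \asymp h^{n/2}$ with the near-convexity of $S_h$ and the strict convexity of $\overline{u}$ (the Caffarelli-type eccentricity control for strictly convex solutions with bounded Monge--Amp\`ere-type determinant) yields $\operatorname{diam}(S_h) \leq Ch^{1/2}$ together with the existence of a segment in $S_h$ through $0$ parallel to $\nu$ of length $d \geq ch^{1/2}$. Hence $\sup_{q \in S_h} q \cdot \nu \leq Ch$, and setting $\epsilon := (\sup_{q\in S_h} q\cdot\nu)/d \leq Ch^{1/2}$, Theorem~\ref{thm:sc:upper_uniform_close} applied at the base point $0$ in direction $\nu$ gives
\begin{equation*}
h^n \leq |\overline{u}(0) - \overline{g}(0,0,-h)|^n \leq C\epsilon\, |S_h|^2 \leq Ch^{1/2}\cdot h^n,
\end{equation*}
i.e.\ $1 \leq Ch^{1/2}$, contradicting $h \to 0$. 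Thus $u$ is everywhere differentiable; continuity of $Du$ then follows from semiconvexity, since along any $x_k \to x$ every subsequential limit of $Du(x_k)$ lies in $\partial u(x) = \{Du(x)\}$. The main obstacle is the eccentricity control of $S_h$; once $\operatorname{diam}(S_h) \leq Ch^{1/2}$ is in hand, the rest is a one-sided Taylor estimate from \eqref{eq:sc:genexp} together with a single application of Theorem~\ref{thm:sc:upper_uniform_close}.
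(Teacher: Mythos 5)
Your outline goes wrong at the eccentricity step, and you have correctly identified where the danger lies, so let me explain why that step is not available and how the paper's own argument sidesteps it.

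You want $\operatorname{diam}(S_h) \leq Ch^{1/2}$ together with a segment through $0$ parallel to $\nu$ of length $\geq c h^{1/2}$. Neither is obtainable from the ingredients at hand. The estimate $|S_h| \asymp h^{n/2}$ from Theorems~\ref{thm:sc:upper_uniform} and \ref{thm:sc:lower_uniform} controls the measure of the section, not its shape; a section of the right measure can be an arbitrarily eccentric ellipsoid. The ``Caffarelli-type eccentricity control'' you invoke is not proved in the paper, and invoking it here is circular: for sections centred at a smooth point, diameter $\lesssim h^{1/2}$ in every direction is essentially a $C^{1,1}$ modulus for $u$ (far beyond $C^1$), while at a \emph{non-differentiable} point --- which is precisely the point $x_0$ under consideration --- the sections are forced to degenerate, so a uniform ball-like bound cannot hold. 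Worse, the length of the segment in the $-\nu$ direction is governed only by the one-sided decay of $\overline{u}$: from $0 \in \partial\overline{u}(0)$ you get $\overline{u}(-t\nu) \geq 0$, and the most you can extract (after choosing the subgradient correctly) is $\overline{u}(-t\nu) = o(t)$. That gives a segment of length $R(h)h$ with $R(h) \to \infty$, but $R(h)$ can grow as slowly as you like; there is no reason $R(h)h \geq c h^{1/2}$.

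The paper's proof turns precisely this weaker statement into the contradiction. Two points in your sketch are replaced:
first, instead of taking arbitrary distinct $y_0, y_1 \in Yu(x_0)$, one takes $p_0$ to be an \emph{extreme point} of the convex set $\partial u(x_0)$ and centres the transformation there. Because $Yu(x_0)$ is $g^*$-convex (Theorem~\ref{thm:g:gconvymapping}), after the transformation and a rotation the subdifferential contains the whole segment $\{te_1 : 0 \leq t \leq a\}$, not merely the endpoints. Second, this segment in $\partial u(0)$ forces $u(-te_1) = o(t)$, and Lemma~\ref{lem:a:big-vec} converts $o(t)$ decay into the statement that the section $G^h$ contains $-R(h)he_1$ with $R(h) \to \infty$. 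The flat side is handled via \eqref{eq:sc:almost-affine}, exactly as you indicate, giving $\sup_{G^h} x_1 \leq Ch/a$. The ratio appearing in Theorem~\ref{thm:sc:upper_uniform_close} is then $\epsilon_h = \dfrac{Ch/a}{R(h)h} = \dfrac{C}{aR(h)} \to 0$, with no need to quantify how fast $R(h) \to \infty$. Feeding this $\epsilon_h$ into Theorem~\ref{thm:sc:upper_uniform_close} and comparing with Theorem~\ref{thm:sc:lower_uniform} (which gives $|G^h|^{2/n} \leq Ch$) yields $1 \leq C\epsilon_h^{1/n}$, a contradiction as $h \to 0$. So the two changes you should make are: (i) work at an extreme point of the subdifferential, so that the subgradient set contains a segment and not just two points; and (ii) replace the $h^{1/2}$ diameter claim by the qualitative $R(h) \to \infty$ from Lemma~\ref{lem:a:big-vec} --- only the ratio, not the absolute length, needs to improve.
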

\begin{proof}
  Suppose for a contradiction at some $x_0$, assumed to be $0$, $\partial u(0)$ contains more than one point. Let $p_0$ be an extreme point of $\partial u(0)$ with $y_0 := Y(0,u(0),p_0)$ and $g(\cdot,y_0,z_0)$ the corresponding support. Without loss of generality $u(0),y_0 = 0$. Fix $h>0$ small, and apply the transformations in Section \ref{sec:sc:transformations-1} so $g$ satisfies \eqref{eq:sc:genexp}. We consider
  \[  G^h = \{ u < 0\}, \]
  which is a section of height $h$ and thus convex.
  
  After these transformations $p=g_x(0,0,h)$ is an extreme point of $\partial u(0)$ satisfying $|p| = O(h^2)$. Thus after subtracting $-h+p\cdot x$ from both $u$ and the generating function we have
  \begin{equation}
    \label{eq:sc:c1-subset}
       \{u < h/2\} \subset  G^h = \{u < h - p \cdot x \} \subset \{u < 3h/2\}.
  \end{equation}
  Because by an initial choice of the section small  $|x|$ is as small as desired. Moreover, $0$ is an extreme point of $\partial u(0)$ and, after a rotation, for some small $a>0$
  \begin{align}
  \nonumber  \partial u(0) &\subset \{ p; p_1 \geq 0\}\\
 \label{eq:sc:subdiff2}   \partial u(0) &\supset \{te_1; 0 \leq t \leq a\}.
  \end{align}

This implies  $u(-te_1) = o(t)$. Thus $ \{ u < h/2\},$
  contains $-R(h)he_1$ for some positive function $R$ satisfying $R(h) \rightarrow \infty$ as $h \rightarrow 0$ (see Lemma \ref{lem:a:big-vec}).

  Using the inequality \eqref{eq:sc:almost-affine}, with $y_a = Y(0,ae_1,u(0))$
  \[ u > g(x,y_a,h) \geq x_1a -Ch, \]
  where we've used $Y(0,ae_1,u(0)) = ae_1+O(h)$. Thus, the second subset relation in \eqref{eq:sc:c1-subset} implies $\sup_{G^h}x_1 \leq Ch/a$.

  Thus by Theorem \ref{thm:sc:upper_uniform_close}
  \[ h \leq \frac{C}{R(h)}|G^h|^{2/n}.\]
  On the other hand Theorem \ref{thm:sc:lower_uniform} yields $|G^h|^{2/n} \leq Ch$, which is a contradiction as $h \rightarrow 0$.
 \end{proof}

Guillen and Kitagawa have proved the stronger result that strictly $g$-convex solutions are $C^{1,\alpha}$. Thus we also obtain that result under the new domain hypothesis in Theorem \ref{thm:sc:contain}.

\clearpage{}
\clearpage{}\chapter{Uniqueness results}
\label{cha:u}

In this chapter we consider uniqueness results, and variants there of, for generated Jacobian equations. Our main result is for the second boundary value problem. Back in Theorem \ref{thm:w:weakexist} we proved  there exists an Aleksandrov solution of the second boundary value problem taking a given value at a given point, that is satisfying $u(x_0) = u_0$ for a given $x_0,u_0$. Here we show, if this solution is sufficiently regular, it is unique. Phrased differently: if two solutions of the second boundary value problem intersect, then they are the same solution. Our methods yield a uniqueness result for sufficiently regular solutions of the Dirichlet problem satisfying an extension property. This chapter follows the author's paper \cite{Rankin2020}. 

 The results in this chapter are based on an old lemma of Aleksandrov's and some ideas from elliptic PDE. We can only use Aleksandrov's lemma at points of differentiability. The elliptic theory requires the associated linearisations are uniformly elliptic, and this requires our solutions are $C^{1,1}$. An interesting question is whether the results of this chapter hold for $g$-convex solutions with less regularity.

Here are the theorems we prove in this chapter. 

\begin{theorem}\label{thm:u:main}
  Let $f\in C^1(\Omega),\ f^*\in C^1(\Omega^*) $ be positive densities satisfying the mass balance condition. Let $u,v \in C^{1,1}(\Omega)$ be $g$-convex Aleksandrov solutions of \eqref{eq:g:gje} subject to \eqref{eq:g:2bvp}. If there is $x_0 \in \Omega$ such that $u(x_0) =v(x_0)$ then $u \equiv v$. 
\end{theorem}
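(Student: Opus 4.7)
Suppose for contradiction $u\not\equiv v$. After possibly interchanging the roles of $u$ and $v$, the open set $\Omega_+:=\{u>v\}$ is nonempty; fix a connected component $\omega$ of $\Omega_+$. Because $\Omega$ is connected and $u(x_0)=v(x_0)$ excludes $\omega=\Omega$, we necessarily have $\partial\omega\cap\Omega\ne\emptyset$. A nearest-point construction then yields $y\in\partial\omega\cap\Omega$ at which $\omega$ satisfies an interior ball condition: given any $y^*\in\partial\omega\cap\Omega$, choose $x^*\in\omega$ sufficiently close to $y^*$ that any nearest point of $\partial\omega$ to $x^*$ remains inside $\Omega$, and take $y$ to be such a nearest point; then the ball about $x^*$ of radius $|x^*-y|$ lies in $\omega$ and is tangent to $\partial\omega$ at $y$.

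The first crucial input is the Aleksandrov-style lemma advertised in the chapter introduction: at every $y\in\partial\{u>v\}\cap\Omega$ one has $Du(y)=Dv(y)$. Setting $w:=u-v\in C^{1,1}(\Omega)$ this gives $w>0$ in $\omega$ together with $w(y)=Dw(y)=0$. To extract a linear equation for $w$ near $y$, I would localise to a ball $B_r(y)\subset\subset\Omega$ on which $f$ and $f^*\circ Yu,\ f^*\circ Yv$ are bounded between positive constants (possible by the $C^1$ positivity of $f,f^*$ and the $C^{1,1}$ control of $u,v$), put $W:=D^2u-A(\cdot,u,Du)$ and $V:=D^2v-A(\cdot,v,Dv)$, and observe that both matrices are positive semidefinite a.e.\ by $g$-convexity. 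The $C^{1,1}$ hypothesis bounds them above by $KI$, while the identities $\det W=B(\cdot,u,Du)$, $\det V=B(\cdot,v,Dv)$ combined with the local lower bound on $B$ force $W,V\ge c_0 I$ for some $c_0>0$. Concavity of $\log\det$ yields
\[
\log\det W-\log\det V\ \ge\ W^{ij}(W-V)_{ij},
\]
and expanding $A(\cdot,u,Du)-A(\cdot,v,Dv)$ and $\log B(\cdot,u,Du)-\log B(\cdot,v,Dv)$ by the mean value theorem converts this into a uniformly elliptic linear inequality
\[
\mathcal{L}w\ :=\ W^{ij}D_{ij}w+b^kD_kw+cw\ \le\ 0 \quad\text{a.e.\ in }\omega\cap B_r(y),
\]
with $b,c\in L^\infty$ and uniformly elliptic leading matrix.

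I would conclude by Hopf's boundary point lemma applied to $w$ in $\omega\cap B_r(y)$: the function $w\ge 0$ attains its minimum value $0$ at $y\in\partial(\omega\cap B_r(y))$, the interior ball condition at $y$ holds, and $w(y)=0$ removes any sign requirement on the lower-order coefficient $c$. Hopf therefore supplies $\partial_\nu w(y)<0$ for the outward normal, contradicting $Dw(y)=0$, so no such component $\omega$ can exist and $u\equiv v$. The principal obstacle is the Aleksandrov-style gradient-matching lemma on $\partial\{u>v\}\cap\Omega$: unlike the $g$-convexity tools of the preceding chapters, this step genuinely exploits both the equality $Yu(\Omega)=Yv(\Omega)=\Omega^*$ furnished by the second boundary value problem and a careful comparison of the vertically ordered $g$-supports of $u$ and $v$ at a common contact point. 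Once it is in place, the subtractive linearisation and Hopf step are classical.
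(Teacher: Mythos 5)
Your proposal is correct but takes a genuinely different route from the paper's for the concluding elliptic step. Both arguments rest entirely on the same key fact --- the paper's Corollary~\ref{cor:u:dudv}, which gives $Du\equiv Dv$ on $\{u=v\}$ --- and its proof is indeed the ``principal obstacle'' you name: Lemma~\ref{lem:u:aleksandrovmccann} compares the vertically ordered $g$-supports of $u$ and $v$ at a common contact point and shows $\Xi:=Yv^{-1}(Yu(\Omega'))$ avoids a neighbourhood of a point where the gradients disagree, after which a change-of-variables computation using the generalised second boundary value problem produces a contradiction from the strict inequality $|\Xi|<|\Omega'|$. Your subtractive linearisation of the two equations for $w=u-v$ likewise matches the paper's Lemma~\ref{lem:u:harn} (concavity of $\log\det$ plus a mean-value expansion of $A$ and $\log B$).

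Where you diverge is the endgame. The paper sets $w=\max\{u,v\}$, observes that gradient matching on $\{u=v\}$ makes $w\in C^{1,1}_{\mathrm{loc}}$ and hence an a.e.\ solution of \eqref{eq:g:mate}, and applies the weak Harnack inequality to $w-v\ge 0$: since $(w-v)(x_0)=0$, the Harnack quotient forces $w\equiv v$ (and, symmetrically, $w\equiv u$) on any compact subdomain containing $x_0$, whence $u\equiv v$. You instead single out a component $\omega$ of $\{u>v\}$, note $\omega\ne\Omega$ because $x_0\notin\omega$, manufacture a boundary point $y\in\partial\omega\cap\Omega$ satisfying an interior ball condition via the nearest-point construction, and invoke Hopf's boundary point lemma --- valid here for strong solutions $w\in W^{2,n}_{\mathrm{loc}}$, with no sign condition on the zeroth-order coefficient since $w(y)=0$ --- to contradict $Dw(y)=0$. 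Both routes work. In fact the paper uses exactly your Hopf route for the Dirichlet problem (Theorem~\ref{thm:u:dirichlet}), so your proposal in effect applies that argument to the second boundary value problem as well. The Harnack route is a touch slicker --- it avoids the interior ball construction and incidentally records that $\max\{u,v\}$ is itself an a.e.\ solution --- while your Hopf version is more geometric and only ever uses the gradient-matching at the single constructed point.
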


We have a similar result if the contact point is on the boundary, provided we assume a convexity condition on the target.

\begin{theorem}\label{thm:u:boundary}
    Let  $f\in C^1(\Omega),\ f^*\in C^1(\Omega^*)$  be positive densities satisfying the mass balance condition. Let $u,v \in C^{1,1}(\overline{\Omega})$ be $g$-convex Aleksandrov solutions of \eqref{eq:g:gje} subject to \eqref{eq:g:2bvp}. Assume $\Omega^*$ is a $C^2$ domain and is $g^*$-convex with respect to $u$ and $v$. If there is $x_0 \in \overline{\Omega}$ with $u(x_0) = v(x_0)$ then $u \equiv v$.
  \end{theorem}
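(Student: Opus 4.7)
If $x_0\in\Omega$, Theorem \ref{thm:u:main} finishes the proof, so I assume $x_0\in\partial\Omega$. Set $h:=u-v$. Theorem \ref{thm:u:main} handles any interior zero of $h$, so I further assume $\{h=0\}\cap\Omega=\emptyset$; by connectedness of $\Omega$, $h$ is then single-signed in $\Omega$, and after possibly swapping $u$ and $v$ I reduce to $h>0$ in $\Omega$ with $h(x_0)=0$, seeking a contradiction.

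Since $u,v\in C^{1,1}(\overline\Omega)$ are elliptic solutions of the MATE form, linearizing (as in Step~2 of the proof of Theorem \ref{thm:w:regularity}; cf.\ Lemma \ref{lem:u:harn}) along $u_\tau=\tau u+(1-\tau)v$ yields
\[\bar L[h]=a^{ij}D_{ij}h+b^kD_kh+ch=0\quad\text{a.e.\ in }\Omega,\]
with $a^{ij}=\int_0^1[M_{u_\tau}]^{ij}\,d\tau$ and $M_{u_\tau}=D^2u_\tau-A(\cdot,u_\tau,Du_\tau)$. The $C^{1,1}$ bound on $u_\tau$ and the lower bound $B\ge\lambda>0$ give two-sided eigenvalue bounds on $M_{u_\tau}$, so $\bar L$ is uniformly elliptic with bounded measurable coefficients. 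Hopf's boundary point lemma (in its strong-solution form) applied to $h\ge0$ with $h(x_0)=0$ then gives $\partial_{\nu_{\rm in}}h(x_0)>0$, where $\nu_{\rm in}$ denotes the inward unit normal to $\Omega$ at $x_0$. Since $x_0$ minimizes $h$ on $\overline\Omega$, the tangential derivatives of $h$ along $\partial\Omega$ vanish, so $Dh(x_0)=\gamma\nu_{\rm in}$ for some $\gamma>0$.

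For the contradiction I linearize the boundary condition. Let $\phi\in C^2$ be a defining function of $\Omega^*$ ($\phi=0$ on $\partial\Omega^*$, $\phi<0$ in $\Omega^*$) and set $G:=\phi\circ Y$. Theorem \ref{thm:w:g2bvp} gives $Yu(x_0),Yv(x_0)\in\partial\Omega^*$, hence $G(x_0,u_0,Du(x_0))=G(x_0,u_0,Dv(x_0))=0$ with $u_0:=u(x_0)=v(x_0)$. The set $K:=Y(x_0,u_0,\cdot)^{-1}(\overline{\Omega^*})$ is convex by $g^*$-convexity of $\Omega^*$ with respect to $(x_0,u_0)$, and both $Du(x_0)$ and $Dv(x_0)=Du(x_0)-\gamma\nu_{\rm in}$ lie in $\partial K$. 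Defining $\psi(\tau):=G(x_0,u_0,p_\tau)$ with $p_\tau=\tau Du(x_0)+(1-\tau)Dv(x_0)$, convexity of $K$ gives $\psi\le0$ on $[0,1]$ with $\psi(0)=\psi(1)=0$. Endpoint monotonicity then forces $G_p(x_0,u_0,Dv(x_0))\cdot\nu_{\rm in}\le0$ and $G_p(x_0,u_0,Du(x_0))\cdot\nu_{\rm in}\ge0$, which together with obliqueness of the 2BVP (i.e.\ that $G_p\cdot\nu_{\rm in}$ has fixed nonzero sign at the two endpoints) compels $\gamma=0$, contradicting Hopf.

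The main obstacle is verifying that $G_p\cdot\nu_{\rm in}$ is nonzero with \emph{consistent} sign at both $Du(x_0)$ and $Dv(x_0)$ --- equivalently, that $\nu_{\rm in}$ is transverse to $\partial K$ at both points, with outward normals on the same side. One must also rule out the degenerate alternative $\psi\equiv0$, which would correspond to a nontrivial $g^*$-segment joining $Yu(x_0)$ to $Yv(x_0)$ within $\partial\Omega^*$. These transversality and nondegeneracy properties should follow from the $g^*$-convexity of $\Omega^*$ with respect to \emph{both} $u$ and $v$ (as explicitly assumed) together with the $C^2$ regularity of $\partial\Omega^*$, but making this quantitative is the crux of the proof.
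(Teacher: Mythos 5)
Your proposal is essentially the paper's proof, and the worry in your closing paragraph is unfounded: you have over-complicated the final step by using both endpoints of the segment $\{p_\tau\}$, when a single endpoint already closes the argument. From $\psi\le0$ on $[0,1]$ with $\psi(1)=0$ you get $\psi'(1)\geq 0$, which (since $\dot p_\tau=Dh(x_0)=\gamma\nu_{\rm in}$ with $\gamma>0$ from Hopf) gives $G_p(x_0,u_0,Du(x_0))\cdot\nu_{\rm in}\geq 0$. The obliqueness established in Section \ref{sec:boundary-estimates} --- a consequence of the uniform $g^*$-convexity of $\Omega^*$ that the theorem hypothesizes --- states $G_p(x_0,u_0,Du(x_0))\cdot\gamma_{\rm out}>0$, i.e.\ $G_p(x_0,u_0,Du(x_0))\cdot\nu_{\rm in}<0$. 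That is the contradiction. You never need to control the sign of $G_p\cdot\nu_{\rm in}$ at the other endpoint $Dv(x_0)$, nor to rule out $\psi\equiv 0$ separately (if $\psi\equiv 0$ then $\psi'(1)=0$, which still contradicts the strict inequality from obliqueness). The paper's version of this step is the Taylor expansion $h(1)=h(0)+h'(0)+h''(\tau)/2$ with $h(0)=h(1)=0$ and $h''\geq 0$, giving $h'(0)\leq 0$, i.e.\ $G_p\cdot D(u-v)(x_0)\geq 0$ at the $Du(x_0)$ endpoint --- identical content, but packaged so that it is transparent that only one application of obliqueness is required. So there is no remaining ``crux'': the obliqueness the paper already proves is exactly the ingredient your argument was missing, and it is only needed once.
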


We include the following result, which is a corollary of the techniques used for the second boundary value problem. 
  \begin{theorem}\label{thm:u:dirichlet}
   Suppose  $f\in C^1(\Omega),\ f^*\in C^1(\Omega^*)$ are $C^1$ positive functions and $u,v \in C^{1,1}(\overline{\Omega})$ solve \eqref{eq:g:gje} subject to $u = v$ on $\partial \Omega$. Suppose further $u,v$ have $C^1$ $g$-convex extensions to a neighbourhood of $\Omega$. Then $u = v$. 
  \end{theorem}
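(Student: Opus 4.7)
My plan is to adapt the Aleksandrov+Hopf strategy from the proofs of Theorems \ref{thm:u:main} and \ref{thm:u:boundary} to the Dirichlet setting, with the $g$-convex extension hypothesis substituting for the $g^*$-convexity of the target used there. First, I write both equations in MATE form \eqref{eq:g:mate}, subtract, and integrate along the line $u_t := tu + (1-t)v$ (cf.\ Step~2 of the proof of Theorem \ref{thm:w:regularity}) to produce, for $w := u - v$, a linear equation
\[
  \mathcal{L} w := a^{ij} D_{ij} w + b^k D_k w + c\, w = 0 \quad \text{a.e.\ in } \Omega,
\]
where $a^{ij} := \int_0^1 [D^2 u_t - A(\cdot,u_t,Du_t)]^{ij}\, dt$ is uniformly positive definite (using $u,v \in C^{1,1}(\overline{\Omega})$ and $B>0$) and $b^k, c$ are bounded in terms of $\Vert u\Vert_{C^{1,1}},\Vert v\Vert_{C^{1,1}},A,B$. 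The coefficient $c$ carries no favourable sign, so no direct maximum principle is available.

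\textbf{Contact set and Aleksandrov identity.} Suppose $w\not\equiv 0$; after possibly swapping $u,v$ the open set $E:=\{w>0\}$ is nonempty, with $w=0$ on $\partial E$ (by continuity at interior boundary points, and by the Dirichlet condition on $\partial E\cap\partial\Omega$). The decisive input is the Aleksandrov-style lemma that drives Theorems \ref{thm:u:main} and \ref{thm:u:boundary}: on $\partial E$ one has $Du = Dv$, hence $Dw = 0$ there. In outline, Lemma \ref{lem:w:comp} applied to $v\leq u$ on $E$ with $u=v$ on $\partial E$ gives $Yu(E)\subset Yv(E)$; the equal mass identity $\int_{Yu(E)} f^* = \int_E f = \int_{Yv(E)}f^*$ together with $f^*>0$ upgrades this to $Yu(E)=Yv(E)$; the $C^{1,1}$ regularity together with $\det DYu = f/f^*>0$ makes $Yu,Yv$ locally bi-Lipschitz, which, combined with $u(x^*)=v(x^*)$ at $x^*\in\partial E$, pins down $Yu(x^*)=Yv(x^*)$, and hence $Du(x^*)=Dv(x^*)$ by A1. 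For $x^*\in\partial E\cap\partial\Omega$ the extension hypothesis enters: embedding $u,v$ as $g$-convex functions in a slightly larger domain makes $x^*$ interior and the same touching argument applies to the extensions.

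\textbf{Hopf contradiction.} With $Dw = 0$ established on $\partial E$, pick any $x_0\in E$ and set $d := \mathrm{dist}(x_0,\partial E)>0$; the ball $B := B_d(x_0)\subset E$ is internally tangent to $\partial E$ at some $x^*\in\partial E$. In $B$ we have $w>0$ and $\mathcal{L}w=0$; rewriting as $(\mathcal{L}-c^+)w = -c^+ w \leq 0$ yields an elliptic operator with nonpositive zeroth-order coefficient. The classical Hopf boundary point lemma applied at the minimum point $x^*$ (with $B$ providing the interior sphere) then gives $\partial w/\partial\nu(x^*)<0$ with $\nu := (x^*-x_0)/d$. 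This contradicts $Dw(x^*)=0$, so $E=\emptyset$; by symmetry $w\equiv 0$, i.e.\ $u\equiv v$.

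\textbf{Main obstacle.} The delicate step is the Aleksandrov identity, specifically the upgrade from the set equality $Yu(E)=Yv(E)$ to the pointwise boundary identity $Yu(x^*)=Yv(x^*)$. This relies essentially on $C^{1,1}$ regularity (ensuring $Yu,Yv$ are bi-Lipschitz, and supplying the uniform ellipticity required by Hopf in strong $L^\infty$ form) and on the extension hypothesis (ensuring the identity propagates to contact points on $\partial\Omega$); weakening either hypothesis is a natural open question. The nuisance that $c$ lacks a favourable sign, by contrast, is circumvented cheaply by the $\mathcal{L}-c^+$ device, thanks to $w\geq 0$ in $B$.
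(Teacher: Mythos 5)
Your high-level strategy — use the $C^1$ $g$-convex extensions, derive $Du = Dv$ on the boundary of the set where one solution exceeds the other, and then contradict Hopf's lemma via the linearisation — matches the paper's. But your execution of the decisive step, the boundary gradient identity $Du \equiv Dv$ on $\partial E$, takes a shortcut that does not close, and it is a genuine gap rather than a detail.

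Concretely, you argue: Lemma \ref{lem:w:comp} gives $Yu(E) \subset Yv(E)$; equal-mass then gives $Yu(E) = Yv(E)$ up to a null set; and finally, since $Yu,Yv$ are locally bi-Lipschitz and $u(x^*) = v(x^*)$, this ``pins down'' $Yu(x^*) = Yv(x^*)$. The last implication is unjustified. Set-level equality of the two images (even exact, not merely mod null sets, even with both maps bi-Lipschitz homeomorphisms) tells you that $Yu(\partial E)$ and $Yv(\partial E)$ coincide as sets, but it says nothing about the two maps agreeing at a given boundary point $x^*$. The extra hypothesis $u(x^*) = v(x^*)$ fixes the height of the two $g$-supports at $x^*$ but not their slopes, which is exactly what you need. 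No amount of regularity bridges this: the information you have established is too soft.

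The paper's proof of Theorem \ref{thm:u:dirichlet} avoids this by running Lemma \ref{lem:u:aleksandrovmccann} in contrapositive form, exactly as in Corollary \ref{cor:u:dudv}. Suppose instead $Du(x^*) \neq Dv(x^*)$ at some $x^* \in \partial \Omega'$ (here $\Omega'$ is a connected component of $\{u>v\}$, with $x^*$ made interior via the extensions). Lemma \ref{lem:u:aleksandrovmccann} gives that $\Xi := Yv^{-1}(Yu(\Omega')) \subset \Omega'$ stays a positive distance from $x^*$, hence $|\Xi| < |\Omega'|$, hence (since $f$ is bounded below)
\[
\int_{\Xi} f^*(Yv)\det DYv \; < \; \int_{\Omega'} f^*(Yv)\det DYv = \int_{\Omega'} f.
\]
On the other hand, $\int_{\Omega'} f = \int_{Yu(\Omega')} f^*$ by the equation; and because the comparison principle gives $Yu(\Omega') \subset Yv(\Omega')$, one has $Yv(Yv^{-1}(Yu(\Omega'))) = Yu(\Omega')$ up to a null set (this is precisely the point where the paper substitutes the comparison-principle inclusion for the 2BVP used in Corollary \ref{cor:u:dudv}), and a change of variables gives $\int_{Yu(\Omega')} f^* = \int_{\Xi} f^*(Yv)\det DYv$. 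Combining yields a strict inequality between a quantity and itself — contradiction. So $Du \equiv Dv$ on $\partial \Omega'$, and your Hopf argument takes over. The point to internalise is that Lemma \ref{lem:u:aleksandrovmccann} is used not to establish $Yu(E) = Yv(E)$ (which you already get cheaply from mass balance) but to derive, from a \emph{hypothetical} gradient disagreement at $x^*$, a quantitative volume deficit that the PDE forbids. Your ``overall approach'' and Hopf step are fine; you should replace your outline of the Aleksandrov step with this contrapositive volume argument.
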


  \section{Main lemma for uniqueness}
  Here is a heuristic explanation of how we prove uniqueness in the Monge--Amp\`ere case. We consider two solutions $u,v$ and the set where one is greater than the other $\Omega' := \{v < u\}$. The comparison principle paired with an appropriate boundary condition implies $\partial u(\Omega') \subset \partial v(\Omega')$. If this inclusion is strict, in the sense that $|\partial u(\Omega')| < |\partial v(\Omega')|,$ we contradict the PDE and conclude $u =v$.

  Aleksandrov \cite{Aleksandrov42a} showed in the context of the Minkowski problem that a sufficient condition for this strict inclusion is a point $x_0 \in \partial \Omega'$ with $Du(x_0) \neq Dv(x_0)$.   McCann adapted the lemma to the Monge--Amp\`ere case \cite[Lemma 13]{McCann95}. We follow McCann's proof and adapt the result to generated Jacobian equations.

\begin{lemma}\label{lem:u:aleksandrovmccann}
Assume $u,v:\Omega \rightarrow \mathbf{R}$ are $g$-convex and differentiable. Suppose for some $x_0 \in \Omega$ there holds $u(x_0) = v(x_0)$. 
  Let $\Omega':=\{x \in \Omega; u(x)>v(x)\}$ and set $\Xi := Yv^{-1}(Yu (\Omega'))$. Then $\Xi \subset \Omega'$. Furthermore, if  $Du(x_0) \neq Dv(x_0)$ then $x_0$ is a positive distance from $\Xi$. 
\end{lemma}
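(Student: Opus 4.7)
The plan is to split the lemma into its two claims and establish each using only the defining property of $g$-supports, the monotonicity $g_z<0$, and the injectivity guaranteed by A1.

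For the inclusion $\Xi\subset\Omega'$, I would take $x\in\Xi$ and use the differentiability of $u,v$ (so $Yu$, $Yv$ are single-valued at every point) to pick $x'\in\Omega'$ with $y:=Yu(x')=Yv(x)$. Setting $z_1=g^*(x',y,u(x'))$ and $z_2=g^*(x,y,v(x))$, the functions $g(\cdot,y,z_1)$ and $g(\cdot,y,z_2)$ are $g$-supports of $u$ at $x'$ and of $v$ at $x$ respectively. Since $x'\in\Omega'$,
\[ g(x',y,z_1)=u(x')>v(x')\geq g(x',y,z_2), \]
and $g_z<0$ forces $z_1<z_2$. Evaluating at $x$ and using $g_z<0$ once more,
\[ u(x)\geq g(x,y,z_1)>g(x,y,z_2)=v(x), \]
so $x\in\Omega'$.

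For the second claim I would argue by contradiction. Suppose $Du(x_0)\neq Dv(x_0)$. By A1, the map $p\mapsto Y(x_0,u(x_0),p)$ is injective, so $Yu(x_0)\neq Yv(x_0)$. If $x_0$ is not a positive distance from $\Xi$, pick $x_k\in\Xi$ with $x_k\to x_0$, and for each $k$ choose $x_k'\in\Omega'$ with $Yu(x_k')=Yv(x_k)$. Since $\Omega$ is bounded, along a subsequence $x_k'\to x_0'\in\overline{\Omega'}$, and continuity of the $Y$-mapping at points of differentiability gives $y:=Yu(x_0')=Yv(x_0)$.

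The main obstacle is that the limit point $x_0'$ may lie on $\partial\Omega'$, where $u=v$ rather than $u>v$, so the argument of the first part does not apply directly; the plan is a case split. If $u(x_0')>v(x_0')$, then $x_0'\in\Omega'$ and the first part applied to $(x_0',x_0)$ yields $u(x_0)>v(x_0)$, contradicting $u(x_0)=v(x_0)$. If instead $u(x_0')=v(x_0')$, put $z_1=g^*(x_0',y,u(x_0'))$ and $z_2=g^*(x_0,y,v(x_0))$. Evaluating the support inequality $u\geq g(\cdot,y,z_1)$ at $x_0$ and using $u(x_0)=v(x_0)=g(x_0,y,z_2)$ gives $z_2\leq z_1$ via $g_z<0$; evaluating $v\geq g(\cdot,y,z_2)$ at $x_0'$ and using $v(x_0')=u(x_0')=g(x_0',y,z_1)$ gives $z_1\leq z_2$. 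Hence $z_1=z_2$, and the common $g$-affine function supports $u$ at $x_0$ as well, so $y\in Yu(x_0)$. By single-valuedness $Yu(x_0)=y=Yv(x_0)$, contradicting $Du(x_0)\neq Dv(x_0)$.
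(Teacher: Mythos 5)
Your proof of the inclusion $\Xi\subset\Omega'$ is correct and matches the paper's argument (the paper phrases it in terms of the inequality $Zu(x)<Zv(\xi)$, but this is the same observation as your $z_1<z_2$). The second part, however, has a gap. You extract a subsequential limit $x_0'=\lim x_k'\in\overline{\Omega'}$, and then split into the cases $u(x_0')>v(x_0')$ and $u(x_0')=v(x_0')$. But $\overline{\Omega'}$ is not contained in $\Omega$: the limit $x_0'$ may land on $\partial\Omega$, where $u$ and $v$ are not defined (they are functions on the open set $\Omega$), so neither case applies, the evaluations $u(x_0')$, $v(x_0')$ make no sense, and the single-valued $Y$-mapping at $x_0'$ used to set $y:=Yu(x_0')$ is not available. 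The case split is therefore not exhaustive, and the argument as written does not close.

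The fix is to avoid passing to the limit of the auxiliary points $x_k'$ altogether; this is precisely what the paper does. The $x_k'$ only serve to witness, via the first part of the lemma (which shows the strict inequality $u(\cdot)>g\big(\cdot,Yv(x_k),Zv(x_k)\big)$ holds on all of $\Omega$, not just at $x_k$), that the $g$-affine function through $Yv(x_k),Zv(x_k)$ lies strictly below $u$ everywhere. One then passes to the limit only in $x_k\to x_0$ (which uses continuity of $Dv$ at $x_0$, valid since a differentiable semiconvex function is $C^1$), obtaining $u(\cdot)\geq g\big(\cdot,Yv(x_0),Zv(x_0)\big)$ on $\Omega$; combined with equality at $x_0$ this exhibits $g(\cdot,Yv(x_0),Zv(x_0))$ as a $g$-support of $u$ at $x_0$, forcing $Yu(x_0)=Yv(x_0)$ and contradicting $Du(x_0)\neq Dv(x_0)$. (The paper reaches the same contradiction by first exhibiting a nearby point $z$ with $u(z)<g(z,Yv(x_0),Zv(x_0))$; the two formulations are equivalent.) Note that once the limit of $x_k'$ is dropped, your $z_1=z_2$ analysis in the second case becomes unnecessary — the conclusion $y\in Yu(x_0)$ falls out directly from the limiting support inequality.
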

\begin{proof}
  To prove the subset assertion take $\xi \in \Xi$. The definition of $\Xi$ implies there is $x \in \Omega'$ with $Yv (\xi) = Yu (x)$. We claim $Zu(x) < Zv(\xi)$. If not, $Zu(x) \geq Zv(\xi)$ combined with $Yv (\xi) = Yu (x)$ and $g_z < 0$ implies, for any $x'$ 
  \[ g(x',Yu (x),Zu(x)) \leq g(x',Yv (\xi),Zv(\xi)).\]
  Subsequently
  \begin{align*}
    u(x) &= g(x,Yu (x),Zu(x))\\
         &\leq g(x,Yv (\xi),Zv(\xi))\\
    &\leq v(x),
  \end{align*}
  where the final inequality is because $g(\cdot,Yv (\xi),Zv(\xi))$ is a $g$-support of $v$. Since $x \in \Omega'$ this contradiction establishes $Zu(x) < Zv(\xi)$. Thus for any $x'$
  \begin{align}
   \nonumber u(x') &\geq g(x',Yu (x),Zu(x))\\
   \label{eq:u:ineq}  &> g(x',Yv (\xi),Zv(\xi)).
  \end{align}
  For $x' = \xi$ we have $u(\xi) > v(\xi)$ so $\xi \in \Omega'$ and the subset relation holds.

  Now we show when $Du(x_0)\neq Dv(x_0)$ that $x_0$ is a positive distance from $\Xi$. Suppose to the contrary  there exists a sequence of  $\{\xi_n\}_{n=1}^\infty$ in $\Xi$ with $\xi_n \rightarrow x_0$. The definition of $\Xi$ implies for each $\xi_n$ there exists an $x_n \in \Omega'$ with $Yv (\xi_n) = Yu (x_n).$

  Now, $Du(x_0) \neq Dv(x_0)$ implies in any neighbourhood of $x_0$ there is a particular $z$ for which 
  \begin{equation}
    \label{eq:u:m1}
    u(z) < g(z,Yv (x_0),Zv(x_0)).
  \end{equation}
  for if not 
  \begin{align*}
    u(x_0) &= v(x_0) = g(x_0,Yv (x_0),Zv(x_0)),\\
\text{ and }\quad\quad    u(x) &\geq g(x,Yv (x_0),Zv(x_0)) \text{ in a neighbourhood of }x_0.
  \end{align*}
  This implies $u(\cdot)-g(\cdot,Yv (x_0),Zv(x_0))$ has a local minimum at $x_0$. Thus
  \[ Du(x_0) = g_x(x_0,Yv (x_0),Zv(x_0)) = Dv(x_0),\]
  and this contradiction establishes \eqref{eq:u:m1}. 

  Since  our derivation of \eqref{eq:u:ineq} used only that $x \in \Omega'$ and $\xi \in \Xi$ satisfied $Yv (\xi) = Yu (x)$, (\ref{eq:u:ineq}) also holds for  for $x_n$ and $\xi_n$. That is for any $x'$ we have
  \begin{equation}
  u(x') > g(x',Yv (\xi_n),Zv(\xi_n)) .\label{eq:u:m2}
\end{equation}

Combining \eqref{eq:u:m1} and \eqref{eq:u:m2} we obtain
  \[ g(x',Yv (x_0),Zv(x_0)) > u(x') >g(x',Yv (\xi_n),Zv(\xi_n)), \]
  which, on sending $\xi_n\rightarrow x_0$ yields a contradiction and completes the proof. 
\end{proof}

It was pointed out by an examiner that, infact, the above proof only requires differentiablity at $x_0$. In this case replace $Yu(x),Yv(x)$ by elements of these sets. 
Furthermore, the same result holds assuming the equivalent condition $Yu(x_0) \neq Yv(x_0)$ in place of $Du(x_0) \neq Dv(x_0)$. However, some condition of this form is necessary to obtain the positive distance conclusion. Without this condition we consider the following simple counterexample in the convex case (where $g(x,y,z) = x\cdot y -z$). We consider $u,v:[-1,1]\rightarrow\mathbf{R}$ defined by $u(x) = x^2-1$
  \[v(x) =
    \begin{cases}
      -2x-2 &x \in [-1,-1/2),\\
      2x^2-3/2 & x \in (-1/2,1/2),\\
      2x-2 &x \in (1/2,1].
    \end{cases}
  \]
  Then $u,v \in C^{1,1}([-1,1])$ with $v < u$ but $Du[-1,1] = Dv[-1,1] = [-2,2]$. 

\section{Uniqueness of the second boundary value problem}
\label{sec:u:uniq-second-bound}

The first step in our proof of uniqueness for the second boundary value problem is to show solutions have the same gradient on the set where they intersect.

\begin{corollary}\label{cor:u:dudv}
Assume the conditions of Theorem \ref{thm:u:main}. Then $Du \equiv Dv$ on the set $\{x \in \Omega; u(x) = v(x)\}$.  
\end{corollary}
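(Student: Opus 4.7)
The plan is to argue by contradiction using Lemma \ref{lem:u:aleksandrovmccann} combined with the Brenier-type identity that comes out of Lemma \ref{lem:w:dualsolutions} and Remark \ref{rem:w:brenier}. Suppose there exists $x_0 \in \Omega$ with $u(x_0) = v(x_0)$ but $Du(x_0) \neq Dv(x_0)$. Set $\xi = Du(x_0) - Dv(x_0) \neq 0$. A first-order Taylor expansion of $u-v$ at $x_0$ gives
\[
u(x_0 + t\xi) - v(x_0 + t\xi) = t|\xi|^2 + o(t),
\]
so the open set $\Omega' := \{x \in \Omega ; u(x) > v(x)\}$ is nonempty, has $x_0 \in \partial \Omega'$, and intersects every ball around $x_0$ in a set of positive Lebesgue measure (containing, up to lower-order perturbations, a cone around $\xi$). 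Applying Lemma \ref{lem:u:aleksandrovmccann} with this $\Omega'$, the set $\Xi := Yv^{-1}(Yu(\Omega'))$ satisfies $\Xi \subset \Omega'$ and $x_0$ lies a positive distance from $\Xi$. Thus some ball $B_r(x_0)$ is disjoint from $\Xi$, and $\Omega' \cap B_r(x_0) \subset \Omega' \setminus \Xi$ has positive Lebesgue measure.

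Next I would invoke the Brenier identity. Under the present hypotheses ($f, f^*$ strictly positive, the mass balance condition, and $u, v$ Aleksandrov solutions of \eqref{eq:g:gje}--\eqref{eq:g:2bvp}), Lemma \ref{lem:w:dualsolutions} together with Remark \ref{rem:w:brenier} gives that both $u$ and $v$ are Brenier solutions, so that for every Borel $E^* \subset \Omega^*$
\[
\int_{Yu^{-1}(E^*)} f \,dx \;=\; \int_{E^*} f^*\,dy \;=\; \int_{Yv^{-1}(E^*)} f\,dx.
\]
I apply this with $E^* := Yu(\Omega')$, which is a Borel subset of $\Omega^*$ (it is the image of the Borel set $\Omega'$ under the Lipschitz map $Yu$, and its measurability was already established in the course of constructing $\mu_{u,f^*}$). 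Using the trivial inclusion $\Omega' \subset Yu^{-1}(Yu(\Omega'))$, the definition of $\Xi$, and the inclusion $\Xi \subset \Omega'$ from Lemma \ref{lem:u:aleksandrovmccann}, I obtain the chain
\[
\int_{\Omega'} f \;\leq\; \int_{Yu^{-1}(Yu(\Omega'))} f \;=\; \int_{Yu(\Omega')} f^* \;=\; \int_{Yv^{-1}(Yu(\Omega'))} f \;=\; \int_\Xi f \;\leq\; \int_{\Omega'} f.
\]
Equality therefore holds throughout, and in particular $\int_{\Omega' \setminus \Xi} f = 0$. Since $f > 0$ on $\Omega$ and $|\Omega' \setminus \Xi| > 0$ by the first paragraph, this is a contradiction, completing the proof.

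The only step I expect to require real care is the handling of measure-zero sets in the duality chain above; specifically, I would need to verify that the set-valued pathologies of $Yu$ and $Yv$ at non-differentiability points do not spoil the equalities. Because $u, v \in C^{1,1}(\Omega)$, both $Yu$ and $Yv$ are genuine Lipschitz maps (not merely set-valued), so the Brenier identity applies cleanly and the identities $Yu^{-1}(Yu(\Omega')) \supset \Omega'$ and $Yv^{-1}(Yu(\Omega')) = \Xi$ hold as written, without any modification on null sets.
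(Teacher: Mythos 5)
Your proof is correct and rests on the same key lemma (Lemma \ref{lem:u:aleksandrovmccann}) and the same final contradiction, but it repackages the measure argument. The paper works with the change-of-variables formula directly and then needs the generalized second boundary value problem plus Lemma \ref{lem:g:sharedy} to justify $Yv(Yv^{-1}(Yu(\Omega'))) = Yu(\Omega')\setminus\mathcal{F}$ modulo a null set before arriving at a strict inequality. You instead route both halves of the chain through the Brenier identity of Remark \ref{rem:w:brenier}, which already bundles the change-of-variables with the second boundary value problem, yielding the symmetric chain $\int_{\Omega'} f \leq \int_{Yu(\Omega')} f^* = \int_{\Xi} f \leq \int_{\Omega'} f$ that collapses to equality, from which $\int_{\Omega'\setminus\Xi} f = 0$ contradicts $f > 0$ and $|\Omega'\setminus\Xi| > 0$. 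That is a tidier deployment of the existing machinery. One point should be stated rather than left implicit: the Brenier identity is asserted for $E^* \subset \Omega^*$, while $Yu(\Omega')$ is contained in $\overline{\Omega^*}$ only up to a Lebesgue-null set (by the definition of the generalized second boundary value problem together with $f > 0$). Apply it to $E^* = Yu(\Omega') \cap \Omega^*$ and observe the discrepancy contributes nothing to any integral, since for $u,v \in C^{1,1}$ the maps $Yu$, $Yv$ are Lipschitz with almost-everywhere nonvanishing Jacobian (from the equation, as $f,f^*>0$), so by the area formula their preimages of null sets remain null. Your closing paragraph gestures at exactly this, but it deserves to be made explicit rather than asserted.
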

\begin{proof}
  Suppose otherwise. Then there is $x_0 \in \Omega$ with $u(x_0) = v(x_0)$ and $Du(x_0) \neq Dv(x_0)$. This implies any neighbourhood of $x_0$ contains an $x$ satisfying $u(x) > v(x)$, which is to say $x_0 \in \partial \Omega' \cap \Omega$. By the previous lemma, for $\epsilon$ sufficiently small $B_\epsilon(x_0) \cap \Xi = \emptyset$ and thus $\Xi \subset \Omega'\setminus B_\epsilon(x_0).$ On the other hand, since $x_0 \in \partial \Omega',$ and $u$ is continuous, $|B_\epsilon(x_0) \cap \Omega'| > 0$. Hence
  \[ |Yv^{-1}(Yu (\Omega'))| = |\Xi| \leq  |\Omega' \setminus B_\epsilon(x_0)| < |\Omega'|,\]
  and since $f$ is bounded below, this implies 
  \begin{equation}
   \int_{Yv^{-1}(Yu (\Omega'))}f^*(Yv )\det DYv  \ dx  < \int_{\Omega'} f^*(Yv )\det DYv  \ dx .\label{eq:u:4}
 \end{equation}

  The change of variables formula holds for the mappings $Yu $ and $Yv $ even though they may not be diffeomorphisms. The reasoning here is as in \cite[Theorem A.31]{Figalli17} and \cite[\S 4]{Trudinger14}. In light of this (\ref{eq:u:4}) yields the following contradiction:
  
 \begin{align}
   \nonumber  \int_{\Omega'}f(x) \ dx &= \int_{\Omega'}f^*(Yu ) \det DYu  \ dx \\
   \label{eq:u:gen1}                    &= \int_{Yu (\Omega')}f^*(y) \ dy\\
   \label{eq:u:gen2}                     &= \int_{Yv (Yv^{-1}(Yu (\Omega')))}f^*(y) \ dy\\
   \nonumber                                   &= \int_{Yv^{-1}(Yu (\Omega'))}f^*(Yv )\det DYv  \ dy\\
   \nonumber                          &< \int_{\Omega'} f^*(Yv )\det DYv  \ dy= \int_{\Omega'}f(x) \ dx.
  \end{align}
  Here the equality between \eqref{eq:u:gen1} and \eqref{eq:u:gen2} uses the generalised second boundary value problem to conclude
  \begin{equation}
    \label{eq:u:sets-eq}
       Yv (Yv^{-1}(Yu (\Omega'))) = Yv (\Omega) \cap Yu (\Omega') = Yu (\Omega')\setminus\mathcal{F},
  \end{equation}
  for some set $\mathcal{F}$ with $|\mathcal{F}|=0$. Thus integrals over the sets in \eqref{eq:u:sets-eq} agree. 
\end{proof}

 \subsection*{A weak Harnack inequality}

\begin{lemma}\label{lem:u:harn} Suppose $u,v \in W^{2,n}(\Omega)$ satisfy (\ref{eq:g:gje}) almost everywhere.  Then for any $\tilde{\Omega} \subset\subset \Omega$ there exists $p,C>0$ and independent of $u,v$ such that
\begin{equation}
\Big(\frac{1}{|\tilde{\Omega}|}\int_{\tilde{\Omega}}(u-v)^p\Big)^{\frac{1}{p}} \leq C \inf_{\tilde{\Omega}}(u-v).\label{eq:u:harn}
\end{equation} 
\end{lemma}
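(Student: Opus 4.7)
The plan is to show that $w := u-v$ satisfies a linear, uniformly elliptic, bounded-measurable-coefficient PDE $\mathcal{L}w = 0$ a.e.\ in $\Omega$, whereupon the conclusion follows immediately from the classical Krylov--Safonov weak Harnack inequality (e.g.\ \cite[Theorem 9.22]{GilbargTrudinger01}) applied to the nonnegative supersolution $w$.

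The linearisation is carried out as follows. Since $u,v \in W^{2,n}(\Omega)$ are both elliptic solutions, we may recast \eqref{eq:g:gje} in the form \eqref{eq:g:mate} and subtract the logarithms, obtaining pointwise a.e.
\[
0 = \log\det[D^2u-A(\cdot,u,Du)] - \log\det[D^2v-A(\cdot,v,Dv)] - \log B(\cdot,u,Du) + \log B(\cdot,v,Dv).
\]
Set
\[
M_\tau := \tau[D^2u - A(\cdot,u,Du)] + (1-\tau)[D^2v - A(\cdot,v,Dv)], \quad \tau \in [0,1],
\]
which is positive-definite for every $\tau$ as a convex combination of positive-definite matrices. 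The fundamental theorem of calculus gives $\log\det M_1 - \log\det M_0 = \bar{a}^{ij}(M_1-M_0)_{ij}$ where $\bar{a}^{ij} := \int_0^1 [M_\tau^{-1}]_{ij}\,d\tau$. Since $A$ and $B$ are $C^1$ in $(u,p)$ (using $g\in C^4$ and $f,f^*\in C^1$ positive), first-order Taylor expansions give
\[
A_{ij}(\cdot,u,Du)-A_{ij}(\cdot,v,Dv) = \widetilde{A}_{ij,u}\,w + \widetilde{A}_{ij,p_k}\,D_k w,
\]
and similarly for $\log B$, with the tilded quantities bounded in $L^\infty$. Combining everything,
\[
\mathcal{L}w := a^{ij}D_{ij}w + b^i D_i w + c\,w = 0 \text{ a.e. in } \Omega,
\]
with $a^{ij}=\bar{a}^{ij}$, and $b^i$, $c$ explicit bounded expressions in $\bar{a}^{ij}$, $\widetilde{A}_{ij,p_k}$, $\widetilde{A}_{ij,u}$, and derivatives of $\log B$.

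Next I verify uniform ellipticity of $\mathcal{L}$. In the applications (Theorems \ref{thm:u:main}--\ref{thm:u:dirichlet}) $u,v \in C^{1,1}$, so $D^2u, D^2v \in L^\infty(\Omega)$ and hence the eigenvalues of $M_\tau$ admit a uniform upper bound $K$. By concavity of $\det^{1/n}$ on positive-definite matrices,
\[
\det M_\tau \geq \min\{\det M_0,\det M_1\} = \min\{B(\cdot,v,Dv),B(\cdot,u,Du)\} \geq \lambda_0 > 0,
\]
since $B = (\det E)\,f/f^*(Y(\cdot,u,Du))$ is bounded below. Consequently the smallest eigenvalue of $M_\tau$ is bounded below by $\lambda_0/K^{n-1}$, so the eigenvalues of $a^{ij} = \bar{a}^{ij}$ lie in a compact subinterval of $(0,\infty)$. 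The weak Harnack inequality then produces the constants $p,C>0$ (depending only on the ellipticity bounds, $\|b^i\|_\infty$, $\|c\|_\infty$ and $\mathrm{dist}(\tilde\Omega,\partial\Omega)$) realising \eqref{eq:u:harn}.

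The main obstacle is precisely the choice of interpolating matrix. A naive linearisation along $u_\tau := \tau u + (1-\tau)v$ would force consideration of $D^2u_\tau - A(\cdot,u_\tau,Du_\tau)$, which is not manifestly positive-definite since $A$ is not concave in $(u,p)$, and thus $\log\det$ need not be defined along the path. Interpolating instead between the two already positive matrices $D^2u - A(\cdot,u,Du)$ and $D^2v - A(\cdot,v,Dv)$ sidesteps this: convex combination preserves positivity, and the unwanted lower-order discrepancy $A(\cdot,u,Du) - A(\cdot,v,Dv)$ is then absorbed into the bounded drift and potential via Taylor's theorem. The remaining issue --- uniform ellipticity of $\bar a^{ij}$ --- is controlled purely by the $C^{1,1}$ hypothesis on $u,v$ together with the lower bound $B \geq \lambda_0 > 0$.
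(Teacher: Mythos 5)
Your proof is correct and follows essentially the same route as the paper: recast \eqref{eq:g:gje} as \eqref{eq:g:mate}, subtract the logarithmic forms of the two equations, interpolate between the positive-definite matrices $D^2u-A(\cdot,u,Du)$ and $D^2v-A(\cdot,v,Dv)$, absorb the remaining $A$ and $\log B$ discrepancies into drift and zeroth-order terms, and invoke the weak Harnack inequality of \cite[Theorem 9.22]{GilbargTrudinger01}. The only cosmetic difference is that you use the fundamental theorem of calculus to produce the exact equality $\mathcal{L}w=0$ with averaged coefficients $\bar a^{ij}=\int_0^1 [M_\tau^{-1}]_{ij}\,d\tau$, whereas the paper uses a second-order Taylor expansion and concavity of $\log\det$ to obtain the one-sided inequality $Lw\leq 0$ with endpoint coefficients $a^{ij}=[D^2u-A(\cdot,u,Du)]^{ij}$; both suffice, and your explicit verification of uniform ellipticity via the $C^{1,1}$ hypothesis and the lower bound on $\det M_\tau$ is a step the paper leaves tacit.
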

\begin{proof}
Provided we are able to show $u-v$ is a supersolution of a homogeneous linear elliptic PDE this is a consequence of the weak Harnack inequality \cite[Theorem 9.22]{GilbargTrudinger01} and a covering argument. We show $w:=u-v$ satisfies
\begin{equation}
 Lw := a^{ij}D_{ij}w+b^kD_kw+cw \leq 0,\label{eq:u:lin}
\end{equation}
where
\begin{align*}
  a^{ij} &=  [D^2u-A(\cdot,u,Du)]^{ij},\\
b^i &= -a^{ij}(A_{ij})_{p_k}-\tilde{B}_{p_k},\\
c &= -a^{ij}(A_{ij})_u-\tilde{B}_u,
\end{align*}
and $\tilde{B} = \log B$. Now using (\ref{eq:g:mate}) we have, almost everywhere, 
\begin{align}
 \label{eq:u:tosub}0=\log\det[D^2v&-A(\cdot,v,Dv)] - \log\det[D^2u-A(\cdot,u,Du)]\\
&+\log B(\cdot,u,Du) - \log B(\cdot,v,Dv).\nonumber
\end{align}
A Taylor series for 
\[ h(t) := \log\det[t(D^2v-A(\cdot,v,Dv))+(1-t)(D^2u-A(\cdot,u,Du))],\]
yields 
\[ h(1) - h(0) = h'(0) + \frac{1}{2}h''(\tau),\]
for some $\tau$ in $[0,1]$. Concavity of $\log\det$ implies $h''(\tau) \leq 0$ and thus on computing $h'(0)$ we obtain
\begin{align}
 \label{eq:u:tosub1} \log\det[D^2v&-A(\cdot,v,Dv)] - \log\det[D^2u-A(\cdot,u,Du)]\\
&\leq a^{ij}D_{ij}(v-u) + a^{ij}(A_{ij}(\cdot,u,Du) - A_{ij}(\cdot,v,Dv)),\nonumber
\end{align}
where $a^{ij} = [D^2u-A(\cdot,u,Du)]^{ij}$. Thus \eqref{eq:u:tosub1} into \eqref{eq:u:tosub} implies
\begin{align}
 \label{eq:u:tosub2}0\leq a^{ij}&D_{ij}(v-u) + a^{ij}(A_{ij}(\cdot,u,Du) - A_{ij}(\cdot,v,Dv))\\
&+\log B(\cdot,u,Du) - \log B(\cdot,v,Dv).\nonumber
\end{align}
The mean value theorem yields
\[ A_{ij}(\cdot,u,Du) - A_{ij}(\cdot,v,Dv) = A_u(\cdot,w,p)(u-v) +A_{p_k}(\cdot,w,p)D_k(u-v), \]
 for some $w = t_1 v+(1-t_1)u$ and $p = t_2Dv + (1-t_2)Du$ and similarly for 
$\log B(\cdot,u,Du) -\log B(\cdot,v,Dv)$. Thus \eqref{eq:u:tosub2} becomes
\[0 \leq a^{ij}D_{ij}(v-u)-(a^{ij}(A_{ij})_{p_k}+\frac{B_{p_k}}{B})D_k(v-u)-(a^{ij}(A_{ij})_u+\frac{B_u}{B})(v-u) ,\]
which is \eqref{eq:u:lin} (multiply by $-1$ since $w = u-v$).
\end{proof}

\subsection*{Proof of Theorem \ref{thm:u:main}: Solutions intersecting on the interior are the same}

We've proved one solution cannot touch another from above. Now we conclude the proof of Theorem \ref{thm:u:main} by showing if two distinct solutions intersect their maximum is a ($W^{2,n}$) solution touching from above --- a contradiction. 

  At the outset we fix $\tilde{\Omega} \subset\subset \Omega$ containing $x_0$. Since $u,v$ are $g$-convex the same is true for $w:=\max\{u,v\}$. Furthermore $Du \equiv Dv$ on $\{u=v\}$, implies $w$  is $C^{1,1}_{\text{loc}}(\Omega)$. Since $C^{1,1}_\text{loc}(\Omega) \subset W^{2,n}_\text{loc}(\Omega)$ we see $w$ solves (\ref{eq:g:mate}) almost everywhere. Hence the weak Harnack inequality \eqref{eq:u:harn} applied to $w-v$ implies $w \equiv v$ in $\tilde{\Omega}$. The same argument yields $w \equiv u$ in $\tilde{\Omega}$ and hence $u \equiv v$ in $\Omega$ via continuity.  \

\subsection*{Solutions intersecting on the boundary are the same}
Next we show, using the $g^*$-convexity of the target domain, that if solutions intersect on the boundary then they are the same solution. Because $\Omega^*$ is a $C^2$ $g^*$-convex domain with respect to $u,v$ there exists a defining function $\phi^* \in C^2(\overline{\Omega^*})$ satisfying
\begin{align*}
  &\phi^* < 0 \text{ in } \Omega^* &&\phi^* = 0 \text{ on }\partial\Omega^*\\
  & D_{p}^2\phi^*(Y(x,u,p)) \geq 0  && |D\phi^*|\neq 0 \text{ on }\partial\Omega^*.
\end{align*}
We construct this function in Section \ref{sec:char-unif-conv}. Then for
\[ G(x,u,p) := \phi^*(Y(x,u,p)) \]  $C^1$ solutions of the second boundary value problem satisfy
\[ G(\cdot,u,Du) = 0 \text{ on }\partial\Omega.\]
Moreover the $g^*$-convexity of $\Omega^*$ implies this condition is oblique i.e satisfies $G_p \cdot \gamma > 0$ on $\partial \Omega$, where $\gamma$ is the outer unit normal. This is proved in Section \ref{sec:boundary-estimates}. With $\phi^*$ in hand we can prove solutions intersecting on the boundary are the same. 
\begin{proof}[Proof (Theorem \ref{thm:u:boundary})]
 Using Theorem \ref{thm:u:main} it suffices to prove there is $x \in \Omega$ with $u(x) = v(x)$.  For a contradiction suppose at some $x_0$ in $\partial \Omega$ we have $u(x_0) = v(x_0)$, yet in $\Omega$ there holds $u > v$. Hopf's lemma (\cite[Lemma 3.4]{GilbargTrudinger01}) yields
\begin{equation}
  \label{eq:u:hopf}
  D_\gamma(u-v)(x_0) < 0,
\end{equation}
(recall from the proof of Proposition \ref{lem:u:harn} $u-v$ solves a linear elliptic inequality). Note no sign condition is needed on the lowest order coefficient in \eqref{eq:u:lin} as $u(x_0)-v(x_0)=0.$ 

Consider $h(t) :=G(x_0,u(x_0),tDv(x_0)+(1-t)Du(x_0))$. By a Taylor series
\[ h(1) = h(0) + h'(0)+h''(\tau)/2,\]
for some $\tau \in [0,1]$. Since $u(x_0) = v(x_0)$ we have $h(1), h(0) =0$. Furthermore convexity implies $h''(\tau) \geq 0$ and hence
\[ 0 \geq h'(0) = G_p \cdot D(v-u),\]
or equivalently $0 \leq G_p\cdot D(u-v).$
Combined with obliqueness we have
\[  D_\gamma(u-v)(x_0) \geq 0,\]
which contradicts \eqref{eq:u:hopf} and thus establishes the result.
\end{proof}

\section{Uniqueness of the Dirichlet problem}
\label{sec:u:uniq-dirichl-probl}

Uniqueness of the Dirichlet problem is a simple Corollary of Lemma \ref{lem:u:aleksandrovmccann} and Hopf's boundary point lemma.

\begin{proof}[Proof (Theorem \ref{thm:u:dirichlet})]
  Throughout the proof we consider the $C^1$ $g$-convex extensions of $u,v$. Assume for a contradiction that there is a nonempty connected component of $\{v < u\}$ denoted $\Omega'$ satisfying $\Omega' \subset \Omega$. We do not discount the possibility $\Omega' = \Omega$. 
  After this extension, the proofs of Lemma \ref{lem:u:aleksandrovmccann} and Corollary \ref{cor:u:dudv} yield $Du \equiv Dv$ on $\partial \Omega'$. In the proof of Corollary \ref{cor:u:dudv} where we used the second boundary value problem to establish equality of \eqref{eq:u:gen1} and \eqref{eq:u:gen2} we now use $Yu(\Omega') \subset Yv(\Omega')$ which follows from the comparison principle. Then $Du \equiv Dv$ on $\partial \Omega'$ contradicts Hopf's lemma applied after linearising as in Lemma \ref{lem:u:harn}.
\end{proof}

\clearpage{}
\clearpage{}\chapter{Global regularity I: Construction of barriers}
\label{chap:r1}

\begin{theorem}\label{thm:r1:main}
  Let $g$ be a generating function satisfying A3w, A4w and A5. Let $u$ be a $g$-convex Aleksandrov solution of \eqref{eq:g:gje} subject to \eqref{eq:g:2bvp}. Assume $f \in C^2(\overline{\Omega}),f^* \in C^2(\overline{\Omega^*})$ and the domains $\Omega,\Omega^*$ are $C^4$, uniformly $g/g^*$-convex with respect to $u$, and satisfy $\overline{\Omega} \subset U, \overline{\Omega^*}\subset V$. Then $u \in C^3(\overline{\Omega})$. 
\end{theorem}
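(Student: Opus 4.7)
The plan is to follow the four-step strategy advertised in the thesis introduction: reduce global regularity of an arbitrary Aleksandrov solution to (i) interior regularity via strict $g$-convexity, combined with (ii) the existence of \emph{some} globally smooth solution which coincides with $u$ at an interior point, and then (iii) invoke the uniqueness result for intersecting $C^{1,1}$ solutions to identify the two.

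First I would establish interior smoothness of the given $u$. The uniform $g$-convexity of $\Omega$ with respect to $u$ together with $\overline{\Omega}\subset U$ puts us in the setting of Remark~\ref{rem:sc:unif-dom}: we can enlarge $\Omega$ slightly inside $U$ to a $g$-convex domain and apply Theorem~\ref{thm:sc:contain} to conclude that $u$ is strictly $g$-convex. Since $f,f^*\in C^{2}$ are positive on the respective closures, they satisfy $0<\lambda\le f,f^*\le \Lambda<\infty$, and $\Omega^*$ is $g^*$-convex with respect to $u$ by hypothesis, so Theorem~\ref{thm:w:regularity} immediately upgrades $u$ to $C^{3}(\Omega)$. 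In particular $u\in C^{1,1}_{\mathrm{loc}}(\Omega)$, which is the regularity required by the uniqueness argument later.

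The main obstacle, and the step that actually requires new work, is producing a globally smooth companion solution $\tilde u \in C^{3}(\overline{\Omega})$ of \eqref{eq:g:gje}--\eqref{eq:g:2bvp} which satisfies $\tilde u(x_0)=u(x_0)$ at some chosen $x_0\in\Omega$. The existence of \emph{a} globally smooth solution under the present uniform $g$ and $g^{*}$-convexity hypotheses is due to Jiang and Trudinger and proceeds via the method of continuity, but the family of solutions produced this way is only determined up to the free parameter inherent in \eqref{eq:g:2bvp} (analogous to additive constants in the Monge--Amp\`ere setting). One would therefore have to reopen their continuity argument and, rather than fixing the pointwise value at an interior point by hand, enforce $u(x_{0})=u_{0}$ through a degree-theoretic perturbation of the continuity method. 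Concretely, one builds a one-parameter family of problems whose smooth solutions $u_{t}$ depend continuously on $t$, shows that the map $t\mapsto u_{t}(x_{0})$ sweeps out a nontrivial interval of $J$ containing $u(x_{0})$ (by comparing with sub- and supersolutions generated from the A5 gradient bound), and uses topological degree together with the uniform apriori estimates of Jiang--Trudinger to realise any target value in that interval. All of the necessary apriori $C^{2}$ and higher estimates, as well as the Evans--Krylov/Schauder regularisation, are in place once A3w, A4w and the uniform convexity of the domains are assumed, so the analytic input needed is already present in Chapters~\ref{chap:r1} and~\ref{chap:r2}.

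With $\tilde u \in C^{3}(\overline{\Omega})$ in hand, the conclusion is immediate. Both $u$ and $\tilde u$ are $g$-convex Aleksandrov solutions of \eqref{eq:g:gje} subject to \eqref{eq:g:2bvp} with the same positive $C^{1}$ data, both lie in $C^{1,1}$ on $\Omega$ (for $u$ this comes from Step~1, for $\tilde u$ from the construction), and by design they agree at the interior point $x_{0}$. Theorem~\ref{thm:u:main} then forces $u\equiv \tilde u$ on $\Omega$, so $u$ inherits the $C^{3}(\overline{\Omega})$ regularity of $\tilde u$. If desired, Theorem~\ref{thm:u:boundary} gives the corresponding statement when the contact point is on $\partial\Omega$, but the construction above places $x_{0}$ in the interior so only Theorem~\ref{thm:u:main} is needed. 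The interior regularity and uniqueness pieces are essentially off-the-shelf applications of earlier chapters; the genuinely delicate step, which deserves a careful separate section, is the degree-theoretic refinement of Jiang--Trudinger's continuity argument that pins the value of the constructed solution at a prescribed point.
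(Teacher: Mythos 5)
Your high-level architecture is exactly the paper's own outline: uniform $g$-convexity of $\Omega$ together with $\overline{\Omega}\subset U$ gives strict $g$-convexity (Remark~\ref{rem:sc:unif-dom} and Theorem~\ref{thm:sc:contain}), Theorem~\ref{thm:w:regularity} then yields $u\in C^3(\Omega)$, and once a globally smooth $v\in C^3(\overline{\Omega})$ solving \eqref{eq:g:gje}, \eqref{eq:g:2bvp} with $v(x_0)=u(x_0)$ is in hand, Theorem~\ref{thm:u:main} identifies $u\equiv v$. That much is correct and is indeed ``off-the-shelf.''

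The genuine gap is in your account of the prescribed-value construction, and the mechanism you sketch is not the one the paper uses and does not obviously work. You propose building a continuous family $u_t$ and arguing that $t\mapsto u_t(x_0)$ sweeps an interval of $J$ containing $u(x_0)$, then selecting the right $t$. But the degree theory of Li--Liu--Nguyen yields existence of solutions for each $t$, not a continuous selection $t\mapsto u_t$ — and it is used precisely because the linearized operator is not invertible, so the implicit-function-theorem machinery that would give such a selection is unavailable. Nor is there any a priori ordering between $u_1(x_0)$ and the target $u(x_0)$ that would make an intermediate-value argument close. What Lemma~\ref{lem:r2:approx-prob} actually does is quite different: insert a localized exponential penalty $e^{[(1-t)\tau + \eta_a(\cdot)](v-u_0)}$ into the homotopy, where $\eta_a$ is a bump of scale $a$ around $x_0$. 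At $t=1$ the equation reads $f^*(Yv)\det DYv = e^{\eta_a(\cdot)(v-u_0)}f$, and mass balance forces any $C^2$ solution to cross $u_0$ inside $B_a(x_0)$ — if $v>u_0$ there, the right-hand side strictly exceeds $f$ on $B_a$ and equals $f$ elsewhere, violating $\int_\Omega f = \int_{\Omega^*}f^*$. Taking $a\to 0$, then $\epsilon,\delta,\rho\to 0$ through the approximating data of Theorem~\ref{thm:r1:hom-start}, delivers $v(x_0)=u(x_0)$. This localization-by-penalty device is the new idea, and your proposal omits it. A secondary slip: you attribute Jiang--Trudinger's existence theorem to the method of continuity, when in fact they (and the present paper) use degree theory precisely because unique solvability of the linearized problem fails.
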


The goal of this chapter and the next is to prove the above theorem.  Here is an outline of the proof: Assume, for now, there exists $v \in C^3(\overline{\Omega})$ solving \eqref{eq:g:gje} subject to \eqref{eq:g:2bvp} and intersecting $u$ at some point in $\Omega$. Our strict $g$-convexity result, Remark \ref{rem:sc:unif-dom}, implies $u$ is strictly $g$-convex and subsequently Theorem \ref{thm:w:regularity} yields $u \in C^3(\Omega)$. Thus $u,v$ are regular enough to apply the uniqueness result, Theorem \ref{thm:u:main}, and conclude $u = v$ so $u \in C^3(\overline{\Omega})$. 

So, the key step remaining is the existence of a globally smooth solution intersecting our Aleksandrov solution. This result is related to Jiang and Trudinger's paper \cite{JiangTrudinger18}. There they used degree theory to prove the existence of $v \in C^3(\overline{\Omega})$ solving \eqref{eq:g:gje} subject to \eqref{eq:g:2bvp}. The degree theory is used in place of the method of continuity because the $u$ dependence implies the linearisations are not \textit{uniquely} solvable.   We modify their construction so as to ensure $v$ takes a prescribed value at a given $x_0$. For the Monge--Amp\`ere and optimal transport cases this is trivial --- we can add a constant. In our case we use the degree theory to obtain a solution which is close to the prescribed value at a given point. Taking a limit of these solutions gives the desired globally smooth solution.  

The degree theory serves, in some sense, as a high-powered version of the method of continuity. So, similarly to the method of continuity, what's required is barrier constructions and apriori estimates. These can all be found in the literature. In particular we use results from \cite{TrudingerWang09,LTW10,LiuTrudinger14,JiangTrudinger14,LTW15,LiuTrudinger16,JiangTrudinger17,JiangTrudinger18,Trudinger20}. For the sake of a unified and complete exposition we are including the details. The reader familiar with the literature can  skip to Section \ref{sec:glob-regul-via} where we give the proof of Theorem \ref{thm:r1:main}.

\section{Construction of a uniformly $g$-convex function and a barrier}
\label{sec:constr-unif-g}

Here we construct a barrier that's required for the $C^2$ Pogorelov estimates in Chapter \ref{chap:r2}. First, we construct a uniformly $g$-convex function that is close to $g$-affine. Heuristically, this function is a $g$-convex analog of $\epsilon|x|^2$ for small $\epsilon$. By uniformly $g$-convex we mean $u \in C^2(\overline{\Omega})$ satisfying $D^2u(x) - g_{ij}(x,Yu(x),Zu(x)) > 0$ on $\overline{\Omega}$. \index{uniformly $g$-convex function}  
\begin{lemma}\label{lem:r1:constr-unif-g}\cite[Lemma 2.1]{JiangTrudinger14}.
  Assume $g$ is a generating function satisfying A3w and $g_0(\cdot):=g(\cdot,y_0,z_0)$ is a $g$-affine function with $y_0 \in \Omega^*$ and $g(\overline{\Omega},y_0,z_0)\subset J$. For $\rho > 0$ sufficiently small there exists $g$-convex $\overline{u} \in C^2(\overline{\Omega})$ satisfying
  \begin{equation}
    \label{eq:r1:u-bar-bound}
     g_0 \leq \overline{u} \leq g_0 + 2\rho\Vert g \Vert_{C^1(\Gamma)}
   \end{equation}
   and
   \begin{equation}
     \label{eq:r1:u-bar-unif}
         D^2\overline{u} - A(\cdot,\overline{u},D\overline{u}) \geq a_0I. 
       \end{equation}
       where $a_0$ depends on $\rho,g$. Finally $Y\overline{u}(\Omega) \subset B_\rho(y_0)$. 
 \end{lemma}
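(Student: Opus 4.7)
The plan is to construct $\overline{u}$ as a small uniformly convex perturbation of the $g$-affine base $g_0$. Fix $x_0 \in \overline{\Omega}$ and, for a small parameter $\epsilon > 0$ to be chosen depending on $\rho$, set
\[
\overline{u}(x) := g(x,y_0,z_0) + \tfrac{\epsilon}{2}|x-x_0|^2.
\]
This is manifestly $C^2$ and satisfies $\overline{u} \geq g_0$ with equality at $x_0$. The point of this ansatz is that $g_0$ is $g$-affine, so $D^2 g_0(x) = g_{xx}(x,y_0,z_0) = A(x,g_0(x),Dg_0(x))$, and the perturbation term injects a positive $\epsilon I$ into the Hessian while only weakly modifying the lower-order arguments of $A$.

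To verify \eqref{eq:r1:u-bar-unif}, I would expand $A$ to first order in the $(u,p)$ arguments around $(g_0,Dg_0)$. Since $A \in C^2$ on $\overline{\Gamma}$ by A0, this gives
\[
A(\cdot,\overline{u},D\overline{u}) - A(\cdot,g_0,Dg_0) = \epsilon\bigl[\tfrac{1}{2}A_u\,|x-x_0|^2 + A_{p_k}(x-x_0)_k\bigr] + O(\epsilon^2),
\]
with the bracketed term bounded in terms of $\|g\|_{C^3}$ and $\mathrm{diam}(\Omega)$. Hence
\[
D^2\overline{u} - A(\cdot,\overline{u},D\overline{u}) = \epsilon I + O(\epsilon),
\]
and choosing $\epsilon$ small enough gives $\geq \tfrac{\epsilon}{2}I =: a_0 I$. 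For the $Y$-image, $Y\overline{u}(x) = Y(x, g_0(x)+\tfrac{\epsilon}{2}|x-x_0|^2, Dg_0(x)+\epsilon(x-x_0))$; at $\epsilon=0$ this equals $y_0$, and by $C^1$ dependence $|Y\overline{u}(x) - y_0| \leq C\epsilon$ uniformly on $\overline{\Omega}$, so taking $\epsilon \leq \rho/C$ yields $Y\overline{u}(\Omega) \subset B_\rho(y_0)$. The same choice forces $\overline{u} - g_0 = \tfrac{\epsilon}{2}|x-x_0|^2$ to be controlled by $2\rho\|g\|_{C^1(\Gamma)}$; if $\epsilon d^2/2$ exceeds this threshold for $d = \mathrm{diam}(\Omega)$, one instead reads the upper bound off the fact that $\overline{u}(x) = g(x,Y\overline{u}(x),Z\overline{u}(x))$ and $|Y\overline{u}-y_0|\leq\rho$, so a first-order Taylor expansion in the $y$ slot gives $|\overline{u}-g_0| \leq \rho\|g_y\|_\infty + O(\rho) \leq 2\rho\|g\|_{C^1}$.

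The one remaining point, and the main subtlety, is that the lemma asks for genuine (i.e.\ global) $g$-convexity, while the construction directly provides only the local condition $D^2\overline{u} \geq A(\cdot,\overline{u},D\overline{u})$. This is exactly the hypothesis of Theorem~\ref{thm:g:locglob}, which promotes local to global $g$-convexity provided $\Omega$ is $g$-convex with respect to every $(y,z)$ with $y \in Y\overline{u}(\Omega)$ and $z = g^*(\cdot,y,\overline{u})$. By the containment $Y\overline{u}(\Omega) \subset B_\rho(y_0)$ and the standing assumption $g(\overline{\Omega},y_0,z_0) \subset J$, for $\rho$ sufficiently small these $(y,z)$ all lie in a small neighbourhood of $(y_0,z_0)$. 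Thus the $g$-convexity condition on $\Omega$ with respect to $(y_0,z_0)$ (which one may assume in the setting where this barrier is applied, and in any event can be checked directly from the $C^2$ defining function of $\Omega$ by continuity) extends to all relevant $(y,z)$, and Theorem~\ref{thm:g:locglob} delivers the global $g$-convexity of $\overline{u}$.
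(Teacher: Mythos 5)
Your construction $\overline{u}(x) = g_0(x) + \tfrac{\epsilon}{2}|x-x_0|^2$ runs into a genuine problem at the very step it is designed for. After the Taylor expansion you correctly obtain
\[
D^2\overline{u} - A(\cdot,\overline{u},D\overline{u}) = \epsilon I - \epsilon\Bigl[\tfrac{1}{2}A_u|x-x_0|^2 + A_{p_k}(x-x_0)_k\Bigr] + O(\epsilon^2),
\]
but the bracketed term is of the \emph{same order} $\epsilon$ as the injected identity, with a coefficient proportional to $\mathrm{diam}(\Omega)^2$. Choosing $\epsilon$ small cannot make $\epsilon I$ dominate $\epsilon B(x)$: if $B(x)$ has an eigenvalue exceeding $1$ somewhere in $\Omega$ then $\epsilon(I - B(x)) + O(\epsilon^2)$ has a negative eigenvalue for every small $\epsilon$, and \eqref{eq:r1:u-bar-unif} fails. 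Your argument silently assumes $\mathrm{diam}(\Omega)$ is small, which the lemma does not. The lemma is stated and needed for fixed domains of arbitrary (bounded) size.

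There is a second gap in the passage from local to global $g$-convexity. You invoke Theorem~\ref{thm:g:locglob}, which requires $\Omega$ to be $g$-convex with respect to all relevant $(y,z)$, and you write that this ``one may assume in the setting where this barrier is applied.'' But no such hypothesis appears in the lemma, and the lemma needs to be true as stated precisely because it is used in situations where $g$-convexity of $\Omega$ is part of what is being assembled, not a given. Constructing $\overline{u}$ as a quadratic perturbation therefore lands you in the awkward position of having a locally $g$-convex function that need not be globally $g$-convex.

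The paper sidesteps both issues simultaneously by defining $\overline{u}$ as the $g$-transform of the concave hemisphere $v_\rho(y) = z_0 - \sqrt{\rho^2 - |y-y_0|^2}$ over $B_\rho(y_0)$. Being a supremum of $g$-affine functions, $\overline{u}$ is $g$-convex by definition with no hypothesis on $\Omega$. And the uniform positive definiteness \eqref{eq:r1:u-bar-unif} comes from the relation $DY\overline{u} = (DXv_\rho)^{-1}$ together with the large curvature $1/\rho$ of the hemisphere: the dual function is strictly $g^*$-convex with constant $\sim 1/\rho$, which bounds $DXv_\rho$ from above and hence $DY\overline{u}$ from below, regardless of $\mathrm{diam}(\Omega)$. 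The moral is that perturbing in the $x$-variable ties your hands to the size of the source domain, whereas constructing in the $y$-variable (where you control the ball radius $\rho$) does not.
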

 \begin{proof}
   We assume $\rho$ is small enough to ensure $B_\rho = B_\rho(y_0) \subset\subset\Omega^*$. Set
   \[ v_\rho(y) = z_0 - \sqrt{\rho^2-|y-y_0|^2},\]
   and define $\overline{u}$ as the $g$-transform of $v_\rho$: 
   \begin{equation}
     \label{eq:r1:u-bar-def}
     \overline{u}(x) = \sup_{y \in B_\rho(y_0)}g(x,y,v_\rho(y)) \text{ for }x \in \overline{\Omega}.    
   \end{equation}

   We can immediately prove \eqref{eq:r1:u-bar-bound}. First $v_\rho(y_0) < z_0$ and $g_z < 0$ implies
   \[  g(x,y_0,z_0) <  g(x,y_0,v_\rho(y_0)) \leq \overline{u}(x) .\]
   In addition since the supremum defining $\overline{u}$ is obtained at some $y \in \overline{B_\rho}$ a Taylor series yields
   \begin{align*}
     \overline{u}(x) &= g(x,y,v_\rho(y))\\
     &\leq g(x,y_0,z_0)+|g_y||y-y_0|+|g_z||v_\rho(y)-z_0|\\
     &\leq g_0(x) +  2\rho\Vert g \Vert_{C^1(\Gamma)}.
   \end{align*}

   The proof of \eqref{eq:r1:u-bar-unif} is more involved. The key idea is that, because $\overline{u}$ is the $g$-transform of $v_\rho$, we have $DYu = [DXv_\rho]^{-1}$. So provided $DXv_\rho$ is bounded above we obtain a lower bound for $DYu$ (in the matrix sense). To begin we fix $x \in \Omega$ and show the supremum defining $\overline{u}({x})$ is attained at $y \in B_\rho$ (that is, it is not attained on the boundary). To this end fix $y_1 \in \partial B_\rho$ and set $y_\lambda = \lambda y_1+(1-\lambda) y_0$. A Taylor series implies
   \begin{align*}
     g(x,y_1,v_\rho(y_1))-g(x,y_\lambda,v_\rho(y_\lambda))&= g_y(x,y_\tau,v_\tau)\cdot(y_1-y_\lambda) \\
     &\quad\quad+ g_z(x,y_\tau,v_\tau)(v_\rho(y_1)-v_\rho(y_\lambda)),
   \end{align*}
   for some $\tau \in [0,1]$ and $y_\tau = \tau y_1 + (1-\tau)y_\lambda$, $v_\tau = \tau v_\rho(y_1) + (1-\tau)v_\rho(y_\lambda)$. Observe $|y_1-y_\lambda| = (1-\lambda)\rho$ and  $v_\rho(y_1)-v_\rho(y_\lambda) = \rho\sqrt{1-\lambda^2}$ (since $y_1 \in \partial B_\rho$). Thus
   \begin{equation}
     \label{eq:r1:ineq}
         g(x,y_1,v_\rho(y_1))-g(x,y_\lambda,v_\rho(y_\lambda)) \leq C(1-\lambda)\rho - \delta\rho\sqrt{1-\lambda^2},
   \end{equation}
   for $\delta = \inf_{\overline{\Gamma}}|g_z| > 0$. When $\lambda > \frac{C^2-\delta^2}{C^2+\delta^2}$ the right hand side of \eqref{eq:r1:ineq} is less than 0. Thus the supremum cannot be attained at $y_1$.

   So $y \mapsto g({x},y,v_\rho(y))$ attains an interior max at some $y \in B_\rho$. At this point,
   \[ 0 = D_y[g({x},y,v_\rho(y))] = g_y({x},{y},v_\rho({y})) + g_z({x},{y},v_\rho({y}))Dv_\rho({y}).\]
   Condition A1$^*$ implies for a particular ${y}$ this can only be satisfied for one ${x}$. Necessarily ${x}=X(y,v_\rho(y),Dv_\rho(y))$. Note also, since this implies
   \[ |Dv_\rho({y})| \leq \left|\frac{g_y}{g_z}\right| \leq C,  \]
   and $|Dv_\rho(y)| \rightarrow \infty$ as $|y-y_0|\rightarrow \rho$ we have ${y}\in B_{(1-\lambda)\rho}(y_0)$ for $\lambda$ depending only on $\sup \left|g_y/g_z\right|$.

   Now, as a supremum of $g$-affine functions $\overline{u}$ is $g$-convex for $\rho$ small enough to ensure $g(\overline{\Omega},y,v_\rho(y)) \subset J$. We wish to show $Y\overline{u}({x})$ is a singleton. Since $\overline{u}$ is the $g$-transform of $v_\rho(y)$ it suffices to show $v_\rho$ is strictly $g^*$-convex. This follows because $v_\rho$ is a hemisphere of radius $\rho$ and so has curvature $1/\rho$. Thus for $\rho$ sufficiently small
   \[ D_{ij}v_\rho - A^*_{ij}(\cdot,v_\rho,Dv_\rho) \geq \frac{1}{2\rho}I.\]
   Thus $v$ is uniformly $g^*$-convex on $B_{\rho}$. Subsequently $u$ is $C^1$ and the supremum in \eqref{eq:r1:u-bar-def} occurs at $y = Y(x,\overline{u},D\overline{u})$.

   In particular $Y\overline{u} = (Xv_\rho)^{-1}$ (Lemma \ref{lem:g:gstarsubdiff}). Since $Xv_\rho$ is $C^1$ with nonvanishing Jacobian determinant so is $Y\overline{u}$. Writing
   \[ D\overline{u} = g(x,Y\overline{u}(x),g^*(x,Y\overline{u}(x),\overline{u}(x))),\]
   we see $\overline{u}$ is $C^2$. The estimate \eqref{eq:r1:u-bar-unif} follows from the relations
   \begin{align*}
     DY\overline{u} = [DXv_\rho]^{-1}\\
     DY\overline{u} = Y_p[D^2\overline{u}-A(\cdot,\overline{u},D\overline{u})]\\
     DXv_\rho = X_q[D^2v_\rho-A^*(\cdot,v_\rho,Dv_\rho)].
   \end{align*}

   We note by construction $Y\overline{u}(\Omega) \subset B_\rho(y_0) \subset\subset \Omega^*$ for $\rho$ sufficiently small. 
 \end{proof}

 The function $\overline{u}$ is used in two ways. The first is the following barrier construction, the second is in Section \ref{sec:constr-funct-appr}. 
For our barrier construction we take  a uniformly $g$-convex function lying above a solution of \eqref{eq:g:mate} and construct a barrier for the linearised equation. This construction is due to Jiang and Trudinger \cite{JiangTrudinger18}, though we use a simpler proof from a later paper of theirs \cite{JiangTrudinger17}. The linearized operators we work in this chapter and the next are not the full linearizations of the associated PDE. Indeed we cannot control the sign of the lowest order linearized terms. However these terms are such that, as we will see, they can always be included in the relevant constants. 

 \begin{theorem}\label{thm:r1:lin}
   Let $g$ be a generating function satisfying A3w and A4w. Let $u \in C^2(\Omega)$ satisfy $D^2u-A(\cdot,u,Du) \geq 0$ and associate the linear operator \index[notation]{$\overline{L}$, \ \ differential operator $\overline{L}v :=w^{ij}[D_{ij}v-D_{p_k}A_{ij}(x,u,Du)D_kv]$} $\overline{L}_u[\cdot]$ defined on a function $v$ by 
   \[ \overline{L}v = \overline{L}_uv:=w^{ij}[D_{ij}v-D_{p_k}A_{ij}(x,u,Du)D_kv].\]
   Here $w = D^2u - A(\cdot,u,Du)$. Let $\overline{u} \in C^2(\Omega)$ satisfy $D^2\overline{u} - A(\cdot,\overline{u},D\overline{u}) \geq a_0I$ and $\overline{u} \geq u$. There is $\epsilon_1,C,K>0$ depending on  $g,\Vert u \Vert_{C^1(\Omega)},\det w,$ and $\overline{u}$ such that
   \begin{equation}
     \label{eq:r1:lin}
         \overline{L}(e^{K(\overline{u}-u)}) \geq \epsilon_1 w^{ii} - C.
   \end{equation}
   \end{theorem}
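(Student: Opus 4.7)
The idea is a direct computation: set $\eta := \overline{u} - u \geq 0$, compute $\overline{L}(e^{K\eta})$ explicitly, and organize the resulting terms so that the positive contribution from the uniform $g$-convexity of $\overline{u}$, combined with the positivity provided by A4w and A3w and by the exponential term $Kw^{ij}D_i\eta D_j\eta$, dominates. Concretely,
\[ \overline{L}(e^{K\eta}) = Ke^{K\eta}\bigl[w^{ij}D_{ij}\eta + K w^{ij}D_i\eta D_j\eta - w^{ij}D_{p_k}A_{ij}(\cdot,u,Du)D_k\eta\bigr]. \]
For the first bracketed term I will write $D_{ij}\eta = \overline{w}_{ij} - w_{ij} + [A_{ij}(\cdot,\overline{u},D\overline{u}) - A_{ij}(\cdot,u,Du)]$ and use $w^{ij}w_{ij} = n$, $\overline w \geq a_0 I$ to get $w^{ij}D_{ij}\eta \geq a_0 w^{ii} - n + w^{ij}[A_{ij}(\cdot,\overline u,D\overline u) - A_{ij}(\cdot,u,Du)]$.

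Next I will decompose the $A$-difference by first varying $u\mapsto\overline u$ at fixed $p=D\overline u$, then varying $p = Du \mapsto D\overline u$ at fixed $u$. The first step contributes $w^{ij}A_{ij,u}(\cdot,u_\tau,D\overline u)\eta$ which is nonnegative by A4w and $\eta\geq 0$. The second contributes $w^{ij}D_{p_k}A_{ij}(\cdot,u,p_\tau)D_k\eta$; combining this with the stray linear term $-w^{ij}D_{p_k}A_{ij}(\cdot,u,Du)D_k\eta$ from $\overline L$ and Taylor-expanding once more in the $p$-variable produces the key quadratic
\[ \tau\,w^{ij}D_{p_kp_l}A_{ij}(\cdot,u,p^*)D_k\eta\,D_l\eta. \]
Spectrally decomposing $w^{ij} = \sum_\alpha \lambda_\alpha \xi^i_\alpha\xi^j_\alpha$, applying the equivalent form \eqref{eq:g:a3w-equiv} of A3w to each rank-one piece, and using Cauchy--Schwarz bounds this from below by $-C|D\eta|\sqrt{w^{ii}}\sqrt{w^{ij}D_i\eta D_j\eta}$. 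Young's inequality, together with the $C^1$ bound $|D\eta|\leq C$, then gives $-\frac{a_0}{2}w^{ii} - C'w^{ij}D_i\eta D_j\eta$.

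Assembling the estimates produces
\[ \overline{L}(e^{K\eta}) \geq Ke^{K\eta}\Bigl[\tfrac{a_0}{2}w^{ii} + (K-C')w^{ij}D_i\eta D_j\eta - n\Bigr]. \]
Choosing $K \geq C'$ eliminates the middle term, and since $\eta \geq 0$ we may use $e^{K\eta}\geq 1$. Finally, from the AM--GM inequality $w^{ii} \geq n(\det w)^{-1/n}$ and the upper bound on $\det w$, we have $w^{ii} \geq c_0 > 0$, so the constant $-n$ can be partially absorbed into $\tfrac{a_0}{4}w^{ii}$ at the price of a constant $C$. Setting $\epsilon_1 := Ka_0/4$ yields the claimed inequality.

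\textbf{Main obstacle.} The essential technical step is the cancellation/absorption in the second paragraph: the stray linear-in-$D\eta$ terms coming from the definition of $\overline L$ and from Taylor expanding $A(\cdot,u,Du)\mapsto A(\cdot,\overline u,D\overline u)$ must combine into a pure quadratic $w^{ij}D_{p_kp_l}A_{ij}D_k\eta D_l\eta$ before A3w can be invoked, and then the gradient factor $|D\eta|^2$ that appears after Young's inequality must be controlled uniformly by the $C^1$ data; the coefficient $K$ in the exponent has to be chosen large enough to swamp this error term, which is what forces $K$ to depend on $\Vert u\Vert_{C^1}$ and on $\overline u$.
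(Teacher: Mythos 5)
Your argument is essentially the paper's proof: compute $\overline{L}(e^{K\eta})$ with $\eta=\overline u-u$, isolate the uniform ellipticity contribution $a_0 w^{ii}$ from $\overline u$, discard the $A_{ij,u}$-term using A4w and $\eta\geq 0$, Taylor out a pure second-order $D^2_pA$-quadratic in $D\eta$, estimate it below via the spectral decomposition of $w^{ij}$ together with the equivalent form \eqref{eq:g:a3w-equiv} of A3w, and let the exponential's $K^2w^{ij}D_i\eta D_j\eta$ swamp the resulting quadratic error. The only real difference from the paper is bookkeeping: the paper creates its positive $\delta w^{ii}$ piece by adding and subtracting $A_{ij}(\cdot,u,D\overline u)$ and invoking A4w through the monotonicity $A(\cdot,u,D\overline u)\leq A(\cdot,\overline u,D\overline u)$, whereas you write $D^2\eta = \overline w - w + [A(\cdot,\overline u,D\overline u) - A(\cdot,u,Du)]$ and then split the $A$-difference into a $u$-variation and a $p$-variation. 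The two decompositions are equivalent.

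One small caveat on your last step: the AM--GM observation $w^{ii}\geq n(\det w)^{-1/n}\geq c_0$ is a red herring here. A lower bound on $w^{ii}$ does not cap the term $-Kn\,e^{K\eta}$; what is needed is an \emph{upper} bound on $e^{K\eta}$. That comes, as the paper implicitly uses, from $\eta = \overline u - u$ being bounded (given $\overline u\in C^2$ and $\Vert u\Vert_{C^1}$ bounded), so $e^{K\eta}\leq e^{K\sup\eta}$ is a constant depending on the allowed data and $-Kn\,e^{K\eta}\geq -C$ directly. With that substitution your final assembly, keeping $Ke^{K\eta}\tfrac{a_0}{2}w^{ii}\geq K\tfrac{a_0}{2}w^{ii}$ from $e^{K\eta}\geq 1$, gives \eqref{eq:r1:lin} with $\epsilon_1 = Ka_0/2$.
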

 \begin{proof}
   First we note a consequence of the A3w condition. Recall \eqref{eq:g:a3w-equiv}, which is that A3w implies for any $\xi,\eta \in \mathbf{R}^n$
   \begin{equation}
     \label{eq:r1:a3w-equiv}
         D_{p_kp_l}A_{ij}\xi_i\xi_j\eta_k\eta_l \geq - C|\xi||\eta|(\xi\cdot\eta).
   \end{equation}
    Now express $w^{ij}$  as
   \[ w^{ij} = \sum_{s=1}^n\lambda_s\phi^s_i\phi^s_j,\]
   in terms of its eigenvalues $\lambda_s$ and corresponding normalized eigenvectors $\phi^s$. Then \eqref{eq:r1:a3w-equiv} implies
   \begin{align*}
     w^{ij}D_{p_kp_l}A_{ij}\eta_k\eta_l &= \sum_{s} \lambda_sD_{p_kp_l}A_{ij}\phi^s_i\phi^s_j\eta_k\eta_l\\
     & \geq - C\sum_{s}\lambda_s|\phi^s||\eta|(\phi^s\cdot\eta).
   \end{align*}
   Cauchy's inequality with epsilon implies, for any $\epsilon>0$,
   \begin{align*}
     w^{ij}D_{p_kp_l}A_{ij}\eta_k\eta_l  &\geq - \epsilon\sum_{s} \lambda_s |\phi^s|^2|\eta|^2 - \frac{C}{\epsilon}\sum_{s}\lambda_s(\phi^s\cdot\eta)^2\\
     &\geq -\epsilon w^{ii}|\eta|^2 - \frac{C}{\epsilon}\sum_{s}\lambda_s(\phi^s_i\eta_i)(\phi^s_j\eta_j),
   \end{align*}
   where we've used $|\phi^s| = 1$. Simplifying the final term to $w^{ij}\eta_i\eta_j$ we have
   \begin{equation}
     \label{eq:r1:a3w-use}
     w^{ij}D_{p_kp_l}A_{ij}\eta_k\eta_l \geq -\epsilon w^{ii}|\eta|^2 - \frac{C}{\epsilon}w^{ij}\eta_i\eta_j.
   \end{equation}

   We use this to obtain \eqref{eq:r1:lin}. Since $\overline{u} \geq u$, A4w implies for $\delta=a_0/2$ 
   \[ D^2\overline{u} - A(\cdot,u,D\overline{u}) - \delta I \geq D^2\overline{u} - A(\cdot,\overline{u},D\overline{u}) - \delta I \geq \delta I. \]
   Then
   \begin{align*}
     \overline{L}(\overline{u}-u) &= w^{ij}[D_{ij}(\overline{u}-u) - D_{p_k}A_{ij}(\cdot,u,Du)D_k(\overline{u}-u)]\\
     &= w^{ij}[\delta I + (D_{ij}\overline{u} - A(\cdot,u,D\overline{u}) - \delta I) - (D_{ij}u - A(\cdot,u,Du))]\\
                       &\quad\quad + w^{ij}[A_{ij}(\cdot,u,D\overline{u}) - A_{ij}(\cdot,u,Du) - D_{p_k}A_{ij}(\cdot,u,Du)D_k(\overline{u}-u)]\\
                       &\geq \delta w^{ii} -n + \frac{1}{2}w^{ij}D_{p_kp_l}A_{ij}(\cdot,u,p_\tau)D_k(\overline{u}-u)D_l(\overline{u}-u).
   \end{align*}
   We've used a Taylor series in the last line. Combined with \eqref{eq:r1:a3w-use} we have
   \begin{equation}
     \label{eq:r1:diff-ineq}
      \overline{L}(\overline{u}-u) \geq \delta w^{ii} -n - \epsilon w^{ii}|D(\overline{u}-u)|^2 - \frac{C}{\epsilon}w^{ij}D_i(\overline{u}-u)D_j(\overline{u}-u).
   \end{equation}

   Via direct calculation 
   \[ \overline{L}(e^{K(\overline{u}-u)}) =K e^{K(\overline{u}-u)}\overline{L}(\overline{u}-u) + K^2e^{K(\overline{u}-u)}w^{ij}D_i(\overline{u}-u)D_j(\overline{u}-u).\]
   Which, with \eqref{eq:r1:diff-ineq}, gives
   \begin{align*}
     \overline{L}(e^{K(\overline{u}-u)}) &\geq Ke^{K(\overline{u}-u)}w^{ii}[\delta  -\epsilon |D(\overline{u}-u)|^2 ] - Kne^{K(\overline{u}-u)}\\
     &\quad\quad+e^{K(\overline{u}-u)}w^{ij}D_i(\overline{u}-u)D_j(\overline{u}-u)[K^2 - \frac{C}{\epsilon}] .
   \end{align*}
   Choosing first $\epsilon$ small then $K$ large yields \eqref{eq:r1:lin}.   \end{proof}

\section{Construction of a function approximately satisfying the second boundary value problem}
\label{sec:constr-funct-appr}

The goal of this section is to construct a uniformly $g$-convex function which approximately satisfies the second boundary value problem. We use the notation
\[\Omega_\delta := \{x ; \text{dist}(x,\Omega) < \delta\},\]
and that $\{\Omega_{\delta,\epsilon}^*\}$ is a doubly parametrised family of domains satisfying the following convergence properties with respect to the Hausdorff distance
\[ d_{\mathbf{H}}(\Omega_{\delta,\epsilon}^*,\Omega_{\delta}^*) \leq C\epsilon,\]
where $\Omega_{\delta}^*:= \lim_{\epsilon \rightarrow 0}\Omega_{\delta,\epsilon}^*$ with respect to the Hausdorff distance, is a family of domains satisfying $\Omega_{\delta}^* \rightarrow \Omega^*$. 

\begin{theorem}\label{thm:r1:hom-start}
 Let $0< \epsilon << \delta$ be fixed small . Let $\overline{u}$ be the uniformly $g$-convex function from Lemma \ref{lem:r1:constr-unif-g}. There exists a uniformly $g$-convex function $u_{\epsilon}\in C^\infty(\overline{\Omega_\delta})$ satisfying
  \[ Yu_{\epsilon}(\Omega_\delta) = \Omega^*_{\delta,\epsilon},\]
  and
  \[ \lim_{\epsilon \rightarrow 0}u_{\epsilon} = \overline{u},\]
  pointwise on $\Omega$ and uniformly on compact subsets of $\Omega$. 
\end{theorem}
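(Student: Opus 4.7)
The plan is to realise $u_\epsilon$ as an Aleksandrov solution, on the enlarged domain $\Omega_\delta$, of a GJE/2BVP system whose data are smooth perturbations of densities tailor-made so that the unperturbed solution is a natural extension of $\overline{u}$.

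First, extend $\overline{u}$ from $\overline{\Omega}$ to $\overline{\Omega_\delta}$ using the same formula that defined it in Lemma \ref{lem:r1:constr-unif-g}:
\[ \overline{u}_\delta(x) := \sup_{y \in B_\rho(y_0)} g(x,y,v_\rho(y)), \qquad x \in \overline{\Omega_\delta}. \]
For $\delta>0$ sufficiently small, $\overline{u}_\delta$ remains $C^2$ and uniformly $g$-convex on $\overline{\Omega_\delta}$ by exactly the argument of Lemma \ref{lem:r1:constr-unif-g}; by construction, $\overline{u}_\delta \equiv \overline{u}$ on $\overline{\Omega}$, and $Y\overline{u}_\delta(\overline{\Omega_\delta}) \subset B_\rho(y_0)$ is a $C^2$ uniformly $g^*$-convex set (with respect to $\overline{u}_\delta$). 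Call this set $\Omega_\delta^*$; it is independent of $\epsilon$ and will serve as the $\epsilon \to 0$ limit of the $\Omega_{\delta,\epsilon}^*$.

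Second, reverse-engineer the data: define on $\Omega_\delta$ the smooth positive density
\[ f_\delta(x) := \det DY\overline{u}_\delta(x), \]
and set $f_\delta^* \equiv 1$ on $\Omega_\delta^*$ (extended as $0$ off $\overline{\Omega_\delta^*}$), so that the change of variables formula gives the mass balance $\int_{\Omega_\delta} f_\delta = |\Omega_\delta^*| = \int_{\Omega_\delta^*} f_\delta^*$, and $\overline{u}_\delta$ solves \eqref{eq:g:gje} subject to \eqref{eq:g:2bvp} with data $(f_\delta, f_\delta^*, \Omega_\delta, \Omega_\delta^*)$. Now for each $\epsilon \in (0,\epsilon_0)$ choose a smooth uniformly $g^*$-convex (with respect to $\overline{u}_\delta$) domain $\Omega_{\delta,\epsilon}^*$ with $d_{\mathbf{H}}(\Omega_{\delta,\epsilon}^*, \Omega_\delta^*) \leq C\epsilon$, together with $f_\epsilon \in C^\infty(\overline{\Omega_\delta})$ and $f_\epsilon^* \in C^\infty(\overline{\Omega_{\delta,\epsilon}^*})$ positive, with $\Vert f_\epsilon - f_\delta\Vert_\infty + \Vert f_\epsilon^* - 1\Vert_\infty \to 0$ as $\epsilon \to 0$, and renormalise to enforce the mass balance $\int_{\Omega_\delta} f_\epsilon = \int_{\Omega_{\delta,\epsilon}^*} f_\epsilon^*$.

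Third, fix a base point $x_0 \in \Omega$ and apply Theorem \ref{thm:w:weakexist} to obtain, for each such $\epsilon$, an Aleksandrov solution $u_\epsilon$ of \eqref{eq:g:gje}--\eqref{eq:g:2bvp} with data $(f_\epsilon, f_\epsilon^*, \Omega_\delta, \Omega_{\delta,\epsilon}^*)$ normalised by $u_\epsilon(x_0) = \overline{u}(x_0)$. Because $\Omega_{\delta,\epsilon}^*$ is $g^*$-convex with respect to $u_\epsilon$ (a condition propagated by its construction) and $\Omega_\delta$ is uniformly $g$-convex with respect to $u_\epsilon$, Remark \ref{rem:sc:unif-dom} yields strict $g$-convexity of $u_\epsilon$, and Theorem \ref{thm:w:regularity} upgrades this to interior $C^3$ regularity. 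Smoothness up to the boundary then follows by the standard oblique boundary value Schauder bootstrap (the obliqueness coming from $g^*$-convexity of $\Omega_{\delta,\epsilon}^*$ as derived in the next chapter and used already in the proof of Theorem \ref{thm:u:boundary}).

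Finally, take $\epsilon \to 0$. By A5 and semiconvexity, $\{u_\epsilon\}$ is locally equi-Lipschitz with a uniform $C^0$ bound, so a subsequence converges locally uniformly on $\Omega_\delta$ to some $g$-convex $\tilde u$ with $\tilde u(x_0) = \overline{u}(x_0)$. Lemma \ref{lem:w:weak_conv} and the weak convergence of the data imply $\tilde u$ is an Aleksandrov solution of the $\epsilon=0$ problem, namely with data $(f_\delta, 1, \Omega_\delta, \Omega_\delta^*)$, for which $\overline{u}_\delta$ is a $C^2$ solution with the same normalisation. Since $\overline{u}_\delta$ is $C^{1,1}$ and strictly $g$-convex, the uniqueness Theorem \ref{thm:u:main} (applied once $\tilde u$'s regularity is upgraded, e.g. via the interior $C^3$ estimates of Theorem \ref{thm:w:regularity}) forces $\tilde u \equiv \overline{u}_\delta$, and the limit is independent of subsequence, so $u_\epsilon \to \overline{u}_\delta$ pointwise on $\Omega_\delta$ and locally uniformly on $\Omega_\delta$; restricting to $\Omega$ gives the claim.

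The hard part will be the third step: guaranteeing that the $u_\epsilon$ obtained via Theorem \ref{thm:w:weakexist} can be chosen with the requisite global smoothness and the chosen normalisation simultaneously. The normalisation $u_\epsilon(x_0) = \overline{u}(x_0)$ is admissible because $\overline{u}(x_0) \in J$ and the condition $\overline{u}(x_0) + K_0\mathrm{diam}(\Omega_\delta) \in J$ is stable under small $\delta$. Global $C^\infty$ smoothness requires the $C^2(\overline{\Omega_\delta})$ boundary estimates that are postponed to Chapter \ref{chap:r2}; here we simply invoke them, noting that the uniform obliqueness of the 2BVP at $u_\epsilon$ is a consequence of the uniform $g^*$-convexity of $\Omega_{\delta,\epsilon}^*$ with respect to $u_\epsilon$ and that our data satisfy all the structural hypotheses those estimates require.
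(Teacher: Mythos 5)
Your construction has two genuine gaps, and they are fatal rather than cosmetic.

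First, your extension $\overline{u}_\delta$ of $\overline{u}$ to $\Omega_\delta$ is the wrong one. You extend by the same formula $\sup_{y\in B_\rho(y_0)} g(x,y,v_\rho(y))$, so $Y\overline{u}_\delta(\Omega_\delta)\subset B_\rho(y_0)$, and hence the set you call $\Omega_\delta^*$ stays inside $B_\rho(y_0)\subset\subset\Omega^*$ for all $\delta$. But the theorem (through the definition of the family $\Omega^*_{\delta,\epsilon}$ given just before it) requires $\Omega_\delta^*\rightarrow\Omega^*$ as $\delta\rightarrow 0$ — this is essential, since $u_\epsilon$ is the $t=0$ starting point of the degree-theoretic homotopy in Lemma \ref{lem:r2:approx-prob}, whose $t=1$ endpoint must solve the second boundary value problem onto $\Omega^*$. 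With your extension the boundary data never recovers $\Omega^*$, so the subsequent degree argument cannot reach the intended problem. The paper's extension (Lemma \ref{lem:r1:extens-property}) takes the supremum over \emph{all} $g$-affine functions with $Y$-value in $\Omega^*$ lying below $\overline{u}$, which produces $u_1$ with $Yu_1(\Omega_\delta)=\overline{\Omega^*}$: the collar $\Omega_\delta\setminus\overline{\Omega}$ is forced to sweep out the annulus $\overline{\Omega^*}\setminus Y\overline{u}(\Omega)$.

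Second, your step obtaining global smoothness of $u_\epsilon$ is circular. You apply Theorem \ref{thm:w:weakexist} to produce an Aleksandrov solution, upgrade to interior regularity via Theorem \ref{thm:w:regularity}, and then invoke "the standard oblique boundary value Schauder bootstrap" and the $C^2(\overline{\Omega})$ boundary estimates of Chapter \ref{chap:r2}. But passing from an Aleksandrov solution of the second boundary value problem to a globally smooth one is precisely the content of Theorem \ref{thm:r1:main}, whose proof depends on the existence of the very function $u_\epsilon$ you are trying to construct. The estimates of Chapter \ref{chap:r2} are a priori estimates for classical solutions; they do not by themselves give existence or boundary regularity for a weak solution, and at this stage of the development no such existence theorem is available. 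The paper avoids all of this by being entirely constructive: extend $\overline{u}$ via Lemma \ref{lem:r1:extens-property}, perturb by $t\,d(x)^2$ in the collar to restore uniform $g$-convexity in the normal direction (the extension is only $C^{1,1}$ and degenerate there), and mollify — with the delicate point being that mollification preserves uniform $g$-convexity thanks to A3w (codimension-one convexity in $p$) and a careful treatment of the region within $O(\epsilon)$ of $\partial\Omega$, where the jump in the normal derivative of $u_1$ is what produces the large normal second derivative after mollification.
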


There are a number of steps to the construction of $u_\epsilon$. The first is to extend $\overline{u}$ to $\Omega_\delta$ by taking a supremum of all $g$-affine functions with $Y$ mapping in $\Omega^*$ and which lie below $\overline{u}$. Then we perturb this function in $\Omega_\delta \setminus \Omega$ to obtain the uniform $g$-convexity and conclude by mollifying. First we need a geometric lemma from \cite{JiangTrudinger18}.

\begin{lemma}\label{lem:r1:geometric}
  Let $g$ be a generating function satisfying A3w and A4w. Assume $u \in C^1(\overline{\Omega})$ is $g$-convex on a domain $\Omega$ which is uniformly $g$-convex with respect to  $y_0,z_0$. Assume at $x_0 \in \partial \Omega$ 
  \[ h(x):= u(x)-g(x,y_0,z_0),\]
  satisfies  $h(x_0) = 0$ and $Dh(x_0) = -s\gamma_0$ for $\gamma_0$ the outer normal to $\Omega$ at $x_0$ and $s>0$. Then $h > 0$ on $\overline{\Omega}\setminus\{x_0\}$.
\end{lemma}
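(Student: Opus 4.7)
The key idea is to reduce this Hopf-type statement to a comparison of two $g$-affine functions by exploiting the $g$-convexity of $u$. Since $u\in C^{1}(\overline{\Omega})$ is $g$-convex, the set $Yu(x_{0})$ is the singleton $\{y_{u}\}$ with $y_{u}:=Y(x_{0},u(x_{0}),Du(x_{0}))$, so $g(\cdot,y_{u},z_{u})$, with $z_{u}:=g^{*}(x_{0},y_{u},u(x_{0}))$, is a $g$-support of $u$ at $x_{0}$ and $u\ge g(\cdot,y_{u},z_{u})$ on $\overline{\Omega}$. The hypotheses $h(x_{0})=0$, $Dh(x_{0})=-s\gamma_{0}$ translate into
\[
g(x_{0},y_{u},z_{u})=g(x_{0},y_{0},z_{0}),\qquad g_{x}(x_{0},y_{u},z_{u})-g_{x}(x_{0},y_{0},z_{0})=-s\gamma_{0}.
\]
Setting $\eta(x):=g(x,y_{u},z_{u})-g(x,y_{0},z_{0})$, the bound $h\ge\eta$ reduces the lemma to showing $\eta>0$ on $\overline{\Omega}\setminus\{x_{0}\}$.

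I would establish this along $g$-segments. Fix an arbitrary $x_{1}\in\overline{\Omega}\setminus\{x_{0}\}$ and let $\{x_{\theta}\}_{\theta\in[0,1]}$ be the $g$-segment with respect to $y_{0},z_{0}$ joining $x_{0}$ to $x_{1}$. Uniform $g$-convexity of $\Omega$ with respect to $y_{0},z_{0}$ means the diffeomorphism $x\mapsto -g_{y}/g_{z}(x,y_{0},z_{0})$ straightens this segment into a chord of a uniformly convex image, whose strict convexity forces the chord direction to be strictly transverse to the boundary at the image of $x_{0}$. Pulling back through the boundary-preserving diffeomorphism gives $\dot{x}_{0}\cdot\gamma_{0}<0$, so
\[
\eta'(0)=(-s\gamma_{0})\cdot\dot{x}_{0}>0.
\]

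To propagate this positivity I would apply Lemma~\ref{lem:g:maindiffineq} with $g(\cdot,y_{u},z_{u})$ in the role of $u$; as this function is $g$-affine, its $Y$ and $Z$ values are constant along $x_{\theta}$ and the first bracket in \eqref{eq:g:maindiffineq} vanishes identically. The A3w condition in the form \eqref{eq:g:a3w-equiv}, combined with the compact-domain Lipschitz bound on $D\eta$, absorbs the $D^{2}_{pp}A$ term into $-K|\eta'|$, while A4w ensures that $A_{ij,u}\dot{x}_{i}\dot{x}_{j}\,\eta\ge 0$ wherever $\eta\ge 0$. On any subinterval where $\eta\ge 0$ we therefore obtain the one-dimensional linear inequality $\eta''+b(\theta)\eta'\ge 0$ with $|b|\le K$.

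Letting $\theta_{*}:=\sup\{\theta\in[0,1]:\eta\ge 0\text{ on }[0,\theta]\}$, the classical one-dimensional maximum principle on $[0,\theta_{*}]$ places $\max\eta$ at an endpoint: if $\theta_{*}<1$, or if $\theta_{*}=1$ with $\eta(1)=0$, both endpoint values are zero, forcing $\eta\equiv 0$ on $[0,\theta_{*}]$ and contradicting $\eta'(0)>0$. Hence $\eta(x_{1})>0$, and since $x_{1}$ was arbitrary the conclusion follows. The main delicacy I expect is bookkeeping with the two one-sided hypotheses: A3w only yields a useful bound after its tangential term has been absorbed via \eqref{eq:g:a3w-equiv}, and A4w only produces the correct sign on the $\eta$-term when $\eta\ge 0$, so the max-principle step must be executed on the maximal interval where both features are simultaneously available—precisely what the definition of $\theta_{*}$ provides.
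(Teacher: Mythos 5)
Your proposal is correct and follows the same route as the paper's own proof: reduce to the unique $g$-support at $x_0$ (which is exactly what the paper means by ``if not [$C^2$] apply the argument to the unique $g$-support''), apply Lemma~\ref{lem:g:maindiffineq} along the $g$-segment using A4w to drop the $A_{ij,u}h$ term when $h\geq 0$, use uniform $g$-convexity to get $\dot{x}_0\cdot\gamma_0<0$ and hence $h'(0)>0$, and conclude by the one-dimensional maximum principle. You have simply filled in the details (the $\theta_*$-bookkeeping) that the paper compresses into ``combined with $h(0)=0$ and the differential inequality implies $h(x_1)>0$.''
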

\begin{proof}
We assume $u \in C^2(\overline{\Omega})$ --- if not apply the argument to the unique $g$-support at $x_0$. 
  Fix $x_1 \in \overline{\Omega}$ and let $\{x_\theta\}_{\theta \in [0,1]}$ denote the $g$-segment with respect to $y_0,z_0$ joining $x_0$ to $x_1$. Lemma \ref{lem:g:maindiffineq} and A4w implies $h(\theta) := u(x_\theta)-g(x_\theta,y_0,z_0)$ satisfies $h''(\theta) > -K|h'(\theta)|$ whenever $h(\theta) \geq 0$. In addition via the uniform $g$-convexity $\dot{x}_0$ points into the domain. Thus
  \[ h'(0) = Dh(x_0)\dot{x}_0 = (-s\gamma_0)\dot{x}_0 > 0.\]
  Which combined with $h(0) = 0$ and the differential inequality implies $h(x_1) > 0$. 
\end{proof}

We use this lemma to extend the uniformly $g$-convex function $\overline{u}$ from Lemma \ref{lem:r1:constr-unif-g} to a neighbourhood of $\Omega$. The following lemma appears in \cite{JiangTrudinger18,TrudingerWang09}. 

\begin{lemma}\label{lem:r1:extens-property}
  Let $\overline{u}$ be the uniformly $g$-convex function constructed in Lemma \ref{lem:r1:constr-unif-g}. Assume $\Omega$ is uniformly $g$-convex with respect to $\Omega^* \times Z\overline{u}(\Omega)$ and $\Omega^*$ is uniformly $g^*$-convex with respect to $\overline{u}$. Define
  \begin{align}
    \nonumber u_1&:\Omega_\delta \rightarrow \mathbf{R},\\
    u_1(x) &= \sup\{g(\cdot,y,z); y \in \Omega^*, g(\cdot,y,z) \leq u \text{ in }\Omega\}. \label{lem:r1:extend}
  \end{align}
  For $\delta$ sufficiently small $u_1$ is a $g$-convex extension of $\overline{u}$ to $\Omega_\delta$ satisfying $Yu_1(\Omega_\delta) = \overline{\Omega^*}$. Moreover for each $x \in\Omega_\delta\setminus\overline{\Omega}$ there is unique $x_b \in \partial \Omega, \ y_b=Yu(x_b) \in \partial \Omega^*$ such that if $\{x_\theta\}$ is the $g$-segment joining $x_0:=x_b$ to $x_1:=x$ with respect to $y_b, z_b:=g^*(x_b,y_b,\overline{u}(x_b))$ 
  \begin{align*}
    u_1(x_\theta) = g(x_\theta,y_b,z_b),\\
    Yu_1(x_\theta) = y_b \text{ for }\theta \in (0,1].
  \end{align*}
 Finally if $r<\delta$ is fixed and we consider the mappings from $\partial \Omega_r$ to $\partial \Omega$ and $\partial \Omega^*$ defined, using the above notation, by $x \mapsto x_b$ and $x \mapsto y_b$, then these are $C^2$ diffeomorphisms. 
\end{lemma}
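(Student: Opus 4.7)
The plan is first to verify that $u_1$ is a well-defined $g$-convex extension of $\overline{u}$. For $\delta$ sufficiently small, continuity of $g$ together with admissibility ensures that each candidate $g$-affine function satisfies $g(\overline{\Omega_\delta}, y, z) \subset J$, so $u_1$ is the supremum of a family of ``very nice'' $g$-affine functions and is $g$-convex. The identity $u_1|_{\overline{\Omega}} = \overline{u}$ follows because for each $x \in \overline{\Omega}$ the $g$-support $g(\cdot, Y\overline{u}(x), Z\overline{u}(x))$ is admissible in the sup -- since $Y\overline{u}(\overline{\Omega}) \subset \overline{B_\rho(y_0)} \subset \Omega^*$ by Lemma \ref{lem:r1:constr-unif-g} -- giving $u_1 \geq \overline{u}$, while the constraint yields the reverse inequality.

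Next I would fix $x \in \Omega_\delta \setminus \overline{\Omega}$, obtain a maximizer $(y_b, z_b)$ of the defining sup by compactness (using A5 and bounds on $\overline{u}$), and note that since $g_z < 0$ the constraint $g(\cdot, y_b, z_b) \leq \overline{u}$ must be saturated at some $x_b \in \overline{\Omega}$. To show $x_b \in \partial\Omega$, suppose instead $x_b \in \Omega$; then $g(\cdot, y_b, z_b)$ is a $g$-support of $\overline{u}$ at $x_b$, forcing $(y_b, z_b) = (Y\overline{u}(x_b), Z\overline{u}(x_b))$. The admissible family $\xi \mapsto (Y\overline{u}(\xi), Z\overline{u}(\xi))$ parametrised by interior touching points is smooth, and first-order optimality of $\xi \mapsto g(x, Y\overline{u}(\xi), Z\overline{u}(\xi))$ at $\xi = x_b$ -- after differentiating and substituting the identity $g_y DY\overline{u} + g_z DZ\overline{u} = 0$ at $x_b$ -- reduces to
\[
\frac{g_y}{g_z}(x, y_b, z_b) = \frac{g_y}{g_z}(x_b, y_b, z_b),
\]
which by A1$^*$ forces $x = x_b$, contradicting $x \notin \overline{\Omega}$. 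A symmetric variational argument in $y$ -- using A1 in place of A1$^*$ and the uniform $g^*$-convexity of $\Omega^*$ with respect to $\overline{u}$ to guarantee room for a $g^*$-segment perturbation through $y_b$ maintaining admissibility -- forces $y_b \in \partial\Omega^*$.

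Finally, along the $g$-segment $\{x_\theta\}_{\theta \in [0,1]}$ from $x_b$ to $x$ with respect to $y_b, z_b$, the function $g(\cdot, y_b, z_b)$ is admissible and equals $u_1$ at both endpoints; Lemma \ref{lem:r1:geometric} applied on $\overline{\Omega}$ rules out any competing $g$-affine passing strictly above $g(\cdot, y_b, z_b)$ at an intermediate $x_\theta$, yielding $u_1(x_\theta) = g(x_\theta, y_b, z_b)$ and $Yu_1(x_\theta) = y_b$ for $\theta \in (0,1]$. Uniqueness of $(x_b, y_b)$ comes from the uniform convexities, which prevent two distinct boundary pairs from both being optimal at the same $x$. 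For the diffeomorphism claim, the pair $(x_b, y_b)$ associated with $x \in \partial\Omega_r$ is determined by the smooth system
\[
g(x_b, y_b, z_b) = \overline{u}(x_b), \qquad g_x(x_b, y_b, z_b) - D\overline{u}(x_b) \parallel \gamma_0(x_b),
\]
together with $x_b \in \partial\Omega$, $y_b \in \partial\Omega^*$ (boundary defining functions), and the requirement that the $g$-segment from $x_b$ reaches $x$ at $\theta = 1$. The Jacobian of this system is nondegenerate by the uniform $g$- and $g^*$-convexity hypotheses, so the implicit function theorem -- in conjunction with $C^4$ boundary smoothness and $\overline{u} \in C^2(\overline{\Omega})$ -- yields the asserted $C^2$ diffeomorphisms. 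The main obstacle is carrying out the dual variational argument in the second paragraph rigorously: ensuring perturbations of $y$ preserve the global admissibility $g(\cdot, y, z(y)) \leq \overline{u}$ on $\overline{\Omega}$ requires delicate use of the uniform $g^*$-convexity of $\Omega^*$, as a naive $g^*$-segment perturbation may violate the constraint at points in $\Omega$ away from $x_b$.
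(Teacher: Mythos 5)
Your approach is genuinely different from the paper's and contains a real gap, which you yourself flag. The paper does not analyze the maximizer of the sup defining $u_1$ via first-order conditions; instead it \emph{constructs} the extension the other way round. It starts from an arbitrary $y_b\in\partial\Omega^*$, decreases $z$ until $g(\cdot,y_b,z)$ first touches $\overline{u}$, and observes that the touching point $x_b$ must be on $\partial\Omega$ simply because $y_b\notin Y\overline{u}(\Omega)=\omega^*\subset\subset\Omega^*$ (so $g(\cdot,y_b,z)$ cannot be a support at an interior point). Lemma \ref{lem:r1:geometric} then gives strict separation $g(\cdot,y_b,z_b)<\overline{u}$ on $\overline{\Omega}\setminus\{x_b\}$ and, applied in a small exterior ball, shows that among the $g^*$-segment family $y_\theta$ the endpoint $y_1\in\partial\Omega^*$ dominates. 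The crucial fact that the maximizer in $\Omega_\delta\setminus\overline{\Omega}$ always has $Yu_1(x)\subset\partial\Omega^*$, and that the contact is at a \emph{unique} boundary point, is obtained not by a variational argument in $y$ but by passing to the $g^*$-transform $v_1$, showing $Xv_1(\Omega^*)=\overline{\Omega}$, and applying the dual version of Lemma \ref{lem:r1:geometric} to the shifted support of $v_1$. This global duality step is precisely what replaces the ``dual variational argument'' you identify as the obstacle.

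Two concrete problems with your version. First, the variational argument in $y$ does not go through as stated: to perturb $y$ near $y_b$ while preserving admissibility you must take $z(y)$ to be the $g^*$-transform of $\overline{u}$, and first-order optimality of $y\mapsto g(x,y,z(y))$ requires $z(\cdot)$ to be differentiable at $y_b$, i.e.\ that the touching point $x_b$ be unique. But uniqueness of $x_b$ is exactly what one is trying to establish, so the argument is circular unless you independently show $Xv_1(y_b)$ is a singleton --- which is what the paper's $g^*$-transform step accomplishes. Second, the claim that ``Lemma \ref{lem:r1:geometric} applied on $\overline{\Omega}$ rules out any competing $g$-affine passing strictly above $g(\cdot,y_b,z_b)$ at an intermediate $x_\theta$'' mis-cites the tool: $x_\theta\notin\overline{\Omega}$ so Lemma \ref{lem:r1:geometric} does not apply there. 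The equality $u_1(x_\theta)=g(x_\theta,y_b,z_b)$ is instead a consequence of Theorem \ref{thm:g:gconvsection} (the $g$-convexity of contact sets), or directly of the differential inequality in Lemma \ref{lem:g:maindiffineq} applied to $h(\theta)=g(x_\theta,y',z')-g(x_\theta,y_b,z_b)$ for any competing admissible $g$-affine, which rules out a positive interior maximum of $h$. Your primal first-order argument for $x_b\in\partial\Omega$ (using $g_y\,DY\overline{u}+g_z\,DZ\overline{u}=0$ and A1$^*$) is correct, and is a nice alternative to the paper's observation that $y_b\notin\omega^*$; but on its own it does not resolve the dual side.
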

\begin{proof}
  To begin take $y_b \in \partial \Omega^*$. Consider $g_\delta(\cdot):=g(\cdot,y_b,\delta)$ for $\delta$ initially large then decrease $\delta$ until $g_\delta$ first touches $\overline{u}$ from below at some $x_b\in \overline{\Omega} $. Because $y_b \in \partial \Omega^*$ and $\omega^*:=Y\overline{u}(\Omega) \subset\subset \Omega^*$ we have $y_b \not\in Y\overline{u}(\Omega)$ so necessarily $x_b \in \partial \Omega.$ Put $z_b=g^*(x_b,y_b,\overline{u}(x_b))$.

  By construction $g(\cdot,y_b,z_b) \leq u(\cdot)$ in $\Omega$ with equality at $x_b$. Thus
  \begin{equation}
    \label{eq:r1:ybdef}
       g_x(x_b,y_b,z_b) = D\overline{u}(x_b) + s\gamma,
  \end{equation}
  for $\gamma$ the outer unit normal to $\partial \Omega$ at $x_b$ and some $s > 0$. Here $s >0$ because $y_b \not\in Y\overline{u}(\Omega)$ implies $g_x(x_b,y_b,z_b) \neq D\overline{u}(x_b)$. Then Lemma \ref{lem:r1:geometric} implies
  \begin{equation}
    \label{eq:r1:injective}
      g(x,y_b,z_b) < \overline{u} \text{ in }\overline{\Omega} \setminus \{x_b\}.
  \end{equation}
  In addition by \eqref{eq:r1:ybdef}
  \[ y_b = Y(x_b,\overline{u}(x_b),D\overline{u}(x_b)+s\gamma).\]
  We let $\{y_\theta\}_{\theta \in [0,1]}$ denote the $g^*$-segment defined by
  \begin{equation}
    \label{eq:r1:y-theta}
       y_\theta = Y(x_b,\overline{u}(x_b),D\overline{u}(x_b)+\theta s \gamma).
  \end{equation}
  That is, $\{y_\theta\}_{\theta \in [0,1]}$ joins $y_0:=Y\overline{u}(x_b) \in \partial \omega^*$ to $y_1:= y_b \in \partial \Omega^*.$

  Note by the uniform $g$-convexity of $\overline{u}$  and the uniform $g^*$-convexity of $\Omega^*$ the $g^*$-segment $\{y_\theta\}_{\theta \in [0,1]}$ intersects $\omega^*$ only at $y_0$ and $\Omega^*$ only at $y_1.$ Thus the map taking $x \in \partial \Omega$ to the unique $y \in \partial \Omega^*$ at which the $g^*$-segment $Y(x,\overline{u}(x),D\overline{u}(x)+\theta\gamma(x))$ intersects $\Omega^*$ is a bijection from $\partial \Omega$ to $\partial \Omega^*$ (injective by  \eqref{eq:r1:injective}). We call this map $T$, so using the above notation  $T(x_b) = y_b$. Note $T$ is $C^2$ because the boundaries are $C^2$ and $\det DY_p \neq 0$ (i.e we solve for $T$ and obtain the differentiability by the implicit function theorem).

  Now fix $x_b \in \partial \Omega$. Consider a small exterior ball $B$ to $\Omega$ at $x_b$, that is $B\subset\Omega^c$ and $\overline{B} \cap \overline{\Omega} = x_b$.  Let $\tilde{\gamma}$ denote the outer normal to this ball at $x_b$, explicitly $\tilde{\gamma}(x_b) = -\gamma(x_b)$. Then  \eqref{eq:r1:y-theta} implies
  \[ g_x(x_b,y_1,z_1) - g_x(x_b,y_\theta,z_\theta) = -s(1-\theta)\overline{\gamma},\]
  for $z_\theta = g^*(x_b,y_\theta,u(x_b))$. So by another application of Lemma \ref{lem:r1:geometric}, this time applied to $h(x) = g(x,y_1,z_1)-g(x,y_\theta,z_\theta)$ in the exterior ball,
  \[g(x,y_1,z_1) > g(x,y_\theta,z_\theta) \text{ in }B_r. \]
  Note we must choose the exterior ball so small as to be uniformly convex. Thus provided $\delta$ is chosen small it is equivalent to define
  \begin{align}
\label{eq:r1:u1-bound-def}     u_1(x) =
    \begin{cases}
      \overline{u}(x) &\text{ in }\overline{\Omega}\\
      \sup\{g(x,y_b,z_b); y_b = T(x_b) \in \partial \Omega^* \text{ for }x_b \in \partial \Omega\} &\text{ in }\Omega_\delta\setminus\overline{\Omega}.  
    \end{cases}   
  \end{align}

  This setup lets us prove the second part of the theorem. We begin by showing $u_1$ has a unique support at each point in $\Omega^\delta\setminus \Omega$. We've already shown for all $x \in \Omega_\delta \setminus \overline{\Omega}$ that $Yu(x) \subset \partial \Omega^*$. To show the support is unique we consider the $g^*$-transform 
  \[ v_1(y ) = \sup_{x \in \Omega^\delta} g^*(x,y,u_1(x)). \]
In the first part of the proof we've shown if $y \in \Omega^*$ and $y \in Yu(x)$ then $x \in \overline{\Omega}$ (i.e if $Yu(x) \notin\partial \Omega^*$ then $x \notin \Omega_\delta \setminus \overline{\Omega}$). Recalling $x \in Xv_1(y)$ implies $y \in Yu_1(x)$ we have $Xv_1(\Omega^*) = \overline{\Omega}$. Subsequently if $x \in \Omega_\delta \setminus \overline{\Omega}$ and, as before, we shift up the support $g^*(\cdot,x,u)$ until it first touches $v_1$ from below, necessarily  at $y \in \partial \Omega^*$ with $u=g(x,y,v_1(y))$. Then by the same argument as \eqref{eq:r1:ybdef} we have $x = X(y,v_1(y),Dv_1(y) + s\gamma^*)$ for $\gamma^*$ the outer normal to $\Omega^*$ at $y$. Applying the dual version of Lemma \ref{lem:r1:geometric} we have $v_1 >g^*(\cdot,x,u) $ on $\overline{\Omega^*} \setminus \{y\}$. This yields the uniqueness of the support at $x$. Since $\Omega_\delta$ is also uniformly $g$-convex with respect to $u_1$ provided $\delta$ is sufficiently small we can argue as before that the map which takes $x \in \partial \Omega_r$ to the above $y \in Yu_1(x) \cap \partial \Omega^*$ is a $C^2$ diffeomorphism. Moreover for the equality along $g$-segments note if $x \in \Omega_\delta \setminus \Omega$ and $y \in Yu(x)$ then also $y \in Yu(x_b)$ for some $x_b \in \partial \Omega$. Equality along the $g$-segment follows from the $g$-convexity of contact sets (Theorem \ref{thm:g:gconvsection}). 
\end{proof}

The function $u_1$ is tangentially uniformly convex in $\Omega_\delta\setminus\Omega$. That is, if $\tau$ is tangential to $\partial \Omega^r$ we have
\begin{equation}
  \label{eq:r1:tang-unif}
  [D_{ij}u_1-A_{ij}(\cdot,u_1,Du_1)]\tau_i\tau_j \geq \lambda_0.
\end{equation}
Indeed, to see this note we have the tangential uniform convexity on $\partial \Omega$ by the same property for $\overline{u}$, and this extends to a small neighbourhood of $\partial \Omega$ by considering the diffeomorphisms in Lemma \ref{lem:r1:extens-property}. In particular $A_{ij}$ depends on $x,Yu(x),Zu(x)$ where by the previous lemma $Yu(x) = Yu(x_b)$ similarly for $Zu$. 

To obtain the same in the normal direction to $\partial \Omega_r$ we let $d(x) = \text{dist}(x,\partial \Omega)$ and set
\begin{align*}
  u_0(x) :=
  \begin{cases}
    u_1(x) \text{ for }x \in \Omega,\\
    u_1(x)+td(x)^2 \text{ for }x \in\Omega_\delta\setminus\Omega. 
  \end{cases}
\end{align*}
Provided $\delta$ is taken sufficiently small (depending on the $C^2$ norm of $\partial \Omega$) we can ensure for $x \in \partial \Omega_r$
\[ D_{ij}[d(x)^2]\gamma_i\gamma_j \geq \lambda_0 > 0\]
for $\gamma$ the outer normal to $\partial \Omega_r.$
For a choice of $t$ large then $\delta$ small we can ensure $Du_0,u_0$ are as close to $Du_1,u_1$ in $\Omega_\delta \setminus \overline{\Omega}$ as desired. Subsequently $\Omega_\delta^*:=Yu_0(\Omega_\delta)$ is a small perturbation of $\Omega^* = Yu_1(\Omega_\delta)$. That is, $\Omega_\delta^* \rightarrow \Omega^*$ in the Hausdorff distance. Moreover for any $K>0$ by choosing $t$ sufficiently large we can ensure that in $\Omega_\delta \setminus \overline{\Omega}$ there holds
\[ [D_{ij}u_1-A_{ij}(\cdot,u_1,Du_1)]\gamma_i\gamma_j \geq K.\]
In combination with \eqref{eq:r1:tang-unif} we obtain the uniform $g$-convexity 

\begin{equation}
  \label{eq:r2:0-unif}
   [D^2u_0 - A_{ij}(\cdot,u_0,Du_0)]\xi_i\xi_j \geq \lambda_0, 
\end{equation}
in $\Omega_\delta \setminus \partial \Omega$. 

Now for $\rho_\epsilon$ the standard mollifier and $\epsilon < < \delta$ we define $u_\epsilon$ on $\Omega_{\delta}$\footnote{For the mollification to be defined on $\Omega_\delta$ we  assume our earlier extension was to $\Omega_{2\delta}$} by
\[ u_\epsilon(x) = \int_{\mathbf{R^n}}u(x)\rho_{\epsilon}(x-y) \ dy.\]

Let's conclude the proof of Theorem \ref{thm:r1:hom-start} by showing $u_\epsilon$ is the desired function. 
\begin{proof}[Proof. (Theorem \ref{thm:r1:hom-start})]
  The convergence claim, $\lim_{\epsilon \rightarrow 0}u_\epsilon = \overline{u}$ in $\Omega$, is immediate because, in $\Omega$,  $u_\epsilon$ is the mollification of  $\overline{u}$. Moreover because in $\Omega_\delta/\partial \Omega$ we have
  \[ u_\epsilon \rightarrow u_0, Du_\epsilon \rightarrow Du_0,\] the set $\Omega^*_{\delta,\epsilon}:=Yu_\epsilon(\Omega_\delta)$ is a small perturbation of $\Omega_\delta^*,$. This proves the stated properties of $\Omega^*_{\delta,\epsilon}$.

  Now we prove the uniform $g$-convexity. We note 
  \begin{align}
    \label{eq:r1:du-eq}    D_iu_\epsilon(x) &= \int_{B_\epsilon(x)} D_iu_0(x-y)\rho_\epsilon(y) \ dy.
  \end{align}

  The corresponding identity for second derivatives relies on the semiconvexity of $u$. Indeed let $E_u$ denote the set where $u_0$ does not have second derivatives in the Aleksandrov sense. We recall the characterisation of the second derivatives of convex (and subsequently semiconvex) functions as signed measures $\mu^{ij}$ \cite[Theorem 6.8]{EvansGariepy15}. Thus
     \begin{align}
   \label{eq:r1:2deriv1}    D_{ii}u_\epsilon(x)  &= \int_{\mathbf{R}^n \cap \{y ; x-y \not\in E_u\}} D_{ii}u_0(x-y)\rho_\epsilon(y) \ dy \\
             \nonumber        &\quad\quad+\int_{\{y; x-y \in E_u\}} \rho_\epsilon(y) \ d\mu^{ii}, \\
    \label{eq:r1:2deriv2}   &\geq \int_{\mathbf{R}^n \cap \{y ; x-y \not\in E_u\}} D_{ii}u_0(x-y)\rho_\epsilon(y) \ dy
  \end{align}
  because $\mu^{ii}$ is a nonnegative measure at singular points.
  
  Now we consider the uniform $g$-convexity in the sets $U_1 = \{x ; d(x) > \epsilon\}$, $U_2 = \{x ; d(x) \in (\epsilon',\epsilon)\}$, $U_3 = \{x ; d(x) < \epsilon'\}$. Here $\epsilon' = (1-\sigma)\epsilon$ for $\sigma$ to be chosen close to one.

  We begin with $U_1$. Note $u_\epsilon \rightarrow u_0$ uniformly on compact subsets of $\Omega_\delta$ and in particular on  $U_1$. Moreover the same is true for $Du_\epsilon \rightarrow Du_0$ on $U_1$ in the sense that for each $\alpha > 0$ there is $\epsilon$ small such that $|Du_\epsilon(x) - Du_0(x)| < \alpha$ for all $x \in U_1$. To see this observe $Du_0|_{\Omega},Du_0|_{\overline{\Omega_\delta} \setminus \overline{\Omega}}$ are uniformly continuous. Inspecting the usual proof of locally uniform convergence for mollifications we see if the function is initially uniform continuous then the convergence is uniform in the above sense. The uniform $g$-convexity follows by the uniform convergence along with \eqref{eq:r2:0-unif}, and \eqref{eq:r1:2deriv2}. 

  Similarly in $U_2$ we use that $|Du_\epsilon(x) - Du_0(x)|$ can be controlled uniformly in $U_2$ in terms of $\epsilon,\sigma$ sufficiently small. Thus, as in $U_1$, via a choice of $\epsilon$ small and $\sigma$ close to $1$ in conjunction with \eqref{eq:r1:2deriv2}, we obtain the uniform $g$-convexity in $U_2$. Importantly the choice of $\sigma$ is independent of $\epsilon.$

  Finally we prove the uniform $g$-convexity in $U_3$. Take $x \in U_3$. Assume $0$ is the closest point to $x$ in $\partial \Omega$ and choose coordinates so $x = |x|e_n$ with $e_1,\dots,e_{n-1}$ tangential to $\partial \Omega$ at $0$. To begin we focus on a tangential direction, without loss of generality assumed to be $e_1$. We aim to show that for some $x_0>0$
  \begin{align}
      D_{11}u_\epsilon(x)-A_{11}(x,u_\epsilon(x),Du_\epsilon(x)) \geq c_0.\label{eq:r1:11deriv1}
  \end{align}
  We have the uniform convergence $u_\epsilon \rightarrow u$. Moreover by the characterization of the extension in Lemma \ref{lem:r1:extens-property} we see the convergence $D_1u_\epsilon \rightarrow D_1u_0$ is uniform in a neighbourhood of $0$. Thus it suffices to prove
  \begin{align}
    D_{11}u_\epsilon(x)-A_{11}(x,u(x),D_1u(x),D'u_\epsilon(x)) \geq c_0, \label{eq:r1:11deriv2}
  \end{align}
  for $D'u_\epsilon= (D_2u_\epsilon,\dots,D_nu_\epsilon)$. The A3w condition implies codimension one convexity, that is $p' \mapsto A_{11}(x,u,p_1,p')$ is convex. Then by Jensen's inequality
  \[A_{11}(x,u(x),D_1u(x),D'u_\epsilon(x)) \leq \int A_{11}(x,u(x),D_1u(x),D'u(y))\rho_\epsilon(x-y). \]
  This along with \eqref{eq:r1:2deriv2} yields \eqref{eq:r1:11deriv2} and subsequently \eqref{eq:r1:11deriv1}. Finally,  for any $K$ large a choice of $\epsilon$ sufficiently small ensures
  \begin{align}
       D_{nn}u_\epsilon(x)-A_{nn}(x,u(x),D_1u(x),D'u_\epsilon(x)) \geq K. \label{eq:r1:nnderiv}
  \end{align}
  This follows by \eqref{eq:r1:2deriv1}. Indeed $\omega^* \subset\subset \Omega^*$ implies $D_nu$ has a jump discontinuity at $0$. Thus $\mu^{nn} \geq c_0 \delta_0$ for some  $c_0>0$ and $\delta_0$ the dirac measure at $0$. Furthermore $|x-0| < (1-\sigma)\epsilon$ implies $\rho_\epsilon(0) \geq c_n \epsilon^{-n}$. Thus, using \eqref{eq:r1:2deriv1}, the left hand side of \eqref{eq:r1:nnderiv} can be made large by taking $\epsilon$ small. This completes the proof of the uniform convexity.   
\end{proof}

\section{Characterizations of uniformly convex domains }
\label{sec:char-unif-conv}

We conclude this chapter by showing uniform $g/g^*$ convexity of $\Omega/\Omega^*$ implies the existence of defining functions satisfying certain differential inequalities. These computations are well known, however we relied on the details provided by Kitagawa \cite{Kitagawa12}.  
\begin{lemma}\label{lem:r1:phi-def}\index[notation]{$\phi$, \ \ a particular defining function for $\Omega$}
  Assume $u:\overline{\Omega}\rightarrow \mathbf{R}$ is a uniformly $g$-convex function. Assume $\Omega$ is uniformly $g$-convex with respect to $u$. There is a defining function $\phi$ for $\Omega$, that is a function $\phi$ satisfying $ \Omega = \{ \phi < 0\},$ and $D\phi = \gamma$ the outer unit normal on $\partial \Omega$, such that some $c_0 > 0$
  \[D_{ij}\phi - D_{p_k}A_{ij}(x,u,Du)D_k\phi \geq c_0I,\]
  in a neighbourhood $\Omega^\epsilon = \{x \in \overline{\Omega}; \text{dist}(x,\partial \Omega) < \epsilon\}.$ Here $c_0,\epsilon$ depend on $g,\Omega,Yu(\overline{\Omega}),$ and $Zu(\overline{\Omega}).$
\end{lemma}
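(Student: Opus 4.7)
The inequality to establish is morally the infinitesimal form of the uniform $g$-convexity of $\partial\Omega$ with respect to $u$: if we quotient by the transformation $q_{y,z}(x) = -g_y/g_z(x,y,z)$ that witnesses $g$-convexity of a domain, the condition becomes standard uniform convexity of the image. My strategy is therefore to build $\phi$ locally by pulling back a uniformly convex defining function through $q_{Yu(x_0), Zu(x_0)}$ at each boundary point, then patch and thicken.

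\textbf{Step 1 (local construction).} Fix $x_0\in\partial\Omega$ and set $y_0 := Yu(x_0)$, $z_0 := Zu(x_0)$. Write $q(x) := -g_y/g_z(x,y_0,z_0)$, so $q$ is a $C^3$ diffeomorphism on a neighbourhood of $x_0$. By hypothesis $q(\Omega)$ is uniformly convex near $q(x_0)$, so there is a $C^2$ defining function $\tilde\phi$ of $q(\Omega)$ with $D^2_{qq}\tilde\phi \geq 2c_0 I$ and $D_q\tilde\phi$ a positive multiple of the outer unit normal of $q(\Omega)$; take $\tilde\phi$ to be the Euclidean signed distance to $\partial q(\Omega)$ plus $C|q-q_0|^2$ if desired. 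Define $\phi_{x_0}(x) := \lambda\,\tilde\phi(q(x))$, where $\lambda>0$ is chosen so that $D\phi_{x_0}(x_0)=\gamma(x_0)$, the outer unit normal to $\partial\Omega$ at $x_0$.

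\textbf{Step 2 (the key identity).} The point is that pulling back via $q$ and then rewriting in the form $D_{ij}\phi - D_{p_k}A_{ij}(x,u,Du)D_k\phi$ eliminates all unwanted lower-order terms. Using the chain rule and the identity
\[
D_{x_j}(E_{ab}) = -\frac{g_{a,z}}{g_z}E_{jb} + E_{l,b}D_{p_l}g_{aj},
\]
derived as equation \eqref{eq:g:3invderiv} in the proof of Lemma \ref{lem:g:maindiffineq}, a direct computation at $x_0$ yields
\[
D_{ij}\phi_{x_0}(x_0) - D_{p_k}A_{ij}(x_0,u(x_0),Du(x_0))D_k\phi_{x_0}(x_0) = \mu\, D_{q_kq_l}\tilde\phi(q(x_0))\,q_{k,i}\,q_{l,j},
\]
for some positive scalar $\mu$ depending only on $g_z$ and $\lambda$; the $A_{ij}$-correction cancels precisely the Christoffel-type terms coming from $q$ being nonlinear. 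Hence the matrix on the left is bounded below by $c_0 I$ at $x_0$, provided $\lambda$ was taken suitably. By continuity, the bound persists, with constant $c_0/2$, on a neighbourhood $N_{x_0}$ of $x_0$.

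\textbf{Step 3 (patching and thickening).} Cover $\partial\Omega$ by finitely many such neighbourhoods $N_{x_0^{(k)}}$ and glue the local functions $\phi_{x_0^{(k)}}$ via a partition of unity subordinate to the collar $\Omega^{\epsilon_0}$, with the values $\phi_{x_0^{(k)}}=0$ and $D\phi_{x_0^{(k)}}=\gamma$ on $\partial\Omega$ matching so the glued function $\phi_0$ is a genuine $C^2$ defining function whose restriction to the normal bundle of $\partial\Omega$ is the signed distance to first order. The matrix inequality holds on $\partial\Omega$ for tangential directions and, for the normal direction, only up to a bounded error. Replacing $\phi_0$ by $\phi := \phi_0 + K\phi_0^2$ with $K$ large boosts the normal eigenvalue by $2K$ at $\partial\Omega$ without affecting tangential components, so for $K$ large enough $\phi$ satisfies $D_{ij}\phi - D_{p_k}A_{ij}(x,u,Du)D_k\phi \geq c_0 I$ on $\partial\Omega$. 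Continuity gives the same estimate with constant $c_0/2$ in a smaller tubular neighbourhood $\Omega^\epsilon$, completing the construction.

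\textbf{Main obstacle.} The delicate point is Step 2, the verification that the augmented Hessian $D_{ij}\phi-D_{p_k}A_{ij}D_k\phi$ transforms cleanly under the $q$-substitution so that the error terms cancel, leaving only the Euclidean Hessian of $\tilde\phi$. All remaining pieces (partition of unity, thickening via $K\phi_0^2$) are routine.
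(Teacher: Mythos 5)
Your overall architecture — tangential directions inherited from the Euclidean convexity of $q(\Omega)$, then a normal boost — is the right one and matches the paper's. But the displayed identity in Step~2 is wrong as a matrix identity, and the proposal is internally inconsistent as a result: if Step~2 held as written, and $D^2\tilde\phi \geq 2c_0 I$ in all directions, you would already have the full matrix bound at $x_0$ and would never need the $K\phi_0^2$ boost in Step~3, yet Step~3 concedes that only tangential directions are controlled on $\partial\Omega$.

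Here is where the computation actually lands. Writing $q_{m,i} = -E_{im}/g_z$ and using the identity \eqref{eq:g:3invderiv}, one finds
\[
q_{m,ij} - q_{m,k}\,D_{p_k}A_{ij}(x_0,u(x_0),Du(x_0))
= \frac{g_{i,z}}{g_z^2}E_{jm} + \frac{g_{j,z}}{g_z^2}E_{im},
\]
so
\[
D_{ij}\phi_{x_0} - D_{p_k}A_{ij}D_k\phi_{x_0}
= \lambda\,\tilde\phi_{ml}\,q_{m,i}q_{l,j}
  \;-\;\frac{1}{g_z}\bigl(g_{i,z}D_j\phi_{x_0} + g_{j,z}D_i\phi_{x_0}\bigr).
\]
The second, leftover term is a symmetrised rank-two matrix $a_i b_j + a_j b_i$; in general it has a genuinely negative eigenvalue (of order $|a||b| - a\cdot b$), and it is \emph{not} of the form $\tilde\phi_{kl}q_{k,i}q_{l,j}$. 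What is true — and this is what Lemma~\ref{lem:r1:phi-def} in fact uses — is that since $D\phi_{x_0}(x_0) = \gamma(x_0)$, the leftover term annihilates any tangent vector $\tau$ to $\partial\Omega$ (because $\gamma\cdot\tau = 0$), so the desired bound holds only after contracting with $\tau_i\tau_j$. That is precisely \eqref{eq:r1:outer-norm}, and the paper then, exactly as your Step~3, recovers the full matrix inequality by decomposing $\xi = \tau + a\gamma$ and letting the $K d^2$ term supply the missing normal positivity. So the gap is in the statement of Step~2, not in the shape of the argument.

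Two further comments. First, your $D\phi_{x_0}=\gamma$ is only arranged at the single point $x_0$, whereas the lemma needs $D\phi=\gamma$ on all of $\partial\Omega$; the paper bypasses both the patching and the pointwise normalisation in one stroke by taking $\phi = Kd(x)^2 - d(x)$ with $d$ the signed distance to $\partial\Omega$ (positive inside), since $Dd = -\gamma$ automatically along the whole boundary and $d$ is globally $C^2$ in a collar. That makes the partition-of-unity gluing unnecessary — which is worth adopting, since otherwise you also need to track the cross-terms $D_i\chi_k\,D_j\phi_k$ from the cutoffs. Second, the paper derives the tangential identity by working with the inverse map $X(y,z,q)$ rather than the forward map $q(x)$, via \eqref{eq:r1:qq-ident}; the two routes are equivalent and yours is fine once corrected, but you should check the identity you quote against \eqref{eq:r1:qq-ident} to see exactly which terms survive.
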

\begin{proof}
  Our goal is an estimate of the form
  \begin{equation}
    \label{eq:r1:phi-full}
       [D_{ij}\phi - D_{p_k}A_{ij}(x,u,Du)D_k\phi]\xi_i\xi_j \geq c_0,
  \end{equation}
  for unit vectors $\xi$. We take the defining function $\phi = K d(x)^2 - d(x)$ where $d(x)$ is the signed distance function to $\partial \Omega$ (positive in $\Omega$) and $K$ will be chosen large. Note $D\phi = \gamma$. First we prove that on $\partial \Omega$
  \begin{align}
  \label{eq:r1:outer-norm}
   [D_{ij}\phi - D_{p_k}A_{ij}&(x,u,Du)D_k\phi]\tau_i\tau_j  \\
  \nonumber  &=[D_i \gamma_j-A_{ij,p_k}(x,u,p)\gamma_k]\tau_i\tau_j \geq c_0|\tau|^2,
  \end{align}
  for tangent vectors $\tau$. Let $(y,z) = (Yu(x),Zu(x))$ for some $x \in \overline{\Omega}$. Then
  \[ \Omega_{y,z}:= \{q ; X(y,z,q) \in \Omega\}\]
  is a uniformly convex domain. Thus the function $q \mapsto \phi(X(y,z,q))$ is a defining function for the uniformly convex domain $\Omega_{y,z}$. Then if $q_0 \in \partial \Omega_{y,z}$ is given and $\tau$ is tangential to $\partial \Omega_{y,z}$ at $q_0$ there holds
  \[ D_{q_kq_l}(\phi(X(y,z,q))|_{q=q_0}\tau_k\tau_l \geq c_0|\tau|^2.\]
  Computing the derivative yields
  \begin{equation}
    \label{eq:r1:to-sub}
    [\phi_{ij}D_{q_k}X^iD_{q_l}X^j+\phi_iD_{q_kq_l}X^i]\tau_k\tau_l \geq c_0|\tau|^2.
  \end{equation}
  Note $D_{q_k}X\tau_k$ is tangential to $\partial \Omega$ at $X(y,z,q_0)$ (and all tangent vectors to $\partial \Omega$ can be expressed in this way).  Note also $D\phi$ is the outer normal to $\partial \Omega$. Then \eqref{eq:r1:outer-norm}  follows by the following calculations in which we compute
  \begin{equation}
    \label{eq:r1:qq-ident}
       D_{q_kq_l}X^\alpha = \frac{g_{j,z}}{g_z}D_{q_k}X^\alpha D_{q_l}X^j + \frac{g_{i,z}}{g_z}D_{q_k}X^iD_{q_l}X^\alpha-D_{p_\alpha}g_{ij}D_{q_k}X^iD_{q_l}X^j. 
     \end{equation}
     We substitute into \eqref{eq:r1:to-sub} and obtain \eqref{eq:r1:outer-norm} by noting  orthogonality implies that $\phi_\alpha D_{q_k}X^\alpha \tau_k = 0$. 

  To prove \eqref{eq:r1:qq-ident} recall the equation defining $X$, which is
  \begin{align*}
\frac{-g_{y_a}}{g_z}(X(y,z,q),y,z) = -q_a,
  \end{align*}
  and differentiate with respect to $q_k$ to obtain
  \[ \frac{1}{g_z}E_{i,a}D_{q_k}X^i = \delta_{ak}.\]
  Differentiating again yields
  \[ \frac{1}{g_z}E_{ia}D_{q_kq_l}X^i = - D_{x_j}\left(\frac{1}{g_z}E_{ia}\right)D_{q_k}X^iD_{q_l}X^j.\]
  This gives \eqref{eq:r1:qq-ident} by direct calculation. These calculations are straightforward but need an identity from way back, \eqref{eq:g:3invderiv} from Chapter \ref{chap:g}.

  Now we obtain \eqref{eq:r1:phi-full}. The following calculations take place with $D_{p_k}A_{ij}$ replaced by $D_{p_k}g_{ij}(\cdot,Y(\cdot,u,p),Z(\cdot,u,p))|_{Y = y,Z=z}$. We compute
  \begin{align*}
    D_i \phi = 2Kdd_i - d_i\\
    D_{ij}\phi = 2Kdd_{ij} + 2Kd_id_j - d_{ij}
  \end{align*}
  Then on the boundary where $d(x) = 0$
  \begin{align*}
    [D_{ij}\phi - D_{p_k}A_{ij}&(x,u,Du)D_k\phi]\xi_i\xi_j \\
    &= [-d_{ij} + D_{p_k}A_{ij}(x,u,Du)D_kd ]\xi_i\xi_j + 2Kd_id_j\xi_i\xi_j. 
  \end{align*}
  We decompose the vector $\xi = \tau + a\gamma$ into a tangential and outer normal component. Then using \eqref{eq:r1:outer-norm} and $d_i\tau_i = 0,d_i\gamma_i = 1$
  \begin{align*}
    [D_{ij}\phi - D_{p_k}g_{ij}(x,y,z)D_k\phi]\xi_i\xi_j &\geq 2[D_{ij}\phi - D_{p_k}A_{ij}(x,u,Du)D_k\phi]\tau_i(a\gamma_j)
                        \\& +c_0|\tau|^2 +(2K-C)a^2.
  \end{align*}
  A standard application of Cauchy's inequality (with epsilon) yields
  \[ [D_{ij}\phi - D_{p_k}A_{ij}(x,u,Du)D_k\phi]\tau_i(a\gamma_i) \geq -\frac{c_0}{2}|\tau|^2- C a^2,\]
 where $C$ depends on $c_0$. So by a choice of $K$ large
  \[ [D_{ij}\phi - D_{p_k}g_{ij}(x,y,z)D_k\phi]\xi_i\xi_j \geq \frac{c_0}{2}(|\tau|^2+a^2) \geq c_0|\xi|^2.  \]
  Now we've proved the estimate for $x \in \partial \Omega, y \in Yu(\overline{\Omega}),$ and $z \in Zu(\overline{\Omega})$. Continuity allows us to extend this estimate to a neighbourhood of $\partial \Omega$.  
\end{proof}

The corresponding result for $\Omega^*$ is not as simple as inserting $Yu(x)$ into a defining function for $\Omega^*$. Indeed for solutions of \eqref{eq:g:gje} subject to \eqref{eq:g:2bvp} and $\Omega_\delta^*$ a neighbourhood of $\partial \Omega^*$ we cannot choose, independently of $u$, a corresponding neighbourhood of $\partial \Omega$ on which $Yu(x) \in \Omega_\delta^*$. We deal with this via the following lemma.

\begin{lemma} \label{lem:r1:g-def}\index[notation]{$G(x,u,p)$, \ \  function for oblique boundary condition}\index[notation]{$\phi^*$, \ \ a particular defining function for $\Omega^*$}
  Assume $u:\overline{\Omega} \rightarrow \mathbf{R}$ is a uniformly $g$-convex function which satisfies $Yu(\Omega) = \Omega^*$. Suppose $\Omega^*$ is uniformly $g^*$-convex with respect to $u$. There is a a defining function $\phi^*$ for $\Omega^*$ such that
  \[ G(x,u,p) := \phi^*(Y(x,u,p)),\]
  satisfies $D_{p_kp_l}G(x,u,Du) \geq c_0I$ for $x \in \partial \Omega$. Moreover $G$ can be modified to a new function, which agrees with our original on an (unspecified) neighbourhood of $\partial \Omega$, and satisfies
  \begin{align*}
    G(x,u,Du) &< 0 \text{ in }\Omega\\
    G(x,u,Du) &= 0 \text{ on }\partial \Omega\\
    D_{p_kp_l}G(x,u,Du) &\geq c_0I \text{ on } \Omega.
  \end{align*}
\end{lemma}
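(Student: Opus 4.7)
The plan is to mirror the construction in Lemma \ref{lem:r1:phi-def} but in the dual setting, and then to handle the transition from local (near $\partial \Omega$) to global (on $\Omega$) $p$-convexity by an explicit cutoff.

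First, I would take $\phi^*(y) := K^*\, d^*(y)^2 - d^*(y)$, where $d^*$ is the signed distance to $\partial \Omega^*$ (positive inside $\Omega^*$, extended smoothly to $\mathbf{R}^n$), so that $\phi^*$ is a defining function for $\Omega^*$ with $D\phi^* = \gamma^*$, the outer unit normal, on $\partial \Omega^*$. Set $G_0(x,u,p) := \phi^*(Y(x,u,p))$. For $x \in \partial \Omega$, so that $Y(x,u(x),Du(x)) = Yu(x) \in \partial \Omega^*$, the chain rule gives
\[ D_{p_k p_l} G_0 = \phi^*_{y_a y_b}(Y)\, Y^a_{p_k} Y^b_{p_l} + \phi^*_{y_a}(Y)\, Y^a_{p_k p_l}, \]
and I would use the identity $Y^a_{p_k} = g_z E^{k,a}$ from \eqref{eq:g:e-yp}, together with a second differentiation of the defining relations \eqref{eq:g:yzdef1}, \eqref{eq:g:yzdef2}, to obtain an expression for $Y^a_{p_k p_l}$ analogous to \eqref{eq:r1:qq-ident}. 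The uniform $g^*$-convexity of $\Omega^*$ with respect to $u$ is precisely the statement that for each $x \in \overline{\Omega}$ the set $\Omega^*_x := g_x(x, \Omega^*, g^*(x,\Omega^*,u(x)))$, i.e.\ the preimage of $\Omega^*$ under $p \mapsto Y(x,u(x),p)$, is uniformly convex in $p$-space. Applied at $x \in \partial \Omega$ this yields tangential positive definiteness of $D_{p_kp_l} G_0$, and taking $K^*$ large recovers positivity in the normal direction exactly as in the proof of Lemma \ref{lem:r1:phi-def}. Continuity then extends $D_{pp} G_0 \geq c_0 I$, evaluated at $p = Du(x)$, to a neighbourhood $W$ of $\partial \Omega$.

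For the global modification, observe that on $\Omega \setminus W'$, for some smaller neighbourhood $W' \subset\subset W$ of $\partial \Omega$, the point $Yu(x)$ lies in a compact subset of $\Omega^*$, so $G_0(x,u(x),Du(x)) \leq -\delta_0$ for some $\delta_0 > 0$. I would take a cutoff $\eta \in C^\infty(\overline{\Omega})$ with $\eta \equiv 1$ on a neighbourhood of $\partial \Omega$ contained in $W'$ and $\eta \equiv 0$ outside $W$, and define
\[ G(x,u,p) := G_0(x,u,p) + \lambda (1-\eta(x))\bigl(|p|^2 - N\bigr), \]
with $\lambda, N$ to be chosen large. The added term vanishes on a neighbourhood of $\partial \Omega$ (giving the required agreement), and since $\sup_{\overline{\Omega}}|Du|^2 < \infty$ a choice of $N$ large enough keeps $|Du(x)|^2 - N < 0$, so the sign condition $G(x,u,Du) < 0$ is preserved in $\Omega \setminus W$ and trivially inside $W$. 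The Hessian decomposes as $D_{pp} G = D_{pp} G_0 + 2\lambda(1-\eta) I$: on $W$ this is bounded below by $c_0 I$ by the local estimate, while on $\Omega \setminus W$ a choice of $\lambda$ larger than $\tfrac{1}{2}\sup |D_{pp} G_0|$ makes the $2\lambda I$ contribution dominate, again giving $\geq c_0 I$. The boundary identity $G(x,u,Du) = 0$ on $\partial \Omega$ is inherited directly from $\phi^*|_{\partial \Omega^*} = 0$ and $Yu(\partial \Omega) \subset \partial \Omega^*$.

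The main obstacle is step one: the chain-rule expansion of $D_{pp} G_0$ and the extraction of positive definiteness from the geometric uniform $g^*$-convexity hypothesis. The extra first-order term $\phi^*_{y_a} Y^a_{p_k p_l}$ does not come from $\phi^*_{y_a y_b}$ and must be absorbed into the dominant tangential positive-definiteness by taking $K^*$ large, paralleling the orthogonality argument that follows \eqref{eq:r1:qq-ident} in the proof of Lemma \ref{lem:r1:phi-def}. Once this local estimate is in hand, the cutoff modification is routine.
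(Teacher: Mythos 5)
Your local estimate at $\partial\Omega$ matches the paper's argument: uniform $g^*$-convexity of $\Omega^*$ with respect to $u$ means the set $\Omega^*_{x,u} := \{p\,;\,Y(x,u(x),p) \in \Omega^*\}$ is uniformly convex in $p$-space, so $p \mapsto \phi^*(Y(x,u(x),p))$ is a defining function for it, and repeating the normal-direction trick of Lemma \ref{lem:r1:phi-def} with $K$ large gives the bound at $p = Du(x)$ for $x \in \partial\Omega$ (a small point: $Y^a_{p_k} = E^{a,k}$ by \eqref{eq:g:e-yp}, with no $g_z$ factor, but this does not affect the argument).

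The global modification via an $x$-space cutoff, however, has a genuine gap. You assert that ``continuity then extends $D_{pp}G_0 \geq c_0 I$, evaluated at $p = Du(x)$, to a neighbourhood $W$ of $\partial\Omega$'', and later that on $\Omega\setminus W'$ the point $Yu(x)$ lies in a fixed compact subset of $\Omega^*$. Both claims implicitly control how quickly $Yu(x)$ leaves a $\delta$-neighbourhood of $\partial\Omega^*$ as $x$ moves into $\Omega$. But $DYu = E^{-1}[D^2u - A(\cdot,u,Du)]$, so that rate of escape is governed precisely by $|D^2u|$ --- the quantity the defining function is being constructed to bound. In an a priori estimate, the neighbourhood $W$, and with it $\eta$, $\lambda$, $N$, and therefore $c_0$, all acquire a hidden dependence on second derivatives of $u$, which makes the barrier arguments of Chapter \ref{chap:r2} circular. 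The paper flags exactly this obstruction in the paragraph preceding the lemma: ``we cannot choose, independently of $u$, a corresponding neighbourhood of $\partial\Omega$ on which $Yu(x) \in \Omega_\delta^*$.''

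The paper's remedy is to trigger the switchover on the position of $Y(x,u,p)$ relative to $\partial\Omega^*$ rather than on the position of $x$ relative to $\partial\Omega$: one takes $G := \max\{\phi^*(Y(x,u,p)),\, a(|p|^2 - K_1)\}$ (then mollifies), with $a,K_1$ tuned so that the second branch is active exactly when $\operatorname{dist}(Y(x,u,p),\partial\Omega^*) \gtrsim \delta$. Since $\phi^*(Y(x,u,p))$ is a fixed function of $(x,u,p)$, the region where the local estimate on $D_{pp}\phi^*\circ Y$ could fail is automatically the region where the $a|p|^2$ branch dominates, and no $u$-dependent neighbourhood ever enters. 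You could repair your construction by replacing $\eta(x)$ with a cutoff in $\phi^*(Y(x,u,p))$ itself; as written, the $x$-space cutoff does not yield a $u$-uniform $c_0$.
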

\begin{proof}
  As before we fix the defining function $\phi^*(y) = Kd^2(y) - d(y)$ for $d(y) = \text{dist}(y,\partial \Omega^*)$. Then for each $x\in \overline{\Omega}, u =u(x)$ set
  \[\Omega^*_{x,u}:= \{ p ; Y(x,u,p) \in \Omega^*\},   \]
  and note this set is uniformly convex. In particular since $p \mapsto \phi^*(Y(x,u,p))$ is a defining function for this set we obtain, as before,
  \[ D_{p_kp_l}\phi^*(Y(x,u,p))\tau_k\tau_l \geq c_0|\tau|^2,\]
  for  $\tau$ a unit vector tangential to $\partial \Omega^*_{x,u}$ and $p \in \partial \Omega^*_{x,u}$. 
  Then arguing as in Lemma \ref{lem:r1:phi-def} and by a choice of $K$ sufficiently large we obtain whenever $\xi \in \mathbf{S}^{n-1}$ and $Y(x,u,Du) \in \partial \Omega^*$
  \[ D_{p_kp_l}\phi^*(Y(x,u,p))\xi_k\xi_l \geq c_0.\]
We obtain this estimate with  $c_0/2$ instead of $c_0$ provided $\text{dist}(Y(x,u,Du),\partial \Omega^*) < \delta$ for some small $\delta.$ However, as noted before the proof, this does not imply an estimate in some fixed neighbourhood of $\partial \Omega$.

  To rectify this take $x^\delta$ satisfying $\text{dist}(x^\delta,\partial \Omega) = \delta$, similarly $x^{\delta/2}$. Set $\overline{a} = \phi^*(x^{\delta/2})$ and $\underline{a} = \phi^*(x^\delta)$.  Now define
  \[G(x,u,p) := \max{\{\phi^*(Y(x,u,p)),a(|p|^2 - K_1)\}}.\]
 Where $K_1,a$ are chosen so that $\underline{a} \leq a(|Du|^2-K_1) \leq \overline{a}$. We further modify $G$ so it equals $a(|p|^2 - K_1)$ whenever $(x,u,p)$ is such that $\text{dist}(Y(x,u,p),\partial \Omega^*) > \delta$.  Now because $D_{p_kp_l}\tilde{G}(x,u,Du) \geq \min\{a,c_0\}I$ on the interior of each of the piecewise domains we obtain that the mollified function is also uniformly convex in $p$ and is the desired function.
\end{proof}

\clearpage{}
\clearpage{}\chapter{Global regularity II: $C^2$ estimates and degree theory}
\label{chap:r2}
In this chapter we obtain $C^2$ estimates for solutions of GJEs. These include global estimates for solutions of the Dirichlet and second boundary value problem, as well as interior estimates for strictly $g$-convex solutions. We recall the interior estimates and estimates for the Dirichlet problem were required in Chapter \ref{chap:w}. The estimates for the second boundary value problem allow us to complete the proof of Theorem \ref{thm:r1:main} and conclude the global regularity of Aleksandrov solutions.  

\section{Pogorelov Estimates}
\label{sec:pogorelov-estimates}

In this section we use a technique for obtaining $C^2$ estimates which dates back to Pogorelov. Here is the basic idea behind these so called Pogorelov type estimates.  We consider a ``test function'', $v$, which controls $|D^2u|$ and assume it obtains an interior maximum. At an interior maximum $v$ satisfies $Lv \leq 0$ for an appropriate differential operator. Provided we've chosen $v$ appropriately  we will be able to manipulate this inequality into an estimate on the second derivatives at this point. 

\begin{theorem}\label{thm:r2:pog-global}
  Assume that $u \in C^4(\Omega) \cap C^2(\overline{\Omega})$ is an elliptic solution of
  \begin{equation}
    \label{eq:r2:mate}
      \log \det[D^2u-A(\cdot,u,Du)] = B(\cdot,u,Du) \text{ in }\Omega,
  \end{equation}
  where the $A,B$ are $C^2$, $A$ satisfies A3w, and $B>0$. Suppose there exists a function $\phi$ satisfying \index[notation]{$L$, \ \ differential operator $Lv:=  w^{ij}[D_{ij}v-D_{p_k}A_{ij}D_kv]-D_{p_k}BD_kv $}
  \begin{equation}
    \label{eq:r2:barrier-req}
       L(\phi) := w^{ij}[D_{ij}\phi-D_{p_k}A_{ij}D_k\phi]-D_{p_k}BD_k\phi \geq w^{ii}-C,
  \end{equation}
  where $w = D^2u - A(\cdot,u,Du)$ and $A,B$ terms evaluated at $(\cdot,u,Du)$. Then $u$ satisfies an estimate
  \begin{equation}
    \label{eq:r2:global}
       \sup_{\Omega}|D^2u| \leq C\big(1+\sup_{\partial \Omega}|D^2u|\big),
  \end{equation}
  where $C$ depends on $A,B,\Vert u \Vert_{C^1(\Omega)},\Vert \phi \Vert_{C^0(\Omega)}$.
\end{theorem}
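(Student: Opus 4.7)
The strategy is a classical Pogorelov-type argument: introduce an auxiliary function that majorises the largest eigenvalue of $w = D^2u - A(\cdot,u,Du)$ and is tangentially controlled by data, show that an interior maximum forces a bound on $w$, and invoke equivalence of norms to pass from a bound on $w$ to a bound on $D^2u$.

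First I would reduce to bounding $\sup_{x\in \Omega,\, |\xi|=1} w_{\xi\xi}(x)$, since $D^2u = w + A(\cdot,u,Du)$ and $A$ is already controlled by $\Vert u\Vert_{C^1}$. Following Pogorelov, consider the test function
\[
v(x,\xi) := \log w_{\xi\xi}(x) + \tfrac{\alpha}{2}|Du(x)|^2 + \beta\,\phi(x), \qquad |\xi|=1,
\]
for constants $\alpha,\beta>0$ to be chosen large. If the max of $v$ over $\overline{\Omega}\times \mathbf{S}^{n-1}$ is attained on $\partial\Omega$, we are done via the second term on the right of \eqref{eq:r2:global}. Otherwise fix an interior maximiser $(x_0,\xi_0)$ and rotate coordinates so that $\xi_0 = e_1$ and $w_{ij}(x_0)$ is diagonal. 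Then $D_iv = 0$ and $L v \le 0$ at $x_0$.

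The main computation is to differentiate the equation \eqref{eq:r2:mate} once and twice in the $e_1$ direction. A single differentiation yields $w^{ij}D_1 w_{ij} = D_1 B + D_{p_k}B\, D_1 u_k + (\text{controlled terms in } u, Du)$; a second differentiation, combined with concavity of $\log\det$ in the form $w^{ij}D_{11}w_{ij} \ge -w^{ik}w^{jl}D_1 w_{ij}D_1 w_{kl} + D_{11}B + D_{p_kp_l}B\, u_{1k}u_{1l} + \ldots$, produces the estimate
\[
L(\log w_{11}) \;\ge\; -\,\frac{w^{ij}D_{11}A_{ij}(\cdot,u,Du)}{w_{11}} \;-\; C\bigl(1+w^{ii}\bigr).
\]
The dangerous quadratic is the term $-w^{ij}A_{ij,p_kp_l}u_{1k}u_{1l}/w_{11}$. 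Here A3w (in the form \eqref{eq:g:a3w-equiv}, as exploited in Theorem \ref{thm:r1:lin}) yields
\[
-w^{ij}A_{ij,p_kp_l}u_{1k}u_{1l} \;\ge\; -\,\epsilon w^{ii}|Du_1|^2 \;-\; \tfrac{C}{\epsilon} w^{ij}u_{1i}u_{1j},
\]
and the second term is absorbed using $w^{ij}u_{1i}u_{1j} \le w^{ij}w_{ij}\cdot w_{11} = n\,w_{11}$ plus $|A|$ terms.

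The next step computes $L(\tfrac12|Du|^2)$, which produces $w^{ij}u_{ki}u_{kj} = \sum_k w^{ii}(u_{ki})^2$ (in our diagonalised frame). This is the positive quadratic in second derivatives we need: choosing $\alpha$ large enough, the $\alpha\sum_k w^{ii}(u_{ki})^2$ contribution dominates the $\epsilon w^{ii}|Du_1|^2$ error from A3w. Finally, the barrier assumption \eqref{eq:r2:barrier-req} gives $\beta L\phi \ge \beta w^{ii} - C\beta$; choosing $\beta$ large relative to $\alpha$ absorbs the remaining $-Cw^{ii}$ term. Assembling everything, the inequality $Lv \le 0$ at $x_0$ becomes
\[
0 \;\ge\; Lv \;\ge\; c_0\, w_{11}(x_0) - C,
\]
which bounds $w_{11}(x_0)$ and hence $v(x_0)$, yielding \eqref{eq:r2:global} after comparison with boundary values.

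\textbf{Main obstacle.} The delicate point is matching the constants: the $\epsilon$ chosen in the A3w inequality is pinned by the size of the second-derivative cross-terms $w^{ii}(u_{1i})^2$, so the selection $\alpha \gg 1/\epsilon$ and then $\beta \gg \alpha$ must be done carefully to ensure the final coefficient of $w_{11}(x_0)$ is strictly positive and independent of $u$. The role of the critical point equation $D_i v = 0$ is essential here — it converts $D_i w_{11}/w_{11}$ into linear combinations of $u_{ij}$ and $\phi_i$, which is what allows the concavity term $-w^{ik}w^{jl}D_1 w_{ij}D_1 w_{kl}$ to be handled without a bound on third derivatives.
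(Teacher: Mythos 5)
Your overall architecture matches the paper's: same test function $\log w_{\xi\xi} + \tau|Du|^2/2 + \kappa\phi$, diagonalise $w$ at the interior maximum, differentiate the equation twice along $e_1$, and exploit concavity of $\log\det$. The treatment of $L\phi$ and of the terms involving $D_1w_{ij}$ (the Pogorelov cancellation via $Dv=0$) is also along the correct lines. However, your handling of the single genuinely delicate term — the quadratic $w^{ii}A_{ii,p_kp_l}u_{1k}u_{1l}$ arising from $D_{11}A_{ij}$ — contains a real gap, and it is precisely the step where A3w does its work.

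You propose to estimate this term via the Cauchy--$\epsilon$ form of A3w used in Theorem~\ref{thm:r1:lin}, namely
\[
w^{ij}D_{p_kp_l}A_{ij}\,\eta_k\eta_l \;\ge\; -\epsilon\, w^{ii}|\eta|^2 \;-\; \tfrac{C}{\epsilon}\,w^{ij}\eta_i\eta_j,
\]
with $\eta = Du_1$. That inequality is correct, but with $\eta = Du_1$ it is useless: in the diagonal frame $u_{1k} = w_{11}\delta_{1k} + A_{1k}$, so $|\eta|^2 \sim w_{11}^2$, and after dividing by $w_{11}$ (to pass to $L(\log w_{11})$) the first error term becomes of order $\epsilon\, w^{ii} w_{11}$. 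Neither of the positive terms you have available — $\kappa L\phi \gtrsim \kappa\, w^{ii}$ from the barrier, nor $\tau L(|Du|^2/2) \gtrsim \tau\, w_{ii}$ — can absorb $\epsilon\, w^{ii} w_{11}$. Indeed the PDE forces $w^{ii} \gtrsim w_{11}^{1/(n-1)}$ (since $\det w$ is bounded), so $w^{ii}w_{11}$ grows strictly faster than $w_{11}$ and faster than $w^{ii}$ alone, and no choice of constants $\epsilon, \tau, \kappa$ closes the argument. (The $\epsilon$-form of A3w works in Theorem~\ref{thm:r1:lin} only because there $\eta = D(\overline{u}-u)$ is controlled in $C^0$; here $\eta$ carries second derivatives.) Your side claim $w^{ij}u_{1i}u_{1j}\le n w_{11}$ also fails for the same reason: the off-diagonal contributions $\sum_{i\neq 1}w^{ii}A_{1i}^2$ are of order $w^{ii}$, not bounded by $w_{11}$, and there is also a stray sign in the stated inequality (A3w bounds $+w^{ij}D_{p_kp_l}A_{ij}u_{1k}u_{1l}$ from below, not its negative).

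The correct resolution exploits the structure $u_{1k}=w_{11}\delta_{1k}+A_{1k}$ directly. Expanding the quadratic, the dominant contribution is $w^{ii}A_{ii,p_1p_1}w_{11}^2$, and everything else is of order $w^{ii}+w^{ii}w_{ii}$ (note: no extra factor of $w_{11}$, because you have used $w_{1k}=0$ for $k\neq1$, unlike with the crude Cauchy estimate). Now apply A3w with the orthogonal pair $\xi=e_i$, $\eta=e_1$ for each $i\neq1$ to conclude $A_{ii,p_1p_1}\ge0$; those terms are nonnegative and can simply be dropped. For $i=1$ the factor $w^{11}$ cancels a power of $w_{11}$ and leaves only $A_{11,p_1p_1}w_{11}\ge -Cw_{11}$. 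After dividing by $w_{11}$ this gives
\[
\frac{w^{ii}A_{ii,p_kp_l}u_{1k}u_{1l}}{w_{11}} \;\ge\; -C\bigl(1+w^{ii}\bigr),
\]
which the barrier and $|Du|^2$ terms absorb cleanly with the choice $\tau\ge C+1$, $\kappa\ge C\tau+C+1$. Without this specific use of the orthogonality structure of A3w the proof does not close.
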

\begin{proof}
  For $x \in \Omega$ and $\xi \in \mathbf{S}^{n-1}$ define
 \[ v(x) = \kappa \phi(x) + \tau |Du(x)|^2/2 + \log(w_{\xi\xi}(x)).\]
 Here $w_{\xi\xi} = w_{ij}\xi_i\xi_j$. 
  Since $|D^2u| \leq C\sup_{\xi}e^v$ if  $v$ has its maximum on $\partial \Omega$ then the estimate \eqref{eq:r2:global} is immediate. Otherwise we suppose the maximum of $v$ occurs at $x_0 \in \Omega$ and in  a direction $\xi$ assumed, without loss of generality, to be $e_1$. At an interior maximum $Dv=0$ and $D^2v \leq 0$ so that
  \begin{equation}
    \label{eq:p:globmain}
       0 \geq L v  = \kappa L\phi + \tau L(|Du|^2/2) + L(\log(w_{11})).
  \end{equation}
  We assume after a rotation that $w$ is diagonal and proceed to compute  each term in \eqref{eq:p:globmain}.
  
  \textit{Term 1: $L\phi$.} This one's immediate via our assumption on $\phi$
  \begin{equation}
    \label{eq:p:globest1}
     L \phi \geq w^{ii}-C.
  \end{equation}

  \noindent \textit{Term 2: $L(|Du|^2)$}
  We compute
  \begin{align*}
    D_i(|Du|^2/2) &= u_{k}u_{ki}\\
    D_{ii}(|Du|^2/2) &= u_{ki}u_{ki}+u_ku_{kii}.
  \end{align*}
  Then
  \begin{align}
\nonumber    L(|Du|^2/2) &= w^{ii}(u_{ki}u_{ki}-u_ku_{kii} - D_{p_l}A_{ii}u_{k}u_{lk}) - B_{p_l}u_{k}u_{kl}\\
 \label{eq:p:2terms}   &= w^{ii}u_{ki}u_{ki} + u_k[w^{ii}(u_{kii}- D_{p_l}A_{ii}u_{lk})- B_{p_l}u_{kl}].
  \end{align}
  We note
  \begin{equation}
   w^{ii}u_{ki}u_{ki} = w^{ii}(w_{ki}+A_{ki})(w_{ki}+A_{ki}) \geq w_{ii}-C(1+w^{ii}),\label{eq:r2:wuu}
 \end{equation}
  and by differentiating \eqref{eq:r2:mate} in the direction $e_k$
  \begin{align}
 \label{eq:r2:diff-once}   w^{ij}[u_{ijk}-A_{ij,p_l}u_{lk}] - B_{p_l}u_{lk} = w^{ij}(A_{ij,k}+A_{ij,u}u_k)+B_k+B_uu_k.
  \end{align}
  Hence \eqref{eq:p:2terms} becomes
  \begin{equation}
  L(|Du|^2/2) \geq w_{ii}-C(1+w^{ii}).\label{eq:p:globest2}
\end{equation}

\noindent \textit{Term 3: $L(\log(w_{11}))$.} This term is by far the most work. To begin, we differentiate \eqref{eq:r2:mate} twice in the $e_1$ direction and obtain
\begin{align}
  \nonumber w^{ii}&[u_{ii11}-D_{p_k}A_{ii}u_{k11}]- B_{p_k}u_{k11} = w^{ii}w^{jj}w_{ij,1}^2 + w^{ii}\big[A_{ii,11} - 2A_{ii,1u}u_1\\
        \nonumber          &\quad+2A_{ii,1p_k}u_{k1}+A_{ii,uu}u_1^2+A_{ii,u}u_{11}+2A_{ii,p_k}u_1u_{1k}+A_{ii,p_kp_l}u_{1k}u_{1l} \big]\\
  \nonumber           &\quad\quad+ B_{11} +2B_{1u}u_1+2B_{1p_k}u_{1k}+B_{uu}u_1^2 + B_uu_{11}\\
\nonumber  &\quad\quad\quad+ 2B_{up_k}u_1u_{1k} + B_{p_kp_l}u_{1k}u_{1l}\\
        &\geq w^{ii}w^{jj}w_{ij,1}^2+w^{ii}A_{ii,p_kp_l}u_{1k}u_{1l} - C(1+w_{ii}+w^{ii}+w_{ii}w^{ii}+w_{ii}^2). \label{eq:p:1}
\end{align}
We use A3w to deal with the second term. Write
\[ w^{ii}A_{ii,p_kp_l}u_{1k}u_{1l} \geq w^{ii}A_{ii,p_1p_1}w_{11}^2 - C(w^{ii}+w^{ii}w_{ii}).\]
Then by applying A3w with $\xi=e_i,\ \eta=e_j$ for $i \neq j$ we see $A_{ii,p_jp_j} \geq 0$ so that
\begin{align*}
  \label{eq:12}
  w^{ii}A_{ii,p_1p_1}w_{11}^2 &= w^{11}A_{11,p_1p_1}w_{11}^2 + \sum_{i=2}^nw^{ii}A_{ii,p_1p_1}w_{11}^2  \\
  &\geq -Cw_{11}.
\end{align*}
Thus \eqref{eq:p:1} becomes
\[ L(u_{11}) \geq w^{ii}w^{jj}w_{ij,1}^2- C(1+w_{ii}+w^{ii}+w_{ii}w^{ii}+w_{ii}^2). \]

We perform similar calculations for $LA_{11}$. Because we'll use such calculations repeatedly in this chapter we consider $L(F(\cdot,u,Du))$ for an arbitrary $C^2$ function $F$. Indeed direct calculation using \eqref{eq:r2:wuu} and  \eqref{eq:r2:diff-once} yields
\begin{equation}
  \label{eq:r2:f-calcs}
   L(F(\cdot,u,Du)) \geq -C(1+w^{ii}+w_{ii}).
 \end{equation}
 Thus $L(w_{11}) = L(u_{11})-L(A_{11})$ satisfies
\begin{equation}
  \label{eq:p:lw11}
   L(w_{11}) \geq w^{ii}w^{jj}w_{ij,1}^2- C(1+w_{ii}+w^{ii}+w_{ii}w^{ii}+w_{ii}^2).
\end{equation}
Now, proceeding to $L(\log w_{11})$, first compute
\begin{align*}
  D_i\log(w_{11}) = \frac{w_{11,i}}{w_{11}}\\
  D_{ii}\log(w_{11}) = \frac{w_{11,ii}}{w_{11}} - \frac{w_{11,i}^2}{w_{11}^2},
\end{align*}
so that
\[ L(\log w_{11})  = -\frac{w^{ii}w_{11,i}^2}{w_{11}^2} + \frac{L(w_{11})}{w_{11}}.\]
Hence by \eqref{eq:p:lw11}
\begin{equation}
L(\log w_{11}) \geq \frac{w^{ii}w^{jj}w_{ij,1}^2}{w_{11}}-\frac{w^{ii}w_{11,i}^2}{w_{11}^2} - \frac{C}{w_{11}}(1+w_{ii}+w^{ii}+w_{ii}w^{ii}+w_{ii}^2).\label{eq:p:toreturn}
\end{equation}
Note when $i,j = 1$ in the first term and $i=1$ in the second term these terms cancel. Moreover at the expense of an inequality we can discard the terms with neither $i$ nor $j = 1$. Subsequently we estimate the first two terms as follows
\begin{align}
  \label{eq:r2:pog1}   \frac{w^{ii}w^{jj}w_{ij,1}^2}{w_{11}}&-\frac{w^{ii}w_{11,i}^2}{w_{11}^2}\\
\nonumber  &\geq \sum_{i>1}\frac{w^{ii}w_{i1,1}^2}{w_{11}^2} + \sum_{j>1}\frac{w^{jj}w_{1j,1}^2}{w_{11}^2} - \sum_{i>1}\frac{w^{ii}w_{11,i}^2}{w_{11}^2}\\
\nonumber  &=\frac{1}{w_{11}^2}\sum_{i>1}w^{ii}\big[2w_{i1,1}^2-w_{11,i}^2\big]\\
  \nonumber  &= \frac{1}{w_{11}^2}\sum_{i>1}w^{ii}w_{11,i}^2 + \frac{2}{w_{11}^2}\sum_{i>1}w^{ii}[w_{i1,1}^2 - w_{11,i}^2]\\
  \nonumber         &= \frac{1}{w_{11}^2} \sum_{i>1}w^{ii}w_{11,i}^2+ \frac{2}{w_{11}^2}\sum_{i>1}w^{ii}(w_{i1,1}+w_{11,i})(w_{i1,1}-w_{11,i}).
\end{align}
Rewriting the second sum in terms of the $A$ matrix yields
\begin{align}
 \nonumber &\frac{w^{ii}w^{jj}w_{ij,1}^2}{w_{11}}-\frac{w^{ii}w_{11,i}^2}{w_{11}^2}= \frac{1}{w_{11}^2} \sum_{i>1}w^{ii}w_{11,i}^2\\
 \nonumber &\quad\quad+ \frac{2}{w_{11}^2}\sum_{i>1}w^{ii}(D_iA_{11}-D_{1}A_{i1})(2w_{11,i}+D_iA_{11}-D_1A_{ii})\\
  \label{eq:r2:pog2}       &= \frac{1}{w_{11}^2} \sum_{i>1}w^{ii}\big[w_{11,i}^2 + 4w_{11,i}(D_iA_{11}-D_{1}A_{i1}) + 4(D_iA_{11}-D_{1}A_{ii})^2\big]\\ \nonumber &\quad \quad -\frac{2}{w_{11}^2}\sum_{i>1}w^{ii}(D_iA_{11}-D_{1}A_{ii})^2\\
\label{eq:r2:pog3}  &\geq -C w^{ii}.
\end{align}
Returning to \eqref{eq:p:toreturn} and using that we may freely assume $1/w_{11} \leq 1$ we have
  \begin{equation}
  L(\log(w_{11})) \geq -C(1+w_{ii}+w^{ii}).\label{eq:p:globest3}
\end{equation}
Now substituting \eqref{eq:p:globest1}, \eqref{eq:p:globest2} and \eqref{eq:p:globest3} into \eqref{eq:p:globmain} we have
\[ 0 \geq w_{ii} (\tau -C)+ w^{ii}(\kappa -\tau C - C) - C(\kappa + \tau +1).\]
Choosing $\tau \geq C+1$ then $\kappa \geq \tau C + C + 1$ we have the estimate
\[ w_{ii} + w^{ii} \leq C.\]
This gives an estimate for $v$ and subsequently on the second derivatives. 
\end{proof}

The interior estimate for solutions with $g$-affine boundary values is based on a similar calculation. The key difference is the inclusion of a term $g(\cdot,y_0,z_0)-u$ which acts as a cut-off function and forces an interior maximum. 

\begin{theorem} \label{thm:r2:pog-local}
    Assume that $u \in C^4(\Omega) \cap C^2(\overline{\Omega})$ is an elliptic solution of
    \begin{align}
      \det[D^2u-A(\cdot,u,Du)] = B(\cdot,u,Du) \text{ in }\Omega, \nonumber\\
      u = g(\cdot,y,z) \text{ on }\partial \Omega,
    \end{align}
where $A,B$ are $C^2$ with $A$ satisfying A3w and $B>0$. 
  Then there exists $\beta,d,C > 0$ such that provided $\text{diam}(\Omega) < d$ we have the estimate 
  \begin{equation}
    \label{eq:r2:pogloc}
       \sup_{\Omega}(g(\cdot,y,z)-u)^{\beta}|D^2u| \leq C,
  \end{equation}
  where $C$ depends on $\Omega,A,B,\Vert u \Vert_{C^1(\Omega)},g$.
\end{theorem}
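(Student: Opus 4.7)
The plan is to mimic the proof of Theorem \ref{thm:r2:pog-global}, replacing the global barrier term $\kappa\phi$ by a multiple of $\log\eta$, where $\eta(x) := g(x,y,z) - u(x)$. Since $u$ is elliptic (hence $g$-convex locally) and agrees with $g(\cdot,y,z)$ on $\partial\Omega$, the section argument gives $\eta \geq 0$ in $\Omega$ with equality on $\partial\Omega$. For $\beta,\tau>0$ to be chosen and $\xi\in\mathbf{S}^{n-1}$, set
\[ v(x,\xi) := \beta\log\eta(x) + \tau |Du(x)|^2/2 + \log w_{\xi\xi}(x). \]
Because $\log\eta \to -\infty$ on $\partial\Omega$, the maximum of $v$ is attained at some $(x_0,\xi)$ with $x_0\in\Omega$; rotate so $\xi=e_1$ and diagonalise $w$ at $x_0$, where $Dv=0$ and $Lv\leq 0$. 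Since $e^v \leq C$ at the max would give the estimate $\eta^\beta w_{11} \leq C$ (and hence \eqref{eq:r2:pogloc} after taking $\xi$ arbitrary and using ellipticity to control the full Hessian), the task is to derive such a bound.

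The estimates for $L(|Du|^2/2)$ and $L(\log w_{11})$ carry over verbatim from the proof of Theorem \ref{thm:r2:pog-global}, giving respectively $w_{ii} - C(1+w^{ii})$ and $-C(1+w_{ii}+w^{ii})$, with A3w entering exactly as in the passage from \eqref{eq:p:1} onward. For the new term, apply \eqref{eq:r2:f-calcs} to $g(\cdot,y,z)$ and combine with $Lu = n + w^{ii}A_{ii} - w^{ii}A_{ii,p_k}u_k - B_{p_k}u_k$ to get $L\eta \geq -C(1+w^{ii}+w_{ii})$. Then
\[ L(\log\eta) = \frac{L\eta}{\eta} - \frac{w^{ii}\eta_i^2}{\eta^2} \geq -\frac{C(1+w^{ii}+w_{ii})}{\eta} - \frac{w^{ii}\eta_i^2}{\eta^2}. \]
These three bounds, inserted into $0 \geq \beta L(\log\eta) + \tau L(|Du|^2/2) + L(\log w_{11})$, will give the desired estimate provided one can absorb the gradient-squared term $w^{ii}\eta_i^2/\eta^2$.

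The main obstacle, and the crux of the Pogorelov technique in this setting, is precisely this negative term. To handle it, use $Dv(x_0)=0$, which reads
\[ \frac{w_{11,i}}{w_{11}} = -\beta\,\frac{\eta_i}{\eta} - \tau\, u_k u_{ki}, \]
so that, after Cauchy's inequality,
\[ \frac{w^{ii}\eta_i^2}{\eta^2} \leq \frac{2}{\beta^2}\,\frac{w^{ii} w_{11,i}^2}{w_{11}^2} + \frac{2\tau^2}{\beta^2}\, w^{ii}(u_k u_{ki})^2. \]
The second summand is controlled by $C\tau^2(1+w_{ii})/\beta^2$ via $u_{ki}=w_{ki}+A_{ki}$. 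The first summand is absorbed by revisiting the computation \eqref{eq:r2:pog1}--\eqref{eq:r2:pog3}: there we discarded the nonnegative contribution $w^{ii}w_{11,i}^2/w_{11}^2$ appearing after grouping, and we now keep it to cancel the bad term in $L(\log\eta)$ once $\beta$ is chosen large enough relative to the A3w constant. Multiplying the resulting inequality through by $\eta^2$ and using that $\eta$ is small when $\text{diam}(\Omega) < d$, one arrives at
\[ \eta^2\big[\beta(w_{ii}+w^{ii}) + \tau w_{ii} + w_{ii}+w^{ii}\big] \leq C(\beta+\tau+1)\eta + C, \]
after which a choice of $\tau$ then $\beta$ sufficiently large yields $\eta^2 w_{11} \leq C$ at $(x_0,e_1)$, hence $\eta^\beta w_{11} \leq C$ at every point and in every direction (taking $\beta=2$, or any larger value by relabeling the power). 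The diameter smallness condition enters in two places: ensuring $\eta$ is bounded so that $\eta^2/\eta = \eta$ is controllable, and guaranteeing that the Taylor-series remainders in $L\eta$ remain uniformly bounded by a constant depending only on $g$ and $\Vert u\Vert_{C^1}$.
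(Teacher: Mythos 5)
The proposal follows the right template (a Pogorelov test function with $\beta\log\eta$, $\eta := g(\cdot,y,z)-u$, as the cut-off), but two essential ingredients of the paper's argument are missing, and without them the inequality you derive at the interior maximum cannot be closed.

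First, you drop the barrier summand $\kappa\phi$ (the paper keeps $\phi=|x-x_1|^2$ inside $v$). This is not optional. When you assemble $0\geq Lv$, the terms $L(\log w_{11})$, $\tau L(|Du|^2/2)$ and $\beta L(\log\eta)$ all contribute negative multiples of $w^{ii}$, and in particular $\beta L(\log\eta)\geq-C\beta/\eta-C\beta w^{ii}-C\beta\sum w^{ii}(D_i\eta/\eta)^2$, so there is a $-C\beta w^{ii}$ loss with a large prefactor. The only positive $w^{ii}$ contribution in the paper comes from $\kappa L\phi\geq\kappa(w^{ii}-C)$, with $\kappa$ chosen last and of order $\beta$; the diameter smallness is what lets $\phi=|x-x_1|^2$ be a valid barrier and keeps $\kappa^2|D\phi|^2\leq1$. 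In your scheme the only positive second-order term is $\tau w_{ii}$, but $w_{ii}$ and $w^{ii}$ are not mutually comparable for $n\geq 3$ (choose eigenvalues $(\lambda,\lambda,B/\lambda^2)$ to make $w^{ii}\gg w_{ii}$), so nothing dominates the $-C\beta w^{ii}$ term. The claimed final inequality with a positive $\eta^2\beta(w_{ii}+w^{ii})$ on the left has the wrong sign; the $\log\eta$ term in $v$ only ever produces losses, not gains.

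Second, your estimate $L\eta\geq-C(1+w^{ii}+w_{ii})$ is far too coarse. Dividing by $\eta$ produces $-Cw^{ii}/\eta$, which is unbounded as $\eta\to0$, and $\beta L(\log\eta)$ then contains an unabsorbable $-C\beta w^{ii}/\eta$. The paper's estimate exploits that $g(\cdot,y,z)$ is $g$-affine, so $D_{ii}u_0=A_{ii}(\cdot,u_0,Du_0)$ exactly. Writing $D_{ii}u_0-D_{ii}u=A_{ii}(\cdot,u_0,Du_0)-w_{ii}-A_{ii}(\cdot,u,Du)$, Taylor-expanding $A_{ii}(\cdot,u_0,Du_0)-A_{ii}(\cdot,u,Du)$ first in $u$ and then to second order in $p$, and invoking A3w (codimension-one convexity) on the quadratic remainder gives
\[
L\eta\geq w^{ii}D_{p_kp_l}A_{ii}D_k\eta\,D_l\eta-C-Cw^{ii}\eta\geq-C-Cw^{ii}\eta-Cw^{ii}D_i\eta-Cw^{ii}(D_i\eta)^2.
\]
The point is that every $w^{ii}$ carries a factor of $\eta$ or $D\eta$, so after dividing by $\eta$ one gets $L(\log\eta)\geq-C/\eta-Cw^{ii}-C\sum w^{ii}(D_i\eta/\eta)^2$, with no $w^{ii}/\eta$. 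Your use of $Dv=0$ to absorb the $w^{ii}(D_i\eta/\eta)^2$ term into the retained piece of $L(\log w_{11})$ is correct and matches the paper; but the two issues above are genuine gaps, and without them the argument does not produce an estimate.
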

\begin{proof}
  The proof is not so different from Theorem \ref{thm:r2:pog-global}. We set $\phi = |x-x_1|^2$ for some $x_1$ in $\Omega$ and note provided the domain is small enough $\phi$ satisfies the barrier requirement \eqref{eq:r2:barrier-req} from Theorem \ref{thm:r2:pog-global}. Moreover by a further choice of $\text{diam} (\Omega)$ small we can ensure $|D\phi|$ is as small as desired. This will be used later.

  This time consider the function
\[ v = \kappa\phi+\tau|Du|^2/2 + \log(w_{\xi\xi}) + \beta \log[g(\cdot,y,z)-u].\]
We use the notation $u_0 = g(\cdot,y,z)$ and $\eta = u_0-u$. Because the nonnegative function $e^v$ is $0$ on $\partial \Omega$,  $v$ attains an interior maximum at $x_0 \in \Omega$ and $\xi$ assumed to be $e_1$. We assume at this point $w$ is diagonal and again note $Lv(x_0) \leq 0$. 

We modify our computations for $L(\log(w_{11}))$. We return to \eqref{eq:r2:pog2} and note Cauchy's inequality implies
\begin{align*}
  4w_{11,i}(D_iA_{11}-D_{1}A_{ii}) \geq -\frac{w_{11,i}^2}{2} - 8 (D_iA_{11}-D_{1}A_{ii})^2.
\end{align*}
Thus in place of \eqref{eq:r2:pog3} we obtain the inequality
\[  \frac{w^{ii}w^{jj}w_{ij,1}^2}{w_{11}}-\frac{w^{ii}w_{11,i}^2}{w_{11}^2} \geq \frac{1}{2w_{11}^2}\sum_{i=2}^nw^{ii}w_{11,i}^2 - Cw^{ii},\]
and subsequently in place of \eqref{eq:p:globest3} we obtain
\[L(\log(w_{11})) \geq \frac{1}{2w_{11}^2}\sum_{i=2}^nw^{ii}w_{11,i}^2 - C(1+w_{ii}+w^{ii}).   \]
Now, the inequality $0 \geq Lv(x_0)$ implies
\begin{align}
  \label{eq:r2:pogloc1} 0 &\geq \kappa(w^{ii}-C) + \tau [w_{ii}-C(1+w^{ii})] +  \frac{1}{2w_{11}^2}\sum_{i=2}^nw^{ii}w_{11,i}^2 \\
\nonumber  & \quad\quad - C(1+w_{ii}+w^{ii})+ \beta L(\log \eta).
\end{align}
So all that remains is to compute $L(\log \eta)$.

To begin write
\begin{equation}
    \label{eq:r2:pogloc2} L(\log \eta) = \frac{L\eta}{\eta} - \sum_{i=1}^nw^{ii}\left(\frac{D_i\eta}{\eta}\right)^2. 
\end{equation}
We compute
\begin{align}
\nonumber  L\eta = w^{ii}&[D_{ii}u_0 - D_{ii}u - D_{p_k}A_{ii}(\cdot,u,Du)D_k\eta] - D_{p_k}BD_k\eta\\
 \nonumber    \geq w^{ii}&[-w_{ii}+A_{ii}(\cdot,u_0,Du_0)- A_{ii}(\cdot,u,Du)- D_{p_k}A_{ii}(\cdot,u,Du)D_k\eta] - C\\
  \nonumber   \geq w^{ii}&[A_{ii,u}\eta+A_{ii}(\cdot,u,Du_0)- A_{ii}(\cdot,u,Du)- D_{p_k}A_{ii}(\cdot,u,Du)D_k\eta] - C\\
  &\geq w^{ii}D_{p_kp_l}A_{ii}D_k\eta D_l \eta - C - Cw^{ii}\eta. \label{eq:r2:pogloc3}
\end{align}
For each $i$ write
\begin{align*}
  w^{ii}D_{p_kp_l}A_{ii}D_k\eta D_l \eta &= \sum_{k,l \neq i}D_{p_kp_l}A_{ii}D_k\eta D_l \eta + 2\sum_{l \neq i}D_{p_ip_l}A_{ii}D_i\eta D_l \eta \\
  &\quad\quad+ D_{p_ip_i}A_{ii}D_i\eta D_i \eta
\end{align*}
Then by A3w the first term is nonnegative, so that
\begin{align*}
w^{ii}D_{p_kp_l}A_{ii}D_k\eta D_l \eta  &\geq -C D_i\eta - C (D_i\eta)^2.
\end{align*}
Returning to \eqref{eq:r2:pogloc3} we see
\[  L\eta \geq -C(1+w^{ii}\eta) - Cw^{ii}D_i\eta - Cw^{ii}(D_i\eta)^2.\]
Which into \eqref{eq:r2:pogloc2} implies
\begin{equation}
  \label{eq:r2:pogloc4}
   L(\log \eta) \geq -\frac{C}{\eta} - Cw^{ii} - C \sum_{i=1}^nw^{ii}\left(\frac{D_i\eta}{\eta}\right)^2.
\end{equation}
Here we've used that we can assume $\eta < 1$, and also used Cauchy's to note
\[ w^{ii}\frac{D_i\eta}{\eta} = \sqrt{w^{ii}}\sqrt{w^{ii}}\frac{D_i\eta}{\eta} \leq w^{ii}+w^{ii}\left(\frac{D_i\eta}{\eta}\right)^2. \]
Now we deal with the final term in \eqref{eq:r2:pogloc4}. We can assume that term $w^{11}(D_1\eta/\eta)^2 \leq 1$, for if not we have \eqref{eq:r2:pogloc} with $\beta = 2$. Since we are at a maximum $D_iv = 0$, that is
\[ \frac{D_i\eta}{\eta} = -\frac{1}{\beta}\left[\frac{D_iw_{11}}{w_{11}}+\kappa D_i\phi + \tau D_k u w_{ik} + \tau D_k u A_{ik}\right].\]
This implies
\[ \sum_{i=1}^nw^{ii}\left(\frac{D_i\eta}{\eta}\right)^2 \leq \frac{C}{w_{11}^2\beta^2}\sum_{i=2}^nw^{ii}w_{11,i}^2 + C\frac{\kappa^2}{\beta^2}w^{ii}|D_i\phi|^2 + \frac{C\tau^2}{\beta^2}[w_{ii}+w^{ii}],\]
where, as stated $w^{11}(D_1\eta/\eta)^2 \leq 1$, so is included in the constant term. 
Choosing $\beta \geq 1,2C$ and returning to \eqref{eq:r2:pogloc4} we obtain
\[ \beta L(\log \eta) \geq \frac{-C\beta}{\eta}-\kappa^2|D\phi|^2 w^{ii} - \frac{\tau^2}{\beta^2}[w^{ii}+w_{ii}] - C\beta w^{ii} - \frac{1}{2w_{11}^2}\sum_{i=2}^nw^{ii}w_{11,i}^2.\]

Substituing into \eqref{eq:r2:pogloc1} completes the proof: We have
\begin{align*}
  0 &\geq \kappa (w^{ii}-C)+\tau[w_{ii}-C(1+w^{ii})] - C(1+w_{ii}+w^{ii}) -\frac{C\beta}{\eta} \\
    &\quad\quad-C \kappa^2|D\phi|^2 w^{ii} - C\frac{\tau^2}{\beta}[w^{ii}+w_{ii}] - C\beta w^{ii}\\
    & = w^{ii}[\kappa - \tau C -C-C\kappa^2|D\phi|^2 -\frac{C\tau^2}{\beta} - C\beta]\\
  &\quad\quad+w_{ii}[\tau - C - \frac{C\tau^2}{\beta}] - C[\kappa+\tau+\frac{\beta}{\eta}].
\end{align*}
Take $\text{diam}(\Omega)$, and subsequently $|D\phi|$, small enough to ensure $\kappa^2|D\phi|^2 \leq 1$ (our choice of $\kappa$ will only depend on allowed quantities). A further choice of $\beta \geq \tau^2$, $\tau$ large depending only on $C$, and finally $\kappa$ large depending on $\tau,C$ implies
\[ 0 \geq w^{ii}+w_{ii}-C(1+\frac{1}{\eta}).\]
This implies $\eta w_{ii} \leq C$ at the maximum point, and the proof is complete. 
\end{proof}

\section{Boundary Estimates}
\label{sec:boundary-estimates}
Theorem \ref{thm:r2:pog-global} reduces global $C^2$ estimates for Monge--Amp\`ere equations to boundary $C^2$ estimates. That is, provided $\sup_{\partial \Omega}|D^2u| \leq C$ then in combination with Theorem \ref{thm:r2:pog-global} we obtain $\sup_{\Omega}|D^2u| \leq C$. We first prove the boundary estimate for the Dirichlet problem, then for the second boundary value problem.

\subsection*{Dirichlet Problem}

The $C^2$ estimates for the Dirichlet problem use techniques from classical elliptic PDE: careful choices of test functions and  barrier arguments. The following theorem is due to Jiang, Trudinger, and Yang \cite{JTY14} who extended the ideas from the Monge--Amp\`ere case \cite{Krylov83,Ivochkina80,Ivochkina83,CNS84,Trudinger95} to general Monge--Amp\`ere type equations. 
\begin{theorem} \label{thm:r2:dir-est}
  Let $u \in C^4(\Omega) \cap C^2(\overline{\Omega})$ be an elliptic solution of
  \begin{align}
   \label{eq:r2:mate-dirichlet} \det [D^2u -A(\cdot,u,Du)] &= B(\cdot,u,Du) \text{ in }\Omega\\
  \label{eq:r2:dirichlet}  u &= \phi \text{ on }\partial \Omega. 
  \end{align}
  where $A,B$ are $C^2$, $A$ satisfies A3w and $B>0$.  Assume $\phi \in C^4(\overline{\Omega})$, $\partial \Omega \in C^4$, and there exists a barrier $\underline{u} \in C^2(\overline{\Omega})$ satisfying $\underline{u} = \phi$ on $\partial \Omega$ along with
  \begin{align}
\label{eq:r2:underu2}   D^2\underline{u} - A(\cdot,u,D\underline{u}) &\geq \delta I,\\
  \label{eq:r2:underu1}  \text{ and }\quad\det[D^2\underline{u} - A(\cdot,u,D\underline{u})] &\geq B(\cdot,u,D\underline{u}),
  \end{align}
    for some $\delta > 0$. Then there is $C$ depending only on $\Omega,\underline{u},A,B,\Vert u \Vert_{C^1(\Omega)}$ such that
\[   \sup_{\partial \Omega}|D^2u| \leq C. \] 
\end{theorem}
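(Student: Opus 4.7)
Fix an arbitrary boundary point $x_0 \in \partial \Omega$, which after translation and rotation we may take to be the origin with inner unit normal $e_n$ and $\partial \Omega$ represented locally as $x_n = \rho(x')$ with $\rho(0) = 0$, $D\rho(0) = 0$. The boundary Hessian $D^2u(x_0)$ decomposes into tangent-tangent, tangent-normal, and normal-normal components, and I will bound each in turn, following the strategy of Jiang-Trudinger-Yang \cite{JTY14} which adapts the classical Monge--Amp\`ere arguments of Ivochkina, Krylov, Caffarelli-Nirenberg-Spruck, and Trudinger. Throughout, the two essential inputs are the subsolution $\underline{u}$, playing the role of an obstacle from below, and A3w, which tames the $p$-dependence of $A$.

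\textbf{Tangent-tangent.} These bounds are immediate. Since $u = \phi$ on $\partial\Omega$, twice differentiating along a tangential direction $\tau$ at $x_0$ and using the parametrisation of $\partial\Omega$ gives
\[ D_{\tau\tau}u(x_0) = D_{\tau\tau}\phi(x_0) + D_n u(x_0)\, D_{\tau\tau}\rho(0), \]
and the $C^1$ bound on $u$, together with the $C^{4}$ regularity of $\phi$ and $\partial\Omega$, yields $|D_{\tau\tau}u(x_0)| \leq C$.

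\textbf{Tangent-normal.} For a fixed tangential direction $\xi$ at $x_0$, I pick a first-order operator $T = D_\xi + c^j(x) D_j$, with $c^j$ depending on $\rho$, that is tangent to $\partial\Omega$ in a neighbourhood of $x_0$ and equals $D_\xi$ at $x_0$; then $T(u-\phi) \equiv 0$ on $\partial\Omega \cap B_r(x_0)$. Differentiating the MATE once in the direction $\xi$ yields an identity of the form \eqref{eq:r2:diff-once} relating $L(D_\xi u)$ to $A$- and $B$-derivatives, and the resulting $D_{p_kp_l}A_{ij}$ terms are absorbed using \eqref{eq:g:a3w-equiv}; this gives a bound $|L(T(u-\phi))| \leq C(1 + w^{ii})$ in $\Omega_r := \Omega \cap B_r(x_0)$. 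To close the estimate I introduce the barrier
\[ \Psi := a\bigl(e^{K(\underline{u}-u)}-1\bigr) + b\,|x-x_0|^2 \pm T(u-\phi). \]
The computation in Theorem \ref{thm:r1:lin} adapts to the present Dirichlet setting \emph{without} invoking A4w, since the subsolution inequalities \eqref{eq:r2:underu1} and \eqref{eq:r2:underu2} evaluate $A$ and $B$ at the solution's own value $u$; concavity of $\log\det$ (as in Lemma \ref{lem:u:harn}) together with A3w therefore gives $L(e^{K(\underline{u}-u)}) \geq \epsilon_1 w^{ii} - C$. Choosing $K$ large, then $a$ large so that $a\epsilon_1 w^{ii}$ dominates the $C(1+w^{ii})$ contribution from $L(T(u-\phi))$, then $b$ large enough that $\Psi \geq 0$ on $\partial B_r(x_0) \cap \overline{\Omega}$, we have $L\Psi \leq 0$ in $\Omega_r$ and $\Psi \geq 0$ on $\partial\Omega_r$ (using that $\underline{u} = u$ and $T(u-\phi) = 0$ on $\partial\Omega \cap B_r(x_0)$). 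The maximum principle (Lemma \ref{lem:a:mp}) then yields $\Psi \geq 0$ in $\Omega_r$. Since $\Psi(x_0) = 0$, differentiating inwards along $e_n$ at $x_0$ gives $|D_n T(u-\phi)(x_0)| \leq C$, which upon expanding $T$ translates into $|D_{\xi n} u(x_0)| \leq C$.

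\textbf{Normal-normal.} Set $W := D^2 u - A(\cdot,u,Du)$. The tangent-tangent bounds give an upper bound on the $(n-1)\times(n-1)$ tangential block $W' := (W_{ij})_{i,j<n}$; a lower bound $W'(x_0) \geq c_0 I$ is obtained by comparison with the subsolution: since $u - \underline{u} \geq 0$ vanishes along $\partial \Omega$, tangential second derivatives of $u$ and $\underline{u}$ agree at $x_0$, and the strict inequality \eqref{eq:r2:underu2} combined with continuity of $A$ in $p$ forces the tangential block of $W$ at $x_0$ to lie above $(\delta/2) I$ once a $C^1$ bound on $u - \underline{u}$ is invoked. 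Expanding $\det W = B$ along the last row,
\[ W_{nn}\,\det W' = B + \sum_{i<n}(-1)^{n+i} W_{in}\, M_{in}, \]
where the cofactors $M_{in}$ are polynomial in entries of $W'$ and $W_{jn}$ for $j<n$, the tangent-normal bound $|W_{in}| \leq C$ together with $\det W' \geq c_0^{\,n-1}$ gives $W_{nn}(x_0) \leq C$, and hence $D_{nn}u(x_0) \leq C$. Combining with the tangent-tangent and tangent-normal bounds yields $\sup_{\partial\Omega}|D^2u| \leq C$.

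\textbf{Main obstacle.} The delicate step is the tangent-normal estimate. The difficulty is twofold. First, applying $L$ to $T(u-\phi)$ produces third-derivative terms that must be traded for quadratic expressions in $w$ via differentiated MATE, but the sign of $D_{p_kp_l}A_{ij}$ is only controlled up to a first-order error by A3w (cf.\ \eqref{eq:g:a3w-equiv}); one must absorb this error into the positive $w^{ii}$ contribution provided by $\underline{u}$. Second, the constants $a$, $b$, $K$ in $\Psi$ must be balanced so that the three competing effects ($a\epsilon_1 w^{ii}$, the bad $C(1+w^{ii})$ from $LT(u-\phi)$, and the boundary positivity of $\Psi$ on $\partial B_r(x_0) \cap \overline{\Omega}$) are simultaneously controlled, which forces $r$ to be small in a way depending only on the data. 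Once these choices are made, the resulting constant is uniform in $x_0 \in \partial\Omega$, completing the estimate.
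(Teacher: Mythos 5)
Your decomposition, the barrier built from $e^{K(\underline u - u)}$, and your observation that A4w is not needed here --- since \eqref{eq:r2:underu2} already evaluates $A$ at $(\cdot,u,D\underline u)$ --- all match the paper's (Jiang--Trudinger--Yang) strategy, and the tangent-tangent step is fine. However, the two remaining steps each contain a genuine gap. In the tangent-normal step, the barrier $\Psi := a(e^{K(\underline u - u)}-1) + b|x-x_0|^2 \pm T(u-\phi)$ (the first term should read $a(1-e^{K(\underline u - u)})$ so that its $L$-image has a useful \emph{upper} bound $\leq -a\epsilon_1 w^{ii}+aC$, rather than a lower one) does not satisfy $L\Psi\leq 0$ when $w^{ii}$ stays of bounded size: the good term $-a\epsilon_1 w^{ii}$ cannot absorb the $aC+Cb+C$ constant contribution, since the only lower bound on $w^{ii}$ that the PDE provides is $n(\sup B)^{-1/n}$, which need not exceed $C/\epsilon_1$. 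The paper's barrier therefore also contains $\mu x_n - Kx_n^2$, and the argument splits into two cases: if $w^{ii}$ is large, the term $-\epsilon w^{ii}$ dominates; if $w^{ii}$ is bounded, the determinant constraint $\det w = B$ forces $w^{nn}$ to be bounded below, and the $-2Kw^{nn}$ contribution from $Lx_n^2$ supplies the needed negativity. Without this extra term and dichotomy the barrier inequality simply fails.

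In the normal-normal step, the claim that $W'(x_0)\geq c_0 I$ follows from \eqref{eq:r2:underu2}, continuity of $A$ in $p$, and a $C^1$ bound on $u-\underline u$ is incorrect: on $\partial\Omega$ the tangential components of $Du$ and $D\underline u$ agree, but their normal components differ by an $O(1)$ amount, so $A_{\alpha\beta}(\cdot,u,Du)-A_{\alpha\beta}(\cdot,u,D\underline u)$ is $O(1)$ and the positivity of $D^2\underline u - A(\cdot,u,D\underline u)$ cannot be transferred to $W'[u]$ by a perturbation argument. The paper instead considers the minimizing point $\overline{x}$ of $W + K|x|^2$ on the boundary portion, with $W:=[D_{\alpha\beta}\phi-A_{\alpha\beta}(x,\phi,D'\phi,D_nu)]\xi_\alpha\xi_\beta$; extends $W$ into $\Omega$ and uses codimension-one convexity from A3w (concavity in $D_nu$) plus a barrier argument to obtain $D_nW(\overline{x})\geq -C$; then exploits the convexity of $A_{\alpha\beta}\xi_\alpha\xi_\beta$ in $p_n$, together with $W[\underline u]\geq\delta$ and the assumption $W[u](\overline{x})<\delta/2$, to deduce $A_{\alpha\beta,p_n}\xi_\alpha\xi_\beta\geq\delta/(2\kappa)$ and thence an upper bound on $D_{nn}u(\overline{x})$; and only then, via the PDE, a lower bound on $W$. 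Your cofactor expansion is fine once a positive lower bound on $W'$ is in hand, but the route to that lower bound must pass through this more delicate structure.
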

\begin{proof}
  First note if $u$ solves \eqref{eq:r2:mate-dirichlet} subject to \eqref{eq:r2:dirichlet} and $\Upsilon$ is a diffeomorphism, then $\tilde{u}:= u \circ \Upsilon$ solves a problem of the same form.  Indeed $\tilde{u}$ satisfies a Dirichlet boundary condition $\tilde{u} = \phi$ on the boundary of $\Omega^\Upsilon := \{x ; \Upsilon(x) \in \Omega\}$.  Furthermore by direct calculation $\tilde{u}$ solves
\[ \det[D^2\tilde{u} - \tilde{A}(\cdot,\tilde{u},D\tilde{u})] = \tilde{B}(\cdot,\tilde{u},D\tilde{u}),\]
  where
  \begin{align*}
    \tilde{A}_{ij}(\cdot,\tilde{u},D\tilde{u}) &= D_i\Upsilon^kD_j\Upsilon^lA_{kl}(\cdot,\tilde{u},[D\Upsilon]^{-1}D\tilde{u})+[D\Upsilon]^{\alpha k}D_\alpha \tilde{u}D_{ij}\Upsilon^k,\\
    \tilde{B}(\cdot,\tilde{u},D\tilde{u}) &= [\det D\Upsilon]^2 B(\cdot,\tilde{u},[D\Upsilon]^{-1}D\tilde{u}).
  \end{align*}
  Because $\tilde{A}(\cdot,\tilde{u},p) = A(\cdot,u,l^{1}_x(p))+l^{(2)}_x(p)$ for functions $p \mapsto l_x^{(1)}(p),l_x^{(2)}(p)$ which are linear in $p$ for each $x$,  $\tilde{A}$ satisfies A3w provided $A$ does. 

  Thus, after diffeomorphism, we fix $x_0 \in \partial \Omega$ assumed to be 0, and assume
  \[ T:= \partial \Omega\cap B_\epsilon(0) \subset \{x; x_n = 0\},\]
  with $e_n$ the inner unit normal at $0$. The plan is to estimate each of the second derivatives (repeated tangential, double normal, and mixed tangential normal) at $0$.

 \textit{Step 1. (Repeated tangential)} These are trivial: in a neighbourhood of $0$, specifically on the boundary portion $T$, we have $u = \underline{u}$. Thus for $\alpha,\beta = 1,\dots,n-1$ we have $|D_{\alpha\beta}u| = |D_{\alpha\beta}\underline{u}| \leq C$.

 \textit{Step 2. (Mixed tangential normal)} The mixed tangential normal estimates are via a barrier argument.  We differentiate the equation once in a tangential direction $e_\alpha$ for $\alpha = 1,\dots,n-1$. As in \eqref{eq:r2:diff-once}
  \begin{equation}
    \label{eq:r2:diff-once-redux}
       |L(D_\alpha u)| \leq C(1+w^{ii}). 
  \end{equation}
  From this
  \begin{align}
    \label{eq:r2:dir-bar} |L[D_\alpha(u-\underline{u})]| \leq C(1+w^{ii})
  \end{align}
  is immediate. Our barrier arguments takes place on $\Omega_\delta:= \Omega \cap B_\delta(0)$. We note $\partial \Omega_\delta = (\partial \Omega \cap \overline{B_\delta}) \cup (\Omega \cap \partial B_\delta)$. We need estimates for $D_\alpha(u - \underline{u})$ on each portion of the boundary. Provided $\delta$ chosen so small as to ensure $\partial \Omega \cap B_\delta \subset T$ we obtain $|D_\alpha(u - \underline{u})| = 0$ on this portion of the boundary. Moreover we trivially obtain $|D_\alpha(u-\underline{u})| \leq C|x|^2$ on $\Omega \cap \partial B_\delta$ for $C$ depending on $\delta, \sup |Du|, \sup |D\underline{u}|$.

  Our barrier is built from $\underline{u}$ over a couple of steps.
  Set $v = 1-e^{k(\underline{u}-u)}$ for a choice of $k$ large. By \eqref{eq:r2:underu1} and linearising, the maximum principle implies $\underline{u} \leq u$ and thus $v \geq 0$. Repeating the calculations in Theorem \ref{thm:r1:lin} we see $v$ satisfies
  \begin{align*}
    v &= 0 \text{ on }\partial \Omega\\
    Lv &\leq -\epsilon w^{ii} + C \text{ in }\Omega
  \end{align*}
  Note whilst Theorem \ref{thm:r1:lin} used $\overline{u}$ satisfying $\overline{u} \geq u$,  this was only to obtain \eqref{eq:r2:underu2}.
  Next set $\psi := v + \mu x_n - K x_n^2$ and compute
  \begin{align*}
    Lx_n &= -w^{ij}A_{ij,p_n}-B_{p_n}\\
    Lx_n^2 &= 2w^{nn} - 2w^{ij}A_{ij,p_n}x_n- 2B_{p_n}x_n
  \end{align*}
  Thus for a choice of $K$ large and $\mu,\delta$ small we obtain
  \begin{align}
   \label{eq:r2:a-inf} L\psi &\leq \frac{-\epsilon}{4}(1+w^{ii}) \text{ in }\Omega_\delta\\
    \psi &\geq 0 \text{ on }\partial \Omega_\delta. \nonumber
  \end{align}
  To provide more detail on the choice of $K,\mu,\delta$ we consider two possibilities for $Lv$, which we know satisfies $Lv \leq -\epsilon w^{ii}+C$. The first case is $-\epsilon w^{ii}+C > -\frac{\epsilon}{2}(w^{ii}+1)$. This implies an upper bound $w^{ii} < C$ depending on $\epsilon$, and then, by the PDE, a lower bound $w^{nn} > C$. We obtain \eqref{eq:r2:a-inf} by a choice of $K$ large depending on $C$, then $\mu$ small depending on $\epsilon$ and $\delta$ small depending on $1/K,\epsilon$. The second case is $-\epsilon w^{ii}+C \leq -\frac{\epsilon}{2}(w^{ii}+1)$. In this case our choice of $\mu,\delta$ small as before ensures \eqref{eq:r2:a-inf}, and we don't need to enforce anything further on $K$. 
  
 Now modify $\psi$ to $\tilde{\psi}:= a\psi + b|x|^2$.
  We have $D_\alpha(u - \underline{u}) = 0 \leq \tilde{\psi}$ on $\partial \Omega \cap \overline{B_\delta}$ and via a choice of $b$ large 
  \[|D_\alpha(u - \underline{u})| \leq b \delta^2 \leq \tilde{\psi} \text{ on }\Omega \cap \partial B_\delta .\]
  Now choosing $a$ large and using \eqref{eq:r2:a-inf} we have
  \[ L\tilde{\psi} \leq -\left(\frac{a\epsilon}{4} - Cb\right)(1+w^{ii}).\]

  All up, with $a > > b$ sufficiently large, we have
  \begin{align*}
    |L[D_\alpha(u-\underline{u})]| - L\tilde{\psi} \geq 0 \text{ in }\Omega_\delta,\\
    |D_\alpha(u-\underline{u})| - \tilde{\psi} \leq 0 \text{ on }\partial \Omega_\delta.
  \end{align*}
  The maximum principle implies $ |D_\alpha(u-\underline{u})| - \tilde{\psi} \leq 0$ in $\Omega_\delta$. Recalling $|D_\alpha(u-\underline{u})(0)|, \tilde{\psi}(0) = 0,$ we see for $t$ small
  \[ |D_\alpha(u-\underline{u})(te_n)-D_\alpha(u-\underline{u})(0)| \leq \tilde{\psi}(te_n) - \tilde{\psi}(0).\]
  Dividing by $t$ and sending $t \rightarrow 0$ we obtain
  \[ |D_{\alpha n}(u-\underline{u})(0)| \leq D_n \tilde{\psi} \leq C. \]
  This completes the mixed tangential normal estimates.

 \textit{Step 3. (Double normal)} 
  We conclude with the double normal estimates. We define, on $T$ and for $\xi \in \mathbf{S}^{n-2}$, the function
  \begin{align}
   \nonumber  W[u]&:= [D_{\alpha\beta}u - A_{\alpha\beta}(x,u,Du)]\xi_\alpha\xi_\beta,\\
  \label{eq:r2:w-extend}  &= [D_{\alpha\beta}\phi - A_{\alpha\beta}(x,\phi,D'\phi,D_nu)]\xi_\alpha\xi_\beta.
  \end{align}
  Here  $\alpha,\beta = 1,\dots,n-1$ and the equality is because $u = \phi$ on $T$ so tangential derivatives agree. Our goal is to obtain an estimate from above for $u_{nn}$. For this it suffices to obtain an estimate $W>c$ for some positive $c$, and use the PDE \eqref{eq:r2:mate-dirichlet} (see \cite[pg. 49]{Figalli17} for details).

  For the lower bound we note that for $K$ sufficiently large $\tilde{w}:= W+K|x|^2$, defined on $T$, attains an interior minimum on $T$ at some $\overline{x},\overline{\xi}$. Now we extend $\phi$ to $\Omega$ in a neighbourhood of $T$ as $\phi(x',x_n) = \phi(x')$ for $x' \in T$. Subsequently we also extend $\tilde{w}$ using \eqref{eq:r2:w-extend}.

  Thanks to A3w, in particular the codimension one convexity, $\tilde{w}$ is concave in $D_{n}u$.
  This let's us compute
  \[ L\tilde{w} \leq C(1+w^{ii}).\]
  This computation is similar to \eqref{eq:p:1} but simpler. We remove third derivatives using \eqref{eq:r2:diff-once-redux} with $i=n$. The only term left to deal with is  $-w^{ij}A_{\alpha\beta,p_np_n}D_{ni}uD_{nj}u\xi_\alpha\xi_\beta$, which via A3w is less than or equal to 0.

  Now we repeat the barrier argument from step 2. Recalling $\psi$ equals $0$ on $\partial \Omega \cap \overline{B_\delta}$ and $\psi \geq 0$ on $\Omega \cap \partial B_\delta$ we see for $a>0$, $a\psi+\tilde{w}|_{\partial \Omega_\delta}$ still has its minimum at  $\overline{x}$. Moreover, via a choice of $a$ large, we have $L(a\psi+\tilde{w}) \leq 0$. Via the maximum (minimum, technically) principle
  \[a\psi+\tilde{w} \geq \inf_{\partial \Omega_\delta} a\psi+\tilde{w} = a\psi(\overline{x})+\tilde{w}(\overline{x}).  \]
  This implies the inner normal at this point is positive. Thus
  \[ D_nW(\overline{x}) \geq -C,\]
  and subsequently
  \begin{equation}
    \label{eq:r2:dir-fin-3}
       A_{\alpha\beta,p_n}\xi_\alpha\xi_\beta D_{nn}u(\overline{x}) \leq C.
  \end{equation}
  Provided we obtain a positive estimate from below for $A_{\alpha\beta,p_n}\xi_\alpha\xi_\beta$, the subsequent upper bound on $ D_{nn}u$ at $\overline{x}$ along with the PDE \eqref{eq:r2:mate-dirichlet} implies the desired  lower bound on $W$.

  To this end using \eqref{eq:r2:underu2} we have on $\partial \Omega \cap B_\delta$,
  \begin{equation}
    \label{eq:r2:dir-fin-1}
      W[\underline{u}]:= [D_{\alpha\beta}\phi - A_{\alpha\beta}(x,\phi,D'\phi,D_n\underline{u})]\xi_\alpha\xi_\beta \geq \delta. 
  \end{equation}
  We assume $W[u](\overline{x}) < \delta/2$, otherwise this is our estimate on the minimum. Then, at $\overline{x}$,
  \begin{align*}
    -\delta/2 &\geq W[u](\overline{x}) - W[\underline{u}](\overline{x}) \\
    &= [ A_{\alpha\beta}(x,\phi,D'\phi,D_n\underline{u})- A_{\alpha\beta}(x,\phi,D'\phi,D_nu)]\xi_\alpha\xi_\beta.
  \end{align*}
  Using the convexity of $A_{\alpha\beta}\xi_\alpha\xi_\beta$ in $D_nu$, more precisely the inequality $h'(0) \leq h(1)-h(0)$ for the convex function $h(t) = A_{\alpha\beta}(x,\phi,D'\phi,t D_n\underline{u} + (1-t)D_nu)$, we have
  \begin{equation}
    \label{eq:r2:dir-fin-2}
        -\delta/2 \geq -A_{\alpha\beta,p_n}\xi_\alpha \xi_\beta D_n(u-\underline{u}).
  \end{equation}
  We trivially have $D_n(u-\underline{u}) \leq \kappa$ for a $\kappa$ depending on $\underline{u}$, $\Vert u \Vert_{C^1}$. Thus \eqref{eq:r2:dir-fin-2} implies
  \[ A_{\alpha\beta,p_n}\xi_\alpha \xi_\beta \geq \frac{\delta}{2\kappa}.\]
  This lower bound yields an upper bound for $u_{nn}$ by \eqref{eq:r2:dir-fin-3}, and subsequently, the lower bound for $W$ on $T$. The upper bound for $u_{nn}$ follows. A lower bound for $u_{nn}$ is immediate from the ellipticity. This concludes the $C^2$ estimate for the Dirichlet problem.  
\end{proof}

This completes the estimates required for Theorem \ref{thm:w:regularity} (which we'd deferred to here).

\subsection*{Second boundary value problem}
We're up to the last of our estimates --- $C^2$  boundary estimates for the second boundary value problem. The ideas in this subsection originated with Urbas \cite{Urbas1997} and were extended to optimal transport by Trudinger and Wang \cite{TrudingerWang09} and to GJEs by Jiang and Trudinger \cite{JiangTrudinger14}.

We first obtain a strict obliqueness estimate. To make sense of this we note, using the function $G(x,u,p)$ from Lemma \ref{lem:r1:g-def}, that $u$ satisfies the boundary condition
\begin{equation}
  \label{eq:r2:bc}
  G(\cdot,u,Du) = 0 \text{ on }\partial \Omega.
\end{equation}
This boundary condition is called \textit{strictly oblique} provided there is $c>0$ such that
\begin{equation}
  \label{eq:r2:strict-oblique}
 G_p(x,u,Du)\cdot \gamma \geq c
\end{equation}
for $x \in \partial \Omega$ and $\gamma$ the outer unit normal to $\partial \Omega$. Once we obtain a strict obliqueness estimate which is independent of $u$ we can estimate second derivatives by decomposing directional derivatives into component in the $G_p$ direction and tangential direction. Versions of the following result in appear \cite{Urbas1997} \cite{TrudingerWang09}, though we follow the details provided by Liu and Trudinger \cite{LiuTrudinger16}.

\begin{theorem}
  Assume $u \in C^2(\overline{\Omega}) \cap C^3(\Omega)$ satifies \eqref{eq:g:mate} subject to \eqref{eq:r2:bc}. Assume the $C^2$ domains $\Omega,\Omega^*$ are, respectively, uniformly $g/g^*$-convex with respect to $u$ and $A,B$ are $C^2$ with $B>0$. Then the boundary condition  \eqref{eq:r2:bc} is strictly oblique. That is, \eqref{eq:r2:strict-oblique} holds for $c$ depending only on $\Omega,\Omega^*,A,B$ and $\Vert u \Vert_{C^1(\overline{\Omega})}$. 
\end{theorem}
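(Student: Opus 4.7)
My plan is to follow the Urbas-style approach adapted to GJEs by Trudinger--Wang and Jiang--Trudinger. Since $Y(\cdot,u,Du)$ maps $\Omega$ into $\overline{\Omega^*}$ and $\phi^*$ from Lemma \ref{lem:r1:g-def} is a defining function for $\Omega^*$, the composition $\tilde G(x) := G(x,u(x),Du(x))$ satisfies $\tilde G \leq 0$ in $\Omega$ with equality on $\partial\Omega$. The strict obliqueness quantity $\chi(x) := G_p(x,u(x),Du(x)) \cdot \gamma(x)$ is therefore nonnegative on $\partial\Omega$; the task is to bound it below by a positive constant depending only on the allowed quantities.

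I would suppose $\chi$ attains its minimum over $\partial\Omega$ at a point $x_0$, and work in coordinates with $\gamma(x_0) = e_n$ and $e_1,\dots,e_{n-1}$ tangent to $\partial\Omega$ at $x_0$. Tangential differentiation of the boundary identity $\tilde G = 0$ expresses one occurrence of $D_n u$ along $\partial\Omega$ in terms of $C^1$-controlled data together with the product $\chi(x_0)$ times a factor involving the normal derivative. A second tangential differentiation, combined with the fact that the tangential gradient of $\chi$ vanishes at its minimum and the tangential Hessian of $\tilde G$ is nonpositive at $x_0$, yields a relation linking $\chi(x_0)$, the second fundamental form of $\partial\Omega$ (bounded below by the uniform $g$-convexity of $\Omega$ through the inequality $D_{ij}\phi - A_{ij,p_k}D_k\phi \geq c_0 I$ of Lemma \ref{lem:r1:phi-def}), and the $p$-convexity of $G$ (bounded below by $D_p^2 G \geq c_0 I$ from Lemma \ref{lem:r1:g-def}, a consequence of the uniform $g^*$-convexity of $\Omega^*$). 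This relation takes the rough shape $c_0 \leq C(1+\chi(x_0))\chi(x_0)$ and forces a positive lower bound $\chi(x_0) \geq c$ depending only on $c_0$ and $\Vert u \Vert_{C^1(\overline{\Omega})}$.

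The main obstacle will be that third derivatives of $u$ appear upon twice tangentially differentiating $\tilde G$, and these are not controlled at this stage of the argument. They must be eliminated using the vanishing tangential gradient of $\chi$ at $x_0$, which expresses the offending third derivative combinations in terms of $C^1$ quantities of $u$ together with known second derivatives of the defining functions $\phi$ and $\phi^*$. A useful symmetry in carrying this out is the duality between the primal and dual formulations: failure of strict obliqueness for the $x$-problem corresponds, via the $g^*$-transform of Lemma \ref{lem:g:smooth_duality}, to failure of strict obliqueness for the dual problem on $\Omega^*$, so one may perform the computation on whichever side produces cleaner cancellation. Once $\chi(x_0) \geq c > 0$ is established at the minimizer, the estimate $G_p \cdot \gamma \geq c$ holds at every boundary point and strict obliqueness follows.
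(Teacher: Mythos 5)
Your opening moves are sound: the obliqueness quantity $\chi = G_p\cdot\gamma$ is nonnegative by the definition of $G$ as a defining function composed with $Yu$, and it is natural to pass to a minimizer $x_0\in\partial\Omega$ and differentiate the boundary identity tangentially. But from there your plan diverges from the paper and runs into a genuine gap.

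The paper does not differentiate the boundary condition twice. It exploits the geometric fact that $D(\phi^*\circ Yu)$ is purely normal on $\partial\Omega$ (since $\phi^*\circ Yu\le 0$ inside with equality on the boundary), writing $\phi^*_i D_j Y^i=\chi\gamma_j$. Combined with $D_jY^i=Y^i_{p_k}w_{kj}$ this produces \emph{two} representations of $G_p\cdot\gamma$, and multiplying them gives the Urbas factorization
\begin{equation*}
(G_p\cdot\gamma)^2 = (w^{mn}\gamma_m\gamma_n)\,(\phi^*_k\phi^*_l Y^l_{p_j}Y^k_{p_i}w_{ij}).
\end{equation*}
The whole proof is about bounding each factor below separately. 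Crucially, the only differentiation performed is a \emph{single} differentiation of the modified function $v=G_p\cdot\gamma - KG$; this produces only second derivatives of $u$ (sitting inside $w$), never third. Your proposed second tangential differentiation of $\tilde G$ necessarily introduces $D^3u$, and the elimination you sketch does not close: $D_\tau\chi=0$ at the minimizer is a statement about \emph{second} derivatives of $u$ (since $\chi=G_p(x,u,Du)\cdot\gamma$ involves first derivatives), so it cannot absorb the $G_{p_k}D_{k\tau\tau}u$ third-derivative terms. There is an order mismatch, and the rough inequality $c_0\le C(1+\chi)\chi$ you claim does not actually fall out of the computation you describe.

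Two further ingredients the paper needs are also missing from your plan. First, minimality at $x_0$ only kills tangential first derivatives of $v$; the paper must run a separate barrier argument (its Step 4, using $\overline{L}v$ and the defining function $\phi$) to obtain $D_n v(x_0)\le C$, without which the quantity $\phi^*_r Y^r_{p_i}D_i(G_p\cdot\gamma)$ cannot be bounded. Second, the modification $v=G_p\cdot\gamma-KG$ with $K$ large is what lets you use the uniform $p$-convexity $D^2_pG\ge c_0 I$ of Lemma \ref{lem:r1:g-def}; working with $\chi$ alone forgoes this. Finally, the duality symmetry you mention is not a useful shortcut here: the dual problem has identical structure and the same obstruction, so switching sides gains nothing.
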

\begin{proof}
\textit{Step 1. Urbas type formula:} Since, on $\partial \Omega$, $G_{p_i} = \phi^*_kY^k_{p_i}$ the quantity we need to estimate is
\begin{equation}
  \label{eq:r2:obl2}
  G_p\cdot \gamma = \phi^*_kY^k_{p_i}\gamma_i.
\end{equation}
Because $\phi^*(Yu) = 0$ on $\partial \Omega$ its derivatives in directions tangent to $\partial \Omega$ are 0, i.e. 
\[ \phi^*_kD_jY^k \tau_j = 0,\]
Because $\phi*(Yu) < 0$ in $\Omega$ its outer normal derivative on $\partial \Omega$ is nonnegative, i.e. 
\[\phi^*_kD_jY^k \gamma_j \geq 0. \]
Combined these imply $D(\phi^*(Yu))|_{\partial \Omega}$ has only an outer normal component. Thus
\begin{equation}
  \label{eq:r2:chi-def}
  \phi^*_iD_jY^i = \chi \gamma_j,
\end{equation}
for some $\chi \geq 0$. Infact since $|D\phi^*| \neq 0$ on $\partial \Omega^*$ and $\det DY \neq 0$ we have $\chi > 0.$

We recall \eqref{eq:g:premate}, which is $DY = E^{-1}w$, and also $E^{ij} = D_{p_j}Y^i$. Thus
\begin{equation}
\label{eq:r2:y-defn}  D_jY^i = D_{p_k}Y^iw_{kj}.
\end{equation}
Via which \eqref{eq:r2:chi-def} becomes
\begin{equation}
  \label{eq:r2:chi1}
  \phi^*_iD_{p_k}Y^iw_{kj} = \chi \gamma_j,
\end{equation}
or equivalently
\begin{equation}
 \label{eq:r2:exp1}
  \phi^*_iD_{p_k}Y^i = \chi w^{jk}\gamma_j.
\end{equation}
Because $G_{p_k} = \phi^*_iD_{p_k}Y^i$ equation \eqref{eq:r2:exp1} implies
\begin{equation}
 G_p\cdot \gamma = \chi w^{ij}\gamma_i\gamma_j.\label{eq:r2:urb1}
\end{equation}
Note this implies (non-strict) obliqueness. Alternatively, had we used \eqref{eq:r2:chi1} to obtain $\gamma_j = \chi^{-1}\phi^*_iY^i_{p_k}w_{kj}$ on substituting into \eqref{eq:r2:obl2} we instead obtain
\begin{equation}
G_p\cdot \gamma = \chi^{-1}\phi^*_k\phi^*_lY^l_{p_j}Y^k_{p_i}w_{ij}.\label{eq:r2:urb2}
\end{equation}
Multiplying together \eqref{eq:r2:urb1} and \eqref{eq:r2:urb2} gives
\begin{equation}
 (G_p \cdot \gamma)^2 = ( w^{mn}\gamma_m\gamma_n)(\phi^*_k\phi^*_lY^l_{p_j}Y^k_{p_i}w_{ij}),\label{eq:r2:urb}
\end{equation}
which, following Trudinger and Wang \cite{TrudingerWang09}, we refer to as a formula of Urbas type.  \\
\\
\textit{Step 2. Estimate on second term in Urbas formula:} 
To obtain the strict obliqueness we estimate both terms in \ref{eq:r2:urb} from below. It suffices to obtain these estimates at $x_0$ in $\partial\Omega$ where $G_{p}\cdot\gamma|_{\partial\Omega}$ is assumed to attain its minimum. After a rotation we assume at $x_0$ that $e_{1},\dots,e_{n-1}$ are tangential to $\partial \Omega$ and $\gamma = e_n$. Moreover we assume  $\gamma$ has been extended as a $C^2$ vector field to a neighbourhood of $\partial \Omega$. For $K$ to be chosen large, define
\[ v := G_{p}\cdot \gamma - KG.\]
We recall $G$ has been modified so as to ensure $G_{p_ip_j} \geq c_0I$.
Since $G$ is 0 on $\partial \Omega$, $v|_{\partial \Omega}$ still has its minimum at $x_0$. Thus by differentiating tangentially
\[ D_{i}v = D_{i}(G_p \cdot \gamma) - KD_iG = 0 \text{ for }i=1,\dots,n-1. \]
We claim $D_n v \leq C$, for $C$ depending on the quantities stated in the theorem. The proof is a barrier argument based diversion which we save for the conclusion of this proof (Step 4). Computing $D_i G$ we have
\begin{align*}
 D_i(G_p \cdot \gamma)  - K \phi_{k}^*D_iY^k =0 &\text{ for }i = 1,\dots,n-1,\\
 D_i(G_p \cdot \gamma)  - K \phi_{k}^*D_iY^k \leq C &\text{ for }i=n.
\end{align*}
Hence after multiplying by $\phi^*_{r}Y^r_{p_i}$ and summing over $i=1,\dots,n$ 
\[K\phi^*_{r}Y^r_{p_i}\phi^*_kD_iY^k \geq \phi^*_rY^r_{p_i}D_i(G_p\cdot\gamma) - C\phi_r^*Y^r_{p_n}. \]
 Substituting $D_{i}Y^k = Y^k_{p_j}w_{ij}$, and noting $\gamma = e_n$ at $x_0$ implies $\phi_r^*Y^r_{p_n} = G_{p}\cdot\gamma$, we have
\begin{equation}
 Kw_{ij}Y^k_{p_i}Y^l_{p_j}\phi^*_k\phi^*_l \geq \phi^*_rY^r_{p_i}D_i(G_p\cdot\gamma) - C(G_{p}\cdot\gamma).\label{eq:r2:tosub}
\end{equation}

The left-hand side is one of the terms from the Urbas formula \eqref{eq:r2:urb} --- it's what we want to estimate. So we  focus on $\phi^*_rY^r_{p_i}D_i(G_p\cdot\gamma)$. We require a structure relation for $Y$. From our definition of $A$, specifically that
\[ E^{kl}[D_{li}u-A_{li}] = D_iY^k = Y^k_i+Y^k_uu_i+Y^k_{p_l}D_{li}u,\]
we obtain 
\[ -Y^{k}_{p_l}A_{li} = Y^k_i+Y^k_zp_i,\]
where the indices are chosen to align with a coming substitution. Differentiating with respect to $p_j$ yields the identity
\begin{equation}
 -Y^{k}_{p_lp_j}A_{li}-Y^k_{p_l}A_{li,p_j} = Y^{k}_{ip_j}+Y^k_{up_j}p_i+Y^k_z\delta_{ij}.\label{eq:r2:ystructure}
\end{equation}
Now using \eqref{eq:r2:obl2} and \eqref{eq:r2:y-defn}  yields
\[ D_i(G_p \cdot\gamma) = \phi_{km}^*Y^k_{p_j}Y^m_{p_l}w_{li}\gamma_j + \phi^*_kY^k_{p_j}D_i\gamma_j + \phi_k^*\gamma_j(Y^k_{i,p_j}+Y^k_{u,p_j}u_i+Y^k_{p_lp_j}u_{li}).\]
In this expression we use equation \eqref{eq:r2:ystructure} to substitute for the first two terms in parentheses and obtain
\begin{align}
  D_i(G_p \cdot \gamma) &= \phi_{km}^*Y^k_{p_j}Y^m_{p_l}w_{li}\gamma_j + \phi^*_kY^k_{p_j}D_i\gamma_j \nonumber \\
               &\quad\quad+ \phi_k^*\gamma_j(-Y^{k}_{p_lp_j}A_{li}-Y^k_{p_l}A_{li,p_j}-Y^k_z\delta_{ij}+Y^k_{p_lp_j}u_{li})\nonumber \\
               &= \Big[\phi_{km}^*Y^k_{p_j}Y^m_{p_l}w_{li}\gamma_j + \phi^*_kY^k_{p_lp_j}w_{li}\gamma_j\Big] + \phi^*_kY^k_{p_j}\big(D_i\gamma_j-A_{ij,p_l}\gamma_l\big)\nonumber \\
               &\quad\quad- \phi^*_kY^k_z\gamma_j\delta_{ij} \nonumber \\
               &= D^2_{p_jp_l}(\phi^*\circ Y)w_{li}\gamma_j+ \phi^*_kY^k_{p_j}\big(D_i\gamma_j-A_{ij,p_l}\gamma_l\big)- \phi^*_kY^k_z\gamma_i, \label{eq:r2:Di}
\end{align}
where the second equality uses $w_{li} = u_{li}-A_{li}$. 

We multiply \eqref{eq:r2:Di} by $\phi^*_rY^r_{p_i}$ and sum over $i=1,\dots,n$ to obtain
\begin{align}
 \phi^*_rY^r_{p_i}&D_i(G_p \cdot \gamma) = \phi^*_rY^r_{p_i}D^2_{p_jp_l}(\phi^*\circ Y)w_{li}\gamma_j +  \phi^*_rY^r_{p_i}\phi^*_kY^k_{p_j}\big(D_i\gamma_j-A_{ij,p_l}\gamma_l\big)\nonumber \\&\quad\quad\quad\quad\quad\quad+\phi^*_rY^r_{p_i}\phi^*_kY^k_z\gamma_i\nonumber \\
&= \chi D^2_{p_jp_l}(\phi^*\circ Y)\gamma_j\gamma_l+  \phi^*_rY^r_{p_i}\phi^*_kY^k_{p_j}\big(D_i\gamma_j-A_{ij,p_l}\gamma_l\big) - \phi^*_kY^k_z(G_{p}\cdot\gamma) \label{eq:r2:sum}
\end{align}

From our domain convexity assumptions, namely Lemmas \ref{lem:r1:phi-def} and \ref{lem:r1:g-def}, the first term is positive and the second is greater than some constant $c$. Thus we have the estimate
\begin{equation}
  \label{eq:r2:use-again}
   \phi^*_rY^r_{p_i}D_i(G_p \cdot \gamma) \geq  c-C(G_p\cdot\gamma). 
\end{equation}
Here $c$ and $C$ depend only on allowed quantities. Now if $G_p \cdot \gamma \geq c/4C$ then this is our obliqueness estimate. Otherwise \eqref{eq:r2:tosub} and \eqref{eq:r2:use-again} yield
\[  Kw_{ij}Y^k_{p_i}Y^l_{p_j}\phi^*_k\phi^*_l \geq \frac{c}{2}.\]
This is our estimate for the second term in the Urbas formula. The other term is quicker. \\
\\
\textit{Step 3. Estimate for first term in Urbas formula:}
To estimate the remaining term $w^{ij}\gamma_i\gamma_j$ from below at $x_0$ note our assumption $D_iv = 0$ for $i=1,\dots,n-1$ and $D_nv \leq C$ implies  $Dv = \tau \gamma$ for $\tau \leq C$. Thus it suffices to esimtate $w^{ij}\gamma_iD_jv$ from below.  If we recall from \eqref{eq:r2:exp1} that $w^{ij}\gamma_i = \chi^{-1}\phi^*_kY^k_{p_j}$ we obtain
\begin{align}
  w^{ij}\gamma_iD_j v &= \chi^{-1}\phi^*_kY^k_{p_j}D_j v  \nonumber\\
& = \chi^{-1}\phi^*_kY^k_{p_j}D_j(G_p \cdot \gamma) -  K\chi^{-1}\phi^*_kY^k_{p_j} D_j G.  \label{eq:r2:tosub1}
\end{align}
Noting that on $\partial \Omega$, $D_jG= \phi_k^*D_jY^k =\phi_k^*Y^k_{pl}w_{lj} $, the second term satisfies 
\[ K\chi^{-1}\phi^*_kY^k_{p_j} D_j G =K\chi^{-1}\phi^*_iY^i_{p_j}\phi_k^*Y^k_{pl}w_{lj} =K(G_p\cdot\gamma),  \] where we have employed \eqref{eq:r2:urb2}.
Moreover we estimate the first term of \eqref{eq:r2:tosub1} using \eqref{eq:r2:sum}. We note in particular the factor $\chi$ in the first term on the right-hand side of \eqref{eq:r2:sum} along with the uniform positivity of $D_{p_jp_l}(\phi^*\circ Y)$ implies that \eqref{eq:r2:tosub1} becomes  
\begin{equation}
  \label{eq:r2:urb-2}
  w^{ij}\gamma_iD_j v \geq c_0- K(G_p\cdot\gamma). 
\end{equation}
As before we're free to assume $G_p\cdot\gamma \leq c_0/2K$. Thus \eqref{eq:r2:urb-2} completes the obliqueness estimate, provisional on the claim $D_nv(x_0)\leq C$.\\   
\\
\textit{Step 4. Estimate on $D_nv(x_0)$:} This final step is obtained by a barrier argument, and it is this barrier argument. We consider the linear operator
\[ \overline{L}v := w^{ij}[D_{ij}v - D_{p_k}A_{ij}D_kv].\]
To simplify calculations set $F(x,u,p) = G_p(x,u,p)\cdot\gamma(x) - K G(x,u,p)$ where  $K$ will be chosen large in the course of the argument. Set $v(x) = F(x,u,Du)$ and compute 
\begin{align}
\label{eq:r2:gen-calc}  \overline{L}v &= w^{ij}F_{p_lp_m}D_{li}uD_{mj}u+F_{p_l}w^{ij}(D_{lij}u-D_{p_k}A_{ij}D_{lk}u)\\
\nonumber  &+ w^{ij}(F_{up_l}u_jD_{li}u+F_{up_l}u_iD_{lj}u+F_zu_{ij}+F_{jp_l}D_{li}u+F_{ip_l}D_{lj}u)\\
\nonumber  &+ w^{ij}\big(F_{uu}u_{i}u_j+F_{iu}u_j+F_{ju}u_i+F_{ij}-D_{p_k}A_{ij}(F_k+F_zu_k)\big).
\end{align}
An estimate for each line follows without too much difficulty. The final line is bounded above by $Cw^{ii}$ where $C$ depends on $A,\Omega,\Omega^*, \Vert u \Vert_{C^1(\Omega)}$ and $K$. Similarly, the calculation
\[ w^{ij}D_{li}u = w^{ij}(w_{li}+A_{li}) \leq \delta_{li}+Cw^{ij},\]
implies the second line is bounded above by $C(w^{ii}+1)$. Next, differentiating the equation with respect to $l$ yields
\[F_{p_l}w^{ij}(D_{lij}u-D_{p_k}A_{ij}D_{lk}u) \leq C(1+w^{ii}) + |B_{p_k}|w_{ii}.\]
For the remaining term $w^{ij}F_{p_lp_m}D_{li}uD_{mj}u$, the $g^*$-convexity of the target domain, via Lemma \ref{lem:r1:g-def}, implies $G_{p_kp_l} \geq c_0I$. Thus after a choice of $K$ large
\[ w^{ij}F_{p_lp_m}D_{li}uD_{mj}u \leq  -\frac{Kc_0}{2}w_{ii}+C(w^{ii}+1).\]
Hence
\[ \overline{L}v \leq \frac{-Kc_0}{4}w_{ii}+|B_{p_k}|w_{ii}+C(w^{ii}+1). \]
We have $w^{ii} \geq c$ by recalling that $\det w \leq c$ (though for possibly different $c$). Using this and a choice of $K$ sufficiently large we have
\[ \overline{L}v \leq Cw^{ii}.\]

Now for the other function in our barrier argument. By Lemma \ref{lem:r1:g-def} there is a defining function $\phi$ satisfying
\[ \overline{L}\phi \geq w^{ii}.\]
on $\Omega_\epsilon := \{x \in \Omega; \text{dist}(x,\partial \Omega) < \epsilon\}$. Note that $\phi = 0$ on $\partial \Omega$ and $\phi < - k < 0$  on $\{x; \text{dist}(x,\Omega) = \epsilon\}$ for some $k>0$. Thus for $a$ sufficiently large 
\begin{align*}
  \overline{L}(v - a \phi ) \leq 0 \text{ in } \Omega_\epsilon ,\\
 v-a\phi  \geq v(x_0) \text{ on }\partial \Omega_\epsilon,
\end{align*}
where we recall $x_0$ is the point on $\partial \Omega$ where $v$ attains its minimum.
By the comparison principle $v-a\phi$ attains its minimum over $\Omega_\epsilon$ at $x_0$. Thus the outer normal $D_n(v-a\phi) \leq 0$, which is to say, $D_nv(x_0) \leq C$. 
\end{proof}

The obliqueness estimate is used for boundary second derivative estimates.

\begin{theorem}\label{thm:r2:2bvp-global}
Assume $u \in C^4(\Omega) \cap C^3(\overline{\Omega})$ is an elliptic solution of \eqref{eq:g:mate} subject to \eqref{eq:g:2bvp}. Assume the $C^2$ domains $\Omega,\Omega^*$ are, respectively, uniformly $g/g^*$-convex with respect to $u$ and $A,B$ are $C^2$ with $B>0$. Then there exists $C$ depending on $g,\Omega,\Omega^*,A,B$ with
\[ \sup_{\partial\Omega}{|D^2u|}\leq C(1+\sup_{\Omega}|D^2u|)^{\frac{2n-3}{2n-2}}. \] 
\end{theorem}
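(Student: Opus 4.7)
The plan is to exploit the strict obliqueness estimate just established: for every $x \in \partial \Omega$, $G_p(x, u, Du) \cdot \gamma(x) \geq c_0 > 0$, where $G(x,u,p) = \phi^*(Y(x,u,p))$. Since $\beta := G_p$ is transverse to $\partial \Omega$, any direction at $x_0 \in \partial \Omega$ decomposes into a tangential part $\tau$ and a multiple of $\beta$, so estimating $|D^2 u|$ on $\partial \Omega$ reduces to bounding the tangential--tangential block $w_{\tau\tau}$, the mixed term $w_{\tau\beta}$, and the pure oblique $w_{\beta\beta}$ of $w := D^2 u - A(\cdot,u,Du)$.

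The central step is the tangential bound. Write $M := \sup_\Omega |D^2 u|$ and
\[ M_0 := \sup\{ w_{\tau\tau}(x) : x \in \partial \Omega,\ \tau \in T_x\partial\Omega,\ |\tau|=1\}, \]
and suppose this supremum is attained at $x_0 \in \partial \Omega$ and $\tau_0 \in T_{x_0}\partial\Omega$. Extend $\tau_0$ to a $C^2$ unit tangent field on a boundary neighbourhood and consider a test function of the form $v := w_{\tau\tau} - a(G_p\cdot\gamma) + bG$, with constants $a,b$ chosen so that $v|_{\partial\Omega}$ is still maximised at $x_0$ and the associated inner-normal derivative has a controllable sign. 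Twice differentiating the boundary identity $G(\cdot,u,Du)\equiv 0$ along $\tau_0$ and using $G_{pp}\geq c_0 I$ from Lemma \ref{lem:r1:g-def} yields a quadratic-form estimate from which an inner-normal inequality for $w_{\tau\tau}$ at $x_0$ can be extracted; combined with strict obliqueness and a Pogorelov-type calculation for $\overline{L}v$, whose dangerous third-order terms are absorbed via A3w exactly as in Theorem \ref{thm:r2:pog-global}, a maximum-principle argument on a boundary strip delivers a bound of the shape $M_0 \leq C(1 + M^{1/2})$.

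With the tangential block controlled, the remaining components follow by algebra. Diagonalising $w$ in the tangent--oblique decomposition, the $(n-1)$ tangential eigenvalues are each bounded by $C M_0$, and combining with $\det w = B \leq C$ together with the trivial bound $|w| \leq C M$ in the cross-directions yields $w_{\beta\beta} \leq C(1+M_0^{n-2} M)$, while Cauchy--Schwarz applied to the nonnegative form $w$ gives $w_{\tau\beta}^2 \leq w_{\tau\tau}\, w_{\beta\beta}$. Substituting the tangential estimate $M_0 \leq C(1+M^{1/2})$ and balancing exponents produces the claimed bound $\sup_{\partial\Omega}|D^2u| \leq C(1+M)^{(2n-3)/(2n-2)}$.

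The main obstacle will be the tangential estimate itself: twice differentiating the boundary condition along $\tau_0$ couples the boundary second fundamental form, the Hessian $G_{pp}$ of the boundary operator, and the third derivatives of $u$ in a delicate way, and it is precisely the combination of the uniform $g^*$-convexity of $\Omega^*$ (forcing $G_{pp}\geq c_0 I$) with the A3w condition (via its equivalent form \eqref{eq:r1:a3w-equiv}, used to absorb the cross terms $w^{ij} A_{ij,p_kp_l} D_k(\cdot) D_l(\cdot)$) that makes the computation close, at the unavoidable cost of the $M^{1/2}$ loss on the right-hand side. The subsequent algebraic step recovering the precise exponent $(2n-3)/(2n-2)$ is routine modulo careful tracking of how the cross-block estimate degrades with $M$.
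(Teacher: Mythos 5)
Your decomposition into tangential, mixed, and oblique blocks is the right skeleton, but the logic and the hard steps are backwards, and the algebraic estimates you propose for the two non-tangential blocks don't close.

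First, you miss the exact cancellation $w_{\tau\beta} = 0$ on $\partial\Omega$. Differentiating $\phi^*(Y(\cdot,u,Du)) = 0$ in a tangential direction $\tau$ and using $D_iY^k = Y^k_{p_j}w_{ij}$ gives $\phi^*_kY^k_{p_j}w_{ij}\tau_i = 0$, i.e.\ $w_{\tau\beta}=0$ identically, not merely $w_{\tau\beta}^2 \le w_{\tau\tau}w_{\beta\beta}$. Using Cauchy--Schwarz here discards structure you actually need.

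Second, the oblique estimate $w_{\beta\beta}$ is obtained in the paper by a dedicated barrier argument controlling the outer-normal derivative $D_\gamma[\phi^*(Yu)]$ on a boundary strip, yielding $w_{\beta\beta} \leq C(1+M)^{(n-2)/(n-1)}$; this uses $w_{ii}^{1/(n-1)} \leq Cw^{ii}$ from $\det w \leq C$. Your proposed derivation via ``diagonalising $w$'' plus a determinant bound runs the inequality the wrong way: with $w_{\tau\beta}=0$, $\det w$ bounded, and tangential eigenvalues bounded \emph{above} by $CM_0$, you get a \emph{lower} bound $w_{\beta\beta} \geq c/(CM_0)^{n-1}$; for an upper bound you would need lower bounds on tangential eigenvalues that you don't have.

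Third and most importantly, the order of steps is inverted and your tangential estimate $M_0 \leq C(1+M^{1/2})$ cannot be established independently. Twice-differentiating $G(\cdot,u,Du)=0$ tangentially and using $G_{p_kp_l}\geq c_0I$ gives $|u_{11}|^2 \leq C(1+M) - u_{11\beta}$; the entire difficulty is bounding $-u_{11\beta}$, and the barrier that does this needs the already-established $w_{\tau\beta}=0$ and $w_{\beta\beta}$ estimates as inputs (they appear in the boundary comparison function $|\tau|^2 w_{11}(0) + b^2(1+M)^{(n-2)/(n-1)}$). The resulting bound is $-D_\beta w_{11}(0) \leq C(1+M)^{(2n-3)/(n-1)}$, which dominates $C(1+M)$ for $n\geq 2$. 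Taking the square root then produces $w_{\tau\tau} \leq C(1+M)^{(2n-3)/(2n-2)}$ --- the exponent in the theorem --- not the $(1+M)^{1/2}$ you claim, which is the exponent only in the special case $n=2$. Your ``balancing exponents'' step therefore starts from a false intermediate estimate, and feeding $M_0 \lesssim M^{1/2}$ into your $w_{\beta\beta} \lesssim M_0^{n-2}M$ would give $w_{\beta\beta}\lesssim M^{n/2}$, which is super-linear and useless.

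The correct sequence is: prove $w_{\tau\beta}=0$ from the boundary identity; prove $w_{\beta\beta} \leq C(1+M)^{(n-2)/(n-1)}$ by a barrier on $\phi^*(Yu)$; only then attack $w_{\tau\tau}$, where the first two estimates enter the boundary comparison function and produce the final $(2n-3)/(2n-2)$ exponent.
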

\begin{remark}
  The exponent on the right hand side may be perplexing to newcomers. What's important is that the exponent is less than one. So when combined with Theorem \ref{thm:r2:pog-global} (the estimate for second derivatives in terms of their boundary values) we obtain
  \begin{equation}
    \label{eq:r2:comb-est}
      \sup_{\Omega}|D^2u| \leq C(1+\sup_{\Omega}|D^2u|)^{\frac{2n-3}{2n-2}}.
  \end{equation}
  The sub-linear power on the right hand side implies $\sup_{\Omega}|D^2u| \leq C'$ depending only on the $C$ in the expression: Otherwise $ \sup_{\Omega}|D^2u|$ sufficiently large contradicts \eqref{eq:r2:comb-est}
\end{remark}

\begin{proof}[Proof. (Theorem \ref{thm:r2:2bvp-global})]
  \textit{Step 1. Decompose into tangent and oblique direction}\\
  We consider an arbitrary unit vector $\xi \in \mathbf{R}^n$ and estimate $w_{\xi\xi}$ on $\partial \Omega$. To this end, at each $x \in \partial \Omega$, decompose $\xi$ into a tangential component and a component in the direction of $\beta := G_p(x,u,Du)$. Explicitly,
\[ \xi = \tau+b\beta, \quad \quad b = \frac{\xi\cdot\gamma}{\beta\cdot\gamma}, \quad \quad\tau\cdot\gamma = 0,\]
where crucially $|b|$ is controlled by the obliqueness. We obtain
\begin{equation}
 w_{\xi\xi} = w_{\tau\tau}+2bw_{\tau\beta}+b^2w_{\beta\beta}.\label{eq:bbound1}
\end{equation}
Thus we need only estimate $w_{\tau \tau},w_{\tau \beta},w_{\beta\beta}$.\\
\\
\textit{Step 2. The easiest term: $w_{\tau\beta}$.} Differentiate $\phi^*(Y(\cdot,u,Du)) = 0$ on $\partial \Omega$ in a direction tangential to $\partial \Omega$ and obtain
\begin{equation}
  \label{eq:r2:simple-est}
   \phi^*_kD_iY^k\tau_i = 0.
\end{equation}
Recalling \eqref{eq:r2:y-defn}, that is $D_iY^k = Y^k_{p_j}w_{ij}$, equation \eqref{eq:r2:simple-est} implies
\begin{equation}
 w_{\tau\beta} = 0.\label{eq:wtbbound}
\end{equation}
\\
\textit{Step 3. $w_{\beta\beta}$ estimate.}
The $w_{\beta\beta}$ estimates are via a barrier argument. We've used such arguments throughout this chapter to obtain estimates on outer normal derivatives. This is useful here because, by a direct calculation like \eqref{eq:bbound1}, $D_\gamma(\phi^*(Y(\cdot,u,Du))) = w_{\tau\beta}+cw_{\beta\beta}$ for a controlled $c$. Set
\[\overline{L}v := w^{ij}(D_{ij}v - D_{p_k}A_{ij}(x,u,Du)D_kv). \]
Recall when $v = F(\cdot,u,Du)$  by \eqref{eq:r2:f-calcs} we have
\begin{equation}
|\overline{L}v| \leq C(1+w_{ii}+w^{ii}).\label{eq:tracestuff1}
\end{equation}

Now, we claim 
\begin{equation}
w_{ii}^{\frac{1}{n-1}} \leq Cw^{ii}.\label{eq:tracestuff2}
\end{equation}
Too see this is write the eigenvalues of $w$ as $\lambda_1\geq \dots \geq \lambda_n > 0$ and note
$C \geq \det w \geq \lambda_1(\lambda_n)^{n-1}.$ Combine with $\lambda_1 \geq w_{ii}/n$ and $\lambda_n \geq (w^{ii})^{-1}$.  Next, because $w^{ii} \geq C$, \eqref{eq:tracestuff1} simplifies to 
\[|\overline{L}v| \leq C(w^{ii}+w_{ii}).\]
Combined with \eqref{eq:tracestuff2} this becomes
\begin{equation}
| \overline{L}v| \leq C(1+M)^{\frac{n-2}{n-1}}w^{ii},\label{eq:vest}
\end{equation}
where $M = \sup_{\Omega}|D^2u|.$

For our barrier take $\phi,\phi^*$ as in Lemmas \ref{lem:r1:phi-def} and \ref{lem:r1:g-def} and set $v = \phi^*(Y(\cdot,u,Du))$,
\[ \mu := C(1+M)^{\frac{n-2}{n-1}}\phi-v.\]
We consider these on $\Omega_\epsilon = \{x \in \Omega; \text{dist}(x,\partial\Omega) < \epsilon \} .$ By Lemma \ref{lem:r1:phi-def}, a choice of $C$ large, and \eqref{eq:vest} we have $\overline{L}\mu > 0$ in $\Omega_\epsilon$. Set $\mathcal{N} =  \{x \in \Omega; \text{dist}(x,\partial\Omega) = \epsilon\}$. We have 
\[ \partial\Omega_\epsilon = \partial\Omega \cup \mathcal{N}.\]
On $\partial \Omega$ we have $\mu = 0$. A further choice of $C$ large depending only on $\inf_{\Omega^*}\phi^*$ and $\sup_{\mathcal{N}}\phi$ yields $\mu \leq 0$ on $\mathcal{N}$. Since now $\overline{L}\mu > 0$ in $\Omega_\epsilon$ and $\mu \leq 0$ on $\partial\Omega_\epsilon$ the maximum principle implies $\mu \leq 0$ in $\Omega_\epsilon$. Thus on $\partial\Omega$, $D_\gamma \mu > 0$. This implies $D_\gamma[\phi^*(Y(\cdot,u,Du))] \leq C(1+M)^{\frac{n-2}{n-1}}$. Then by our remarks at the start of this step 
\begin{equation}
  \label{eq:wbbbound}
  w_{\beta\beta} \leq C(1+M)^{\frac{n-2}{n-1}}.
\end{equation}

\textit{Step 4. $w_{\tau\tau}$ bound.}
We assume the repeated tangential derivatives have a boundary maximum at $x_0 = 0$ in a tangential direction assumed to be $e_1$, and  estimate $w_{11}(0)$. Differentiating $G(\cdot,u,Du) = 0$ on $\partial\Omega$ twice in the tangential direction $e_1$ we obtain 
\begin{equation}
 G_{p_kp_l}u_{1k}u_{1l}+u_{11\beta} \leq C(1+M).\label{eq:willbound}
\end{equation}
By Lemma \ref{lem:r1:g-def} we have $G_{p_kp_l} \geq c_0I$ on $\partial \Omega$ so \eqref{eq:willbound} implies
\begin{equation}
  \label{eq:r2:final-step}
   |u_{11}|^2 \leq C(1+M) - u_{11\beta}.
\end{equation}
An estimate of for $-D_{\beta}w_{11}$ follows from an outer normal estimate $-D_\gamma w_{11}$. Thus another barrier argument is in order. 

We use \eqref{eq:bbound1} with $\xi = e_1$ along  with \eqref{eq:wtbbound} and \eqref{eq:wbbbound} to obtain that on the boundary
\begin{align}
  w_{11} &\leq w_{\tau\tau}+b^2w_{\beta\beta}\leq |\tau|^2w_{11}(0)+b^2(1+M)^{\frac{n-2}{n-1}}.\label{eq:w11bound}
\end{align}
Now consider
\begin{align*}
  v &:=  w_{11}-|\tau|^2w_{11}(0)-b^2(1+M)^{\frac{n-2}{n-1}}.
\end{align*}
In light of \eqref{eq:w11bound} we have $v \leq 0$ on $\partial \Omega$ and and $v(0) = 0$. Elsewhere we clearly have $|v| \leq C(1+M)$. 

Now, equation \eqref{eq:p:lw11} yields
\[ Lw_{11} \geq -C(w_{ii}^2+w_{ii}w^{ii}+w^{ii}).\]
Because $|\tau|^2,b^2$ are of the form $F(\cdot,u,Du)$ the same calculations as for \eqref{eq:vest} (though now with an additional power of $w_{ii}$) yield
\begin{equation}
  \label{eq:r2:v-diff}
  Lv \geq -C(1+M)^{\frac{2n-3}{n-1}}w^{ii}. 
\end{equation}

Then the barrier argument is as follows. Consider on $\Omega \cap B_r(0)$ for small $r$ the function 
\[ \psi = \phi - \mu \text{dist}(x,\partial \Omega)+K\text{dist}(x,\partial \Omega)^2-a |x|^2.\]
We follow the reasoning used for inequality \eqref{eq:r2:a-inf} to conclude if $K$ is sufficiently large, and $a,\mu,r$ are sufficiently small, we have for some small $c_0$
\begin{align*}
  L\psi \geq c_0w^{ii} \text{ in }\Omega \cap B_r(0)\\
  \psi \leq 0 \text{ on }\partial \Omega \cap \overline{B_r(0)}\\
  \psi \leq -ar^2 \text{ on }\Omega \cap \partial B_r(0).
\end{align*}
Thus for $b$ sufficiently large $b\psi + v \leq 0$ on $\partial (\Omega \cap B_r(0))$ and $L(b\psi + v ) \geq 0$. Thus $b\psi+v$ attains its maximum over $\Omega \cap B_r(0)$ at $0$ and the outer normal derivative at $0$ is nonnegative. That is, $ D_\gamma \mu > 0$, from which it follows that
\begin{equation}
  \label{eq:r2:third}
   -D_{\beta}w_{11}(0) \leq C(1+M)^{\frac{2n-3}{n-1}}.
\end{equation}
We've implicitly used estimates on $D_\beta|b|^2,D_\beta|\tau|^2$ to control $D_\beta v$. Since $|b|^2,|\tau|^2$ are of the form $F(\cdot,u,Du)$ they permit, via the barrier argument of step 3, an estimate like \eqref{eq:wbbbound}. 
Using \eqref{eq:r2:third} in \eqref{eq:r2:final-step} we obtain,
\begin{equation}
 w_{11}(0) \leq  C(1+M)^{\frac{2n-3}{2(n-2)}}\label{eq:wttbound}
\end{equation}
Combining \eqref{eq:wtbbound}, \eqref{eq:wbbbound} and \eqref{eq:wttbound} yields the boundary estimate. 
\end{proof}

\section{Global regularity via degree theory}
\label{sec:glob-regul-via}

Now we have everything needed to prove the global regularity of Aleksandrov solutions (Theorem \ref{thm:r1:main}). We recall our comments after the statment of the theorem: all that's left to do is construct a solution $v \in C^3(\overline{\Omega})$ satisfying $v(x_0) = u(x_0)$. Here $u$ is the given Aleksandrov solution and $x_0$ is some point in $ \Omega$. To this end, let $g(\cdot,y_0,z_0)$ be a support of $u$ at $x_0$ and $\epsilon,\rho,\delta$ be small parameters. Consider the function $\overline{u}$ from Lemma \ref{lem:r1:constr-unif-g} and the function $u_\epsilon$ from Theorem \ref{thm:r1:hom-start}. Let $f_\delta,f^*_\epsilon$ denote smooth extensions of $f,f^*$ to $\Omega_\delta,\Omega^*_{\delta,\epsilon}$  normalized so they still satisfy mass balance. We first obtain the solvability of an approximating problem. Then we complete the proof of Theorem \ref{thm:r1:main} by sending $\epsilon,\delta,\rho \rightarrow 0$.  

\begin{lemma}\label{lem:r2:approx-prob}
  Assume the conditions of Theorem \ref{thm:r1:main}. There exists $v \in C^3(\overline{\Omega}_\delta)$ depending on $\delta,\epsilon,\rho$ and satisfying
  \begin{align}
 \label{eq:r2:f-app-1}   \det DY(\cdot,v,Dv) &= \frac{f_\delta(\cdot)}{f_\epsilon^*(Y(\cdot,v,Dv))}\text{ in }\Omega_\delta\\
 \label{eq:r2:f-app-2}   Yv(\Omega_\delta) &= \Omega^*_{\delta,\epsilon}\\
    v(x_0) &= u_\epsilon(x_0) \label{eq:r2:f-app-3}
  \end{align}
\end{lemma}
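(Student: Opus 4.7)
The plan is to obtain $v$ by a degree-theory argument in the style of Jiang and Trudinger [JiangTrudinger18], modified so that the pointwise constraint \eqref{eq:r2:f-app-3} is enforced along with \eqref{eq:r2:f-app-1} and \eqref{eq:r2:f-app-2}. Working in $C^{2,\alpha}(\overline{\Omega_\delta})$, I would connect the target problem to one that is explicitly solvable by a continuous homotopy, verify uniform a priori estimates along the homotopy, and conclude via homotopy invariance of the Leray--Schauder degree. The role of $u_\epsilon$ from Theorem \ref{thm:r1:hom-start} is essential: at one endpoint of the homotopy, $u_\epsilon$ itself is a smooth, uniformly $g$-convex solution of the second boundary value problem on $\Omega_\delta,\Omega^*_{\delta,\epsilon}$ (taking the density $f_0:=f^*_\epsilon(Yu_\epsilon)\det DYu_\epsilon$), so at that endpoint the linearised operator is invertible and the local degree can be computed directly.

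For the homotopy itself I would use a two-parameter family: $t\in[0,1]$ interpolates the density from $f_0$ to $f_\delta$ (rescaling to preserve mass balance against $f^*_\epsilon$ on $\Omega^*_{\delta,\epsilon}$), while a scalar parameter $c$, inserted into the equation in a way that does not destroy A3w, A4w or A5, plays the role of the additive constant that is unavailable in the genuinely $u$-dependent setting. For each fixed $c$ in a compact interval, the degree-theoretic argument of [JiangTrudinger18] produces a solution $v_{t,c}$ at $t=1$; the remaining task is to show continuity of $c\mapsto v_{1,c}(x_0)$ and to exhibit values of $c$ giving $v_{1,c}(x_0)$ on either side of $u_\epsilon(x_0)$, after which an intermediate-value argument yields the required $c$ with $v_{1,c}(x_0)=u_\epsilon(x_0)$.

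The a priori estimates needed to make this scheme work are exactly those assembled in Chapters \ref{chap:r1} and \ref{chap:r2}. Since $\Omega_\delta\subset\subset U$ is uniformly $g$-convex and $\Omega^*_{\delta,\epsilon}$ is $g^*$-convex with respect to any $g$-convex $v$ under consideration (for $\delta,\epsilon$ small), Theorem \ref{thm:sc:contain} together with Remark \ref{rem:sc:unif-dom} forces strict $g$-convexity, which upgrades ellipticity to uniform ellipticity. Gradient bounds come from A5 and the 2BVP; interior second-derivative estimates follow from Theorem \ref{thm:r2:pog-global} with the barrier built from Theorem \ref{thm:r1:lin}; global $C^2$ estimates come from combining Theorem \ref{thm:r2:pog-global} with the boundary estimate Theorem \ref{thm:r2:2bvp-global} and the strict obliqueness estimate. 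Evans--Krylov and Schauder theory then upgrade to uniform $C^{2,\alpha}$ bounds, placing the relevant solution map into a compact perturbation of the identity on $C^{2,\alpha}(\overline{\Omega_\delta})$, as required for the degree theory.

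The main obstacle is the interplay between the two parameters. The $u$-dependence in $Y$ means solutions do not form a one-parameter family under vertical shift, so the pointwise normalisation \eqref{eq:r2:f-app-3} does not decouple from \eqref{eq:r2:f-app-1}--\eqref{eq:r2:f-app-2}. I expect the delicate part will be choosing the $c$-dependence so that (i) the uniform $C^{2,\alpha}$ estimates hold on a compact $c$-interval independent of $t$, and (ii) as $c$ traverses this interval the resulting values $v_{1,c}(x_0)$ actually cover $u_\epsilon(x_0)$; both are achievable by inserting $c$ through a shift that only modestly perturbs the admissible height set $J$ in A0, but verifying the coverage condition requires a comparison-principle argument against the explicit solution $u_\epsilon$ at $t=0$.
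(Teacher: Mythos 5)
Your proposed route is genuinely different from the paper's, and it has a gap at its crux. The paper does \emph{not} introduce a second scalar parameter $c$ followed by an intermediate-value argument. Instead, it inserts a localized exponential penalty into the right-hand side of the one-parameter homotopy:
\[ f^*(Yv)\det DYv = e^{[(1-t)\tau + \eta_a(\cdot)](v-u_\epsilon)}\big[tf_\delta + (1-t)f^*_\epsilon(Yu_\epsilon)\det DYu_\epsilon\big], \]
where $\eta_a$ is a cutoff supported in $B_a(x_0)$. Mass balance then forces any $g$-convex solution at $t=1$ to intersect $u_\epsilon$ somewhere inside $B_a(x_0)$: if $v > u_\epsilon$ on $B_a(x_0)$, the exponential factor strictly exceeds $1$ there and equals $1$ outside, so $\int_{\Omega_\delta} f^*(Yv)\det DYv > \int_{\Omega_\delta} f_\delta$, contradicting the second boundary condition and mass balance. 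Uniform $C^{3,\alpha}$ estimates are shown to be independent of $a$; sending $a\to 0$ then pushes the contact point to $x_0$ and yields $v(x_0)=u_\epsilon(x_0)$. The factor $e^{(1-t)\tau(v-u_\epsilon)}$ with $\tau$ large serves a separate purpose: it produces the positive zeroth-order coefficient needed for unique solvability of the linearisation at $t=0$.

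The gap in your scheme is the step ``show continuity of $c\mapsto v_{1,c}(x_0)$.'' Degree theory gives existence of $v_{1,c}$, not uniqueness, and the whole point of the $u$-dependence in GJEs is that uniqueness is subtle (indeed, Chapter \ref{cha:u} is devoted to partial substitutes for it). Without a continuous selection from the solution set, the intermediate value argument does not apply, and nothing in your outline supplies such a selection or a connectedness argument for the solution branch. You also leave the insertion of $c$ unspecified; saying it ``only modestly perturbs $J$'' is not enough, because the coverage condition that $v_{1,c}(x_0)$ sweeps through $u_\epsilon(x_0)$ depends on precisely how $c$ enters the PDE. The paper's penalty trick avoids all of this by encoding the pointwise normalisation directly into the equation, so that mass balance alone forces the intersection near $x_0$ for \emph{every} solution produced by the degree, with no need for a selection.

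One smaller remark: the degree theory used here is the Li--Liu--Nguyen framework for fully nonlinear oblique problems, not plain Leray--Schauder, and your invocation of Theorem \ref{thm:sc:contain} for the homotopy solutions is unnecessary since those are classical $C^{4,\alpha}$ solutions by construction; strict $g$-convexity of the Aleksandrov solution $u$ is used later, when combining this lemma with the uniqueness result, not to obtain the a priori estimates along the homotopy.
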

\begin{proof}
  Within this proof we'll use the notation $f,f^*,\Omega,\Omega^*$, and $u_0$ for $f_\delta,f^*_\epsilon,$ $\Omega_\delta,\Omega^*_{\delta,\epsilon},$ and $u_\epsilon$. Fix a smooth cutoff function $\eta$ for the unit ball, that is $\eta > 0$ on $B_1(0)$ and $\eta = 0$ on $\overline{B_1}^c$ and set $\eta_a(x) =  a^{4}\eta((x-x_0)/a)$. The $a^{4}$ term ensures $\Vert \eta_a \Vert_{C^4} \leq C$ for $C$ independent of $a$.

  For $t \in [0,1]$ and $\tau$ to be fixed large consider the family of problems
\begin{align}
  \label{eq:r2:homotopy}  f^*(Y(\cdot,v,Dv))&\det DY(\cdot,v,Dv) = e^{[(1-t)\tau+\eta_a(\cdot)](v-u_0)}\big[tf(\cdot)\\
                                   \nonumber                     &\quad+(1-t)f^*(Y(\cdot,u_0,Du_0))\det DY(\cdot,u_0,Du_0)\big]\\
\label{eq:r2:2bvp-approx}  Yu(\Omega) &= \Omega^*. 
\end{align}  
We restrict our attention to $g$-convex $C^{4,\alpha}(\overline{\Omega})$ solutions. For $t=0$ the problem is solved by $u_0$. We aim to show, using the degree theory of Li, Liu, and Nguyen \cite{LLN17} that his problem has a solution for $t=1$. We outline the required $C^{4,\alpha}(\overline{\Omega})$ bounds below. Then the degree theory is applied as follows. By \cite[Theorem 1 (p2)]{LLN17} the problems \eqref{eq:r2:homotopy} and \eqref{eq:r2:2bvp-approx} have the same degree for $t=0$ and $t=1$. Then by \cite[Corollary 2.1 (a)]{LLN17} to show a solution exists for $t=1$ it suffices to show the problem at $t=0$ has nonzero degree. Finally by a combination of Corollary 2.1 (d) and Theorem 1 (p3) from the same paper, the problem has nonzero degree at $t=0$ provided both the problem for $t=0$ has a unique solution and the linearised problem at $t=0$ (linearised about $u_0$) is uniquely solvable. To summarise we need apriori estimates in $C^{4,\alpha}(\overline{\Omega})$ along with the unique solvability for $t=0$ of the problem and its linearisation. 

\textit{Step 1. a-priori estimates} We first show by mass balance that any solution of \eqref{eq:r2:homotopy} subject to \eqref{eq:r2:2bvp-approx} intersects $u_0$ in the domain and, furthermore, for $t=1$ this intersection occurs in $B_a(x_0)$. Assume to the contrary $v > u_0$ on $\Omega$. The proof is similar if $v < u_0$ on $\Omega$. Because $v > u_0$ we have $ e^{[(1-t)\tau+\eta_a(\cdot)](v-u_0)} > 1$. Now \eqref{eq:r2:homotopy} and \eqref{eq:r2:2bvp-approx} along with mass balance yield the following contradiction
  \begin{align*}
    \int_{\Omega^*}f^*(y) \ dy &= \int_{\Omega} f^*(Y(\cdot,v,Dv))\det DY(\cdot,v,Dv)  \\
                         &> \int_{\Omega}tf(\cdot) + (1-t)f^*(Y(\cdot,u_0,Du_0))\det DY(\cdot,u_0,Du_0)\\
    &= \int_{\Omega^*}f^*. 
  \end{align*}
  For $t=1$ we obtain the same contradiction if $v > u_0$ on $B_a$ for in this case 
  \begin{align*}
    f^*(Y(\cdot,v,Dv))\det DY(\cdot,v,Dv) &> f \text{ on }B_a(x_0)\\
    f^*(Y(\cdot,v,Dv))\det DY(\cdot,v,Dv) &= f \text{ on }\Omega\setminus\overline{B_a(x_0)}.
  \end{align*}
  An estimate $\sup_{\Omega}|Dv|\leq C$ follows by A5. We combine the $Dv$ estimate with the contact point $v(x') = u(x')$ to obtain $\Vert v \Vert_{C^1(\overline{\Omega})} \leq C$.  Global apriori $C^2$ estimates follow via Theorem \ref{thm:r2:pog-global} in combination with Theorem \ref{thm:r2:2bvp-global}. Importantly the barrier required for Theorem \ref{thm:r2:pog-global} is provided by Theorem \ref{thm:r1:lin} and the domain convexity hypothesis of Theorem \ref{thm:r1:main}. Subsequently higher order estimates are obtained by the elliptic theory. More precisely $C^{2,\alpha}(\overline{\Omega})$ estimates from \cite[Theorem 1]{LiebermanTrudinger86} then $C^{4,\alpha}(\overline{\Omega})$ estimates by the linear theory. The linear theory is standard, so we just outline its use (these estimates are also obtained in \cite{JiangTrudinger18}). We consider a boundary portion, and a flattening of the boundary as in the proof of Theorem \ref{thm:r2:dir-est}. Then differentiating in a direction tangential to the flattened boundary we see near this boundary portion tangential derivatives satisfy a linear oblique problem. We obtain $C^{2,\alpha}$ estimates (for these first derivatives) by \cite[Lemma 6.29]{GilbargTrudinger01}. Finally the estimates for the remaining derivative are obtained directly from the equation. For the $C^{4,\alpha}$ estimates, as in \cite{JiangTrudinger18}, we  smooth our domains so they are $C^5$. However uniform $C^{3,\alpha}$ estimates are independent of this smoothing.

  We emphasize also that for the solution at $t=1$ the apriori estimate is independent of $u_0,\epsilon,\delta,\rho$ (provided these parameters are initially fixed sufficiently small).\\
 
\textit{Step 2. Unique solvability of the nonlinear problem for $t=0.$}\\
Assume $v$ solves \eqref{eq:r2:homotopy} subject to \eqref{eq:r2:2bvp-approx} for $t=0$. We recall $v = u_0$ somewhere on $\Omega$, and will show $v \equiv u_0$ on $\Omega$.  Set $\Omega' := \{x \in \Omega; v(x) > u_0(x)\}$.If $\Omega'$ is empty then $v \leq u_0$, however by Lemma \ref{lem:u:harn} this implies $v = u_0$ and we're done.  Thus we assume $\Omega' \neq \emptyset$. By \eqref{eq:r2:homotopy} and $v > u_0$ we have
  \[ f^*(Y(\cdot,v,Dv))\det DYv > f^*(Y(\cdot,u_0,Du_0)) \det DYu_0 \]
  on $\Omega'$. In particular, after integrating,
  \[ \int_{Yv(\Omega')}f^* > \int_{Yu_0(\Omega')}f^*.\]
  On the other hand by Lemma \ref{lem:u:aleksandrovmccann} we see $Yu_0^{-1}(Yv(\Omega')) \subset \Omega'$ so with \eqref{eq:g:2bvp}
  \begin{align*}
    \int_{Yv(\Omega')} f^* &= \int_{Yu_0(Yu_0^{-1}(Yv(\Omega')))}f^*\\
                   &= \int_{Yu_0^{-1}(Yv(\Omega'))}f^*(Y(\cdot,u_0,Du_0)) \det DYu_0\\
    &\leq \int_{\Omega'}f^*(Y(\cdot,u_0,Du_0)) \det DYu_0 =  \int_{Yu_0(\Omega')}f^*.
  \end{align*}
This contradiction implies $v \equiv u_0$. 

  \textit{Step 3. Unique solvability of the linearised problem}
The linearization at $t=0$ is a problem of the form
  \begin{align}
  \label{eq:r2:lin-fin}  a^{ij}D_{ij}v +b^iD_iv - cv &= 0 \text{ in }\Omega\\
  \nonumber  \alpha v + \beta \cdot Dv &= 0 \text{ on }\partial \Omega,
  \end{align}
  where $a^{ij},b^i,\alpha,\beta$ depend on $u_0$ and $g$, the equation is uniformly elliptic, $\beta$ is a strictly oblique vector field and, provided $\tau$ is sufficiently large, $c > \tau/2$. We show this problem has the unique solution $v=0$. If not, assume without loss of generality $v$ is a solution positive at some point  in $\overline{\Omega}$. Take the defining function $\phi$, which we recall satisfies $D\phi = \gamma$ (the outer unit normal), and set $w = e^{-\kappa\phi}v$ for large $\kappa$ to be chosen.  If the positive maximum of $w$ occurs at $x_0 \in \partial \Omega$ then by the obliqueness
  \[ 0 \leq \beta \cdot Dw(x_0) = e^{-\kappa \phi}[\beta \cdot Dv - \kappa (\beta \cdot \gamma) v].\]
  With the linearised boundary condition we obtain
  \[ \kappa (\beta \cdot \gamma)v(x_0) \leq \beta \cdot Dv(x_0) = - \alpha v(x_0), \]
  a contradiction for $\kappa$ sufficiently large depending only on $\alpha$. Thus $w$ attains its positive maximum at $x_0 \in \Omega$. At this point
  \begin{align}
 \nonumber   0 &\geq e^{\kappa \phi}a^{ij}D_{ij}w(x_0)  \\
 \label{eq:r2:lin-ineq}     &= a^{ij}D_{ij}v - 2\kappa a^{ij}D_i\phi D_j v - \kappa v a^{ij}D_{ij}\phi  + \kappa ^2 v a^{ij}D_i\phi D_j \phi. 
  \end{align}
  At an interior maximum $Dw(x_0) = 0$ so $D_iv(x_0) = \kappa D_i \phi(x_0) v(x_0)$. Thus we can eliminate $Dv$ terms in \eqref{eq:r2:lin-ineq}. In combination with \eqref{eq:r2:lin-fin} and $c > \tau/2$ we obtain
  \[ 0 \geq e^{\kappa \phi}a^{ij}D_{ij}w(x_0) \geq C(\tau - C_1)v(x_0),\]
  for $C_1$ independent of $\tau$. This is a contradiction for $\tau$ sufficiently large.

  \textit{Conclusion.} By the degree theory there is $v \in C^{4,\alpha}(\overline{\Omega})$ solving \eqref{eq:r2:homotopy} and \eqref{eq:r2:2bvp-approx} for $t=1$. Our $C^{3,\alpha}$ estimates are independent of $a$ and the domain smoothing. We consider a sequence of $a_k \rightarrow 0$ and the corresponding solutions denoted $v_{a_k}$. By Arzela--Ascoli we have uniform convergence to some $v \in C^{3,\alpha}(\overline{\Omega})$ solving \eqref{eq:r2:homotopy}  and \eqref{eq:r2:2bvp-approx} for $t=1$. Because there is $x_k \in B_{a_k}(x_0)$ with $v_{a_k}(x_k) = u_0(x_{k})$, by the uniform convergence, $v$ satisfies $v(x_0) = u_0(x_0)$.     
\end{proof}

The proof of Theorem \ref{thm:r1:main} follows. Take the solution of \eqref{eq:r2:f-app-1}-\eqref{eq:r2:f-app-3}. We have $C^{3,\alpha}$ bounds independent of the parameters $\epsilon,\delta,\rho$ (provided these parameters are initially fixed small, say $<1$). Send first $\epsilon$ then $\delta$ to $0$ and obtain a solution of \eqref{eq:g:gje} subject to \eqref{eq:g:2bvp} equal to $\overline{u}$ at $x_0$. Finally recall $\overline{u}(x_0) \rightarrow g(x_0,y_0,z_0) = u(x_0)$ as $\rho\rightarrow 0$. Thus on sending $\rho \rightarrow 0$ we obtain $v \in C^3(\overline{\Omega})$ solving \eqref{eq:g:gje} subject to \eqref{eq:g:2bvp} and satisfying $u(x_0)=v(x_0)$. By our remarks at the start of this section, the proof of Theorem \ref{thm:r1:main} is complete.

\clearpage{}
\clearpage{}\appendix
\chapter{Omitted proofs}
The following proofs were referenced, but not included, in the main text. 
\begin{lemma}\label{lem:a:1}
  Assume $\{u_k\}_{k=1}^\infty$ is a sequence of $g$-convex functions on $\Omega$ converging to a $g$-convex function $u:\Omega \rightarrow \mathbf{R}$. For each $E^* \subset \Omega^*$ define
  \[ \mu_{u,f}^*(E^*) = \int_{Yu^{-1}(E)}f,\]
and similarly $\mu_{u_k,f}^*$. Then  $\mu_{u_k,f}^*$ converges weakly to $\mu_{u,f}^*$.
\end{lemma}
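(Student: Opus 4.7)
The plan is to mirror the proof of Lemma \ref{lem:w:weak_conv}, but now on $\Omega^*$ with the roles of $Yu$ and $Yu^{-1}$ interchanged. Both $\mu^*_{u_k,f}$ and $\mu^*_{u,f}$ are finite Radon measures (bounded by $\int_\Omega f$), so weak convergence follows once we establish
\begin{align*}
\limsup_{k \to \infty} \mu^*_{u_k,f}(K^*) &\leq \mu^*_{u,f}(K^*) \quad\text{for every compact } K^* \subset \Omega^*,\\
\liminf_{k \to \infty} \mu^*_{u_k,f}(U^*) &\geq \mu^*_{u,f}(U^*) \quad\text{for every open } U^* \subset\subset \Omega^*.
\end{align*}
By Remark \ref{rem:g:semiconvex} and the reference cited in the proof of Lemma \ref{lem:w:weak_conv}, pointwise convergence $u_k \to u$ is automatically locally uniform.

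For the upper bound, I would show the inclusion
\[ \bigcap_{i=1}^\infty \bigcup_{k=i}^\infty Yu_k^{-1}(K^*) \subset Yu^{-1}(K^*). \]
If $x$ lies in the left-hand side then along a subsequence $k_j$ there exist $y_{k_j} \in Yu_{k_j}(x) \cap K^*$; compactness of $K^*$ produces (after a further subsequence) a limit $y_{k_j} \to y \in K^*$. Passing to the limit in the support inequalities
\[ u_{k_j}(x') \geq g\bigl(x',\, y_{k_j},\, g^*(x, y_{k_j}, u_{k_j}(x))\bigr) \qquad (x' \in \Omega) \]
via locally uniform convergence yields $u(x') \geq g(x', y, g^*(x, y, u(x)))$, i.e.\ $y \in Yu(x) \cap K^*$, so $x \in Yu^{-1}(K^*)$. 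Continuity from above of the finite measure $E \mapsto \int_E f$ then gives the upper bound, exactly as in \eqref{eq:w:reasoning}.

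For the lower bound, let $N$ be the set of points where $u$ fails to be differentiable. By semiconvexity, $|N| = 0$. The target inclusion is
\[ Yu^{-1}(U^*) \setminus N \subset \bigcup_{i=1}^\infty \bigcap_{k=i}^\infty Yu_k^{-1}(U^*). \]
If $x \in Yu^{-1}(U^*) \setminus N$ then $Yu(x) = \{Y(x, u(x), Du(x))\} = \{y\}$ with $y \in U^*$. Choose arbitrary selections $y_k \in Yu_k(x)$; by the same upper-semicontinuity argument used for the upper bound, every subsequential limit of $(y_k)$ lies in $Yu(x) = \{y\}$, so in fact $y_k \to y$. Openness of $U^*$ then forces $y_k \in U^*$ for all sufficiently large $k$, i.e.\ $x \in Yu_k^{-1}(U^*)$ eventually. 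Continuity from below of $E \mapsto \int_E f$ (or Fatou's lemma applied to $f\mathbf{1}_{Yu_k^{-1}(U^*)}$) now delivers the lower bound.

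The main obstacle is the lower-bound step: one must rule out multi-valuedness of $Yu$ on a set of positive measure, since otherwise the selection argument fails. This is exactly what the semiconvexity of $u$ (and the resulting almost-everywhere differentiability, together with $Yu(x) = \{Y(x,u(x),Du(x))\}$ at such points) provides. Everything else is structurally identical to Lemma \ref{lem:w:weak_conv}, just with $Yu$ and $Yu^{-1}$ swapped.
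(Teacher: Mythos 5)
Your proposal is correct and follows essentially the same route as the paper's own proof: both reduce weak convergence to the two set inclusions, pass to subsequential limits in the support inequalities for the upper bound, and use the measure-zero nondifferentiability set together with single-valuedness of $Yu$ at differentiable points for the lower bound. The only cosmetic difference is that you work directly with the open set $U^*$ (using finiteness of $\mu^*_{u,f}$), whereas the paper, following the template of Lemma~\ref{lem:w:weak_conv}, routes the lower bound through a compact $V \subset U^*$ via inner regularity.
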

\begin{proof}
  The proof is similar to Lemma \ref{lem:w:weak_conv}. Following the reasoning there it suffices to show for each compact $K \subset \Omega^*$
  \begin{equation}
    \label{eq:a:1}
       Yu^{-1}(K) \supset \bigcap_{i=1}^\infty\bigcup_{k=i}^\infty Yu^{-1}_k(K),
  \end{equation}
  and for each open $U \subset\subset \Omega^*$ that whenever $V$ is a compact subset of $U$
  \begin{equation}
    \label{eq:a:2}
     Yu^{-1}(V) \setminus E_u \subset \bigcup_{i=1}^\infty\bigcap_{k=i}^\infty Yu_k^{-1}(U).
  \end{equation}
  Here $E_u$ is the points where $u$ is not differentiable. 

  For \eqref{eq:a:1} take $x$ in the right hand side. There is sequence of $y_k \in K$ such that  $y_k \in Yu_k(x)$. This implies $Yu(x)$ also contains an element of $K$, as required

  For \eqref{eq:a:2} take $x$ in the left hand side. Then $Yu(x) = y \in V$. If $x$ is not in the right hand side there is a sequence of $u_k$ such that $Yu_k(x)$ is disjoint from $U$. Take a sequence of $y_k \in Yu_k(x)$, assumed up to a subsequence, to converge to some $\overline{y}$ which necessarily is not in $V$. Taking a limit yields $\overline{y} \in Yu(x)$, a contradiction.
\end{proof}

The next proof is from \cite{Trudinger14}. I fear my only contribution is obfuscating the notation. 
\begin{lemma}\index{A3w$^*$}\index[notation]{$A^*$, $A^*(y,z,q)$, $A_{ij}^*(y,z,q)$, \ \ dual augmenting matrix}
  \label{lem:a:a3w-star} Assume $g$ is a generating function satisfying A3w. Define, on $\mathcal{U}^*$,
  \[ A_{ij}^*(y,z,q) = g^*_{,ij}(X(y,z,q),y,U(y,z,q)). \]
  Then whenever $\xi,\eta \in \mathbf{R}^n$ satisfy $\xi\cdot\eta = 0$ there holds
  \[ D_{q_kq_l}A_{ij}^*(y,z,q)\xi_i\xi_j\eta_k\eta_l \geq 0. \]
\end{lemma}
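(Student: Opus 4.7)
My plan is to derive A3w$^*$ from A3w by exploiting the natural duality between the roles of $x,p$ on the one hand and $y,q$ on the other. The key observation, obtained by combining the identity $g^*_y = -g_y/g_z$ from \eqref{eq:g:deriv-equiv} with the defining equations for $X,U$, is that at fixed $(y,z)$ the curve $t\mapsto x(t) := X(y,z,q_0+t\eta)$ satisfies
\[ -\frac{g_y}{g_z}(x(t),y,z) = q_0 + t\eta, \]
which is precisely the defining condition of a $g$-segment with respect to $y,z$. So ``varying $q$ linearly at fixed $(y,z)$'' is exactly the same operation as ``moving along a $g$-segment''.

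With that identification in hand, I would first use \eqref{eq:g:deriv-equiv} (applied one more time by differentiating $g^*_y=-g_y/g_z$ in $y$) to rewrite $A^*_{ij}(y,z,q)$ as an explicit algebraic expression in the derivatives of $g$ evaluated at $(X(y,z,q),y,z)$; the leading piece will be $-g_{,ij}/g_z$ corrected by terms involving $g_{i,z}, g_{,i}$ and $g_z$. Differentiating this expression twice with respect to $q$ and applying the chain rule using \eqref{eq:g:xdot} for $\dot x(0)$, the term $D_{q_kq_l}A^*_{ij}\xi_i\xi_j\eta_k\eta_l$ reduces, after bookkeeping, to a linear combination of (i) the ``genuine'' fourth derivative term $D_{p_kp_l}A_{ij}\xi_i\xi_j\tilde\eta_k\tilde\eta_l$ evaluated at the dual point, with $\tilde\eta$ the image of $\eta$ under the pushforward by $E$, and (ii) lower order terms that are controlled by the A3w condition for $g$ applied in its equivalent form \eqref{eq:g:a3w-equiv} together with orthogonality.

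The central step is therefore to check that the orthogonality condition $\xi\cdot\eta = 0$ for the $g^*$-calculation translates exactly into the orthogonality condition needed to apply A3w for $g$. This is where the choice of coordinates at the base point must be used: by replacing $y$ by $E(x_0,y_0,z_0)y$ (the normalization already used in Section \ref{sec:sc:transformations-1}) we can arrange $E=\mathrm{Id}$ at the reference point, so that the pushforward is trivial there and the orthogonality is preserved verbatim. Alternatively — and this is the route I would prefer for cleanliness — I would bypass the direct computation of the fourth derivative by instead proving A3w$^*$ through the equivalence with Loeper's Maximum Principle: A3w for $g$ implies LMP (Corollary \ref{cor:g:loeper}), LMP for $g$ is equivalent to LMP for $g^*$ via the $g/g^*$ transform (the max on the right-hand side of Corollary \ref{cor:g:loeper} is stable under this transform), and LMP$^*$ is known to imply A3w$^*$ by the cited result of Loeper--Trudinger.

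The main obstacle is keeping track of the correction factors from the $(g_z,E)$ duality identities in the direct computation — every derivative of $X$ or $U$ in $q$ introduces inverse-$g_z$ and inverse-$E$ factors, and one must verify carefully that these contribute only nonnegative terms or terms proportional to $\xi\cdot\eta$ (which vanish by hypothesis). The LMP route avoids this but hides the algebra inside already-proved results, so the trade-off is between a long but transparent direct calculation and a shorter proof that leans on external equivalences.
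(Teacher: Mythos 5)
Your direct-computation route is essentially the paper's approach, but two of the details you flag as hurdles are handled more cleanly than you anticipate. First, you do not need to normalize $E=\mathrm{Id}$ at the base point (and hence do not need to invoke the tensoriality of A3w as a separate ingredient): in the final identity the two vectors transform contragrediently, $\tilde\xi_i = E^{mi}\xi_m$ and $\tilde\eta_k = E_{k\alpha}\eta_\alpha$, so that $\tilde\xi\cdot\tilde\eta = (E^{-1}E)_{m\alpha}\xi_m\eta_\alpha = \xi\cdot\eta$ is preserved \emph{exactly} without any coordinate choice. (Your write-up keeps $\xi$ fixed and only pushes $\eta$ by $E$; both must be transformed, and it is precisely their contragredient pairing that makes the orthogonality carry over.) Second, there are no surviving ``lower order terms'' to absorb via \eqref{eq:g:a3w-equiv}: every term in $D_{q_iq_j}A^*_{kl}$ that is not of the bracketed fourth-order type carries a Kronecker delta ($\delta_{jl}$, $\delta_{ik}$, or $\delta_{il}$) and is annihilated outright when contracted with $\xi_i\xi_j\eta_k\eta_l$ under $\xi\cdot\eta=0$. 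The computation closes with an exact identity
\[
D_{q_iq_j}A^*_{kl}\xi_i\xi_j\eta_k\eta_l = -g_z\,D_{p_kp_l}A_{ij}\,(E^{mi}\xi_m)(E^{nj}\xi_n)(E_{k\alpha}\eta_\alpha)(E_{l\beta}\eta_\beta),
\]
and since $-g_z>0$ the conclusion follows by A3w applied in its plain (orthogonal) form, not the equivalent form.

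Your preferred ``cleaner'' alternative through Loeper's maximum principle has a genuine gap: the step asserting that LMP for $g$ is equivalent to LMP for $g^*$ is not established by the transform argument you sketch, and it is not an innocuous formal consequence. LMP is a statement about $g$-affine functions in the $x$ variable having $g^*$-convex contact sets; LMP$^*$ is the analogous statement with $x$ and $y$ exchanged, and they are genuinely distinct claims. Even in the symmetric optimal-transport setting, the swap-invariance of the MTW condition is proved by the same kind of calculation you are trying to avoid, not by stability of the $\max$ under the Legendre-type transform. Worse, in this paper the dual form of Theorem \ref{thm:g:gconvsection} (and hence Theorem \ref{thm:g:gconvymapping} and, via it, Corollary \ref{cor:g:loeper}) is derived \emph{using} A3w$^*$, so invoking LMP here risks circularity unless you also reprove LMP from A3w without passing through the $g^*$-convexity of $Yu(x)$. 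The Loeper--Trudinger equivalence you cite is used in the paper to transfer A3w along a change of generating function (Lemma \ref{lem:sc:transform_facts}), which is a different operation from dualizing $g \leftrightarrow g^*$. The direct computation is the honest way through.
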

\begin{proof}
  Our goal is to derive two essentially equivalent formulas for $D_{p_kp_l}A_{ij}$ and $D_{q_iq_j}A^*_{kl}$. Set
  \begin{equation}
    \label{eq:a:q-def}
       Q_j(x,y,z) = -\frac{g_{y_j}}{g_z}(x,y,z),
  \end{equation}
  and compute
  \begin{equation}
    \label{eq:a:q-diff}
    D_{x_i}Q_j = -\frac{E_{ij}}{g_z}. 
  \end{equation}
  Next, when $y=Y(x,u,p),z=Z(x,u,p)$, we have
  \begin{equation}
    \label{eq:a:p-diff}
       D_{p_k} = D_{p_k}Y^rD_{y_r} + D_{p_k}ZD_z = E^{rk}\left(D_{y_r}-\frac{g_{y_r}}{g_z}D_z\right).
  \end{equation}
  We've eliminated $D_{p_k}Z$ using reasoning similar to \eqref{eq:g:dz-sub} and employed $D_{p_k}Y^r = E^{rk}$. Thus
  \begin{equation}
    \label{eq:a:to-sub}
    D_{p_k}A_{ij} = D_{p_k}g_{ij} = E^{r,k}\left(g_{ij,r}-\frac{g_{,r}}{g_z}g_{ij,z}\right).
  \end{equation}
  We wish to substitute for the expression in parentheses. Note 
  \begin{align*}
    D_{x_j}E_{ir} - D_{x_j}(Q_r)g_{i,z} &=  D_{x_j}\left(g_{i,r} - \frac{g_{i,z}g_{,r}}{g_z}\right) + D_{x_j}\left(\frac{g_{,r}}{g_z}\right)g_{i,z}\\
    &= g_{ij,r}- g_{ij,z}\frac{g_{,r}}{g_z}.
  \end{align*}
  Using this and \eqref{eq:a:q-diff}, equation  \eqref{eq:a:to-sub} simplifies as follows:
  \begin{align*}
    D_{p_k}A_{ij} &= E^{rk}\left( D_{x_j}E_{ir} - D_{x_j}(Q_r)g_{i,z} \right)\\
    &= E^{rk}D_{x_j}E_{ir}+\delta_{jk}\frac{g_{i,z}}{g_z}.
  \end{align*}
  By another use of \eqref{eq:a:p-diff} and the formula for differentiating the inverse of a  matrix
  \begin{align}
    \nonumber    D_{p_kp_l}A_{ij}&= E^{rk}E^{sl}\left[D_{x_jy_s}E_{ir} - \frac{g_{,s}}{g_z}D_{x_j,z}E_{ir}\right] \\
    \nonumber                    &\quad + E^{sl}D_{x_j}(E_{ir})\left[D_{y_s}E^{rk}-\frac{g_{,s}}{g_z}D_{z}(E^{rk})\right]  + \delta_{jk}D_{p_l}\left(\frac{g_{i,z}}{g_z}\right) \\
    \label{eq:a:aijkl}    &= E^{rk}E^{sl}\big[D_{x_jy_s}E_{ir}+Q_{s}D_{x_j,z}E_{ir}\\
    \nonumber                             &\quad-E^{ab}D_{x_j}E_{ia}(D_{y_s}E_{br}+Q_{s}D_{z}E_{br})\big] + \delta_{jk}D_{p_l}\left(\frac{g_{i,z}}{g_z}\right).
  \end{align}

  Now we aim for a similar expression for $D_{q_iq_j}A^*_{kl}$. First note because
  \[g^*_{y_k}(x,y,u) = -\frac{g_{,k}}{g_z}(x,y,g^*(x,y,u)),\]
  we have
  \begin{equation}
    \label{eq:a:gyy}
     g^*_{y_ky_l}(x,y,u) = -D_{y_l}\left(\frac{g_{_{,k}}}{g_z}\right)\Big|_{(x,y,g^*(x,y,u))}- g^*_{,l}D_{z}\left(\frac{g_{_{,k}}}{g_z}\right)\Big|_{(x,y,g^*(x,y,u))} .
  \end{equation}
  Now because
  \[ A^{*}_{kl}(y,z,q) = g^*_{y_ky_l}(X(y,z,q),y,g(X(y,z,q),y,z)),\]
  equation \eqref{eq:a:gyy} along with \eqref{eq:a:q-def} implies 
  \[ A^*_{kl}(y,z,q) = D_{y_l}Q_k + q_lD_zQ_k,\]
  where $Q$ is evaluated at $X(y,z,q),y,z$. Thus
  \begin{align}
    \label{eq:a:one-q}
    D_{q_i}A^*_{kl}(y,z,q) &= \frac{\partial X^r}{\partial q_i}D_{x_r,y_l}Q_k+\frac{\partial X^r}{\partial q_i}D_{x_r,z}Q_k Q_l+ \delta_{il}D_z(Q_k).
  \end{align}
  To simplify, note
  \begin{align}
   \label{eq:a:xq-dif} \frac{\partial X^r}{\partial q_i} &= -g_zE^{ir},
  \end{align}
  and from \eqref{eq:a:q-diff}
  \begin{align*}
    D_{x_r,y_l}Q_k &= -\frac{1}{g_z}\left(D_{y_l}E_{rk}-E_{rk}\frac{g_{,l,z}}{g_z}\right),\\
     D_{x_r,z}Q_k &=  -\frac{1}{g_z}\left(D_{z}E_{rk}-E_{rk}\frac{g_{,zz}}{g_z}\right).
  \end{align*}
  Whereby \eqref{eq:a:one-q} implies
  \[   D_{q_i}A^*_{kl}(y,z,q) = E^{ir}(D_{y_l}E_{rk} +D_{z}E_{rk} q_l) - \frac{\delta_{ik}}{g_z}(g_{,l,z}+g_{,zz}q_l) + \delta_{il}D_z(Q_k). \]
  So by differentiating with respect to $q_j$ and employing \eqref{eq:a:xq-dif}
  \begin{align}
    \nonumber   D_{q_iq_j}A^*_{kl} &(y,z,q)\\
  \nonumber  &= -g_zE^{js}\big[-E^{ia}D_{x_s}E_{a,b}E^{br}(D_{y_l}E_{r,k}+D_{z}E_{r,k}q_l)\\
                  \nonumber                        &\quad+E^{ir}(D_{x_s,y_l}E_{rk}+D_{x_s,z}E_{r,k}q_l)]\\
                 \nonumber                         &\quad\quad+E^{ir}D_{z}E_{rk}\delta_{jl}-\delta_{ik}D_{q_j}\left(\frac{1}{g_z}(g_{,l,z}+g_{,zz}q_l)\right) + \delta_{il}D_{q_j,z}(Q_k)\\
 \label{eq:a:astar-fin}   &= -g_zE^{js}E^{ir}[D_{x_s,y_l}E_{rk} + q_lD_{x_s,z}E_{rk}\\
    \nonumber &\quad- E^{ab}D_{x_s}E_{ra}(D_{y_l}E_{b,k}+q_lD_zE_{b,k})]\\
    \nonumber &\quad\quad +E^{ir}D_{z}E_{rk}\delta_{jl}-\delta_{ik}D_{q_j}\left(\frac{1}{g_z}(g_{,l,z}+g_{,zz}q_l)\right) + \delta_{il}D_{q_j,z}(Q_k).
  \end{align}
  Finally we consider the quantities $D_{p_kp_l}A_{ij}\xi_i\xi_j\eta_k\eta_l$ and $D_{q_iq_j}A^*_{kl}\xi_i\xi_j\eta_k\eta_l$ where $\xi,\eta \in \mathbf{R}^n$ satisfy $\xi \cdot \eta = 0$ . Note when we use \eqref{eq:a:aijkl} and \eqref{eq:a:astar-fin} to compute these quantities all terms with a Dirac delta vanish by orthogonality. More explicitly, direct calculation using these identities implies if $\xi \cdot \eta = 0,$ then
\[D_{q_iq_j}A^*_{kl}\xi_i\xi_j\eta_k\eta_l = -g_zD_{p_kp_l}A_{ij}(E^{mi}\xi_m)(E^{nj}\xi_n)(E_{k\alpha}\eta_\alpha)(E_{l\beta}\eta_\beta) \]
 So by A3w, $D_{q_iq_j}A^*_{kl}\xi_i\xi_j\eta_k\eta_l \geq 0$ if $\xi\cdot\eta = 0$. And this is of course A3w$^*$. 
\end{proof}

\begin{lemma}\label{lem:a:mp}
  Assume $u$ is $C^2$ and solves
  \begin{align*}
    0 &\leq a^{ij}D_{ij}u + b^iD_iu+cu \text{ in }B_r(0),\\
    u &= 0 \text{ on }\partial B_r,
  \end{align*}
  where $a^{ij},b^i,c^i$ are measurable functions on $B_r$ and $a^{ij}(x)$ is positive definite for each $x \in B_r$.   Let $a^{ii} = \textrm{trace}(a^{ij})$ and $c= c^+-c^-$ where $c^+ := \text{max}(c,0)$, $c^- = \max\{-c,0\}$.  If
  \begin{equation}
    \label{eq:a:r-choice}
       r \leq \text{min}\left\{\inf_{B_r} \frac{a^{ii}}{2|b|},\inf_{B_r} \sqrt{\frac{a^{ii}}{2 c^+}}\right\},
  \end{equation}
  then $u \leq 0$ in $B_r$. 
\end{lemma}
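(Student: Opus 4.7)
\medskip

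\noindent\textbf{Proof plan.} The natural candidate for a barrier is the quadratic $\phi(x)=r^2-|x|^2$, which is positive in $B_r$ and vanishes on $\partial B_r$. My first step is to compute, using $D_i\phi=-2x_i$ and $D_{ij}\phi=-2\delta_{ij}$, the pointwise estimate
\[
L\phi \;=\; -2a^{ii}-2b^ix_i+c\phi \;\le\; -2a^{ii}+2|b|r+c^{+}r^2 \;\le\; -a^{ii}/2 \;<\; 0 \quad\text{in }B_r,
\]
where in the last inequality the condition on $r$ is used exactly: the bound $r\le a^{ii}/(2|b|)$ absorbs $2|b|r$ into $a^{ii}$, and the bound $r^2\le a^{ii}/(2c^{+})$ absorbs $c^{+}r^2$ into $a^{ii}/2$.

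To avoid fuss at the boundary where both $u$ and $\phi$ vanish, I would work with the shifted barrier $\phi_\eta:=\phi+\eta$ for a small $\eta>0$. The same computation yields $L\phi_\eta\le -a^{ii}/2+c^{+}\eta$, and choosing $\eta$ small (e.g.\ $\eta=r^2/2$, which makes $c^{+}\eta\le a^{ii}/4$ thanks to the hypothesis $c^{+}r^2\le a^{ii}/2$) gives
\[
L\phi_\eta \;\le\; -a^{ii}/4 \;<\; 0 \quad\text{in }B_r, \qquad \phi_\eta\ge\eta>0 \text{ on }\overline{B_r}.
\]

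Next I would argue by contradiction via a comparison with this barrier. Suppose $u(x)>0$ somewhere in $B_r$. Define $\alpha^\ast:=\max_{\overline{B_r}}u/\phi_\eta$, which is a well-defined positive number since $\phi_\eta$ is bounded below by $\eta>0$. Because $u=0$ on $\partial B_r$ while $\phi_\eta>0$ there, the ratio $u/\phi_\eta$ vanishes on $\partial B_r$, so the maximum $\alpha^\ast>0$ must be attained at an interior point $x_1\in B_r$. Consider $v:=u-\alpha^\ast\phi_\eta$; then $v\le 0$ on $\overline{B_r}$ with $v(x_1)=0$, so $x_1$ is an interior maximum of $v$. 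Standard calculus then gives $Dv(x_1)=0$ and $a^{ij}D_{ij}v(x_1)\le0$ (using positive definiteness of $a^{ij}$), from which $Lv(x_1)\le c(x_1)v(x_1)=0$.

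On the other hand, linearity of $L$ together with the differential inequality for $u$ and the barrier estimate yields
\[
Lv \;=\; Lu-\alpha^\ast L\phi_\eta \;\ge\; 0-\alpha^\ast\bigl(-a^{ii}/4\bigr) \;=\; \alpha^\ast a^{ii}/4 \;>\; 0 \quad\text{in }B_r,
\]
contradicting $Lv(x_1)\le 0$. Hence $u\le 0$ in $B_r$, as claimed. The only technical point that requires care is the choice of $\eta$: it must be small enough to preserve the strict sign of $L\phi_\eta$ while guaranteeing $\phi_\eta$ stays uniformly positive; the calculation above shows the hypothesis on $r$ gives exactly the margin needed.
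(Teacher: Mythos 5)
Your proof is correct, and it takes a genuinely different route from the paper's. The paper adds the barrier directly to $u$, setting $v=u+K(|x|^2-r^2)/2$ with $K$ chosen in terms of $\sup u$ and $\sup(c^+/a^{ii})$, then works with the \emph{modified} operator $a^{ij}D_{ij}+b^iD_i-c^-$ (nonpositive zeroth-order coefficient) so that the classical weak maximum principle can be cited directly; the conclusion $\sup u\le\sup u/2$ then drops out of the size condition on $r$. Your argument keeps the full operator with the indefinite coefficient $c$, establishes the strict pointwise sign $L\phi_\eta\le-a^{ii}/4<0$ for the shifted barrier $\phi_\eta=r^2-|x|^2+\eta$, and then runs a ratio-maximum (touching) argument: at the interior maximum $x_1$ of $u/\phi_\eta$ the test function $v=u-\alpha^*\phi_\eta$ vanishes, so the $cv$ term is zero there and the sign of $c$ never matters. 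What your approach buys is self-containment (no appeal to a separate maximum principle theorem, just a pointwise second-derivative test) and a barrier $\phi_\eta$ that is independent of the unknown $u$, avoiding the paper's slightly awkward $K=2\sup u\,\sup(c^+/a^{ii})$ that implicitly presumes $\sup u>0$. What the paper's version buys is brevity via the citation. The small additive shift $\eta=r^2/2$ you introduce to avoid the $0/0$ on $\partial B_r$ is exactly the right fix, and you verify correctly that the hypothesis on $r$ leaves the margin for it.
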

\begin{proof}
  Set $v = u + K(|x|^2-r^2)/2$ for $K$ to be chosen. Compute
  \[ Lv:=a^{ij}D_{ij}v+b_iD_iv - c^-v \geq-c^+u + Ka^{ii}- K|b||x|. \]
  Choose $K = 2 \sup u\sup (c^+/ a^{ii})$. If \eqref{eq:a:r-choice} is satisfied then $|x||b| \leq a^{ii}/2$. So $Lv \geq 0$ and $v$ satisfies the classical maximum principle. Thus
  \[ 0 \geq \sup v \geq \sup u  -Kr^2/2.  \]
  Then by \eqref{eq:a:r-choice}  $Kr^2/2 \leq \sup u/2$ and we obtain the result. 
  \end{proof}

\begin{lemma}
  \label{lem:a:big-vec} Assume $u\in C^0([0,\epsilon])$ satisfies $u(t) = o(t)$ as $t\rightarrow 0$. For some $\delta>0$ there exists a function $R:(0,\delta)\rightarrow \mathbf{R}$ such that  $R(h) \rightarrow \infty$ as $h \rightarrow0$ and $hR(h) \in \{t ; u(x) < h\}$.
\end{lemma}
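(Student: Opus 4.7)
The idea is to quantify the $o(t)$ hypothesis by choosing a nested sequence of thresholds and then picking the largest index compatible with a given $h$.

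By hypothesis, $u(t)/t \to 0$ as $t \to 0^+$. So for each $n \in \mathbf{N}$ there exists $\delta_n \in (0,\epsilon)$ with
\[ u(t) < \frac{t}{n} \quad \text{for all } t \in (0,\delta_n]. \]
Without loss of generality I would take $\delta_n$ strictly decreasing in $n$ with $\delta_n \to 0$.

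The plan is then: set $\delta := \delta_1$, and for each $h \in (0,\delta)$ define
\[ R(h) := \max\{\, n \in \mathbf{N} : nh \le \delta_n \,\}. \]
This set is nonempty (it contains $1$, since $h \le \delta_1$) and bounded above (since $\delta_n$ is bounded while $nh \to \infty$ in $n$), so $R(h)$ is a well-defined positive integer. By construction $R(h)\,h \le \delta_{R(h)}$, which puts $R(h)h$ in the range where $u(t) < t/R(h)$ applies, giving
\[ u\bigl(R(h)h\bigr) < \frac{R(h)h}{R(h)} = h, \]
so $R(h)h \in \{t : u(t) < h\}$ as required.

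The only thing left to check is $R(h) \to \infty$ as $h \to 0^+$, which is immediate: fix any $N \in \mathbf{N}$ and set $\delta'_N := \delta_N/N$. Then for every $h \in (0,\delta'_N)$ we have $Nh < \delta_N$, so $n = N$ lies in the set defining $R(h)$, hence $R(h) \ge N$. Since $N$ was arbitrary, $R(h) \to \infty$. No step here is really an obstacle; the only mild care is ensuring $R(h)$ is finite, which follows from $\delta_n$ being bounded (in fact $\delta_n < \epsilon$) while $nh$ is unbounded in $n$ for each fixed $h > 0$.
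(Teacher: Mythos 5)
Your proof is correct, but it goes by a genuinely different route from the paper's. The paper defines $\tau_h := \inf\{\tau : u(\tau) = h/2\}$ (a first hitting time, which exists and satisfies $u(\tau_h)=h/2$ via continuity and the intermediate value theorem, after handling the trivial case $u\le 0$ separately and setting $\delta = u(t_0)$ for some $t_0$ with $u(t_0)>0$), puts $R(h) = \tau_h/h$, and then argues by contradiction: a bounded sequence $R(h_n)\le C$ would force $u(\tau_{h_n})/\tau_{h_n} = 1/(2R(h_n)) \ge 1/(2C)$ while $\tau_{h_n} \to 0$, contradicting $u(t)=o(t)$. Your argument instead quantifies the $o(t)$ hypothesis up front with a decreasing sequence of thresholds $\delta_n$ on which $u(t)<t/n$, and takes $R(h)=\max\{n : nh\le\delta_n\}$; then $u(R(h)h)<h$ follows by construction and $R(h)\to\infty$ drops out by considering $h<\delta_N/N$. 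Your version is more elementary --- it never invokes continuity or the IVT, so it actually proves the lemma under the weaker hypothesis that $u(t)/t\to 0$ without assuming $u\in C^0$ --- and avoids the paper's case split on whether $u$ is ever positive. The paper's version is slightly more compact and produces a continuously-varying $R$, though that extra regularity is not used where the lemma is applied. Both are perfectly valid.
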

\begin{proof}
  Assume $u(t_0) > 0$ for some $t_0 \in [0,\epsilon]$, if no such $t_0$ exists we're done. Set $\delta = u(t_0)$. For $h \in (0,\delta)$ define $\tau_h := \inf\{\tau;u(\tau) = h/2\}$.  Put $R(h)= \tau_h/h$.
  Clearly $u\big(hR(h)\big) < h$, so we need to show $R(h)\rightarrow \infty$ as $h \rightarrow 0$. If not there is a sequence of $h_n \rightarrow 0$ with $R(h_n) \leq C$. Then
  \[ \frac{u(\tau_{h_n})}{\tau_{h_n}} = \frac{h_n}{2\tau_{h_n}} = \frac{1}{2R(h_n)} \geq \frac{1}{2C}, \]
  which is a contradiction as $\tau_{h_n}\rightarrow 0$. 
\end{proof}

\begin{lemma}\label{lem:a:gqq}
Let $g$ be a generating function satisfying A3w. Then the statement of $g$-quasiconvexity, inequality \eqref{eq:sc:gqq} holds. 
\end{lemma}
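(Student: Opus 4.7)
The plan is to set
$$h(\theta) := g(x_\theta, y_1, z_1) - g(x_\theta, y_0, z_0)$$
and deduce \eqref{eq:sc:gqq} from a differential inequality for $h$ obtained via Lemma \ref{lem:g:maindiffineq}. By the hypotheses $z_0 = g^*(x_0,y_0,u_0)$ and $z_1 = g^*(x_0,y_1,u_0)$, we have $g(x_0,y_0,z_0) = g(x_0,y_1,z_1) = u_0$, so $h(0) = 0$.

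The key step is to apply Lemma \ref{lem:g:maindiffineq} to the $g$-affine function $u(x) := g(x,y_1,z_1)$ along the $g$-segment $\{x_\theta\}$ from $x_0$ to $x_1$ with respect to $y_0,z_0$. Since $u$ is $g$-affine, $Yu \equiv y_1$ and $Zu \equiv z_1$, so $D^2u - g_{xx}(\cdot,Yu,Zu) \equiv 0$ and in particular $u$ is $g$-convex in the sense required; the lemma's inequality \eqref{eq:g:main-diff-useful} then gives directly
$$h''(\theta) \;\geq\; -K|h'(\theta)| - K_0|h(\theta)|, \qquad \theta \in [0,1],$$
with $K,K_0$ depending only on $g$ (through uniform bounds on $\Vert g\Vert_{C^3(\overline\Gamma)}$, $|\dot x_\theta|$, and $|Du - g_x(\cdot,y_0,z_0)|$, all controlled on $\overline\Gamma$ using A0 and $\det E \neq 0$). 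Note that in deriving \eqref{eq:g:main-diff-useful} the $D^2_p A$ contribution is handled through A3w in the form \eqref{eq:g:a3w-equiv} applied to $\xi = \dot x_\theta$ and $\eta = Du - g_x(\cdot,y_0,z_0)$, whose inner product is precisely $h'(\theta)$.

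To extract $h(\theta) \leq M\theta [h(1)]_+$, set $L := [h(1)]_+$. If $L = 0$, I would show $h \leq 0$ on $[0,1]$: this is the one-dimensional dual form of Loeper's maximum principle (Corollary \ref{cor:g:loeper}) and is obtained by subdividing $[0,1]$ into sub-intervals on which the signs of $h$ and $h'$ are fixed, linearising the differential inequality to $h'' + \varepsilon_1 K h' + K_0 h \geq 0$ with $\varepsilon_1 = \pm 1$, and invoking the short-interval maximum principle of Lemma \ref{lem:a:mp} on each piece; iterating across adjacent sub-intervals propagates the boundary data $h(0)=0$, $h(1)\leq 0$ to $h \leq 0$ throughout. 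If $L > 0$, I would compare $h$ with the linear barrier $w(\theta) = M\theta L$: away from $\theta = 0$ the same short-interval argument forces $h \leq w$ provided $M$ is sufficiently large in terms of $K,K_0$, and near $\theta = 0$ one exploits the geometric observation that $h'(0)$ and $h(1) = L$ both depend bilinearly on the pair $(\dot x_0, y_1-y_0)$ to obtain $h'(0) \leq C L$, which combined with the semiconvexity bound $h'' \geq -K_1$ (itself a consequence of the differential inequality and the a priori boundedness of $h,h'$ on $[0,1]$) closes the estimate.

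The main obstacle is the zeroth-order term $K_0|h|$ with the wrong sign for a single-shot maximum principle; this is what both forces the short-interval propagation argument of Lemma \ref{lem:a:mp} and requires the geometric scaling identification $h'(0) = O(L)$, coming from the structure of the $g$-segment rather than from the ODE inequality itself, in order to close the bound near $\theta = 0$ where the linear barrier $w(\theta) = M\theta L$ is smallest.
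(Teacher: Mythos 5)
Your setup is correct: the right move is to set $h(\theta)=g(x_\theta,y_1,z_1)-g(x_\theta,y_0,z_0)$ and derive a differential inequality along the $g$-segment via Lemma \ref{lem:g:maindiffineq}. But the inequality you invoke, $h''\geq -K|h'|-K_0|h|$ from \eqref{eq:g:main-diff-useful}, is too weak, and the extra zeroth-order term you then fight against is not actually present here. The observation you are missing is the one the paper's proof pivots on: because $u=g(\cdot,y_1,z_1)$ is $g$-affine, the step in Lemma \ref{lem:g:maindiffineq} that produces the $A_{ij,u}\,h(\theta)$ contribution — the Taylor expansion in the $u$-argument — does not arise, and one gets the \emph{clean} inequality $h''\geq -K|h'|$ with no zeroth-order term. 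The paper then closes the argument by citing \cite[Corollary 9.4]{GuillenKitagawa17}, which turns precisely this ODE inequality, together with $h(0)=0$, into the quasiconvexity bound \eqref{eq:sc:gqq}.

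This is not a cosmetic difference: with the zeroth-order term the conclusion simply fails at the ODE level. Take $h(\theta)=\sin^2(\pi\theta)$. Then $h(0)=h'(0)=h(1)=0$ and $h''=2\pi^2-4\pi^2 h\geq -K_0|h|$ whenever $K_0\geq 2\pi^2$, so $h$ satisfies your differential inequality with a $K_0$ of the size that genuinely appears (it scales with $\|A_{ij,u}\|_\infty|\dot x_\theta|^2$, which is not small for long $g$-segments). Yet $h(1/2)=1$ while $M\cdot\tfrac12\cdot[h(1)]_+=0$, so no $M$ depending only on $g$ can make \eqref{eq:sc:gqq} hold. Note that this example also defeats your proposed repair near $\theta=0$: it satisfies $h'(0)=0\leq CL$ with $L=[h(1)]_+=0$, so the geometric estimate $h'(0)\leq CL$ gives you nothing to leverage, and the short-interval propagation via Lemma \ref{lem:a:mp} cannot rule out an interior positive bump that returns to zero at $\theta=1$. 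You need to eliminate the $K_0|h|$ term, not absorb it, and that elimination is exactly what the $g$-affinity of $u=g(\cdot,y_1,z_1)$ buys you.
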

\begin{proof}
  With all quantities as in that statement set
  \begin{align*}
    h(\theta) &= g(x_\theta,y_1,g^*(x_0,y_1,u_0)) - g(x_\theta,y_0,g^*(x_0,y_0,u_0))\\
         &= g(x_\theta,y_1,z_1)-g(x_\theta,y_0,z_0).
  \end{align*}
  Our goal is to show
  \begin{equation}
    \label{eq:a:h-goal}
       h(\theta) \leq C\theta [h(1)]_{+}.
  \end{equation}
  Note for
  \begin{align*}
    &p_0 := g_x(x_0,y_0,z_0)&&p_1 := g_x(x_0,y_1,z_1),
  \end{align*}
  we have
  \begin{align*}
    &y_0=Y(x_0,u_0,p_0)&&y_1=Y(x_0,u_0,p_1),\\
    &z_0=Z(x_0,u_0,p_0)&&z_1=Z(x_0,u_0,p_1).
  \end{align*}
  Now repeat the calculations in Lemma \ref{lem:g:maindiffineq} up to inequality \eqref{eq:g:need-app}, though with $u =  g(x_\theta,y_1,z_1)$. Note in the subsequent step of that lemma no Taylor series in $u$ is needed so we conclude
  \begin{equation}
    \label{eq:a:h-ineq}
       h''(\theta) \geq -K|h'(\theta)|.
  \end{equation}
Now the result follows from \cite[Corollary 9.4]{GuillenKitagawa17}.
\end{proof}

\chapter{Parabolic generated Jacobian equations}
\label{cha:bound-parab-gener}

Parabolic flows are a standard technique to obtain solutions to generated Jacobian equations. Here we consider $T>0$ and $u:\overline{\Omega}\times[0,T] \rightarrow \mathbf{R}$ where $u(\cdot,t)$ is $g$-convex for $t \in [0,T]$ and $u$ solves
\begin{align}
 \label{eq:a:pgje} \tag{PGJE} \exp{(u_t)} &= \frac{f^*(Y(\cdot,u,Du))\det DY(\cdot,u,Du)}{f(\cdot)} \text{ in }\Omega \times (0,T),\\
\label{eq:a:p2bvp}  Yu(\Omega) &= \Omega^* \text{ for }t \in (0,T),\\
\label{eq:a:ic}  u(\cdot,0) &= u_0(\cdot) \text{ on }\Omega\times \{0\}.
\end{align}
If a solution converges, in the sense that $u_t(x,t) \rightarrow 0$ as $t \rightarrow \infty$, then we expect the resulting function to solve \eqref{eq:g:gje} subject to \eqref{eq:g:2bvp} (at least in some weak sense). This method has been carried out by Schn\"{u}rer and Smoczyk \cite{SchnurerSmoczyk03} for the Monge--Amp\`ere equation and Kitagawa \cite{Kitagawa12} for the optimal transport case. Both results use the maximum principle for parabolic equations and the lack of $u$ dependence in a critical way. The $u$ dependence in \eqref{eq:a:pgje} prevents the same techniques. In this appendix we introduce a convexity based technique that yields $\sup |u|$ estimates depending only on $u_0$ and the constant from the A5 condition. More precisely we show $u$ always intersects $u_0$.

In this chapter of the appendix we assume $g$ satisfies $A3w,A4w,A5$. We also assume a uniformly $g$-convex $u_0$, satisfying $Yu(\Omega) = \Omega^*$ and $u(\Omega) \pm K_0\text{diam}(\Omega) \subset J$ where $K_0$ is from A5, has been given. In general though these arguments take place on $\Omega_\delta,\Omega_{\delta,\epsilon}^*$ using the function $u_\epsilon$ from Theorem \ref{thm:r1:hom-start} and then we send $\epsilon,\delta\rightarrow 0$. We use the convention from parabolic equations that for $C^{k,\alpha}(\overline{\Omega} \times [0,T])$ the time derivatives are weighted with a factor of $2$ \cite{Lieberman96}. For example $C^{2,\alpha}(\overline{\Omega} \times [0,T])$ is the space of functions whose second space derivatives are $\alpha$ holder continuous and whose first time derivatives are $\alpha/2$ holder continuous.

Our estimates independent of $T$ are based on convexity. Specifically an averaging property enforced by \eqref{eq:a:pgje}, \eqref{eq:a:p2bvp}, \eqref{eq:a:ic} and the parabolic equation solved by the $g^*$-transform. We begin with the required properties of the $g^*$-transform. 

\begin{lemma}
  Let $u \in C^2(\overline{\Omega})$ be a $g$-convex solution of
  \begin{align*}
    f^*(Yu(\cdot)) \det DYu &= f(\cdot) \text{ on }\overline{\Omega}\\
    Yu(\Omega) &= \Omega^*,
  \end{align*}
  where $f,f^*$ satisfy $0 < \lambda \leq f,f^* \leq \Lambda < \infty$. 
  Then the $g^*$-transform of $u$, denoted $v$, satisfies $v \in C^2(\overline{\Omega^*})$ and 
  \begin{align}
 \label{eq:a:v-eq}   f(Xv(\cdot))\det DXv =& f^*(\cdot)  \text{ on }\overline{\Omega^*}\\
\label{eq:a:v-bv}    Xv(\Omega^*) &= \Omega.
  \end{align}
\end{lemma}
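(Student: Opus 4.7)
The plan is to establish regularity and inversion properties for the map $Yu$ and then transfer these to $v$ through Lemma \ref{lem:g:gstarsubdiff}.

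First I would show that $Yu : \overline{\Omega} \to \overline{\Omega^*}$ is a $C^1$ diffeomorphism. Since $u \in C^2(\overline{\Omega})$ the map $x \mapsto Yu(x) = Y(x,u(x),Du(x))$ is $C^1$, and the GJE together with $f,f^* > 0$ forces $\det DYu = f/f^* > 0$ on $\overline{\Omega}$, so $Yu$ is a local diffeomorphism. Surjectivity onto $\overline{\Omega^*}$ is immediate from \eqref{eq:a:p2bvp}. For injectivity I would use the change of variables formula: if $Yu$ were not injective on the open set where it is a local diffeomorphism, then $\int_{\Omega}f = \int_{\Omega}f^*(Yu)\det DYu$ would strictly exceed $\int_{Yu(\Omega)}f^* = \int_{\Omega^*}f^*$, contradicting mass balance. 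Hence $Yu$ has a $C^1$ inverse $X : \overline{\Omega^*} \to \overline{\Omega}$.

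Next I would identify $v$ explicitly. By Lemma \ref{lem:g:gstarsubdiff}, for any $x \in \Omega$ with $y = Yu(x)$ the supremum defining $v(y)$ is attained at $x$, so $v(y) = g^*(X(y), y, u(X(y)))$. Since $u \in C^2$, $X \in C^1$, and $g^* \in C^4$, this representation gives $v \in C^1(\overline{\Omega^*})$, and differentiating (together with the defining identities for $X$) recovers $Dv(y) = g^*_y(X(y), y, u(X(y)))$, showing moreover $Xv(y) = X(y)$. Thus $Xv = (Yu)^{-1}$ on $\overline{\Omega^*}$, which already yields \eqref{eq:a:v-bv}. The $C^2$ regularity of $v$ then follows because $DXv$ equals $[DYu]^{-1}$ composed with $(Yu)^{-1}$ and all of these are $C^1$; alternatively one differentiates the relation $Dv(y) = g^*_y(Xv(y),y,u(Xv(y)))$ once more.

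For the PDE \eqref{eq:a:v-eq}, I would simply differentiate the identity $Yu(Xv(y)) = y$, obtaining $DYu(Xv(y)) \cdot DXv(y) = I$, whence $\det DXv(y) = 1/\det DYu(Xv(y)) = f^*(y)/f(Xv(y))$, which is exactly \eqref{eq:a:v-eq}.

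The main obstacle I anticipate is the global injectivity of $Yu$; the local and boundary behaviour is clear, but pinning down the global bijection without already assuming smoothness of $v$ requires the mass balance argument above (or equivalently a degree-theoretic argument using that $Yu$ sends $\partial \Omega$ to $\partial \Omega^*$ via \eqref{eq:a:p2bvp} and the $g^*$-convexity of $\Omega^*$). Once this is in hand the remainder is a direct computation.
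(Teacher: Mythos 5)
Your proof is correct and takes essentially the same route as the paper: identify $v(y)=g^*((Yu)^{-1}(y),y,u((Yu)^{-1}(y)))$ via Lemma \ref{lem:g:gstarsubdiff}, compute $Dv$ to conclude $Xv=(Yu)^{-1}$, then read off \eqref{eq:a:v-bv} and \eqref{eq:a:v-eq} from the Jacobian of the inverse map. The only cosmetic difference is that you justify global injectivity of $Yu$ by a mass-balance/degree argument, whereas the paper asserts it via strict $g$-convexity of $u$ and strict $g^*$-convexity of $v$; both are fine.
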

\begin{proof}
  As noted in Lemma \ref{lem:g:gstarsubdiff} if $y \in Yu(x)$ then
  \[ v(y) = g^*(x,y,u(x)).\]
  Moreover because $u$ is $C^1$ and strictly $g$-convex, $v$ is $C^1$ and strictly $g^*$-convex. Next because $Yu$ is a $C^1$ diffeomorphism from $\Omega$ to $\Omega^*$ it's inverse $(Yu)^{-1}$ is well defined and
  \begin{equation}
    \label{eq:a:v1}
       v(y) = g^*\big(Yu^{-1}(y),y,u(Yu^{-1}(y))\big).
  \end{equation}
  An explicit calculation employing \eqref{eq:g:deriv-equiv} yields
  \begin{equation}
    \label{eq:a:v2}
       Dv(y) = g^*_y\big(Yu^{-1}(y),y,u(Yu^{-1}(y))\big), 
  \end{equation}
  so that $v \in C^2(\overline{\Omega^*})$. Let $y$ be given and $x = Yu^{-1}(y)$. Then \eqref{eq:a:v1} and \eqref{eq:a:v2} are precisely the equations defining $Xv(y) = X(y,v(y),Dv(y))$ and we see $Xv(y) = Yu^{-1}(y)$. Using this, and that $Yu$ is a diffeomorphism, \eqref{eq:a:v-eq} and \eqref{eq:a:v-bv} follow.  
\end{proof}

We also need to consider the time evolution of the $g^*$-transform.
\begin{lemma}\label{lem:a:vt}
  Let $u\in C^2(\overline{\Omega} \times [0,T])$ satisfy  that $u(\cdot,t)$ is uniformly $g$-convex for  $t \in [0,T]$. Assume also $Yu(\Omega,t) = \Omega^*$ for  $t \in [0,T]$. Define the $g^*$-transform by
  \begin{align}
    \label{eq:3}
    v(y,t) &:\overline{\Omega^*} \times [0,T] \rightarrow \mathbf{R}\\
  \label{eq:a:v-time-def}  v(y,t) &= \sup_{x \in \Omega}g^*(x,y,u(x,t)).
  \end{align}
  Let $y \in \Omega$ and $x = Xv(y)$ then
  \[ D_tv(y,t) = g^*_u(x,y,u(x))D_tu(x,t). \]
\end{lemma}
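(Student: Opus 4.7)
The plan is a straightforward envelope-theorem argument, once we have identified the point at which the supremum defining $v(y,t)$ is attained. First I would record, following the reasoning of the preceding lemma, that because $u(\cdot,t)$ is uniformly $g$-convex and $Yu(\cdot,t):\Omega\to\Omega^*$ is a $C^1$ diffeomorphism, the supremum in \eqref{eq:a:v-time-def} is attained at the unique point $x = x(y,t) := Yu(\cdot,t)^{-1}(y) = Xv(y,t)$. I would verify that $x(y,t)$ depends in a $C^1$ manner on $t$, via the implicit function theorem applied to the equation $Yu(x,t) = y$, which is solvable for $x$ in terms of $(y,t)$ because $D_xYu$ is nonsingular (this is precisely the uniform $g$-convexity combined with $\det E\neq 0$).

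Next I would write
\[
  v(y,t) = g^*\bigl(x(y,t),\,y,\,u(x(y,t),t)\bigr),
\]
and differentiate with respect to $t$ by the chain rule, obtaining
\[
  D_tv(y,t) = \bigl[g^*_{x_i}(x,y,u(x,t)) + g^*_u(x,y,u(x,t))\,D_{x_i}u(x,t)\bigr]\,D_tx^i(y,t)
  + g^*_u(x,y,u(x,t))\,D_tu(x,t).
\]
The main (only) step is now to observe that the bracketed term vanishes. This is the envelope/optimality condition: since $x(y,t)$ maximises $x\mapsto g^*(x,y,u(x,t))$ over $\Omega$ and the maximiser lies in the interior (as $y\in\Omega^* = Yu(\Omega,t)$), we have
\[
  D_{x_i}\bigl[g^*(x,y,u(x,t))\bigr]\Big|_{x=x(y,t)} = g^*_{x_i} + g^*_u\,D_{x_i}u = 0.
\]
Equivalently this is the defining relation for $Xv$ together with $Dv(y,t) = g^*_y(x,y,u(x,t))$, which is the computation already carried out in \eqref{eq:a:v1}--\eqref{eq:a:v2}.

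Substituting this back yields the claimed formula
\[
  D_tv(y,t) = g^*_u\bigl(x,y,u(x,t)\bigr)\,D_tu(x,t), \qquad x = Xv(y,t).
\]
No serious obstacle is anticipated; the only technical point to be careful about is the regularity of the maximiser $x(y,t)$ in $t$, which as noted follows from uniform $g$-convexity via the implicit function theorem, so that the chain-rule differentiation is legitimate.
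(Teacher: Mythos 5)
Your proposal is correct and follows essentially the same route as the paper: write $v(y,t)=g^*(x(y,t),y,u(x(y,t),t))$ with $x(y,t)=Xv(y,t)$, differentiate in $t$, and observe that the terms involving $\partial x/\partial t$ cancel. The paper justifies the cancellation by invoking the identities \eqref{eq:g:deriv-equiv} together (implicitly) with the support relation $Du(x,t)=g_x(x,y,g^*(x,y,u(x,t)))$, which is precisely the same first-order optimality condition you phrase via the envelope theorem.
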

\begin{proof}
  Fix  $y \in \Omega$ and let $x = Xv(y,t)$. By the previous lemma
  \[v(y,t) = g^*(x,y,u(x,t)). \]
  Differentiating with respect to $t$ yields
  \begin{align*}
    D_tv(y,t) &= g^*_x(x,y,u(x,t))\frac{\partial x}{\partial t} + g^*_u(x,y,u(x,t))[Du\frac{\partial x}{\partial t} + u_t].
  \end{align*}
  The result follows because, by \eqref{eq:g:deriv-equiv},
  \[ g^*_x(x,y,u(x,t))\frac{\partial  x}{\partial  t} +g^*_u(x,y,u(x,t))Du\frac{\partial  x}{\partial t} = 0. \]
\end{proof}
As a corollary we obtain by the parabolic equation solved by the $g^*$-transform of a solution of \eqref{eq:a:pgje}, \eqref{eq:a:p2bvp} and \eqref{eq:a:ic}.

\begin{corollary}\label{cor:a:v-eq}
  Let $u:\in C^2(\overline{\Omega}\times[0,T])$. Assume $u$ solves \eqref{eq:a:pgje}, \eqref{eq:a:p2bvp} and \eqref{eq:a:ic} and let $v$ be defined by \eqref{eq:a:v-time-def}. Then $v$ solves
  \begin{align}
    \label{eq:4}
    \exp{[-g_z(Xv,\cdot,u(Xv))v_t(\cdot)]} &= \frac{f(Xv)\det DXv(\cdot)}{f^*(\cdot)} \text{ in } \overline{\Omega^*}\times[0,T]\\
    Xv(\Omega^*) &= \Omega.
  \end{align}
\end{corollary}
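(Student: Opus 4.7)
The plan is to decouple the spatial and temporal parts of the equation: at each fixed time we will transform the spatial Monge--Amp\`ere-type identity via the $g^*$-transform (exactly as in Lemma \ref{lem:g:smooth_duality}), and we will separately convert $u_t$ into a $v_t$ using Lemma \ref{lem:a:vt}. Combining the two gives the stated PDE for $v$, and the inclusion $Xv(\Omega^*) = \Omega$ is inherited from the dual second boundary value problem.

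For the spatial step, freeze $t \in [0,T]$ and rewrite \eqref{eq:a:pgje} as
\[
   f^*(Yu(\cdot,t))\,\det DYu(\cdot,t) \;=\; f(\cdot)\exp(u_t(\cdot,t)) \quad \text{in } \Omega .
\]
For each fixed $t$ this is of the form \eqref{eq:g:gje}, with the source density $f$ replaced by the positive $C^0$ density $\tilde f_t(x):= f(x)\exp(u_t(x,t))$, and the boundary condition is \eqref{eq:a:p2bvp}. Applying (the $C^2$ version of) Lemma \ref{lem:g:smooth_duality} to $u(\cdot,t)$ shows that the $g^*$-transform $v(\cdot,t)$ is a $C^2(\overline{\Omega^*})$ solution of
\[
   \tilde f_t(Xv(\cdot,t))\,\det DXv(\cdot,t) \;=\; f^*(\cdot), \qquad Xv(\Omega^*,t) = \Omega .
\]
Unwinding $\tilde f_t$ gives
\begin{equation}\label{eq:a:spatial}
   f(Xv)\,\det DXv \;=\; f^*(\cdot)\,\exp(-u_t(Xv,t)).
\end{equation}
This furnishes the right-hand side of the claimed equation and the boundary condition for $v$; all that remains is to identify $u_t(Xv,t)$ with $g_z\,v_t$.

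For the temporal step, fix $(y,t) \in \Omega^* \times (0,T)$ and set $x = Xv(y,t)$. Lemma \ref{lem:a:vt} gives
\[
   v_t(y,t) \;=\; g^*_u(x,y,u(x,t))\, u_t(x,t),
\]
and the identity $g^*_u = 1/g_z$ from \eqref{eq:g:deriv-equiv}, applied at the point $(x,y,g^*(x,y,u(x,t)))$ (which equals $(x,Yu(x,t),Zu(x,t))$ because $y=Yu(x,t)$), yields
\[
   u_t(x,t) \;=\; g_z\bigl(x,y,Zu(x,t)\bigr)\,v_t(y,t).
\]
Substituting this into \eqref{eq:a:spatial} and taking reciprocals produces
\[
   \exp\!\bigl[-g_z(Xv,\cdot,Zu(Xv))\,v_t\bigr] \;=\; \frac{f(Xv)\,\det DXv}{f^*(\cdot)},
\]
which is the claimed equation once we interpret the third argument of $g_z$ in the statement consistently with the convention $Zu(x)=g^*(x,Yu(x),u(x))$.

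I expect no serious obstacle here: both ingredients are already in hand. The only subtlety worth flagging is notational, namely making precise at which point $g_z$ is evaluated (the triple $(x,y,z)$ with $y=Yu(x,t)$, $z=Zu(x,t)$), so that the identity $g^*_u = g_z^{-1}$ from \eqref{eq:g:deriv-equiv} is applied at compatible arguments. Once that is spelled out, the proof is just a direct combination of Lemmas \ref{lem:g:smooth_duality} and \ref{lem:a:vt}.
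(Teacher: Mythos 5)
Your proposal is correct and follows exactly the route the paper intends: the corollary is stated immediately after the two ingredients (the spatial duality lemma proved earlier in the appendix, and Lemma \ref{lem:a:vt} for the time derivative), and the paper plainly means for it to follow by freezing $t$, treating $f\exp(u_t(\cdot,t))$ as the source density, applying the duality lemma, and then substituting $u_t = g_z\, v_t$ via \eqref{eq:g:deriv-equiv}. Your flagged notational point about where $g_z$ is evaluated is exactly the right thing to be careful about, and you resolve it correctly.
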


Finally we obtain $\sup |u|$ bounds.

\begin{theorem}
  Assume $u \in C^2(\overline{\Omega} \times [0,T])$ solves \eqref{eq:a:pgje}, \eqref{eq:a:p2bvp} and \eqref{eq:a:ic}. Then $\sup_{\overline{\Omega} \times [0,T]} |u| \leq C$ for a constant $C$ depending only on $u_0,K_0$. 
\end{theorem}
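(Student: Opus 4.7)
The strategy is to show that $u(\cdot,t)$ always intersects $u_0$: for each $t\in[0,T]$ there exists $x(t)\in\overline{\Omega}$ with $u(x(t),t)=u_0(x(t))$. Combined with A5, which gives $|Du|,|Du_0|\le K_0$ uniformly, any such intersection point yields $|u(x,t)-u_0(x)|\le 2K_0\,\text{diam}(\Omega)$ for every $x\in\overline{\Omega}$, hence the bound $\sup|u|\le \sup|u_0|+2K_0\,\text{diam}(\Omega)$ depending only on $u_0$ and $K_0$.

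For one half of the intersection, integrate \eqref{eq:a:pgje} over $\Omega$. Using \eqref{eq:a:p2bvp} as a change of variables together with the mass balance condition,
\[
\int_\Omega f(x)\,e^{u_t(x,t)}\,dx = \int_{\Omega^*} f^*(y)\,dy = \int_\Omega f(x)\,dx.
\]
Since $e^s\ge 1+s$, this forces $\int_\Omega f\,u_t(\cdot,t)\,dx\le 0$, so $\Phi(t):=\int_\Omega f\,u(\cdot,t)\,dx$ is non-increasing. Therefore $\int_\Omega f(u(\cdot,t)-u_0)\,dx\le 0$, and since $f>0$ on $\Omega$ there exists $x_-(t)\in\overline{\Omega}$ with $u(x_-(t),t)\le u_0(x_-(t))$.

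For the other half, apply the same reasoning on the dual side using Corollary~\ref{cor:a:v-eq}. Let $v(\cdot,t)$ be the $g^*$-transform of $u(\cdot,t)$ and $v_0$ the $g^*$-transform of $u_0$. The analogous computation (change of variables $y=Yu(x)$ and mass balance) gives $\int_{\Omega^*}f^*\exp(-g_z v_t)\,dy=\int_{\Omega^*}f^*\,dy$, and Jensen produces $\int_{\Omega^*}f^*(-g_z)v_t\,dy\le 0$. From this we extract $y_+(t)\in\overline{\Omega^*}$ with $v(y_+,t)\le v_0(y_+)$. Setting $x_+(t):=Xv_0(y_+)$ and using that $g(x,y,\cdot)$ is strictly decreasing in its third argument (A2),
\[
u(x_+,t)=\sup_y g(x_+,y,v(y,t))\ge g(x_+,y_+,v(y_+,t))\ge g(x_+,y_+,v_0(y_+))=u_0(x_+),
\]
so $u(\cdot,t)\ge u_0$ at $x_+(t)$. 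Combining the two halves, $w(x,t):=u(x,t)-u_0(x)$ attains both non-positive and non-negative values on the connected set $\overline{\Omega}$, so by the intermediate value theorem it vanishes at some $x(t)$, giving the intersection.

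The technical heart is the dual step: the weight $-g_z$ depends on $t$ through $u$ and $Xv$, so $\int f^*(-g_z)v_t$ is not directly the time derivative of a clean quantity. To extract $y_+(t)$ from the inequality $\int f^*(-g_z)v_t\le 0$, the plan is to argue by contradiction: if $v(\cdot,t)>v_0$ strictly on $\overline{\Omega^*}$ for some $t>0$, then by the order-reversing property of the $g^*$-transform $u(\cdot,t)<u_0$ strictly on $\overline{\Omega}$; examining the first time at which this strict separation just fails and exploiting the uniform bounds from $g\in C^4(\overline{\Gamma})$ together with $g_z<0$ on $\overline{\Gamma}$ should produce the required $y_+(t)$. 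This continuity-plus-first-time analysis, bridging the instantaneous integral inequality and the pointwise comparison with $v_0$, is where the nontrivial work lies.
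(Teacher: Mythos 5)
Your Step 1 is correct, and it takes a genuinely different route from the paper's. You deduce $\int_\Omega f\,u_t\,dx\le 0$ from $\int_\Omega f\,e^{u_t}\,dx=\int_\Omega f\,dx$ via $e^s\ge 1+s$, so $\Phi(t)=\int_\Omega f\,u(\cdot,t)\,dx$ is non-increasing and hence $\int_\Omega f\,(u(\cdot,t)-u_0)\,dx\le 0$, which gives a point where $u\le u_0$. The paper instead argues by contradiction: it assumes $u(\cdot,t)>u_0$ everywhere, writes $u(x,t)-u_0(x)=\int_0^t u_t\,d\tau$, applies Jensen's inequality in the time variable to get $\exp\bigl(\frac1t\int_0^t u_t\,d\tau\bigr)\le \frac1t\int_0^t e^{u_t}\,d\tau$ for each fixed $x$, integrates against $f\,dx$, and uses the PDE plus mass balance to reach $\int_\Omega f<\int_\Omega f$. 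Your monotone-functional formulation is arguably cleaner and buys the same conclusion without a contradiction hypothesis.

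The gap you flag in Step 2 is real, and your sketch does not close it. You obtain $\int_{\Omega^*}f^*(-g_z)v_t\,dy\le 0$, but since the positive weight $-g_z$ is evaluated along the flow and varies in $t$, this is not $\frac{d}{dt}$ of any quantity of the form $\int_{\Omega^*}f^*\,(\text{weight})\,v\,dy$; the dual does not hand you a monotone functional the way the primal does (indeed $-g_z\equiv 1$ only in the Monge--Amp\`ere case, where your Step 2 would succeed). Your proposed ``examine the first time at which strict separation fails'' is not an argument: you have not specified what quantity is being continued, why a first/last failure time exists, or how the instantaneous integral inequality interacts with it. The paper's route for Step 2 avoids the monotonicity idea entirely and mirrors its own Step 1: assume for contradiction that $u(\cdot,t)<u_0$ everywhere; then for each fixed $x$, $1<\exp\bigl(\frac{u_0(x)-u(x,t)}{t}\bigr)=\exp\bigl(\frac1t\int_0^t(-u_t)\,d\tau\bigr)\le\frac1t\int_0^t e^{-u_t(x,\tau)}\,d\tau$ by Jensen; then it rewrites $-u_t=-g_z\,v_t$ using Lemma~\ref{lem:a:vt}, passes to the dual variable, integrates against $f^*\,dy$, and applies Corollary~\ref{cor:a:v-eq} with mass balance to evaluate the right-hand side as $\int_{\Omega^*}f^*\,dy$, obtaining the contradiction. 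Note that this strategy uses the pointwise strict gap $u_0-u(\cdot,t)>0$ directly as an input to Jensen in time, rather than asking the weighted integral $\int f^*(-g_z)v_t$ to be conserved or signed in $t$ --- that is precisely the structural point your approach is missing. You should replace your Step 2 with this argument; your extraction of $x_+$ from $y_+$ at the end, via $u(x_+,t)\ge g(x_+,y_+,v(y_+,t))\ge g(x_+,y_+,v_0(y_+))=u_0(x_+)$, is correct and can be reused, but in the paper's framing the contradiction itself already produces the intersection point, so that final step is not needed.
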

\begin{proof}
  Via condition A5 it suffices to show at each time $t$, $u(\cdot,t)$ intersects $u_0$. We achieve this by showing an inequality both ways.\\
  \\
  \textit{Step 1. We do not have $u > u_0$ on $\Omega$} \\
For a contradiction we assume there is $t \in [0,T]$ such that $u(x,t) > u_0(x)$ for all $x \in \Omega$. We have
\begin{align*}
  \int_{\Omega}f \ dx &< \int_{\Omega}f \exp\Big(\frac{u(x,t)-u_0(x)}{t}\Big) \ dx\\
                      &= \int_{\Omega}f \exp\Big(\frac{1}{t}\int_{0}^t\frac{\partial  u}{\partial t} d \tau\Big) \ dx.
\end{align*}
An application of Jensen's inequality (which preserve the above \textit{strict} inequality), then invoking PPJE yields
\begin{align*}
  \int_{\Omega}f \ dx &< \frac{1}{t}\int_{0}^t \int_{\Omega}f \exp\Big(\frac{\partial  u}{\partial  t} \Big) \ dx \ d\tau\\
                      &= \frac{1}{t}\int_{0}^t \int_{\Omega} f^*(Yu)\det DYu \ dx \ d\tau\\
                      &=  \frac{1}{t}\int_{0}^t \int_{\Omega^*}f^* \ dy \ d\tau\\
  &= \int_{\Omega}f(x) \ dx .
\end{align*}
In the last line we've used mass balance to obtain our contradiction.\\
\\
\textit{Step 2. We do not have $u < u_0$ on $\Omega$} \\
  This time for a contradiction we assume there is $t \in [0,T]$ such that $u(x,t) < u_0(x)$ for all $x \in \Omega$. We obtain for every $x \in \Omega$
  \begin{align*}
    1 &< \exp\Big(\frac{u_0(x)-u(x,t)}{t}\Big)\\
      &= \exp\Big(\frac{1}{t}\int_{0}^t-\frac{\partial u}{\partial t}(x,\tau) \ d \tau\Big).    
  \end{align*}
  Jensen's, and Lemma \ref{lem:a:vt} yields
  \begin{align}
   \nonumber 1 &< \frac{1}{t}\int_0^t \exp\Big(-\frac{\partial u}{\partial t}(x,\tau)\Big) \ d \tau\\
  \label{eq:a:important}  &= \frac{1}{t}\int_0^t \exp\Big(-g_z \frac{\partial v}{\partial t}(y,\tau)\Big) \ d \tau,
  \end{align}
  for $y=Y(x,u(x),Du(x))$. Since this holds for every $x \in \Omega$, the boundary condition implies \eqref{eq:a:important} holds for every $y \in \Omega^*$. Hence using Corollary \ref{cor:a:v-eq} we have
  \begin{align*}
    \int_{\Omega}f^*(y) \ dy &< \int_{\Omega^*}f^*(y) \frac{1}{t}\int_0^t \exp\Big(-g_z \frac{\partial v}{\partial t}(y,\tau)\Big) \ d \tau \ dy\\
                               &=\frac{1}{t}\int_0^t\int_{\Omega^*}f^*(y)  \exp\Big(-g_z \frac{\partial v}{\partial t}(y,\tau)\Big) \ d y \ d \tau\\
                               &= \frac{1}{t}\int_0^t\int_{\Omega^*} f(Xv)\det DXv \ d y \ d \tau\\
                               &=  \frac{1}{t}\int_0^t \int_{\Omega}f(x) \ dx \ d \tau\\
    &= \int_{\Omega^*}f^*(y) \ dy .
  \end{align*}
  This contradiction concludes step 2. We conclude for each $t$ there is a point $x_t \in \Omega$ with $u(x_t,t) = u_0(x_t)$. 
\end{proof}

Combined with A5 we have $\Vert u \Vert _{C^1}$ estimates (with respect to the space derivatives), that are independent of $T$. In particular the lower order coefficients of the linearized parabolic equation are bounded. Then by the standard parabolic maximum principle \cite[Theorem 2.2]{LSU1968} applied to the linearized equation for $u_t$ we obtain an estimate of the form $\sup |u_t| \leq Ce^{CT}$. Here $C$ depends only on $u_0,g,\Omega,\Omega^*$. Then the $C^2$ estimates for space derivatives are obtained as in Chapter \ref{chap:r2} with the extension to parabolic equations exactly as in  Kitagawa's work on parabolic optimal transport \cite{Kitagawa12}. The major difference is, unlike the optimal transport case, the $|u_t|$ estimate depending on $T$ means our $C^2$ estimates also have $T$ dependence. Thus using the method of continuity we can obtain the existence of $C^2(\overline{\Omega} \times [0,T])$ for any $T > 0$. It is an open problem whether such a solution converges to a solution of the elliptic problem. The answer depends on finding an estimate on $\sup |u_t|$ which is independent of $T$.

\clearpage{}

\bibliographystyle{plain}
\bibliography{thesis.bib}
\printindex[notation]
\printindex

\end{document}